\documentclass[11pt,reqno]{amsart}
\usepackage[a4paper,left=0.9in,right=0.9in,top=0.9in,bottom=0.9in]{geometry}
\usepackage{amsmath,amssymb,mathtools,lmodern}
\usepackage[colorlinks=true,linkcolor=blue,citecolor=blue,urlcolor=blue]{hyperref}
\usepackage{enumitem}
\usepackage{needspace}
\usepackage{tikz}
\usepackage{bm}
\usetikzlibrary{arrows.meta,calc,decorations.pathmorphing,decorations.pathreplacing}
\definecolor{abBlue}{HTML}{255A96}
\definecolor{abTeal}{HTML}{28796C}
\definecolor{abPurple}{HTML}{775394}
\definecolor{abOrange}{HTML}{BA6C2E}
\definecolor{abRed}{HTML}{B24C4C}
\definecolor{abGrey}{HTML}{878B91}
\tikzset{
 figedge/.style={line width=.85pt,-{Stealth[length=1.6mm,width=1.1mm]},shorten <=1.6pt,shorten >=1.6pt},
 figpath/.style={line width=1.1pt,-{Stealth[length=1.6mm,width=1.1mm]},shorten <=1.6pt,shorten >=1.6pt},
 figvertex/.style={circle,fill=black,inner sep=0pt,minimum size=3.4pt}
}

\usepackage{setspace}
\setlist{nosep}
\newtheorem{theorem}{Theorem}[section]
\newtheorem{proposition}[theorem]{Proposition}
\newtheorem{lemma}[theorem]{Lemma}
\newtheorem{corollary}[theorem]{Corollary}
\newtheorem{conjecture}[theorem]{Conjecture}

\theoremstyle{definition}
\newtheorem{definition}[theorem]{Definition}
\newtheorem{remark}[theorem]{Remark}
\newtheorem{observation}[theorem]{Observation}
\newtheorem*{setting}{Setup}

\theoremstyle{plain}
\newtheorem{claim}[theorem]{Claim}
\newenvironment{claimproof}
  {%
   \begin{proof}[Proof of the claim]}
  {\end{proof}}

\newcommand{\Prb}{\mathbb P}
\newcommand{\Ex}{\mathbb E}
\newcommand{\F}{\mathbb F}

\DeclareMathOperator{\ord}{ord}

\DeclareMathOperator{\Col}{Col}

\newcommand{\CE}{\mathcal E}

\title{Anti-concentration and Graham's rearrangement problem in finite abelian groups}

\author{Hoi H. Nguyen}
\address{Ohio State University}
\email{nguyen.1261@math.osu.edu}

\author{Tuan Tran}
\address{University of Science and Technology of China}
\email{trantuan@ustc.edu.cn}

\author{Van H. Vu}
\address{University of Hong Kong}
\email{vanvu@hku.hk}

\begin{document}

\begin{abstract}
Let $S$ be a subset of $n$ elements  of a  group $G$. 
An ordering $x_1,\ldots,x_n$ of  its elements is \emph{valid} if the partial sums
$x_1,x_1+x_2,\ldots,x_1+\cdots+x_n$
are pairwise distinct. This notion has a long history in combinatorial group theory, dated back to 
Gordon's works in early 1960. 

Alspach-Liversidge conjecture asserts that any subset of the nonzero elements of a finite abelian group  $G$ has a valid ordering. This 
generalizes Graham's conjecture, which considers the case when the group $G$ is $\F_p$, the cyclic group of 
prime order. 

Graham's conjecture has been solved recently, as a combination of results by 
Kravitz \cite{Kravitz24}, Bedert and Kravitz \cite{BK26}, M\"uyesser and Pokrovskiy~\cite{MP25},  Bedert, Buci\'c, Kravitz, Montgomery, and
M\"uyesser~\cite{BBKMM}, and  Pham and Sauermann \cite{PS26}.  Beside this, the only other case where
Alspach-Liversidge conjecture has been confirmed is $G= \F_2^k$, thanks to a recent result by Bedert, Buci\'c, Kravitz, Montgomery, and
M\"uyesser~\cite{BBKMM}.

We prove the Alspach-Liversidge conjecture in its full generality.
Our proof combines ideas from many previous papers. In particular, we will use the probabilistic approach from \cite{PS26} and the 
absorption method following \cite{MP25,BBKMM}. Our new main technical ingredient is a Fourier-moment estimate, which is of independent interest. 
In particular,  we can use it to show that the distribution of certain kinds of random sums in an abelian group has an almost uniform distribution. 

\end{abstract}

\maketitle
\pagestyle{plain}
\enlargethispage{3pt}

\section{Introduction}

Let $S$ be a subset of $n$ elements  of a finite abelian group $G$. 
An ordering $x_1,\ldots,x_n$ of  its elements is \emph{valid} if the partial sums
$x_1,x_1+x_2,\ldots,x_1+\cdots+x_n$
are pairwise distinct. 

The notion of  valid orderings has a long history.
In 1961, motivated by the study of Latin squares,
Gordon~\cite{Gordon61} considered the case $S=G$
and asked which group $G$ admits a valid ordering.
He showed that a nontrivial finite abelian group $G$
has a valid ordering if and only if
$\sum_{a\in G}a\ne0$.
For the nonabelian case, Keedwell~\cite{Keedwell81}
conjectured in 1981 that every sufficiently large
nonabelian group has a valid ordering.
His conjecture was confirmed recently (2025)  by
M\"uyesser and Pokrovskiy~\cite{MP25}.

The problem becomes much harder when $S$ is an
arbitrary subset of $G$. In 1971,
Graham~\cite{Graham71}, motivated by the study of
subset sums (and also jugling), posed the following conjecture
for cyclic groups of prime order.

\begin{conjecture}\label{Graham}
For every prime $p$, every subset
$S\subseteq\F_p\setminus\{0\}$ has a valid ordering.
\end{conjecture}

Archdeacon, Dinitz, Mattern, and Stinson~\cite{ADMS16}
generalized the conjecture to arbitrary cyclic groups.
Alspach conjectured that, under the additional
assumption $\Sigma(S)\ne0$, the partial sums can
be made both distinct and nonzero;
see~\cite{BH05,CFOR}.  Various versions of Alspach's conjecture have been reiterated and stated, for example, for general Abelian groups by Costa, Morini, Pasotti and Pellegrini \cite{CMP18}, 
and by Costa, Della Fiore, Ollis and Rovner-Frydman \cite{CFOR}. 
Below we state a version of Graham's conjecture for general Abelian groups, which  appeared in a paper by  Alspach and Liversidge  \cite{AL20}.

\begin{conjecture}\label{Alspach}
For any finite abelian group $G$, every subset
$S\subseteq G\setminus\{0\}$ has a valid ordering.
\end{conjecture}

In recent years, there have been several important progresses concerning Conjectures \ref{Graham} and \ref{Alspach}. 
For Graham's conjecture, Kravitz~\cite{Kravitz24} proved
the result for $n\le \log p/\log\log p$.
Bedert and Kravitz~\cite{BK26} extended this range to
$n\le \exp((\log p)^{1/4})$ for any sufficiently large $p$.
At the other end, M\"uyesser and Pokrovskiy~\cite{MP25}
established the result for sets of size $(1-o(1))p$. Bedert, Buci\'c, Kravitz, Montgomery, and
M\"uyesser~\cite{BBKMM} subsequently proved it whenever
$n\ge p^{1-c}$, for an absolute constant $c>0$.
Early this year,  Pham and Sauermann~\cite{PS26} showed that,
for every $0<\alpha<1$, there is a constant $C_\alpha$
such that the result holds for
$C_\alpha\le n\le p^{1-\alpha}$.
Together, these results settle Graham's conjecture
for all sufficiently large primes.

\begin{theorem}  \label{Grahamtheorem} 
There is a constant $C$ such that the following holds. For all $p \ge C$, any set  $S \subseteq \F_p \setminus \{0\} $
has a valid ordering.
\end{theorem}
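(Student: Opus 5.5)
The plan is to cover the whole range $1\le n\le p-1$, with $n=|S|$, by the union of the four cited ranges, choosing the parameters so the ranges overlap once $p$ is large.

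\begin{enumerate}
\item \textbf{Small sets.} For $n\le \exp((\log p)^{1/4})$ we use Bedert--Kravitz~\cite{BK26}, valid for all $p\ge p_0$. Sets of bounded size $n\le C_\alpha$ fall into this range as soon as $\exp((\log p)^{1/4})\ge C_\alpha$.
\item \textbf{Intermediate sets.} Fix any $\alpha\in(0,1)$ with $\alpha<c$, where $c$ is the absolute constant from~\cite{BBKMM}. Pham--Sauermann~\cite{PS26} then gives the range $C_\alpha\le n\le p^{1-\alpha}$.
\item \textbf{Large sets.} The result of~\cite{BBKMM} gives $n\ge p^{1-c}$, again for $p$ large. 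This range overlaps the previous one, since $p^{1-c}\le p^{1-\alpha}$ by the choice $\alpha<c$. The M\"uyesser--Pokrovskiy range $n\ge(1-o(1))p$ is already contained in it, so it is not needed separately.
\item \textbf{Choosing $C$.} Take $C$ at least the maximum of the thresholds from the three cited theorems and of $\exp\bigl((\log C_\alpha)^4\bigr)$. For every $p\ge C$ and every $n$, at least one of the three ranges then applies.
\end{enumerate}

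The only real obstacle is bookkeeping. One must check that each cited theorem is stated for all sufficiently large $p$ with explicit or absolute thresholds, and that the constant $c$ from~\cite{BBKMM} is independent of $p$. The compatibility $\alpha<c$ is what makes the intermediate and large ranges meet; the small and intermediate ranges meet automatically once $p$ is large.

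If one of the cited results came with a non-uniform threshold, I would fall back on the standard trick of shrinking $\alpha$ further. This uses the fact that $C_\alpha$ is fixed once $\alpha$ is fixed.
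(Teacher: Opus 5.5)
Your argument is correct and matches the paper, which gives no separate proof and simply observes that the cited ranges of Bedert--Kravitz, Pham--Sauermann and Bedert--Buci\'c--Kravitz--Montgomery--M\"uyesser together cover every $1\le n\le p-1$ once $p$ is large (Kravitz's and M\"uyesser--Pokrovskiy's ranges being subsumed, as you note). Your explicit choices $\alpha<\min\{c,1\}$ and $C\ge\exp\bigl((\log C_\alpha)^4\bigr)$ are exactly the bookkeeping needed to make the three ranges overlap.
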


For general abelian groups, Alspach and
Liversidge~\cite{AL20} proved the conjecture for sets of size at most nine.
Buci\'c, Frederickson, M\"uyesser, Pokrovskiy, and Yepremyan~\cite{BFMPY} proved an approximate version: every $n$-element subset $S$ of an arbitrary group
has an ordering with at least $n-o(n)$ distinct partial products.
In 2025, Bedert, Buci\'c, Kravitz, Montgomery, and
M\"uyesser~\cite{BBKMM} proved Alspach's conjecture
for sufficiently large subsets of $\F_2^k$.
In the same paper, they also established the conjecture
for sets of size at least $|G|^{1-c}$, for an absolute
constant $0<c<1$; this result also holds  for
nonabelian groups.

\begin{theorem}  \label{F2theorem}   
There is an absolute constant $C$ such that  the following holds. For all $k$, any set  $S \subseteq \F_2^k \setminus \{ 0\} $
of size at least $C$ has a valid ordering.
\end{theorem}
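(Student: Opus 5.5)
The plan is to split according to the size $n=|S|$ relative to $N=2^k=|G|$, with three regimes: small sets, intermediate sets, and large sets $n\ge N^{1-c}$. Since every nonzero element of $\F_2^k$ has order $2$, a valid ordering is equivalent to a sequence of distinct partial sums. The basic structural fact I would use throughout is this: $x_1+\dots+x_j=x_1+\dots+x_i$ for $i<j$ exactly when the consecutive block $x_{i+1},\dots,x_j$ sums to $0$. So the task is to order $S$ so that no consecutive block of length $\ge 2$ is zero-sum. Blocks of length $1$ are automatically nonzero.

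\textbf{Large sets.} For $n\ge N^{1-c}$ I would use the absorption approach of M\"uyesser--Pokrovskiy in the form developed for general groups.
\begin{itemize}
\item First reserve a small random subset $R\subseteq S$ and build from it absorbing gadgets. Each gadget is a short path in the Cayley-type ``partial sum'' digraph that can be reordered so as to swallow any single leftover element.
\item Next order most of $S\setminus R$ greedily or randomly into a long path with distinct partial sums. This uses the approximate result of Buci\'c et al.\ (at least $n-o(n)$ distinct partial sums), after repairing the few collisions by deleting elements into a leftover pool.
\item Finally, absorb the leftover pool into the gadgets.
\end{itemize}
The group being elementary abelian helps here, because random subset sums of a spanning-rich set are near-uniform on $\F_2^k$.

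\textbf{Intermediate sets.} For $C\le n\le N^{1-c}$ I would follow the probabilistic approach of Pham--Sauermann: choose a uniformly random ordering and show that with positive probability no consecutive block of length $2\le \ell\le n$ sums to $0$. For a fixed starting position and length $\ell$, the block sum is the sum of a random $\ell$-subset of $S$. The number of pairs (position, length) is at most $n^2$, so I need
\[
\Prb\Bigl(\textstyle\sum_{x\in T}x=0\Bigr)\ll n^{-2}
\]
for a random $\ell$-subset $T$.

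\begin{itemize}
\item \textbf{Short blocks.} For $\ell$ small, the probability is bounded combinatorially by $O(\#\{\text{zero-sum } \ell\text{-subsets}\}/\binom{n}{\ell})$. This can be controlled because any $\ell-1$ elements determine the last one, giving probability $\le \ell/n$. That is too weak for $\ell=2,3$ on its own, so for these I would instead do a local-switching / Lov\'asz Local Lemma argument on a random ordering, since the dependencies are local.
\item \textbf{Long blocks.} For large $\ell$ I would use Fourier analysis on $\F_2^k$:
\[
\Prb(\Sigma T=0)=N^{-1}\sum_{\chi}\Ex\,\chi(\Sigma T).
\]
Bounding the nontrivial character terms amounts to an anti-concentration estimate for the random subset sum. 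I would handle the constraint $|T|=\ell$ by replacing it with independent Bernoulli selection, at the cost of a polynomial factor.
\item \textbf{Degenerate sets.} The bad case is when $S$ is concentrated on a subspace $H$ of small dimension with few elements outside it. There I would induct: order the elements outside $H$ carefully so that the partial sums lie in distinct cosets, and recurse inside $H$.
\end{itemize}

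\textbf{Small sets.} For $n<C$ with $C$ an absolute constant, the statement imposes no claim, so nothing is needed.

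\textbf{Main obstacle.} I expect the hardest step to be the Fourier-moment / anti-concentration bound uniform over all characters, for sets $S$ that are neither large nor structured. Character sums $\Ex\,\chi(\Sigma T)$ over random $\ell$-subsets are close to $1$ in absolute value exactly when $\chi$ vanishes on most of $S$, that is, when $S$ lies largely in a hyperplane. I would first try a high-moment estimate, counting characters $\chi$ with $|\{x\in S:\chi(x)=-1\}|\le m$ via a dimension count ($\le N^{O(m/n)}$-type bounds). I would then combine it with the subspace-recursion for the degenerate case.
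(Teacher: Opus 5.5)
In the paper this statement is the case $G=\F_2^k$ of Theorem~\ref{thm:Graham}. Compared with that proof, your plan has two genuine gaps.

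\textbf{The intermediate regime.} This regime cannot be handled by a union bound over a single random ordering. You need $\Prb(\Sigma(T)=0)\ll n^{-2}$ for each of the roughly $n^2$ blocks. But the \emph{main term} of this probability is about $1/N$, even for perfectly spread-out sets. For instance, take a random $S\subseteq\F_2^k$ of size $n=N^{2/3}$: the expected number of zero-sum blocks in a random ordering is about $n^2/(2N)=N^{1/3}/2\to\infty$. No sharpening of your Fourier estimate can repair this, so the range $\sqrt N\lesssim n\le N^{1-c}$ is covered by neither of your regimes. (Also, only blocks starting at position at least $2$ matter; when $\Sigma(S)=0$ the block of length $n$ is always zero-sum.)

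The missing idea is the Pham--Sauermann local repair of Proposition~\ref{prop:repair-full}. The random ordering need not be valid; its bad right endpoints only need to be sparse and removable by disjoint nearby swaps. This requires two bounds, both uniform after deleting $20D+1$ elements:
\begin{itemize}
\item a one-point bound $q$ on $\sum_\ell\Prb(P_\ell=z)$, to which the main term contributes only about $n/N$;
\item a bound $F$ on the $D$-fold nested-chain sums, to which the main term contributes about $(n/N)^D$.
\end{itemize}
Then $q+nF$ is small for $n\le N^{1-c}$ and $D\gg1/c$, provided the error terms are controlled. Controlling them is the real work. It needs joint multi-point estimates, which single-block bounds plus a local lemma for short blocks do not give. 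The paper first splits via the dichotomy of Lemma~\ref{lem:filtration}. In the reservoir case it uses Lemmas~\ref{lem:reservoir-prefixes} and~\ref{lem:reservoir-chains}. For a balanced affine core it uses Lemma~\ref{lem:Fourier-moment} combined with Roos' permanent inequality (Lemma~\ref{lem:multivariate-partition} and Proposition~\ref{prop:almost-coset-direct-chain}).

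\textbf{Degenerate sets.} Your recursion here is not sound.
\begin{itemize}
\item Ordering the outsiders so that the partial sums visit distinct cosets of $H$ is not possible in general. For instance, if all outsiders lie in one coset $a+H$, the visited cosets simply alternate, so a repeat is forced once there are three outsiders.
\item Even when such an ordering exists, each segment of inside elements that follows an outsider must avoid zero-sum blocks starting at its first element. A recursive call to the theorem does not supply this.
\item Your large regime does not cover this case either. Suppose almost all of $S$ lies in a subspace $K$ with $|K|\le n^{1+2\varepsilon}$ and a few elements are scattered in a much larger $G$. Then $n$ is far below $N^{1-c}$, and the local-repair criterion also fails, because the main term $n/|K|$ in $q$ need not be small.
\end{itemize}

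The paper treats this case in Proposition~\ref{prop:localized-dense-core}. It first regularizes inside $K$ (Lemma~\ref{ext:regularity}). The outsiders are then threaded through an exceptional path in which each is preceded by a padding colour from $H$ (Lemma~\ref{lem:fixed-order-exceptional}). This path is joined to robust main paths, a waveform and a pair absorber, all kept vertex-disjoint by a random vertex partition. When instead the core subgroup has size above $n^{1+\varepsilon}$, the outsiders simply remain in the random ordering and are accounted for in the estimates of Proposition~\ref{prop:almost-coset-direct-chain}.
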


The authors also mentioned that it is possible to extend their method to handle the case when $G= \F_q^k$, with $q $ fixed, 
$k \rightarrow \infty$, and $n \ge C(q)$, some constant depending on $q$.  The proof was, however, not included for the sake of a cleaner presentation.

Thus, so far, the only two cases where  Alspach-Leversidge conjecture  has been  solved is when $G= \F_p$ and $G= \F_2^k$
(and perhaps also $G =\F_q^k$ for $q$ fixed and $k \rightarrow \infty$, as mentioned above). 

\vskip2mm 


We prove the Alspach-Liversidge conjecture for all sufficiently large subsets of arbitrary finite abelian groups. As in Theorem~\ref{F2theorem}, the required lower bound on $n$ is an absolute constant, independent of the ambient group $G$.

\begin{theorem}\label{thm:Graham}
There is an absolute constant $C$ such that the following holds.  For every
finite abelian group $G$, any set 
$S \subseteq G\setminus\{0\}$ of size at least $C$
has a valid ordering.
\end{theorem}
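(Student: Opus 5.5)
We split according to the size of $n=|S|$ relative to $|G|$, following the same division that settled Theorem~\ref{Grahamtheorem}. There are three regimes: a dense regime $n\ge |G|^{1-c}$, a sparse regime $C\le n\le |G|^{1-c}$, and the case where $S$ is heavily concentrated in a proper subgroup. The dense regime is covered by the result of Bedert, Buci\'c, Kravitz, Montgomery and M\"uyesser~\cite{BBKMM}, which holds for sets of size at least $|G|^{1-c}$ in arbitrary groups. So the work lies in the sparse regime, where we adapt the probabilistic strategy of Pham and Sauermann~\cite{PS26}, combined with the absorption framework of~\cite{MP25,BBKMM}.

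In the sparse regime we proceed as follows.

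First, we pick a small random-like reservoir $R\subseteq S$ and build an absorbing structure from it. This is a family of short sequences whose partial sums can be locally rerouted, so that any small leftover set of elements can be inserted while keeping all partial sums distinct.

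Second, we order the bulk $S\setminus R$ in random blocks. The goal is that a collision $x_1+\dots+x_i=x_1+\dots+x_j$ with $j-i$ large occurs with small probability. Such a collision says that a random subsum of $S$ of length $j-i$ equals $0$. Hence we need anti-concentration bounds of the form
\[
\Pr\Bigl(\textstyle\sum_{k\in T} x_k = g\Bigr)\le \frac{1+o(1)}{|H|}+\text{(small error)},
\]
where $T$ is a random subset of prescribed size and $H$ is the subgroup relevant to the sum.

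Third, collisions with $j-i$ short are rare by a first-moment/greedy argument, because $S$ avoids $0$. The few bad positions that remain are repaired with the absorber.

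The anti-concentration step is the heart of the argument. We prove it through Fourier analysis on $G$. We write the probability as
\[
\frac{1}{|G|}\sum_{\chi}\overline{\chi(g)}\,\mathbb{E}\,\chi\Bigl(\sum_{k\in T} x_k\Bigr)
\]
and bound high moments $\sum_\chi |\mathbb{E}\chi(\sum x_k)|^{2m}$. The key quantity is, for each character $\chi$, how many $x\in S$ have $\chi(x)$ far from $1$. Characters for which few elements of $S$ are moved are exactly those nearly trivial on most of $S$. These form the annihilator-like structure of a subgroup $H$ containing most of $S$.

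This dichotomy gives the case split. If no proper subgroup contains almost all of $S$, the random sums are nearly uniform on $G$ and the argument above applies directly. If a subgroup $H$ contains all but a few elements of $S$, we recurse. We order $S\cap H$ validly inside $H$, by induction on $|G|$, with a strengthened hypothesis allowing prescribed start and end behaviour. We then interleave the few elements outside $H$, which move the partial sums into distinct cosets. The sizes work out because cosets separate partial sums automatically.

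I expect the main obstacle to be the Fourier-moment estimate in groups with many small-order components, for instance $\mathbb{Z}_4^k$ or mixed $\mathbb{Z}_{p^a}$ factors. There, $\chi(x)$ can be close to $1$ without being trivial, and one must control the number of characters with large $|\mathbb{E}\chi|$ uniformly in $G$. My first attempt would be a Hal\'asz-type counting argument. For each threshold, I would bound the number of $\chi$ with $\sum_{x\in S}\|\chi(x)-1\|^2\le t$ by a covering argument, showing these characters lie in a few cosets of the annihilator of a large subgroup. I would then sum the contributions dyadically.
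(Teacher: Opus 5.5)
Your proposal has genuine gaps; the two main ones follow.

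\textbf{The outsider step fails.} Interleaving outsiders $y_1,\dots,y_r$ into a valid ordering of $S\cap H$ does not separate partial sums by cosets. The successive stretches of partial sums lie in $H$, $y_1+H$, $y_1+y_2+H,\dots$, and this walk in $G/H$ can revisit a coset; if $[G:H]=2$ and $r\ge2$ it is forced to. Write $Q_j$ for the partial sums of your ordering of $S\cap H$. The stretch after $y_2$ consists of values $y_1+y_2+Q_j$, and validity in $H$ says nothing about whether these coincide with earlier $Q_i$.

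Nor are the outsiders ``few''. What a dichotomy can actually deliver (Lemma~\ref{lem:filtration}) is a \emph{coset} $a+H$, not a subgroup: the odd residues of $\mathbb Z_{2m}$ lie in no proper subgroup, yet their random subset sums have fixed parity. This coset contains $(1-c)n$ elements of $S$, so there can be up to $cn$ outsiders, and the balancedness is only about $n^{-\varepsilon}$. When nested concentration persists for many steps, the alternative is a reservoir, i.e.\ a set with distinct subset sums at each fixed size. Your dichotomy has nothing covering this intermediate regime.

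The paper treats outsiders in two different ways. When $\ord(a+H)\ge n^\varepsilon$ or $|H|>n^{1+\varepsilon}$, they stay inside the random ordering and are controlled by the quotient counting of Lemma~\ref{aq:quotient} and the block-count decomposition of Proposition~\ref{prop:almost-coset-direct-chain}. In the dense case they are placed on an exceptional path with padding colours from $H$, in a fixed order (Lemma~\ref{lem:fixed-order-exceptional}). The last edge of this path is deleted if it returns to its starting coset, so that the two main paths lie in distinct cosets. All of this sits inside a reopened version of the absorption argument of \cite{BBKMM} (Proposition~\ref{prop:localized-dense-core}). Citing the dense theorem as a black box cannot supply your undefined strengthened hypothesis with prescribed start and end behaviour.

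\textbf{The sparse regime also fails, because zero-sum intervals of a random ordering are not few.} For a random $S$ with $n=|G|^{0.9}$, their expected number is of order $n^2/|G|=n^{8/9}$. Short ones are not rare either: $0\notin S$ only rules out length one, and if $S=-S$ has no elements of order $2$ there is on average about one zero-sum interval of length two. An absorber designed for a few leftovers cannot take polynomially many, and the absorption machinery of \cite{BBKMM} is built for polynomially dense colour sets.

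The paper follows \cite{PS26} instead. Each bad endpoint is repaired by swapping it with a nearby unblocked position (Proposition~\ref{prop:repair-full}). This needs no absorber, but it does need bad endpoints not to cluster and blocked partners to be rare. Excluding these events requires the joint bound $F_D$ on $D$ prescribed prefix sums occurring simultaneously, uniformly after deleting $O(D)$ elements. Your single-sum Fourier plan does not provide this; indeed, even for one fixed-size random subset the characteristic function does not factor. The paper obtains the joint bounds from Roos' permanent inequality combined with the Fourier-moment bound, keeping a largest block unexposed (Lemma~\ref{lem:multivariate-partition}).

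\textbf{The Fourier-moment bound itself is left unproved.} You rightly flag its uniformity in $G$ as the main obstacle. The paper proves it (Lemma~\ref{lem:Fourier-moment}) not by a Hal\'asz-type covering but from three ingredients:
\begin{enumerate}
\item the angular closure $T(\theta_1)T(\theta_2)\subseteq T(\theta_1+\theta_2)$ of the level sets $T(\theta)=\{\chi:|\widehat\mu(\chi)|\ge\cos\theta\}$;
\item the fact that balancedness keeps nontrivial subgroups of $\widehat G$ out of $T(\sqrt\alpha)$;
\item Kneser's theorem applied to the powers $T(\theta)^v$.
\end{enumerate}
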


Notice in our theorem, the constant $C$ is independent of the ambient group $G$. Thus, when applied to $G= \F_q^k$, our theorem is stronger than the claim  following Theorem 
\ref{F2theorem}, which requires $C$ to depend on $q$.

Our proof adapts the probabilistic local-repair method of Pham and Sauermann~\cite{PS26} to finite abelian groups through a structural reduction. The key new analytic ingredient is the Fourier-moment estimate of Lemma~\ref{lem:Fourier-moment}. Combined with Roos' permanent inequality~\cite{Roos}, it yields joint anti-concentration estimates arising in this reduction. 

The remaining case occurs when almost all of $S$ lies in a subgroup in which it has sufficiently large density. We handle this case by adapting the absorption method of Bedert et al.~\cite{BBKMM}, incorporating the exceptional elements outside the subgroup. For a detailed overview of the proof, see Section~\ref{sub:overview}.

We used ChatGPT to assist with the adaptation of the absorption argument, to help generate diagrams, and to improve the clarity and flow of the exposition. We take full responsibility for the mathematical content of the paper.

\medskip
\noindent\textbf{Notation and organization.}
Unless stated otherwise, all groups are finite abelian
and written additively.
We write $H\le G$ for a subgroup, $a+H$ for a coset,
$X\sqcup Y$ for a disjoint union, and
$[n]=\{1,\ldots,n\}$.
Logarithms are natural, and constants are absolute
unless their dependence is specified.

Section~\ref{sec:Fourier-moment} proves the Fourier-moment estimate.
Section~\ref{sec:subset-sums} establishes the structural
dichotomy and the prefix and nested-chain estimates.
Section~\ref{sec:repair-full} develops local repair,
while Section~\ref{sec:external-absorption} adapts
the path and absorption constructions to dense
subsets of subgroups with exceptional elements.
Section~\ref{sec:main-proof} combines these ingredients
to prove Theorem~\ref{thm:Graham}.

\section{A Fourier moment estimate and the distribution of random subset sums}
\label{sec:Fourier-moment}

Let $G$ be a finite abelian group. 
For a probability measure $\mu$ on $G$, define its
\emph{collision probability} by
$\Col(\mu)=\sum_{x\in G}\mu(x)^2$.
For $0<\alpha<1$, we say that $\mu$ is
\emph{$\alpha$-balanced} if every coset of every proper
subgroup of $G$ has mass at most $1-\alpha$.
A subset $S\subseteq G$ is $\alpha$-balanced if its
uniform probability measure is $\alpha$-balanced.

For a function $f:G\to\mathbb C$, define its Fourier coefficient by
$\widehat f(\chi)=\sum_{x\in G}f(x)\overline{\chi(x)}$.
Character orthogonality,
$\sum_{\chi\in\widehat G}\chi(x)=|G|\mathbf1_{\{x=0\}}$,
gives the Fourier inversion formula 
$f(z)=|G|^{-1}\sum_{\chi\in\widehat G}\widehat f(\chi)\chi(z)$
and Parseval's identity
$|G|^{-1}\sum_{\chi\in\widehat G}|\widehat f(\chi)|^2
=\sum_{x\in G}|f(x)|^2$.
For a probability measure $\mu$, the last sum equals
$\operatorname{Col}(\mu)$.

An \emph{additive character} is a map $\xi:G\to\mathbb R/\mathbb Z$
satisfying $\xi(x+y)=\xi(x)+\xi(y)$.
Every complex character has the form $\chi(x)=e^{2\pi i\xi(x)}$.
For $t\in\mathbb R/\mathbb Z$, write $\|t\|$ for the distance
of any real representative of $t$ to the nearest integer.

Our key new technical ingredient is the following lemma, which bounds all nontrivial Fourier moments in terms of
balancedness and collision probability.

\begin{lemma}\label{lem:Fourier-moment}
Let $\mu$ be an $\alpha$-balanced probability measure on a finite
abelian group $G$, where $0<\alpha<1$. Then, for every real $q\ge2$,
\[
 \frac1{|G|}\sum_{\chi\ne1}|\widehat\mu(\chi)|^q
 \le \frac{20\operatorname{Col}(\mu)}{\sqrt{\alpha q}}.
\]
\end{lemma}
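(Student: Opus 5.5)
The plan is to write $|\widehat\mu(\chi)|^q=|\widehat\mu(\chi)|^2\cdot|\widehat\mu(\chi)|^{q-2}$ and to view $w(\chi)=|\widehat\mu(\chi)|^2/(|G|\operatorname{Col}(\mu))$ as a probability measure on $\widehat G$; this is legitimate by Parseval. The lemma then says that $\sum_{\chi\ne1}w(\chi)|\widehat\mu(\chi)|^{q-2}\le 20/\sqrt{\alpha q}$.

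Since $|\widehat\mu|^{q-2}=(1-(1-|\widehat\mu|^2))^{(q-2)/2}\le e^{-(q-2)(1-|\widehat\mu|^2)/2}$, a layer-cake integration reduces everything to one level-set estimate. Put
\[
\Lambda_t=\{\chi\ne1:\ |\widehat\mu(\chi)|^2\ge1-t\}.
\]
The estimate to prove is $w(\Lambda_t)\le C\sqrt{t/\alpha}$ for all $0<t\le1$. Then $\sum_{\chi\ne1}w|\widehat\mu|^{q-2}\le\int_0^\infty e^{-(q-2)t/2}\,dw(\Lambda_t)$, and integrating by parts gives $O(1/\sqrt{\alpha q})$. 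Two checks remain at the end: tracking constants to reach $20$, and the small range $2\le q\le 4$, where the trivial bound $w\le1$ together with $\sqrt{\alpha q}\le 2$ already suffices.

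For the level-set estimate I pass to the symmetrized measure $\nu=\mu*\mu^-$ (with $\mu^-(x)=\mu(-x)$), so that $|\widehat\mu(\chi)|^2=\sum_z\nu(z)\cos 2\pi\xi(z)$. Two facts about $\nu$ will be used.

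\emph{(a) Approximate closure.} If $\chi\in\Lambda_t$, then $\Ex_\nu\|\xi(z)\|^2\le t/8$, because $1-\cos 2\pi s\ge 8\|s\|^2$. By the triangle inequality, $\chi,\psi\in\Lambda_t$ gives $\chi\psi\in\Lambda_{4\pi^2\cdot 4t}$ up to constants. More importantly, for $\chi\in\Lambda_t$ and $k\le c/\sqrt t$ we get $\chi^k\in\Lambda_{1/2}$.

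\emph{(b) Balancedness transferred to $\nu$.} For any proper subgroup $H$,
\[
\nu(H)=\sum_{a+H}\mu(a+H)^2\le\max_a\mu(a+H)\le 1-\alpha.
\]
Averaging over the cyclic group $\langle\chi\rangle$ of order $m$ therefore gives
\[
\frac1m\sum_{j}|\widehat\mu(\chi^j)|^2=\nu(\ker\chi)\le1-\alpha,
\]
so a positive proportion of powers of $\chi$ have $1-|\widehat\mu|^2\gtrsim\alpha$.

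Combining (a) and (b): a nontrivial $\chi\in\Lambda_t$ forces a long arithmetic progression $\chi,\chi^2,\dots,\chi^{K}$ with $K\approx\sqrt{\alpha/t}$, along which $1-|\widehat\mu(\chi^j)|^2$ grows at most like $j^2t$. Since the $\chi^j$ are distinct elements of a cyclic group on which $\nu$ is $\alpha$-spread, this yields a lower bound on the order $m$ of $\chi$.

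To turn this into a bound on $w(\Lambda_t)$, I would use a covering/averaging argument in the style of Bohr sets. Consider
\[
\sum_{\chi}\sum_{j=1}^{K}w(\chi)\,\mathbf 1[\chi\in\Lambda_t]\,\mathbf 1[\chi^j\notin\Lambda_{\alpha/2}].
\]
The aim is to compare the $w$-mass of $\Lambda_t$ with the $w$-mass of $\{\chi:|\widehat\mu|^2\le1-\alpha/2\}$ via the maps $\chi\mapsto\chi^j$. This needs $w(\chi^j)$ to be comparable to $w(\chi)$ on $\Lambda_t$, and indeed both are $\ge(1-O(j^2t))|G|^{-1}\operatorname{Col}^{-1}$. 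Roughly, each element of $\Lambda_t$ is matched with $\gtrsim\sqrt{\alpha/t}$ distinct "exits". Bounding the multiplicity of this matching would then give $w(\Lambda_t)\lesssim\sqrt{t/\alpha}$.

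I expect this multiplicity bound to be the main obstacle. The maps $\chi\mapsto\chi^j$ are not injective on noncyclic groups, so the charging must be done inside each cyclic subgroup. There the problem becomes one-dimensional: a symmetric probability measure $\rho$ on $\mathbb Z_m$ (the pushforward of $\nu$ under $\xi$) with $\rho(0)\le1-\alpha$. The $\sqrt{t}$ scaling should then come from the elementary fact that $\{k:\Ex_\rho\|k\xi\|^2\le t\}$ has density $O(\sqrt{t/\alpha})$ near the identity, which is Chebyshev applied to a measure whose mass is not concentrated at $0$. If the direct charging fails, the fallback is to prove the weighted one-dimensional inequality for $\rho$ first and then sum over cyclic subgroups using $\sum_\chi|\widehat\mu(\chi)|^2=|G|\operatorname{Col}(\mu)$.
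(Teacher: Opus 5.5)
\paragraph{Verdict.} Your reduction is fine, but the central level-set estimate is never proved, and neither route you propose for it works.

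\paragraph{What is correct.} The Parseval normalization, the layer-cake step, the case $2\le q\le4$, and your facts (a) and (b) are all correct. Your target $w(\Lambda_t)\lesssim\sqrt{t/\alpha}$ is exactly the paper's key estimate $N(\theta)\le 8|G|\operatorname{Col}(\mu)\theta/\sqrt\alpha$ with $t\approx\theta^2$. Your (b) is the paper's observation that $\{\chi:|\widehat\mu(\chi)|\ge\cos\sqrt\alpha\}$ contains no nontrivial subgroup.

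\paragraph{Why the charging along dilates fails.} The estimate on $w(\Lambda_t)$ is the entire content of the lemma. Dilates $\chi\mapsto\chi^j$ only explore the cyclic orbit of one character, and the resulting charging has unbounded multiplicity. Take $G=\mathbb Z_m$ and $\mu$ uniform on an interval of length $L$. Then $|G|\operatorname{Col}(\mu)=m/L$, and $\Lambda_t$ is essentially $\{1\le|k|\le N\}$ with $N\asymp\sqrt t\,m/L$.
\begin{itemize}
\item A character $k$ deep inside $\Lambda_t$ leaves $\Lambda_{\alpha/2}$ only after $j\asymp N\sqrt{\alpha/t}/|k|$ steps, not after $\sqrt{\alpha/t}$ steps.
\item All exits of non-negligible weight lie in one window $|x|\asymp N\sqrt{\alpha/t}$.
\item An exit $x$ is reached from every divisor $k\le N$ of $x$. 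Charging all exits already gives average multiplicity of order $\log N$.
\item By the multiplication-table phenomenon, the products $jk$ are far fewer than the pairs $(j,k)$. So even a selective assignment of $\sqrt{\alpha/t}$ exits per $k$ cannot have bounded overlap.
\end{itemize}
This route therefore loses an unbounded factor and cannot give $20/\sqrt{\alpha q}$.

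\paragraph{Why the fallback fails.} A Chebyshev argument in $\mathbb Z_m$ bounds the level set by $O(\sqrt{t/\alpha})\,m$. Even that requires $\rho(Q)\le1-\alpha$ for all proper subgroups $Q$, not just $\rho(0)\le1-\alpha$: take $\rho(0)=1-\alpha$, $\rho(1/2)=\alpha$. What you need instead is a bound by $\sqrt{t/\alpha}$ times the Parseval mass $\sum_{\chi\in C}|\widehat\mu(\chi)|^2$. In the example above that mass is $m/L$, not $m$. Summing per-cyclic-subgroup bounds also overcounts this mass, because cyclic subgroups overlap. Every one contains the trivial character, which alone contributes $1$ per subgroup. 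In $\mathbb F_p^r$ with $\mu$ uniform on $n\gg p$ points, that gives about $|G|/p\gg|G|/n=|G|\operatorname{Col}(\mu)$.

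\paragraph{The missing idea.} Multiply \emph{different} elements of the level set, not powers of a single one.
\begin{enumerate}
\item Use the exact angular closure $T(\theta_1)T(\theta_2)\subseteq T(\theta_1+\theta_2)$. It comes from Cauchy--Schwarz applied to $\mathbb E[(U-\mathbb EU)(V-\mathbb EV)]$. Your closure with a constant loss would also spoil the constant $20$.
\item For $v\le m:=\lfloor\sqrt\alpha/\theta\rfloor$, the product sets $T(\theta)^v$ lie in $T(\sqrt\alpha)$. Their stabilizers are subgroups contained in them, so by your (b) they are trivial.
\item Kneser's theorem then gives $|T(\theta)^v|\ge|T(\theta)^{v-1}|+N(\theta)$, hence $|T(\theta)^m|\ge mN(\theta)+1$.
\item Every coefficient on $T(\theta)^m$ exceeds $1/2$, so Parseval gives $|T(\theta)^m|\le4|G|\operatorname{Col}(\mu)$.
\end{enumerate}
This linear sumset growth, rather than the orbit length of one character, is what turns balancedness into the factor $\theta/\sqrt\alpha$ relative to $|G|\operatorname{Col}(\mu)$.
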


We came across this lemma in a study on the anti-concentration phenomenon, following 
our earlier works \cite{BBDV26,DNgPTV}.  A subject in these works is the random subset sums.
Consider a finite set $S$, and let $1 \le k \le |S|$ be an integer. Choose a random subset $R$ of $S$ 
of size $k$, uniformly among all subsets of $S$ of this size. 
We are interested in the distribution of  the random variable $\Sigma(R)= \sum_{ x \in R }  x $.

Berger, Berkowitz, Devlin, and Vu~\cite{BBDV26} proved that, if $S$ consists of real numbers, then for any real number $z$, 
\[
\Prb (\Sigma (R)= z)  = \frac{O(1)}{|S|\sqrt{\min\{k,|S|-k\}}}.
\]

In  \cite{PS26}, Pham and Sauerman treated the case when $S \subseteq \F_p$. 
Assuming 
$C\log|S|\le k\le10^{-3}|S|/\log|S|$,
for an absolute constant $C$, 
they proved that for any $z \in \F_p$, 

$$\Prb(\Sigma(R)=z)
 \le\frac{1}{p} +\frac{C}{|S|\sqrt{k}}. $$ This bound is a key technical component of their proof
 concerning Graham's conjecture; see the introduction of \cite{PS26}.

Using Lemma \ref{lem:Fourier-moment}, we can deduce the following result, which bounds the pointwise discrepancy between the distribution of $\Sigma(R)$ and the uniform distribution on $G$. Its proof will appear elsewhere; the result itself is not needed for the proof of Theorem~\ref{thm:Graham}.

\begin{theorem}\label{thm:k-sum}
Let $S$ be an $\alpha$-balanced subset of a finite abelian
group $G$, and let $R$ be a uniformly random $k$-element
subset of $S$, where $1\le k<|S|$.
Then, for every $z\in G$,
\[
 \Bigl|\Prb(\Sigma(R)=z)-\frac1{|G|}\Bigr|
 \le\frac{2^{16}}
 {|S|\sqrt{\alpha\min\{k,|S|-k\}}}.
\]
\end{theorem}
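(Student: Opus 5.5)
We prove Theorem~\ref{thm:k-sum} by Fourier inversion, controlling the nontrivial coefficients through Lemma~\ref{lem:Fourier-moment}. By symmetry ($R\mapsto S\setminus R$ shifts $\Sigma$ by the constant $\Sigma(S)$) we may assume $k\le |S|/2$. Write $n=|S|$ and let $\mu$ be the uniform measure on $S$, so $\operatorname{Col}(\mu)=1/n$. Fourier inversion gives
\[
\Prb(\Sigma(R)=z)-\frac1{|G|}=\frac1{|G|}\sum_{\chi\ne1}\Ex\Bigl[\prod_{x\in R}\chi(x)\Bigr]\overline{\chi(z)},
\]
so it suffices to bound $\frac1{|G|}\sum_{\chi\ne1}|\Ex\prod_{x\in R}\chi(x)|$ by $O(1/(n\sqrt{\alpha k}))$.

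The first step is to express $\Ex\prod_{x\in R}\chi(x)$ as a contour integral of a product. Introduce a complex variable $w$ and use
\[
\binom nk^{-1}\prod_{x\in S}(1+w\chi(x)) = \sum_j w^j\, e_j(\chi(S))/\binom nk.
\]
Extract the coefficient of $w^k$ by integrating over $|w|=r$ with $r$ chosen so that the mass concentrates at $j=k$; the natural choice is $r=k/(n-k)$, giving a saddle point. Then $|\Ex\prod_R\chi|\le \binom nk^{-1}r^{-k}\int_0^1\prod_{x}|1+re^{2\pi i(\theta+\xi(x))}|\,d\theta$. Using $|1+re^{2\pi i t}|^2=(1+r)^2-4r\sin^2(\pi t)$, each factor equals $(1+r)\exp(-c\, r\|t\|^2/(1+r)^2)$ up to constants. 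Since $\binom nk r^k\approx (1+r)^n/\sqrt{k}$ (Stirling), the bound becomes
\[
\sqrt{k}\int_0^1\exp\Bigl(-c\,k\,\Ex_{x\in S}\|\theta+\xi(x)\|^2\Bigr)d\theta .
\]
The point now is to relate this to moments of $|\widehat\mu|$. Note that $|\Ex_{x\in\mu} e^{2\pi i(\theta+\xi(x))}|=|\widehat\mu(\chi)|$, and $1-|\widehat\mu(\chi)|\lesssim \min_\theta\Ex\|\theta+\xi(x)\|^2$ up to constants. Conversely, the integrand is at most $\exp(-c'k(1-|\widehat\mu(\chi)|))\le |\widehat\mu(\chi)|^{c'k}$ only near the optimal $\theta$. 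The cleaner route I would take is to replace the $\theta$-integral by an average over the $\theta$-direction, which amounts to replacing $\chi$ by twisted versions. Alternatively, I would bound the product over $S$ directly by a power of $|\widehat{\mu}(\chi)|$ using Hölder/AM-GM. This gives $|\Ex\prod_R\chi|\lesssim \sqrt{k}\cdot k^{-1/2}\,|\widehat\mu(\chi)|^{q}$ with $q\asymp k$, possibly plus a Gaussian-width factor $k^{-1/2}$ from the $\theta$-integral.

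With such a pointwise bound, summing over $\chi\ne1$ and applying Lemma~\ref{lem:Fourier-moment} with $q\asymp k$ (for $k\ge2$; small $k$ handled directly by $\Prb\le 1/n$-type bounds and balancedness) gives
\[
\frac1{|G|}\sum_{\chi\ne1}|\widehat\mu(\chi)|^{q}\le\frac{20}{n\sqrt{\alpha q}}\asymp\frac{1}{n\sqrt{\alpha k}},
\]
which is the claimed bound, with constants tracked into $2^{16}$.

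The main obstacle is the second step. The aim is a uniform inequality $|\Ex\prod_{x\in R}\chi(x)|\le C|\widehat\mu(\chi)|^{ck}$, or an integrated version, that does not lose the factor $\sqrt{k}$ coming from $\binom nk^{-1}r^{-k}$. The $\theta$-integral must supply a compensating $k^{-1/2}$ factor, and that compensation has to be obtained without pushing an exponent below $2$ into the moment lemma. I would first try to handle it by writing the $\theta$-average as an average of $|\widehat{\mu_\theta}|^{ck}$, where $\mu_\theta$ is $\mu$ rotated, and then applying Lemma~\ref{lem:Fourier-moment} inside the $\theta$-integral.
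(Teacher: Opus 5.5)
Since the paper defers the proof of Theorem~\ref{thm:k-sum}, your outline has to stand on its own. It has a genuine gap in the second step, and the gap goes beyond the $\sqrt k$ bookkeeping you flag.

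The pointwise inequality you are aiming for, $|\Ex\prod_{x\in R}\chi(x)|\le C|\widehat\mu(\chi)|^{ck}$, is false. Suppose $\chi$ is real-valued and $\sum_{x\in S}\chi(x)=0$, so $\widehat\mu(\chi)=0$. For $k=2$ one has $\Ex\prod_{x\in R}\chi(x)=\bigl((\sum_x\chi(x))^2-\sum_x\chi(x)^2\bigr)/(n(n-1))=-1/(n-1)$. For any even $k$, the generating function $(1-w^2)^{n/2}$ gives $\pm\binom{n/2}{k/2}/\binom nk\ne0$.

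Relatedly, the inequality $\exp(-c'k(1-a))\le a^{c'k}$ that you invoke runs the other way, since $\log a\le a-1$. With the saddle radius fixed at $r=k/(n-k)$, neither of your routes can give better than $e^{-O(k)}$ for characters with $a=|\widehat\mu(\chi)|\le 1/2$:
\begin{itemize}
\item the integrand $\exp(-ck\,\Ex_x\|\theta+\xi(x)\|^2)$ is at least $e^{-ck/4}$;
\item the H\"older/AM--GM bound integrates to $e^{-k'}I_0(k'a)\ge e^{-k'}$, where $k'=k(n-k)/n$;
\item rotating $\mu$ by a constant phase does not change $|\widehat\mu(\chi)|$, so that device cannot help.
\end{itemize}
By Parseval, all but $4|G|/n$ characters are of this kind. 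So your bound for $|G|^{-1}\sum_{\chi\ne1}|\Ex\prod_{x\in R}\chi(x)|$ is at least of order $e^{-O(k)}$. This exceeds $2^{16}/(n\sqrt{\alpha k})$ for $k$ up to order $\log n$, whereas the completion bound only disposes of $k=O(1/\alpha)$. Lemma~\ref{lem:Fourier-moment} cannot absorb such a term, because it only controls quantities decaying like a power of $|\widehat\mu(\chi)|$. This is exactly the floor behind the additive $e^{-3Bu_i/(80|A|)}$ in Lemma~\ref{lem:multivariate-partition}, the closest argument the paper actually carries out. With $t=1$, $B=n-k$, $u_1=k$, it uses Roos' inequality in place of your contour integral and yields only a one-sided bound with an extra $e^{-3k'/80}$.

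What is missing is an $n$-sensitive estimate for the bulk of the spectrum. One way to get it within your framework is to keep Cauchy's estimate and the AM--GM bound $\prod_x|1+\zeta\chi(x)|\le(1+r^2+2ra)^{n/2}$ on $|\zeta|=r$, but let $r$ depend on $a$. Using $\binom nk\ge(n/k)^k$, take $r=\sqrt{k/n}$ when $a\le\sqrt{k/n}$ and $r=k/(2an)$ otherwise. This gives $|\Ex\prod_{x\in R}\chi(x)|\le(4a)^k+(e^3k/n)^{k/2}$. Then split the characters into three ranges.
\begin{itemize}
\item For $a<1/8$: Parseval together with the factor $2^{-k}$ handles $(4a)^k$. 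The additive term is $O(1/(n\sqrt k))$ when $k\le n/(4e^3)$; for larger $k$ the fixed-radius bound $O(\sqrt k\,e^{-k/4})$ already suffices.
\item For $1/8\le a<1/2$: there are at most $64|G|/n$ such characters, each contributing $O(\sqrt k\,e^{-k/4})$.
\item For $a\ge 1/2$: use the AM--GM bound at $r=k/(n-k)$, integrated via $\int_0^1e^{-k'(1-a\cos2\pi\theta)}\,d\theta\le e^{-k'(1-a)}\sqrt{\pi/(8k'a)}$. This gives $|\Ex\prod_{x\in R}\chi(x)|=O(e^{-k'(1-a)})=O(a^{k'/2})$, which also settles your $\sqrt k$ concern. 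Only here does Lemma~\ref{lem:Fourier-moment} enter, with $q=k'/2$.
\end{itemize}
Without some such treatment of the small coefficients, your outline proves only the weaker estimate with an additional $e^{-ck}$ error term.
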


Since every subset of $\F_p$ with at least two elements
is $1/2$-balanced, Theorem~\ref{thm:k-sum} sharpens and extends
Pham-Sauerman result considerably. 
We have a two-sided pointwise discrepancy estimate, which holds for every $1\le k<|S|$, and for any finite abelian group. 

\subsection{Proof of Lemma \ref{lem:Fourier-moment}}
We use the following classical theorem~\cite{Kneser53} in the character group:
when a sumset has trivial stabilizer, it must grow by at least
the size of the added set minus one.

\begin{theorem}[Kneser's addition theorem]
\label{thm:analytic-kneser}
Let $A,B$ be nonempty subsets of a finite abelian group $G$, and let
$K=\{x\in G:A+B+x=A+B\}$ be the stabilizer of their sumset. Then
$|A+B|\ge |A+K|+|B+K|-|K|$.
\end{theorem}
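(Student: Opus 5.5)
Since Kneser's theorem is classical, in the paper we would simply cite~\cite{Kneser53}. If a self-contained argument were wanted, I would follow the standard Dyson $e$-transform induction in the form streamlined by Kemperman and DeVos, in three steps.

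\emph{Step 1: reduction to trivial stabilizer.} Let $\pi:G\to G/K$ be the quotient map and put $A'=\pi(A)$, $B'=\pi(B)$. Because $A+B$ is a union of $K$-cosets, we have $|A+B|=|K|\,|A'+B'|$, $|A+K|=|K|\,|A'|$ and $|B+K|=|K|\,|B'|$. Moreover, the stabilizer of $A'+B'$ in $G/K$ is trivial. So it suffices to prove the following: if $A+B$ has trivial stabilizer, then $|A+B|\ge |A|+|B|-1$.

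In fact, for the induction it is cleaner to prove the general inequality $|A+B|\ge|A+H|+|B+H|-|H|$, where $H$ is the stabilizer of $A+B$. I would do this by induction on $|B|$, and for fixed $|B|$ by downward induction on $|A|$. The case $|B|=1$ is trivial.

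\emph{Step 2: the $e$-transform.} For $e\in A-B$ define
\[
A(e)=A\cup(B+e),\qquad B(e)=B\cap(A-e).
\]
Then $A(e)+B(e)\subseteq A+B$ and $|A(e)|+|B(e)|=|A|+|B|$, and $B(e)$ is nonempty. Suppose first that $B(e)=B$ for every choice of $e=a-b$. Then $A+B-B\subseteq A$, so $A$ is a union of cosets of $L=\langle B-B\rangle$. Hence $A+B=A+b_0$ for any $b_0\in B$, and $L$ stabilizes $A+B$. This gives $L\le H$, so $B+H$ is a single $H$-coset, and the inequality becomes $|A+B|\ge|A+H|$, which holds because $A+B$ is $H$-periodic and contains a translate of $A$.

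Otherwise, we choose $e$ with $|B(e)|<|B|$ and apply the induction hypothesis to the pair $(A(e),B(e))$. Writing $H_e$ for the stabilizer of $A(e)+B(e)$, this yields
\[
|A+B|\ge|A(e)+B(e)|\ge|A(e)+H_e|+|B(e)+H_e|-|H_e|.
\]

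\emph{Step 3: transferring the bound back (the main obstacle).} The difficulty is that $H_e$ need not equal $H$, and $A(e)+B(e)$ may be strictly smaller than $A+B$. I would follow DeVos's case analysis here.

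If $H_e\le H$ fails, I would consider the sets $A+H_e$ and $B+H_e$ and apply induction again to the pair $(A\cup(A(e)+H_e),\,B)$, or to the analogous pair built in the quotient. Among all admissible $e$, I would choose one that maximizes $|A(e)+H_e|+|B(e)+H_e|-|H_e|$, or equivalently one that minimizes a suitable potential.

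The aim is to show that either $A+B$ contains $A(e)+B(e)+H_e$ together with enough extra $H_e$-cosets coming from $A\setminus A(e)$ and $B\setminus B(e)$, or else the minimality forces $H_e$ to stabilize $A+B$. In the second case $H_e\le H$, and the inequality follows by counting cosets. This counting of cosets lost when passing from $(A(e),B(e))$ back to $(A,B)$ is the delicate part. If it becomes unwieldy, I would fall back on citing~\cite{Kneser53} as the paper does.
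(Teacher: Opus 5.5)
Your primary plan, simply citing Kneser's theorem, is exactly what the paper does: it quotes the 1953 theorem and gives no proof. As a citation, the proposal is fine. The self-contained sketch, however, is not yet a proof. Steps~1 and~2 are correct: the reduction to trivial stabilizer, the facts $A(e)+B(e)\subseteq A+B$ and $|A(e)|+|B(e)|=|A|+|B|$, and the degenerate case $A+B-B\subseteq A$. But Step~3 carries the entire content of Kneser's theorem beyond the $e$-transform, and it is only described, not carried out.

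Concretely, after your reduction $A+B$ has trivial stabilizer. If $H_e=\{0\}$ you are done at once, since $|A+B|\ge|A(e)+B(e)|\ge|A(e)|+|B(e)|-1=|A|+|B|-1$. The only real case is $H_e\neq\{0\}$, where $A(e)+B(e)$ is a nonempty $H_e$-periodic proper subset of the aperiodic set $A+B$. Write $R=(A+B)\setminus(A(e)+B(e))$ and $h(X)=|X+H_e|-|X|$. It would suffice to show $|R|\ge|H_e|-1-h(A(e))-h(B(e))$, i.e.\ that the part of $A+B$ outside the periodic piece makes up the loss of $|H_e|-1$. Your Step~3 gives no mechanism for this, and the moves you list do not supply one:
\begin{itemize}
\item Since $A\subseteq A(e)$, the pair $(A\cup(A(e)+H_e),B)$ is just $(A(e)+H_e,B)$. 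Its sumset contains $(B+e)+B$, which is in general not contained in $A+B$. Your downward induction on $|A|$ does make the hypothesis formally applicable to this pair, but the resulting bound concerns the wrong set and says nothing about $|A+B|$.
\item The ``extra $H_e$-cosets coming from $A\setminus A(e)$'' do not exist, because that set is empty.
\item Choosing $e$ to maximize $|A(e)+H_e|+|B(e)+H_e|-|H_e|$ is not tied to any inequality that would force $H_e$ into the stabilizer of $A+B$.
\end{itemize}
Closing this case requires a genuinely additional argument, for instance a second application of the minimality hypothesis to a pair whose sumset still lies inside $A+B$. That is precisely what the published proofs supply. Short of writing it out in full, citing Kneser's theorem, as the paper does, is the right way to handle this statement.
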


\begin{proof}[Proof of Lemma \ref{lem:Fourier-moment}]
Since $|\widehat\mu(\chi)|\le1$, Parseval's identity gives
\[
|G|^{-1}\sum_{\chi\ne1}|\widehat\mu(\chi)|^q
\le |G|^{-1}\sum_{\chi\ne1}|\widehat\mu(\chi)|^2\le\operatorname{Col}(\mu)
\]
for $2\le q<4$.
This is stronger than the desired bound, so assume $q\ge4$.

We estimate the moment by counting Fourier coefficients close to $1$.
For $0\le\theta\le\pi/2$, put
$T(\theta)=\{\chi\in\widehat G:|\widehat\mu(\chi)|\ge\cos\theta\}$
and $N(\theta)=|T(\theta)|-1$.
Since $\widehat\mu(1)=1$, $N(\theta)$ counts the nontrivial characters
in $T(\theta)$. Our key estimate is
\begin{equation}\label{eq:analytic-small-angle}
 N(\theta)\le
 \frac{8|G|\operatorname{Col}(\mu)}{\sqrt\alpha}\,\theta
 \quad \text{for } 0\le\theta\le\sqrt\alpha.
\end{equation}
We prove it using a closure property of the level sets and Kneser's theorem.

First, we claim that
$T(\theta_1)T(\theta_2)\subseteq T(\theta_1+\theta_2)$
whenever $\theta_1+\theta_2\le\pi/2$, where characters are multiplied
pointwise. Take $\chi_i\in T(\theta_i)$, and let $X$ have distribution
$\mu$. Put $U=\overline{\chi_1(X)}$ and $V=\overline{\chi_2(X)}$.
Since $|U|=|V|=1$, we have
$\mathbb E|U-\mathbb E U|^2=1-|\mathbb E U|^2$ and
$\mathbb E|V-\mathbb E V|^2=1-|\mathbb E V|^2$.
Cauchy--Schwarz therefore gives
\[
 \bigl|\widehat\mu(\chi_1\chi_2)-(\mathbb E U)(\mathbb E V)\bigr|
 =\bigl|\mathbb E[(U-\mathbb E U)(V-\mathbb E V)]\bigr|\le\sqrt{(1-|\mathbb E U|^2)(1-|\mathbb E V|^2)}.
\]
Using $|\mathbb E U|\ge\cos\theta_1$ and $|\mathbb E V|\ge\cos\theta_2$, we obtain
\[
 \begin{aligned}
|\widehat\mu(\chi_1\chi_2)|=|\mathbb E(UV)|
 &\ge |\mathbb E U||\mathbb E V|-\sqrt{(1-|\mathbb E U|^2)(1-|\mathbb E V|^2)}\\
 &\ge\cos\theta_1\cos\theta_2-\sin\theta_1\sin\theta_2
 =\cos(\theta_1+\theta_2),
 \end{aligned}
\]
which proves the claim.

Next, balancedness implies that $T(\sqrt\alpha)$ contains no
nontrivial subgroup of $\widehat G$.
Indeed, let $D\le\widehat G$ be nontrivial, and define its annihilator by
$D^\perp=\{x\in G:\chi(x)=1\text{ for every }\chi\in D\}$.
This is a proper subgroup of $G$.
Orthogonality on $D$ states that
$|D|^{-1}\sum_{\chi\in D}\chi(x)=\mathbf1_{\{x\in D^\perp\}}$.
Thus, for independent samples $X,Y$ from $\mu$,
\[
 \frac1{|D|}\sum_{\chi\in D}|\widehat\mu(\chi)|^2
 =\Pr(X-Y\in D^\perp)
 =\sum_{Q\in G/D^\perp}\mu(Q)^2.
\]
By balancedness, the last sum is at most
$(1-\alpha)\sum_Q\mu(Q)=1-\alpha$.
Consequently, some nontrivial $\chi\in D$ satisfies
$|\widehat\mu(\chi)|\le\sqrt{1-\alpha}$.
Since $\sin\sqrt\alpha<\sqrt\alpha$, we have
$\sqrt{1-\alpha}<\cos\sqrt\alpha$.
Hence $D$ cannot be contained in $T(\sqrt\alpha)$.

Now fix $0<\theta\le\sqrt\alpha$, and put
$m=\lfloor\sqrt\alpha/\theta\rfloor$.
For $1\le v\le m$, repeated use of angular closure gives
$T(\theta)^v\subseteq T(v\theta)\subseteq T(\sqrt\alpha)$.
Each $T(\theta)^v$ contains $1$.
Its stabilizer
$P_v=\{\psi\in\widehat G:\psi T(\theta)^v=T(\theta)^v\}$
is a subgroup, and $P_v\subseteq T(\theta)^v$ because $1\in T(\theta)^v$.
The preceding paragraph therefore forces $P_v=\{1\}$.

Since $T(\theta)^v=T(\theta)^{v-1}T(\theta)$ has trivial stabilizer, Kneser's theorem
gives 
\[
|T(\theta)^v|\ge|T(\theta)^{v-1}|+|T(\theta)|-1,
\]
for $2\le v\le m$.
Starting from $|T(\theta)|=N(\theta)+1$, induction yields
$|T(\theta)^m|\ge mN(\theta)+1$.
Since $T(\theta)^m\subseteq T(m\theta)$ and $m\theta<1$, every
$\chi\in T(\theta)^m$ satisfies $|\widehat\mu(\chi)|\ge\cos(m\theta)>1/2$.
Hence Parseval gives
\[
 |T(\theta)^m|\le4\sum_{\chi\in T(\theta)^m}|\widehat\mu(\chi)|^2
 \le4|G|\operatorname{Col}(\mu).
\]
Combining these bounds and using
$m\ge\sqrt\alpha/(2\theta)$, we obtain
\[
 N(\theta)\le\frac{4|G|\operatorname{Col}(\mu)}m
 \le\frac{8|G|\operatorname{Col}(\mu)}{\sqrt\alpha}\,\theta.
\]
This proves \eqref{eq:analytic-small-angle}.

We now turn the level-set estimate into a moment bound.
Note that, for $a\in[0,1]$, we have
$a^q=q\int_{\arccos a}^{\pi/2}
\cos^{q-1}\theta\sin\theta\,d\theta$.
Applying this identity to each coefficient and summing yield
\begin{equation}\label{eq:layercake}
 \frac1{|G|}\sum_{\chi\ne1}|\widehat\mu(\chi)|^q
 =\frac q{|G|}\int_0^{\pi/2}
 \cos^{q-1}\theta\sin\theta\,N(\theta)\,d\theta.
\end{equation}
We split this integral at $\sqrt\alpha$.

On $[0,\sqrt\alpha]$, use \eqref{eq:analytic-small-angle},
$\sin\theta\le\theta$, and $\cos\theta\le e^{-\theta^2/2}$ to obtain
\[
 \begin{aligned}
 \frac q{|G|}\int_0^{\sqrt\alpha}
 \cos^{q-1}\theta\sin\theta\,N(\theta)\,d\theta
 &\le\frac{8q\operatorname{Col}(\mu)}{\sqrt\alpha}
 \int_0^\infty\theta^2e^{-(q-1)\theta^2/2}\,d\theta\\
 &=\frac{4\sqrt{2\pi}\,q\operatorname{Col}(\mu)}
 {\sqrt\alpha(q-1)^{3/2}}\le\frac{16\operatorname{Col}(\mu)}{\sqrt{\alpha q}}.
 \end{aligned}
\]

For $\theta<\pi/2$, Parseval also gives the crude bound
$N(\theta)\cos^2\theta\le|G|\operatorname{Col}(\mu)$.
Hence the remaining contribution is at most
\[
 \begin{aligned}
 \frac q{|G|}\int_{\sqrt\alpha}^{\pi/2}
 \cos^{q-1}\theta\sin\theta\,N(\theta)\,d\theta
 &\le q\operatorname{Col}(\mu)
 \int_{\sqrt\alpha}^{\pi/2}
 \cos^{q-3}\theta\sin\theta\,d\theta\\
 &=\frac q{q-2}\operatorname{Col}(\mu)
 \cos^{q-2}\sqrt\alpha\le\frac{2\operatorname{Col}(\mu)}{\sqrt{\alpha q}}.
 \end{aligned}
\]
Adding the two contributions proves the lemma.
\end{proof}

\section{Nested subset sums}\label{sec:subset-sums}\label{sec:nested}

In this section, we develop the structural and probabilistic
estimates needed for the proof of Theorem~\ref{thm:Graham}. We begin
by introducing the relevant notation and giving an overview
of the proof, before turning to the technical details.

\begin{definition}\label{def:prefixes}
For an ordering $x_1,\ldots,x_{|A|}$ of a finite set $A\subseteq G$,
its \emph{prefix sums} are $P_0=0$ and
$$P_\ell=\sum_{j=1}^\ell x_j \mbox{ for }1\le\ell\le|A|.$$
For prescribed $1\le b_1<\cdots<b_k<|A|$, a \emph{uniform nested
chain} of these sizes in $A$ is a chain $R_1\subset\cdots\subset R_k\subsetneq A$ chosen uniformly among all such chains with $|R_i|=b_i$.
Equivalently, we take a uniform ordering and set $R_i=\{x_1,\ldots,x_{b_i}\}$ for each $i$.

For $k\ge1$ and $\boldsymbol z=(z_1,\ldots,z_k)\in G^k$, write
\[
 F_k(A;\boldsymbol z)=F_k(A;z_1,\ldots,z_k):=
 \sum_{1\le b_1<\cdots<b_k<|A|}
 \Prb(P_{b_i}=z_i\text{ for every }i).
\]
\end{definition}

Thus $F_k(A;z_1,\ldots,z_k)$ is the expected number
of index tuples at which the prefix sums take the
prescribed values $z_1,\dots, z_k$.

We also introduce a structural property that will
play an important role. 
\begin{definition}\label{def:fixed-size}
A finite set $P\subseteq G$
is called a \emph{reservoir} if any two distinct
subsets of $P$ having the same cardinality have
different sums. In other words, $P$ has distinct
subset sums at each fixed cardinality.
\end{definition}

\subsection{Overview of the proof of Theorem~\ref{thm:Graham}.}\label{sub:overview} Let $S\subseteq G\setminus\{0\}$ be sufficiently
large, and write $n=|S|$. Our proof combines
probabilistic anti-concentration estimates with
two methods for constructing valid orderings, local-repair method (Section \ref{sec:repair-full}) and absorption method (Section \ref{sec:external-absorption}).

The first method, which handles most cases, is due to Pham and Sauermann~\cite{PS26}.
We begin with a uniformly random ordering
$x_1,\ldots,x_n$ of $S$. Such an ordering may contain
zero-sum intervals, and our goal is to eliminate them
by locally modifying the ordering.

More precisely, whenever a zero-sum interval ends
at a position $b$, we seek a suitable nearby position
$y$ and interchange the elements at positions $b$
and $y$. The main difficulty is that eliminating one
zero-sum interval may create new ones. This is where anti-concentration enters. For a random
ordering, the event that a prescribed interval has
sum zero can be expressed as an equality between
two prefix sums. Moreover, the failure of a proposed
swap can be expressed through further equations
involving sums of selected elements. To control
these events simultaneously, we need estimates not
only for individual prefix sums, but also for several
prescribed prefix sums occurring together. The quantities $F_1$ and $F_k$ introduced above
provide precisely this information. The bound on
$F_1$ controls individual obstructions, while bounds
on $F_k$, for a fixed large $k$, prevent too many
obstructions from occurring simultaneously.
Proposition~\ref{prop:repair-full} shows that sufficiently strong
estimates for $F_1$ and $F_D$, where $D$ is a fixed
large integer, guarantee that the local-repair
procedure succeeds with high probability. 

A crucial point is that these estimates must
hold uniformly after deleting a bounded number of
elements. Indeed, when we consider a proposed swap,
we first reveal the elements involved in it and
then apply anti-concentration to the remaining
random ordering. The challenge is that an arbitrary set $S$ need not be balanced, so the Fourier estimates of
Section~\ref{sec:Fourier-moment} cannot always be applied directly.
We overcome this through a structural dichotomy: fixing small positive constants $c,\varepsilon$, Lemma~\ref{lem:filtration} shows that either $S$ contains a
reservoir $P$ with $|P|\ge n^\varepsilon$,
or there exist a subgroup $H\le G$ and
$a\in G$ such that
\[
T=S\cap(a+H),\quad |T|\ge(1-c)n,
\]
and $T-a$ is balanced in $H$, with a quantitative
balancedness parameter.

We now explain how the two alternatives lead
to the required estimates.
\vskip .1in
{\it \underline{The reservoir case}.} Suppose that $S$ contains a large reservoir $P$. Fix a positive integer $j$ and consider the first $j$ elements of a uniformly random ordering of $S$. These elements form a uniformly random $j$-subset $R$ of $S$. We wish to bound the probability that their sum equals a prescribed value $z\in G$.

To this end, we first condition on the selected elements outside $P$, namely $R\setminus P$. Their sum, say $s$, and their number are now fixed. Since $|R|=j$, the number of selected elements from $P$ is also fixed, and the condition $\Sigma(R)=z$ becomes
\[
\Sigma(R\cap P)=z-s.
\]
By the defining property of a reservoir, distinct subsets of $P$ of the same cardinality have different sums. Thus there is at most one possible choice of $R\cap P$ satisfying this equation. This uniqueness is the source of our anti-concentration estimate.

To make this observation quantitative, we use independent Bernoulli sampling: each element of $S$ is selected independently with probability $j/|S|$, and we condition on selecting exactly $j$ elements. The resulting set has the same distribution as $R$. Before conditioning on its cardinality, the probability of selecting any particular subset of $P$ is at most $\max\{j/|S|,1-j/|S|\}^{|P|}$. The uniqueness property therefore gives an exponentially small bound when $j$ is sufficiently far from $0$ and $|S|$. For prefixes near these endpoints, we use a simpler completion bound: after revealing all but one selected element, at most one of the remaining elements can complete the prescribed sum. Combining these two estimates and summing over all prefix lengths gives Lemma~\ref{lem:reservoir-prefixes}.

We next need to control the probability that several prefix sums simultaneously take prescribed values. Fix indices $1\le b_1<\cdots<b_k<|S|$ and divide the random ordering into $k+1$ consecutive blocks, with boundaries at these indices. Prescribing the prefix sums is equivalent to prescribing the sum of each block. However, these block sums are dependent. To overcome this difficulty, we choose a largest block and leave it unexposed. Since the sum of all elements of $S$ is fixed, the sum condition on this reserved block is automatically satisfied once the sums of the other blocks are prescribed. We then expose the remaining $k$ blocks successively, working inward from the two ends. At each step, the next block is a prefix or suffix of a uniformly random ordering of the elements not yet exposed. This allows us to apply the one-prefix estimate conditionally and repeatedly.

The advantage of reserving a largest block is that at least $|S|/(k+1)$ elements remain unexposed throughout the procedure. We also need sufficiently many reservoir elements among them, since otherwise the one-prefix estimate becomes ineffective. A sampling tail bound shows that the probability of having too few such elements is exponentially small in $|P|/(k+1)$. On the complementary event, each successive exposure contributes a factor controlled by the one-prefix estimate.

Finally, we sum over all possible choices of the prefix indices $b_1,\ldots,b_k$. This gives the joint estimate in Lemma~\ref{lem:reservoir-chains}, which is the key input for controlling several simultaneous obstructions in the local-repair argument.

In summary, Lemmas~\ref{lem:reservoir-prefixes} and~\ref{lem:reservoir-chains} provide the
bounds needed for local repair in the
reservoir case.
\vskip .1in
{\it \underline{The affine-core case}.} Suppose instead that most of $S$ lies in
$a+H$, as provided by the structural dichotomy.
We call $T=S\cap(a+H)$ the \emph{affine core}, and the elements of
$S\setminus T$ the \emph{outsiders}. Put $d=\operatorname{ord}(a+H)$ in the quotient group $G/H$.

We distinguish three regimes, depending on
$d$ and $|H|$.

\begin{enumerate}[label=(\arabic*),leftmargin=*]
\item (Large quotient order.) If $d\ge n^\varepsilon$, we exploit the
quotient group $G/H$. Conditional on the
relative order of the outsiders, their
contribution to the quotient sum is determined
once their number in the prefix is fixed.
Each core element contributes the same coset
$a+H$. Thus, for a prescribed quotient sum, the number
of core elements in the prefix must belong to
one residue class modulo $d$. A unimodality
argument bounds the expected number of such
prefixes, while the completion bound converts
this into an estimate for exact sums in $G$. The same idea, combined with a largest-block
argument, gives the corresponding joint estimate.
These are Propositions~\ref{aq:localtime} and~\ref{aq:chains}.
\vskip .1in

\item (Large subgroup.)
Suppose that $d<n^\varepsilon$ but
$|H|>n^{1+\varepsilon}$. Here the quotient
does not provide sufficient anti-concentration,
so we work inside $H$. Since $T-a$ is balanced in $H$, the Fourier
estimates of Section~\ref{sec:Fourier-moment} allow us to control
the discrepancy of random subset sums from
the uniform distribution on $H$. The natural
main term for hitting a prescribed element
is $1/|H|$, which becomes small after
summing over possible prefix lengths because
$|H|$ is much larger than $n$. For several prescribed prefix sums, we again
divide the ordering into blocks and reserve
a largest block. Lemma~\ref{lem:multivariate-partition} gives a joint
Fourier estimate for the sums of the other
large blocks, using Roos's permanent
inequality and the Fourier-moment estimate
of Lemma~\ref{lem:Fourier-moment}. Blocks containing no core elements, as well
as blocks containing only a few core elements,
are handled separately by counting and
completion bounds. Summing over all possible
block sizes then gives Proposition~\ref{prop:almost-coset-direct-chain},
which supplies the required estimates
for local repair.
\vskip .1in

\item (Dense subgroup). In the remaining case, the subgroup $K=H+\langle a\rangle$
contains $T$ and satisfies $|K|=d|H|\le n^{1+2\varepsilon}$, $\frac{|S\cap K|}{|K|}
\ge(1-c)n^{-2\varepsilon}$. Thus almost all elements of $S$ lie in a
subgroup in which they have relatively high
density. In this regime, the preceding
anti-concentration estimates are not
sufficiently strong for our local-repair
criterion. We instead adapt the path and absorption
method of Bedert et al.~\cite{BBKMM},
which was developed in connection with
Graham's problem over $\F_2^n$ and more
generally for dense subsets of finite
groups. We describe this approach in Section~5, culminating in Proposition~\ref{prop:localized-dense-core}.

\end{enumerate}

The rest of Section~\ref{sec:nested} establishes the
structural dichotomy and the required
prefix and nested-chain estimates.
Section~\ref{sec:repair-full} develops the local-repair
method and its probabilistic criterion. 
Section~\ref{sec:external-absorption} develops the dense-subgroup
construction, and Section~\ref{sec:main-proof} combines
these ingredients, with suitable
parameter choices, to prove
Theorem~\ref{thm:Graham}.

We now provide the details of the proof outlined above.

\subsection{Structure decomposition} We first establish the structural dichotomy.

\begin{lemma}\label{lem:filtration}
Let $S\subseteq G$, $0<c<1/2$, and let $1\le r\le|S|$ be an integer.
One of the following holds:
\begin{enumerate}[label=(\roman*)]
\item $S$ contains $r$ elements with distinct subset sums at each fixed size;
\item there are $H\le G$ and $a\in G$ such that
      $|S\cap(a+H)|\ge(1-c)|S|$ and $(S-a)\cap H$ is
      $\frac{c}{r+1}$-balanced in $H$.
\end{enumerate}
\end{lemma}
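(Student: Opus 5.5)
We build a descending filtration of cosets together with a growing reservoir, one step at a time. Set $H_0=G$, $a_0=0$, $P_0=\emptyset$, $S_0=S$. At stage $t$ we hold a coset $a_t+H_t$ with $S_t=S\cap(a_t+H_t)$ and a set $P_t\subseteq S\setminus S_t$ of $t$ elements. The invariant is that $P_t$ is a reservoir, and moreover that every element of $S_t$ is ``invisible'' to $P_t$. The intended meaning of invisibility is the following: if two subsets of $P_t\cup S_t'$ of equal size have equal sums, where $S_t'\subseteq S_t$ is any further reservoir, then they agree on $P_t$. We also require $|S_t|\ge |S|(1-\frac{c}{r+1})^t$, so that $|S_t|\ge (1-c)|S|$ for all $t\le r$. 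This holds because $(1-\frac{c}{r+1})^{r}\ge 1-\frac{rc}{r+1}\ge 1-c$.

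At stage $t$ we test whether $(S_t-a_t)$ is $\frac{c}{r+1}$-balanced in $H_t$.
\begin{enumerate}[label=(\alph*),leftmargin=*]
\item If it is balanced, we stop with $H=H_t$ and $a=a_t$. Alternative (ii) then holds, since $|S\cap(a+H)|=|S_t|\ge(1-c)|S|$.
\item If it is not balanced, there is a proper subgroup $H_{t+1}<H_t$ and a coset $a_{t+1}+H_{t+1}\subseteq a_t+H_t$ containing more than $(1-\frac{c}{r+1})|S_t|$ elements of $S_t$. In particular, some $x\in S_t$ lies outside this smaller coset. We pick such an $x$ and set $P_{t+1}=P_t\cup\{x\}$.
\end{enumerate}
If we reach $t=r$ without stopping, then $P_r$ has $r$ elements and we want alternative (i).

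The key verification is that $P_r=\{x_1,\dots,x_r\}$, with $x_t\in (a_{t-1}+H_{t-1})\setminus(a_t+H_t)$, has distinct subset sums at each fixed size. Suppose $A,B\subseteq P_r$ with $|A|=|B|$ and $\Sigma(A)=\Sigma(B)$. Let $t$ be the smallest index at which $A$ and $B$ differ on $x_t$; we may assume $x_t\in A\setminus B$. Both $A$ and $B$ contain the same elements among $x_1,\dots,x_{t-1}$, so we cancel them. What remains is a set $A'$ containing $x_t$ and a set $B'$ avoiding it, with $A',B'\subseteq\{x_t,\dots,x_r\}$ and $|A'|=|B'|=m$.

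Every one of $x_t,\dots,x_r$ lies in $a_{t-1}+H_{t-1}$. Hence each such element can be written as $a_t+h$, where $h\in H_{t-1}$. Moreover, $x_{t+1},\dots,x_r$ lie in $a_t+H_t$, so they can be written as $a_t+h$ with $h\in H_t$. Reduce modulo $H_t$, using the shifts $x-a_t$; the equal cardinality $m$ is exactly what lets the $a_t$ terms cancel. Every element other than $x_t$ then contributes $0$, while $x_t$ contributes $x_t-a_t\notin H_t$. So $\Sigma(A')-\Sigma(B')\equiv x_t-a_t\not\equiv 0 \pmod{H_t}$, which is a contradiction. This shows that the invariant really only needs ``$x_t\in a_{t-1}+H_{t-1}$ but $x_t\notin a_t+H_t$'', and the nesting of cosets makes the argument work.

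The main point to get right is this fixed-cardinality cancellation. Without the equal-size condition, the translates $a_t$ would not cancel, and genuine distinct subset sums could fail, for instance in $\F_2^k$-type situations. Equal size is exactly what turns the nested affine cosets into nested subgroups, in the spirit of a lexicographic argument. The remaining checks are bookkeeping. The process terminates because $H_t$ strictly decreases and $G$ is finite. A subtlety is that the process might stop before $t=r$ with balancedness; this is fine, since it gives (ii). If the process would need to continue while $|S_t|$ is too small, that cannot happen for $t\le r$, by the density bound above.
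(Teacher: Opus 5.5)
Your overall route is the paper's: nested cosets, one element of $S$ dropped per step, and fixed-size cancellation modulo $H_t$ at the least index where the two subsets differ. That final verification is correct. The ``invisibility'' invariant is never used and can be dropped.

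The gap is the sentence ``In particular, some $x\in S_t$ lies outside this smaller coset'' in step (b). Failure of $\frac{c}{r+1}$-balancedness only gives a coset $a_{t+1}+H_{t+1}$ of a proper subgroup of $H_t$ containing \emph{more than} $(1-\frac{c}{r+1})|S_t|$ elements of $S_t$. Nothing prevents it from containing all of $S_t$. This already happens at $t=0$ whenever $S$ lies in a coset of a proper subgroup of $G$. For instance, take $G=\mathbb Z_2\times\mathbb Z_p$ with $p\ge3$ prime and $S=\{0\}\times(\mathbb Z_p\setminus\{0\})$. Then $S$ is not balanced in $H_0=G$, but the only coset witnessing this is $\{0\}\times\mathbb Z_p\supseteq S$. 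So no $x$ exists and your procedure is stuck. Yet alternative (ii) holds with $H=\{0\}\times\mathbb Z_p$ and $a=0$, a pair your procedure never tests. Your count ``after $r$ unbalanced steps, $P_r$ has $r$ elements'' presupposes that every unbalanced step adds an element, so the argument breaks at this point.

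The missing ingredient is the normalization the paper uses. At each stage, pick $a_t\in S_t$ and replace $H_t$ by $\langle S_t-a_t\rangle$, so that $a_t+H_t$ is the smallest coset containing $S_t$.
\begin{itemize}
\item This coset lies inside the old one and still contains $S_t$. So $S_t=S\cap(a_t+H_t)$, your density bound, and your derivation of (ii) are unaffected.
\item A coset $a_t+b+K$ with $b\in H_t$ and $K<H_t$ can no longer contain $S_t$. Otherwise $0\in S_t-a_t\subseteq b+K$ forces $b+K=K$, and then $\langle S_t-a_t\rangle\le K$. Hence the heavy coset misses an element of $S_t$, which is exactly the $x$ you need.
\item The normalized coset at the next stage lies inside the heavy coset. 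So the chosen elements still satisfy $x_t\in(a_{t-1}+H_{t-1})\setminus(a_t+H_t)$ for the normalized cosets, and your cancellation argument goes through verbatim.
\end{itemize}
Alternatively, when the heavy coset contains all of $S_t$, pass to it without adding an element, and count only the steps that add elements; this terminates because $H_t$ strictly decreases. With either repair your proof becomes the paper's proof.
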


\begin{proof}
Put $\tau=c/(r+1)$. We will iteratively construct nested cosets
$C_j=a_j+H_j$ such that $S_j:=S\cap C_j$ has size at least
$(1-j\tau)|S|$, stopping when $S_j-a_j$ is $\tau$-balanced
in $H_j$ or after $r$ steps.

Start with $S_0=S$. For each nonempty $S_j$, choose $a_j\in S_j$
and set $H_j=\langle S_j-a_j\rangle$ and $C_j=a_j+H_j$,
the smallest coset containing $S_j$.
If $j<r$ and balancedness fails, choose $K<H_j$ and $b\in H_j$
such that $D:=a_j+b+K$ satisfies
$|S_j\cap D|>(1-\tau)|S_j|$.
Since $b\in H_j$ and $K<H_j$, we have $D\subsetneq C_j$.
Set $S_{j+1}=S_j\cap D$. Then
$|S_{j+1}|>(1-\tau)|S_j|\ge(1-(j+1)\tau)|S|>0$.
Minimality gives $S_{j+1}\subsetneq S_j$ and
$S_{j+1}\subseteq C_{j+1}\subseteq D\subsetneq C_j$.
These inclusions also give $S_{j+1}=S\cap C_{j+1}$,
completing the inductive step.

If we stop at a balanced translate, then
$|S_j|\ge(1-r\tau)|S|>(1-c)|S|$, giving (ii) with
$a=a_j$ and $H=H_j$.

Otherwise, choose $x_i\in S_{i-1}\setminus S_i$ for
$1\le i\le r$ and $a\in S_r$. Nestedness gives $C_i=a+H_i$, so
\[
 x_i-a\notin H_i,
 \quad x_j-a\in H_i\quad(j>i).
\]
We observe that an equality between distinct equal-size subset sums of
$\{x_1,\ldots,x_r\}$ remains valid after translating by $-a$.
Reducing modulo $H_i$, where $i$ is the least index occurring
in only one subset, forces $x_i-a\in H_i$, a contradiction.
This proves (i).
\end{proof}

\subsection{Reservoir improvement}  We begin with a simple observation, which is useful for short blocks.

\begin{observation}\label{obs:completion}
Let $R$ be a uniform $k$-subset of a finite set $A\subseteq G$,
where $1\le k <|A|$. For every $z\in G$,
\[
 \Prb(\Sigma(R)=z)
 \le\min\left\{\frac1{|A|-k+1},\frac1{k+1}\right\}
 \le\frac2{|A|}.
\]
\end{observation}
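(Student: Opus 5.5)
The plan is to prove the two bounds $\frac1{k+1}$ and $\frac1{|A|-k+1}$ separately by a completion argument, and then deduce the final inequality $\le 2/|A|$ from the elementary fact that $\max\{k+1,|A|-k+1\}\ge |A|/2$.

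For the bound $\frac1{k+1}$, I would realize $R$ in two stages. First choose a uniform $(k+1)$-subset $Q\subseteq A$; this needs $k+1\le|A|$, which holds since $k<|A|$. Then delete a uniformly random element $y\in Q$ and set $R=Q\setminus\{y\}$. By symmetry $R$ is a uniform $k$-subset of $A$. Conditional on $Q$, we have $\Sigma(R)=\Sigma(Q)-y$. Since the elements of $Q$ are distinct, the value $z$ is attained for at most one $y\in Q$, namely $y=\Sigma(Q)-z$. Hence the conditional probability is at most $1/(k+1)$, and averaging over $Q$ gives the claim.

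For the bound $\frac1{|A|-k+1}$, I would use the dual construction. First choose a uniform $(k-1)$-subset $Q\subseteq A$; this is legitimate since $k\ge1$. Then add a uniform element $y\in A\setminus Q$, of which there are $|A|-k+1$, and set $R=Q\cup\{y\}$. Again $R$ is uniform among $k$-subsets. Conditional on $Q$, the equation $\Sigma(Q)+y=z$ has at most one solution $y$, so the conditional probability is at most $1/(|A|-k+1)$. Averaging over $Q$ finishes this bound.

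Finally, since $(k+1)+(|A|-k+1)=|A|+2>|A|$, one of these two denominators exceeds $|A|/2$. Therefore the minimum of the two bounds is at most $2/|A|$.

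The only point needing care is checking that each two-stage sampling really produces the uniform distribution on $k$-subsets. In the first construction, each $k$-set $R$ arises from the $|A|-k$ sets $Q\supset R$, each contributing probability $\binom{|A|}{k+1}^{-1}\frac1{k+1}$, and this total does not depend on $R$. The second construction is symmetric, with each $R$ arising from its $k$ subsets $Q\subset R$ of size $k-1$. Apart from this check, the argument is routine.
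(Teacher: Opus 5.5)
Your proposal is correct and is essentially the paper's argument. The bound $1/(|A|-k+1)$ is exactly the paper's step (reveal $k-1$ elements; at most one remaining element completes the sum), and your delete-one-element construction for $1/(k+1)$ is the same completion argument viewed through complements, which the paper phrases as applying the first bound to the uniform $(|A|-k)$-subset $A\setminus R$ with target $\Sigma(A)-z$. You also spell out the final inequality $\le 2/|A|$, which the paper leaves implicit.
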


\begin{proof}
Order $A$ uniformly and reveal its first $k-1$ elements.
At most one of the $|A|-k+1$ equally likely remaining elements
completes the sum, giving the first bound.
Apply the same argument to $A\setminus R$ with target
$\Sigma(A)-z$ to obtain the second.
\end{proof}
As the reader can see, summing the bounds above over all prefix lengths gives only a
constant. A reservoir improves this to a bound decreasing with its size.

\begin{lemma}\label{lem:reservoir-prefixes}
Let $A\subseteq G$ contain a nonempty set $P$ with distinct subset
sums at each fixed size. Then, for every $z\in G$,
\[
 F_1(A;z)\le\frac{13\log(2|A|)}{|P|}.
\]
\end{lemma}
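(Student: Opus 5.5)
Write $N=|A|$ and $m=|P|$. Since $F_1(A;z)=\sum_{1\le j<N}\Prb(\Sigma(R_j)=z)$, where $R_j$ is a uniform $j$-subset of $A$, the plan is to bound each term and sum over $j$. Following the overview, I would use two bounds: the completion bound of Observation~\ref{obs:completion} for $j$ near $0$ or near $N$, and a reservoir bound for $j$ in the middle range. Fix a threshold $L=\lceil N\log(2N)/m\rceil$ (roughly). If $L\ge N/2$, then $m\lesssim 2\log(2N)$. In that case the bound $F_1\le\sum_j\min\{1/(N-j+1),1/(j+1)\}\le 2\log(2N)$ already suffices after adjusting constants, since $13\log(2N)/m\ge$ a constant times $\log(2N)$. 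So I may assume $m$ is large compared with $\log(2N)$.

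\emph{Endpoint ranges.} For $j<L$ or $j>N-L$, Observation~\ref{obs:completion} gives a term of size at most $1/(j+1)$ or $1/(N-j+1)$. Summing over $j\le L$ gives only $\log L$, which is too weak. So I would use the other form of the bound: for $j<L$, use $1/(N-j+1)\le 2/N$, and similarly at the top end. The endpoint ranges then contribute at most $2L\cdot 2/N\approx 4\log(2N)/m$, which is of the right order.

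\emph{Middle range $L\le j\le N-L$.} Realize $R_j$ as a Bernoulli$(p)$ random subset $B$ with $p=j/N$, conditioned on $|B|=j$. Then
\[
\Prb(\Sigma(R_j)=z)\le \frac{\Prb(\Sigma(B)=z,\ |B|=j)}{\Prb(|B|=j)}.
\]
Condition on $B\setminus P$. This fixes both $|B\cap P|=j-|B\setminus P|$ and the target $z-\Sigma(B\setminus P)$. Because $P$ is a reservoir, at most one subset of $P$ of that size has the target sum, and it is selected with probability at most $\max\{p,1-p\}^m\le (1-L/N)^m\le e^{-Lm/N}\le 1/(2N)$. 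The denominator satisfies $\Prb(\mathrm{Bin}(N,p)=j)\ge 1/(N+1)$, since $j=pN$ is a mode of the binomial. Hence each middle term is $O(1)$ times $e^{-Lm/N}N$. To make this term $O(1/N)$, I would take $L\approx 2N\log(2N)/m$, so that $e^{-Lm/N}\le (2N)^{-2}$. The middle range then contributes $O(1/N)\le O(1/m)$, and the endpoint ranges contribute $\approx 8\log(2N)/m$. Together these stay within $13\log(2N)/m$.

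The main obstacle is bookkeeping the constants so that they total at most $13$. The points needing care are the choice of $L$ (its ceiling, and the case $L>N/2$) and the lower bound on the binomial mode probability. The reservoir conditioning argument itself is immediate.
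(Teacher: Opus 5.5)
Your ingredients are the same as the paper's: the completion bound $2/|A|$ from Observation~\ref{obs:completion}, reservoir uniqueness under Bernoulli$(j/|A|)$ sampling, and $\Prb(|R|=j)\ge 1/(|A|+1)$ because $j$ is a mode. However, the summation step fails as written.

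\textbf{The middle range.} With $e^{-Lm/N}\le(2N)^{-2}$, each middle term is at most $(N+1)/(4N^2)\approx 1/(4N)$. The middle range $L\le j\le N-L$ can contain almost $N$ indices, so a uniform per-term bound only gives a total of about $1/4$, not $O(1/N)$. Once $m\gg\log N$ (for instance $m\asymp N$), the target $13\log(2N)/m$ is far below $1/4$. So your argument breaks down in exactly the regime that matters.

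The fix is to use the geometric decay of the reservoir bound in $\min\{j,N-j\}$. Summing gives $\sum_{i\ge L}(N+1)e^{-im/N}\le (N+1)e^{-Lm/N}/(1-e^{-m/N})=O(1/m)$, using $m\le N$. The paper does this without any threshold. It notes that $f(x)=\min\{2/N,(N+1)e^{-mx/N}\}$ is decreasing and bounds $F_1(A;z)\le 2\sum_{i\ge1}f(i)\le 2\int_0^\infty f=\frac4m\bigl(\log\frac{N(N+1)}2+1\bigr)$. This handles all sizes of $m$ at once.

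Alternatively, you can keep your hard threshold but enlarge it to $L=\lceil 3N\log(2N)/m\rceil$. Then:
\begin{itemize}
\item each middle term is at most $(N+1)(2N)^{-3}$, so the middle total is below $1/(8N)\le 1/(8m)$;
\item the two end ranges contribute at most $4(L-1)/N<12\log(2N)/m$.
\end{itemize}
Together this is within $13\log(2N)/m$.

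\textbf{The case $L\ge N/2$.} Your justification here is also wrong. In this case $m=O(\log N)$, so the target $13\log(2N)/m$ is only bounded below by a constant. The harmonic bound of order $2\log(2N)$ is therefore too large. Summing the bound $2/N$ over all $j$ gives $F_1(A;z)<2$, which is enough. Better, drop the case split entirely: the end-range estimate $4(L-1)/N$ stays a valid upper bound when the two end ranges overlap and the middle range is empty.
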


\begin{proof}
The assertion is immediate when $|A|=1$,
so assume $|A|\ge2$.
Let $P_\ell,1\le \ell \le n$ be the prefix sums of a uniform ordering of $A$,
as in Definition~\ref{def:prefixes}. 

Fix $1\le j<|A|$, and independently select each element of $A$ with probability $p:=j/|A|$,
and call the resulting set $R$. Conditional on every selection outside
$P$, the condition $|R|=j$ fixes the selected cardinality inside $P$.
At that cardinality, a prescribed total sum determines at most one
selected subset of $P$. Its probability is at most
$\max\{p,1-p\}^{|P|}$. Consequently,
\[
 \Prb\bigl(\Sigma(R)=z,\ |R|=j\bigr)\le\max\{p,1-p\}^{|P|}.
\]
The probability $\Prb(|R|=j)$ is the maximum of $\Prb(|R|=0),\ldots,\Prb(|R|=|A|)$.
These $|A|+1$ probabilities sum to $1$, and $j$ is the mode of this binomial distribution,
$\Prb(|R|=j)\ge \frac{1}{|A|+1}$. Conditional on $|R|=j$, the set $R$ is a uniform $j$-subset
of $A$, therefore,
\[
\begin{aligned}
\Prb(P_j=z)
=\frac{\Prb(\Sigma(R)=z,\ |R|=j)}
        {\Prb(|R|=j)}&\le (|A|+1)\max\{p,1-p\}^{|P|}\\
        &\le (|A|+1)e^{-\min\{j,|A|-j\}|P|/|A|}.
\end{aligned}
\]
On the other hand, Observation~\ref{obs:completion} gives
$\Prb(P_j=z)\le\frac2{|A|}$.
Put $f(x)=\min\{\frac{2}{|A|},(|A|+1)e^{-|P|x/|A|}\}$ for $x\ge0$.
This function is decreasing. Hence
\begin{align*}
 \sum_{j=1}^{|A|-1}\Prb(P_j=z)
 &\le2\sum_{j\ge1}f(j)\le2\int_0^\infty f(x)\,dx\le \frac4{|P|}\left(\log\frac{|A|(|A|+1)}2+1\right)
 \le\frac{13\log(2|A|)}{|P|}. \qedhere
\end{align*} 
\end{proof}

Several prescribed prefix sums are handled by leaving a largest
block unrevealed, called the \emph{reserved block}. To \emph{expose}
a block means to reveal its set of elements, but not their order.
Exposing the other blocks from the two ends keeps the remaining set
large, so the prefix estimate can be iterated whenever enough
reservoir elements remain.

\begin{lemma}\label{lem:reservoir-chains}
Let $A\subseteq G$ contain a nonempty set $P$ with distinct
subset sums at each fixed size. For every integer $k\ge1$
and all $z_1,\ldots,z_k\in G$,
\[
 F_k(A;z_1,\ldots,z_k)
 \le (k+1)\left(\frac{26(k+1)\log(2|A|)}{|P|}\right)^k
      +|A|^k e^{-|P|/8(k+1)}.
\]
\end{lemma}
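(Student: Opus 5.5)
Fix $z_1,\ldots,z_k$ and write $n=|A|$. We bound, for each index tuple $1\le b_1<\cdots<b_k<n$, the probability $\Prb(P_{b_i}=z_i\ \forall i)$ and then sum over tuples. Put $b_0=0$, $b_{k+1}=n$, and consider the $k+1$ blocks $B_t=\{x_{b_{t-1}+1},\ldots,x_{b_t}\}$. The event that $P_{b_i}=z_i$ for all $i$ is the same as $\Sigma(B_t)=z_t-z_{t-1}$ for $t\le k$, with $z_0=0$. The condition on $B_{k+1}$ is then automatic, since $\Sigma(A)$ is fixed.

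Choose an index $t^*$ of a largest block, so $|B_{t^*}|\ge n/(k+1)$. Leave $B_{t^*}$ unexposed and require only the $k$ conditions on the other blocks. These conditions are equivalent to the original event, because the remaining block sum is forced by the total. Expose the other blocks from the outside in: $B_1,B_2,\ldots,B_{t^*-1}$ from the left and $B_{k+1},B_k,\ldots,B_{t^*+1}$ from the right, in any interleaving.

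At each step, conditional on what has been exposed, the unexposed elements form a set $A'\supseteq B_{t^*}$ in uniformly random order. The next block is a uniformly random prefix or suffix of $A'$ of prescribed length, and its required sum is determined by already-exposed data. By symmetry (a suffix of length $\ell$ has sum $\Sigma(A')$ minus a prefix of length $|A'|-\ell$), each step asks for a prefix-sum probability in $A'$ with $P':=P\cap A'$ as reservoir. Any subset of a reservoir is a reservoir.

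\emph{Good event.} Let $\mathcal G$ be the event that $|P\cap B_{t^*}|\ge |P|/(2(k+1))$. On $\mathcal G$, every intermediate $A'$ satisfies $|P'|\ge |P|/(2(k+1))$. The proof of Lemma~\ref{lem:reservoir-prefixes} then gives the per-length bound
\[
\Prb(P'_j=w)\le g_{A'}(j):=\min\Bigl\{\tfrac{2}{|A'|},\,(|A'|+1)e^{-\min\{j,|A'|-j\}|P'|/|A'|}\Bigr\}.
\]
This bound must be applied before summing over lengths, since the lengths are tied to the tuple.

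\emph{Summing over tuples.} Rather than fixing the tuple first, I sum over block lengths step by step. Fix $t^*$ at one of at most $k+1$ values; this gives the factor $(k+1)$. Since the tuple determines $t^*$ (taking, say, the first largest block), each tuple is counted once, and we may restrict to tuples whose largest block is $t^*$. At each of the $k$ exposure steps, sum the conditional probability over the length $\ell$ of the block being exposed. This sum is at most
\[
\sum_{j}\Prb(P'_j=w)\le \frac{13\log(2|A'|)}{|P'|}\le \frac{26(k+1)\log(2n)}{|P|}.
\]
The lengths of later blocks range over what remains, and the bound holds uniformly in the conditioning. Iterating over the $k$ steps produces the first term, $(k+1)\bigl(26(k+1)\log(2n)/|P|\bigr)^k$.

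The delicate point is that the event $\mathcal G$ concerns $B_{t^*}$, which is only determined once all lengths are chosen. I would handle this by inserting the indicator of $\mathcal G'$, the event that the current $A'$ still has at least $|P|/(2(k+1))$ reservoir elements, at each step. This is monotone: $\mathcal G$ implies $\mathcal G'$ at every step, so the per-step bound applies on $\mathcal G$ and the iteration is valid.

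\emph{Bad event.} On $\mathcal G^c$, bound the probability crudely by $\Prb(\mathcal G^c)$ for each tuple, and use at most $n^k$ tuples. Since $B_{t^*}$ is a uniformly random subset of size $m\ge n/(k+1)$, the count $|P\cap B_{t^*}|$ is hypergeometric with mean at least $|P|/(k+1)$. The Chernoff bound for the hypergeometric distribution, which is dominated by the binomial case, gives $\Prb(\mathcal G^c)\le e^{-|P|/(8(k+1))}$. This yields the second term, $n^ke^{-|P|/8(k+1)}$.

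The main obstacle is making the interchange of summing over lengths and conditioning rigorous with respect to $\mathcal G$. The cleanest route is to split, for each tuple, $\Prb(E)\le \Prb(E\cap\mathcal G)+\Prb(\mathcal G^c)$. The term $\Prb(E\cap\mathcal G)$ is bounded using the step-wise indicators $\mathcal G'$ described above.
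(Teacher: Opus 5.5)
Your proposal is correct and matches the paper's proof: reserve the first largest block, expose the other blocks from both ends, and bound the good part by iterating Lemma~\ref{lem:reservoir-prefixes}. As in the paper's $N_s$ argument, you make this iteration rigorous with goodness indicators on the successive remaining sets, and you bound the bad part by a hypergeometric Chernoff estimate times at most $|A|^k$ tuples. The only cosmetic difference is that you define goodness through $B_{t^*}$ rather than through the paper's $U_k\supseteq B_r$; this changes nothing, since both tail bounds use only $|B_{t^*}|\ge |A|/(k+1)$.
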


\begin{proof}
If $|A|\le k$, then $F_k=0$, so assume
$|A|>k$. Let
$\mathcal I=\{(b_1,\ldots,b_k):1\le b_1<\cdots<b_k<|A|\}$,
and let $x_1,\ldots,x_{|A|}$ be a uniform ordering of $A$.
For each $\boldsymbol b\in\mathcal I$, put $b_0=0$,
$b_{k+1}=|A|$, $z_0=0$, and $z_{k+1}=\Sigma(A)$.
The blocks $B_i=\{x_{b_{i-1}+1},\ldots,x_{b_i}\}$,
$1\le i\le k+1$, are nonempty. Let
$\mathcal E_{\boldsymbol b}$ be the event
$P_{b_i}=z_i$ for all $1\le i\le k$, equivalently
$\Sigma(B_i)=z_i-z_{i-1}$ for all $1\le i\le k+1$.

Let $r$ be the first index for which $|B_r|$ is maximal,
and reserve $B_r$. Then $|B_r|\ge |A|/(k+1)$, and its
sum condition follows from those of the other blocks.
Expose $B_1,\ldots,B_{r-1},B_{k+1},B_k,\ldots,B_{r+1}$
in this order, revealing only their sets of elements.
Each exposed block is a prefix or suffix of the remaining
ordering.

For $1\le s\le k$, let $U_s$ be the set of unexposed
elements immediately before the $s$th exposure.
These sets depend on $\boldsymbol b$ and the random
ordering, and satisfy
$A=U_1\supsetneq\cdots\supsetneq U_k\supsetneq B_r$.
Thus $|U_s|\ge |A|/(k+1)$.
Call $U\subseteq A$ \emph{good} if
$|U\cap P|\ge \frac{|P|}{2(k+1)}$, and \emph{bad} otherwise.
By nestedness, all $U_s$ are good if and only if $U_k$
is good. Hence
\[
 F_k(A;z_1,\ldots,z_k)
 =\sum_{\boldsymbol b\in\mathcal I}
 \Prb(\mathcal E_{\boldsymbol b},\,U_k\text{ good})+\sum_{\boldsymbol b\in\mathcal I}
 \Prb(\mathcal E_{\boldsymbol b},\,U_k\text{ bad}).
\]
We bound these two sums separately.

\begin{claim}\label{clm:reservoir-good} We have
$$\sum_{\boldsymbol b\in\mathcal I}
 \Prb(\mathcal E_{\boldsymbol b},\,U_k\text{ good}) \le (k+1)\left(\frac{26(k+1)\log(2|A|)}{|P|}\right)^k.$$
\end{claim}

\begin{claimproof}
Fix $r$ and put $L=26(k+1)\log(2|A|)/|P|$.
We enlarge the sum by allowing arbitrary positive sizes
for the exposed blocks, with total less than $|A|$,
without requiring $B_r$ to be the first largest block.
For the uniform ordering $x_1,\ldots,x_{|A|}$, let $N_s$ denote the number of sequences of positive integers $(j_1,\ldots,j_s)$ with $j_1+\cdots+j_s<|A|$ such that, when the first $s$ blocks are exposed with these sizes, each block has its prescribed sum and the set of remaining elements immediately before each exposure is good. 
Put $N_0=1$.
We show that 
$$\Ex N_s\le L\Ex N_{s-1}.$$

Fix a choice of sizes for the first $s-1$ blocks and reveal
only their sets of elements. This determines whether the
partial choice is counted by $N_{s-1}$ and the remaining
set $U_s$, whose ordering is conditionally uniform.
If the partial choice is not counted, or $U_s$ is bad,
there are no extensions counted by $N_s$.
Otherwise, the next block $B_i$ is a prefix or suffix
of the remaining ordering, and the conditional expected
number of successful extensions is at most
\[
 F_1(U_s;z_i-z_{i-1})
 \le\frac{13\log(2|U_s|)}{|P\cap U_s|}
 \le L,
\]
by Lemma~\ref{lem:reservoir-prefixes} (reversing the
ordering for a suffix). Averaging over the revealed subsets
and summing over the partial size choices gives
$\Ex N_s\le L\Ex N_{s-1}$. 

Putting together, we hence obtain
$$\Ex N_k\le L^k.$$

Fix the reserved index $r$. Each original index tuple $\mathbf b=(b_1,\ldots,b_k)$ determines the sizes of the $k$ exposed blocks. Conversely, these sizes, together with $r$, uniquely determine the size of the reserved block and hence the original tuple $\mathbf b$. Thus this correspondence is injective. Whenever $\CE_{\mathbf b}$ holds and the remaining set before every exposure is good, the corresponding size sequence is counted by $N_k$. It follows that the sum of the probabilities of these events, over all tuples with reserved index $r$, is at most $\mathbb E N_k\le L^k$. Summing over the $k+1$ possible choices of $r$ gives the claimed bound.

Each original index tuple with reserved index $r$
determines a distinct complete size choice counted by
$N_k$ whenever $\mathcal E_{\boldsymbol b}$ holds and
$U_k$ is good. Thus the sum over such tuples is at most
$L^k$. Summing over the $k+1$ possible reserved indices
proves the claim.
\end{claimproof}

\begin{claim}\label{clm:reservoir-bad} We have
$$\sum_{\boldsymbol b\in\mathcal I}
 \Prb(\mathcal E_{\boldsymbol b},\,U_k\text{ bad}) \le |A|^k e^{-|P|/8(k+1)}.$$
\end{claim}

\begin{claimproof}
Fix $\boldsymbol b\in\mathcal I$.
The set $U_k$ occupies a fixed interval of positions
in the uniform ordering, so it is a uniform subset
of $A$ of deterministic size at least
$|B_r|\ge|A|/(k+1)$.
Consequently,
$\mu:=\Ex|U_k\cap P|=|P||U_k|/|A|
\ge|P|/(k+1)$.
Since $U_k$ is bad only if
$|U_k\cap P|<|P|/2(k+1)\le\mu/2$, the hypergeometric Chernoff bound
\cite[Theorem~2.10]{JLR} gives
\[
 \Prb(U_k\text{ bad})
 \le e^{-\mu/8}
 \le e^{-|P|/8(k+1)}.
\]
Therefore,
\[
 \sum_{\boldsymbol b\in\mathcal I}
 \Prb(\mathcal E_{\boldsymbol b},\,U_k\text{ bad})
 \le \sum_{\boldsymbol b\in\mathcal I}
       \Prb(U_k\text{ bad})\le |\mathcal I|e^{-|P|/8(k+1)}
 \le |A|^k e^{-|P|/8(k+1)}. \qedhere
\]
\end{claimproof}

Adding the bounds in Claims~\ref{clm:reservoir-good}
and~\ref{clm:reservoir-bad} proves Lemma~\ref{lem:reservoir-chains}.
\end{proof}

Next we turn to the other case of the dichotomy.

\subsection{Affine cores and prefix sums}

We use the affine core to bound visits to a prescribed target.
Each core element contributes the same value $a+H$ in the quotient,
so the first estimate counts visits to a target coset.  

\begin{lemma}\label{aq:quotient}
Let $H\le G$, $a\in G$, and assume that $S=E\sqcup U\subseteq G$, where
$\varnothing\ne E\subseteq a+H$ and $|U|\le|E|$.
Put $d=\operatorname{ord}(a+H)$ in $G/H$, and let $P_\ell$ be the
prefix sums of a uniform ordering of $S$ from Definition~\ref{def:prefixes}. For every $z\in G$, conditional
on any prescribed relative order of $U$, the expected number of
$0\le\ell\le|S|$ with $P_\ell\in z+H$ is at most
$$\frac{|S|+1}{d}+128\sqrt{|U|+1}.$$
\end{lemma}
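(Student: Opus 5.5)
The plan is to reduce everything to a uniformly random lattice path and count residue classes along it. Write $m=|U|$ and let $u_1,\ldots,u_m$ be the prescribed relative order of $U$. The remaining randomness consists of two independent pieces: the uniform order of $E$, and the uniform interleaving. The interleaving is a uniformly random monotone lattice path from $(0,0)$ to $(|E|,m)$, where a horizontal step is a core element and a vertical step is an outsider. If a prefix contains $i$ core elements and $j$ outsiders, then
\[
P_\ell+H=i(a+H)+Q_j+H,\qquad Q_j:=u_1+\cdots+u_j ,
\]
and $Q_j$ is fixed by the conditioning. Hence $P_\ell\in z+H$ holds exactly when $i(a+H)=z-Q_j+H$ in $G/H$. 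Since $a+H$ has order $d$, for each $j$ the admissible $i$ form either the empty set or a single residue class $r_j \bmod d$. The order of $E$ plays no role.

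Let $I_j$ be the number of core elements preceding $u_j$, with $I_0=0$ and $I_{m+1}=|E|$. At height $j$ the path visits exactly the points $i\in[I_j,I_{j+1}]$. So the count is
\[
\sum_{j=0}^{m}\#\{i\in[I_j,I_{j+1}]:\ i\equiv r_j \bmod d\}.
\]
Each summand is at most $(I_{j+1}-I_j+1)/d+\delta_j$, where the excess satisfies $\delta_j\in[0,1]$. Summing the main terms gives $(|E|+m+1)/d=(|S|+1)/d$, which is exactly the first term of the statement. The trivial bound $\delta_j\le1$ only yields $m+1$, so the whole difficulty is to show $\sum_j\Ex\delta_j\le128\sqrt{m+1}$.

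For the excess, I would use that the left endpoint $I_j$ is genuinely spread out. The main structural fact is that, conditional on all other $I_{j'}$, the value $I_j$ is uniform on $[I_{j-1},I_{j+1}]$, since the number of completing paths does not depend on it. Marginally, $I_j$ has a negative-hypergeometric, hence unimodal, law with point probabilities of order $\bigl(\tfrac{|E|}{m}\sqrt{\min\{j,m-j\}}\bigr)^{-1}$ when $|U|\le|E|$. For a fixed gap $L$, the function $x\mapsto\#\{i\in[x,x+L]: i\equiv r\}-(L+1)/d$ is $d$-periodic, has mean zero and is bounded by $1$. By Abel summation, its average against a unimodal law $\pi$ is at most a constant times $\min\{1,\,d\max\pi\}$. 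An alternative is to first condition on the gap and use the conditional uniformity of $I_j$ on the window $[I_{j-1},I_{j+1}]$, which has length comparable to $2|E|/m$. I would then bound $\delta_j\le1$ for the $O(\sqrt{m})$ levels with $\min\{j,m-j\}$ small. For the remaining levels I would use the unimodality bound, so that the total excess is a sum of the shape $\sum_j (j+1)^{-1/2}\lesssim\sqrt{m+1}$.

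The main obstacle is making this excess estimate uniform in $d$. When $d$ is much larger than the spread of $I_j$, each level contributes $0$ or $1$, and the bound $d\max\pi$ is useless. In that regime I would instead bound $\Ex\delta_j$ by the probability that the random interval $[I_j,I_{j+1}]$ meets the residue class at all. I would control this by the length of the interval divided by $d$, plus the probability that an endpoint lands in a set of one residue per period, and estimate that probability through the unimodal, spread-out law of $I_j$. I expect the constant $128$ to come from combining these two regimes with crude constants. Reconciling the two cases cleanly, in particular handling $|E|/m$ small compared with $d$, is where I am least sure the argument works as sketched.
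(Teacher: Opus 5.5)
Your reduction is correct and matches the paper's row decomposition. At outsider level $j$ the path occupies the core counts $[I_j,I_{j+1}]$, the admissible ones form one residue class $r_j \bmod d$ (or none), and the main terms sum to $(|S|+1)/d$. The gap is in the excess estimate, which is the entire content of the lemma.

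\emph{The excess must be signed.} $\delta_j$ has to be the signed difference $\#-(L+1)/d\in(-1,1)$, not its positive part as ``$\delta_j\in[0,1]$'' suggests. For $d=2$ and $|E|=|U|$, at a typical level $L$ is even and $I_j\equiv r_j \pmod 2$ with probability bounded away from zero. Then $\delta_j^+=1/2$, so $\sum_j\Ex\delta_j^+$ is of order $|U|$.

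\emph{None of your tools is uniform in $d$.} Since $I_j\in\{0,\ldots,|E|\}$, its law has $\max\pi\ge1/(|E|+1)$. So for $d>|E|$ the bound $\min\{1,d\max\pi\}$ is of order $1$ at every level, and the total is of order $|U|$. The window-uniformity variant gives at best $\min\{1,d/W\}$ with $W=I_{j+1}-I_{j-1}+1\approx2|E|/|U|$, and fails once $d\gg|E|/|U|$. This is exactly the regime where the lemma is used: in Case~2(a), $d\ge n^{\varepsilon}$ while $|E|/|U|$ may be a constant. There an excess of order $|U|$ would make the bound of Proposition~\ref{aq:localtime} of constant size.

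\emph{Two further steps are unjustified.} Averaging $x\mapsto\#\{i\in[x,x+L]:i\equiv r_j\}-(L+1)/d$ ``for a fixed gap $L$'' against the marginal law of $I_j$ ignores that $L=I_{j+1}-I_j$ is correlated with $I_j$. The large-$d$ patch is only a heuristic. An interval shorter than $d$ can meet the class at an interior point, so no pathwise inequality of that shape holds, and proving it in expectation is precisely the missing estimate.

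The missing idea is to take expectations before separating the main term, and to apply unimodality to the visit probabilities rather than to the law of $I_j$. The expected count at level $j$ is $\sum_{i\equiv r_j}w_j(i)$, where
$w_j(i)=\Pr(I_j\le i\le I_{j+1})=\binom{i+j}{j}\binom{|S|-i-j}{|U|-j}\big/\binom{|S|}{|U|}$
is the probability that the path passes through $(i,j)$.
\begin{enumerate}
\item For fixed $j$ this sequence is unimodal in $i$: $w_j(i+1)\ge w_j(i)$ iff $i+1\le j(|E|+1)/|U|$. It sums to $\Ex(I_{j+1}-I_j+1)=(|S|+1)/(|U|+1)$.
\item Every superlevel set $\{i:w_j(i)>t\}$ is an interval, so it contains at most (its length)$/d+1$ members of the residue class. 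Integrating over $t$ gives $\sum_{i\equiv r_j}w_j(i)\le\frac1d\sum_iw_j(i)+\max_iw_j(i)$ for every $d$.
\item The Beta-integral identity $w_j(i)=(|S|+1)\int_0^1\binom{|U|}{j}s^j(1-s)^{|U|-j}\binom{|E|}{i}s^i(1-s)^{|E|-i}\,ds$ and $|S|+1\le2(|E|+1)$ give, via Stirling, $\max_iw_j(i)\le\sqrt{|U|/(j(|U|-j))}$ for $0<j<|U|$.
\item The two end levels contribute at most $1$ each, so the total excess is at most $2+4\sqrt{|U|}$.
\end{enumerate}
Heuristically, the excess at level $j$ is at most of order $\Ex(L+1)\cdot\max\pi\approx\min\{j,|U|-j\}^{-1/2}$, independently of $d$. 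Your Abel summation captures only the cap $d\max\pi$ and misses the cap $(L+1)\max\pi$ coming from the interval.
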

\begin{proof} Put $m=|E|$ and $r=|U|$.
If $r=0$, then $P_j+H=ja+H$ for $0\le j\le m$, so at
most $(m+1)/d+1$ indices satisfy $P_j\in z+H$.
Assume henceforth that $1\le r\le m$.

Let $u_1,\ldots,u_r$ denote the prescribed ordering of $U$,
and let $T_i$ be the position of $u_i$ in the uniform
ordering of $S$. Throughout, condition on $T_1<\cdots<T_r$.
Under this conditioning, $\{T_1,\ldots,T_r\}$ is a uniform
$r$-subset of $\{1,\ldots,|S|\}$.
Thus, for $0\le i\le r$ and $0\le j\le m$, the probability $w_{ij}$
that the first $i+j$ elements of the ordering contain
$i$ elements of $U$ and $j$ elements of $E$ is
$$w_{ij}=\frac{\binom{i+j}{i}\binom{|S|-i-j}{r-i}}
{\binom{|S|}{r}}.$$
If the first $i+j$ elements contain $i$ elements of $U$,
those elements are $u_1,\ldots,u_i$ and the other $j$
elements lie in $E\subseteq a+H$. Hence
$P_{i+j}+H=\sum_{h=1}^{i}u_h+ja+H$.
Put 
$$R_i=\{j\in\mathbb Z:
\sum_{h=1}^{i}u_h+ja+H=z+H\}.$$
This set is either empty or one residue class modulo $d$.
Row $i$ of the array $(w_{ij})_{0\le i\le r, 0\le j\le m}$ therefore contributes
$\sum_{0\le j\le m,\ j\in R_i}w_{ij}$
to the expected number of indices $\ell$ with $P_\ell\in z+H$.

Set $T_0=0$ and $T_{r+1}=|S|+1$.
Exactly $i$ elements of $U$ occur among the first $\ell$
elements of the ordering if and only if
$T_i\le\ell<T_{i+1}$. Consequently,
\[
 \sum_{j=0}^{m}w_{ij}
 =\Ex(T_{i+1}-T_i)
 =\frac{|S|+1}{r+1}.
\]
The last equality holds because the gap vector
$(T_{i+1}-T_i)_{i=0}^{r}$ is uniform over all positive
integer $(r+1)$-tuples summing to $|S|+1$, so its
coordinates are exchangeable.

Note also that each row of $(w_{ij})_{0\le i\le r, 0\le j\le m}$ is unimodal, since for $j<m$,
$w_{i(j+1)}\ge w_{ij}$ if and only if
$j+1\le i(m+1)/r$.
For a fixed row $i$, put 
$$M_i=\max_{0\le j\le m}w_{ij}, \mbox{ and } I_t=\{j\in\{0,\ldots,m\}:w_{ij}>t\}.$$
By unimodality, $I_t$ is an interval.
If it contains $h\ge1$ members of $R_i$, the first and last
are at least $(h-1)d$ apart, so $(h-1)d+1\le|I_t|$.
Thus 
$$|I_t\cap R_i|\le |I_t|/d+1.$$
Integrating the indicators of membership in $I_t$ gives
\[
 \sum_{\substack{0\le j\le m\\j\in R_i}}w_{ij}
 =\int_0^{M_i}|I_t\cap R_i|\,dt
 \le\int_0^{M_i}\left(\frac{|I_t|}{d}+1\right)\,dt
 =\frac1d\sum_{j=0}^{m}w_{ij}+M_i.
\]
Summing over $i=0,\ldots,r$ and using the row-sum identity,
the expected number of indices $0\le\ell\le|S|$ with
$P_\ell\in z+H$ is at most
\[
 \sum_{i=0}^{r}
 \Bigl(\frac1d\sum_{j=0}^{m}w_{ij}+M_i\Bigr)
 =\frac{|S|+1}{d}+\sum_{i=0}^{r}M_i.
\]

It remains to bound the row maxima.
We have $M_0=M_r=1$.
For the other rows, the identity for beta distribution,
$\int_0^1 t^p(1-t)^q\,dt=p!q!/(p+q+1)!$
for nonnegative integers $p,q$, gives
\begin{align*}
 w_{ij}
 &=(|S|+1)\int_0^1
   \binom ri t^i(1-t)^{r-i}
   \binom mj t^j(1-t)^{m-j}\,dt\le \frac{|S|+1}{m+1}
   \max_{0\le t\le1}\binom ri t^i(1-t)^{r-i}.
\end{align*}
For $1\le i\le r-1$, one can check that the maximum occurs at $t=i/r$.
Using Robbins's bounds
$\sqrt{2\pi q}(q/e)^q\le q!\le e\sqrt q(q/e)^q$
for $q\ge1$, we obtain
$$M_i
 \le 2\binom ri
       \left(\frac ir\right)^i
       \left(\frac{r-i}{r}\right)^{r-i}
 \le \frac e\pi\sqrt{\frac r{i(r-i)}}
 \le \sqrt{\frac r{i(r-i)}}.
 $$
Consequently,
\[
 \sum_{i=0}^{r}M_i
 \le 2+\sqrt r\sum_{i=1}^{r-1}\frac1{\sqrt{i(r-i)}}
 \le 2+4\sqrt r
 \le128\sqrt{r+1},
\]
where we used the fact that $\sum_{i=1}^{q}i^{-1/2}\le2\sqrt q$ and hence $\sum_{i=1}^{r-1}[i(r-i)]^{-1/2}\le4$.
\end{proof}

\subsubsection{Large quotient order} Although the results below hold in general, they are only effective when $d$ is large, for instance $d\ge n^\varepsilon$. 


\begin{proposition}\label{aq:localtime}
Let $H\le G$, $a\in G$, and $S=E\sqcup U\subseteq G$, where
$\varnothing\ne E\subseteq a+H$ and $|U|\le|E|$.
Put $d=\operatorname{ord}(a+H)$ in $G/H$, and let $P_\ell$ be the
prefix sums of a uniform ordering of $S$. For every $z\in G$,
\[
 \sum_{\ell=1}^{|S|-1}\Prb(P_\ell=z)
 \le\frac6d+\frac{512}{\sqrt{|E|}}.
\]
\end{proposition}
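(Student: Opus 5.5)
The plan is to reduce exact hitting of $z$ to hitting the coset $z+H$, which Lemma~\ref{aq:quotient} already controls, and to gain a factor $O(1/|E|)$ from Observation~\ref{obs:completion} applied to the core elements. Write $m=|E|$ and $r=|U|\le m$, so $|S|\le 2m$.

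\emph{Conditioning.} Condition on the full \emph{pattern}: which positions of the ordering are occupied by elements of $U$, and which element of $U$ sits in each of them. Given the pattern, the core elements fill the remaining positions in uniformly random order. For each $\ell$ the following are then determined:
\begin{itemize}
\item the number $i$ of $U$-elements among the first $\ell$, and their sum $s_i=u_1+\cdots+u_i$;
\item the number $j=\ell-i$ of core elements among the first $\ell$;
\item whether $P_\ell\in z+H$, since $P_\ell+H=s_i+ja+H$.
\end{itemize}
The core part of the prefix is a uniform $j$-subset of $E$. So the event $P_\ell=z$ requires $P_\ell\in z+H$ together with the event that this subset has sum exactly $z-s_i$.

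\emph{Interior indices, $1\le j\le m-1$.} By Observation~\ref{obs:completion}, the conditional probability of the exact sum is at most $2/m$. Summing over such $\ell$ and averaging over the pattern gives at most $\frac 2m$ times the expected number of visits of $(P_\ell)$ to $z+H$. Lemma~\ref{aq:quotient} applies, since it conditions only on the relative order of $U$ and our pattern refines that. It bounds the expected number of visits by $(|S|+1)/d+128\sqrt{r+1}$. Using $|S|+1\le 2m+1$ and $r+1\le m+1\le 2m$, this yields
\[
\frac{2(2m+1)}{md}+\frac{256\sqrt{2m}}{m}\le \frac{6}{d}+\frac{363}{\sqrt m}.
\]

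\emph{Boundary indices, $j=0$ or $j=m$.} These are the main obstacle. Here the completion bound gives nothing, and the expected number of such $\ell$ can be large when $r$ is small, so I handle them without the pattern conditioning. For $j=0$ and $\ell=i\ge1$, the first $i$ entries all lie in $U$. I reveal the first $i-1$ entries. The probability that they all lie in $U$ is at most $(r/|S|)^{i-1}\le 2^{-(i-1)}$, using $r\le |S|/2$. Given this, the $i$-th entry is uniform among the $|S|-i+1$ remaining elements, and at most one of them completes the sum $z$. Hence
\[
\Prb(j=0,\ P_\ell=z)\le 2^{-(i-1)}\cdot\frac{1}{|S|-i+1}\le 2^{-(i-1)}\cdot\frac{2}{|S|},
\]
where the last step uses $i\le r\le|S|/2$. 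Summing over $i$ gives at most $4/|S|\le 4/m$.

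The case $j=m$ is symmetric: pass to the reversed ordering, where the suffix after position $\ell$ consists only of $U$-elements and must have sum $\Sigma(S)-z$. This gives another $4/m$.

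\emph{Conclusion.} The three contributions total at most $\frac6d+\frac{363+8}{\sqrt m}\le\frac6d+\frac{512}{\sqrt{|E|}}$. The points needing care are the following:
\begin{itemize}
\item Lemma~\ref{aq:quotient} remains valid under the finer pattern conditioning; averaging over the pattern after the bound suffices.
\item The $2^{-(i-1)}$ decay is what makes the boundary contribution $O(1/m)$ rather than of order $|S|/r$.
\end{itemize}
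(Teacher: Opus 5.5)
Your proof is correct and follows essentially the same route as the paper. Both split according to the number $J_\ell$ of core elements in the prefix. The interior terms $0<J_\ell<|E|$ are bounded by $\frac{2}{|E|}$ (Observation~\ref{obs:completion}) times the coset-visit count of Lemma~\ref{aq:quotient}, and the boundary terms $J_\ell\in\{0,|E|\}$ by geometric decay together with reversal of the ordering.

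The only difference is cosmetic and lies in the boundary case. The paper conditions on $J_\ell=0$, applies the completion bound inside $U$, and treats $\ell=|U|$ separately. You instead reveal the first $i-1$ entries and complete among all remaining elements of $S$, which avoids that special case.
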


\begin{proof}
Using the notation of Definition~\ref{def:prefixes}, put
$J_\ell=|\{x_1,\ldots,x_\ell\}\cap E|$ for $1\le\ell<|S|$.
For $1\le\ell<|U|$, we have
\[
 \Prb(P_\ell=z,J_\ell=0)
 =\Prb(P_\ell=z\mid J_\ell=0)\Prb(J_\ell=0)
 \le \frac2{|U|}\left(\frac{|U|}{|S|}\right)^\ell.
\]
Indeed, conditional on $J_\ell=0$, the prefix is a uniform
$\ell$-subset of $U$, so Observation~\ref{obs:completion}
bounds the first factor by $2/|U|$.
Sampling without replacement bounds the second by
$(|U|/|S|)^\ell$.
For $\ell=|U|\ge1$, the joint probability is at most
$\binom{|S|}{|U|}^{-1}\le1/|S|$, and for $\ell>|U|$ it is zero.
Therefore,
\[
 \sum_{\ell=1}^{|S|-1}\Prb(P_\ell=z,J_\ell=0)
 \le \frac2{|E|}+\frac1{|S|}
 \le \frac3{|E|}.
\]
Reversing the ordering with target $\Sigma(S)-z$ gives
the same total bound for $\Prb(P_\ell=z,J_\ell=|E|)$.
Both contributions vanish when $U=\varnothing$.

For each $\ell$, let $V:=\{x_1,\ldots,x_\ell\}\cap U$.
Since $J_\ell=\ell-|V|$ and
$P_\ell+H=J_\ell a+\Sigma(V)+H$, the conditions
$P_\ell\in z+H$ and $0<J_\ell<|E|$ depend only on $V$.
For every choice of $V$ satisfying these conditions,
$\{x_1,\ldots,x_\ell\}\cap E$ is a uniform nonempty proper
$J_\ell$-subset of $E$, so Observation~\ref{obs:completion}
bounds the conditional probability of $P_\ell=z$ by $2/|E|$.
Averaging over these choices of $V$ gives
$\Prb(P_\ell=z\mid P_\ell\in z+H,\ 0<J_\ell<|E|)
\le2/|E|$.
Consequently,
\[
 \begin{aligned}
 \Prb(P_\ell=z,0<J_\ell<|E|) &=\Prb(P_\ell=z\mid P_\ell\in z+H,\ 0<J_\ell<|E|)\Prb(P_\ell\in z+H,0<J_\ell<|E|)\\
 &\le\frac2{|E|}
          \Prb(P_\ell\in z+H,0<J_\ell<|E|)\le\frac2{|E|}\Prb(P_\ell\in z+H).
 \end{aligned}
\]
If the conditioning event has probability zero, the bound
is immediate.
Summing the three contributions and applying
Lemma~\ref{aq:quotient}, averaged over the relative order
of $U$, yields
\[
\sum_{\ell=1}^{|S|-1}\Prb(P_\ell=z)
 \le \frac6{|E|}
 +\frac2{|E|}
 \left(\frac{|S|+1}{d}+128\sqrt{|U|+1}\right)\le \frac6d+\frac{512}{\sqrt{|E|}},
\]
using $|U|\le|E|$ and $|E|\ge1$.
\end{proof}


The next proposition bounds joint prefix sums in a uniform
ordering of a retained set $A\subseteq S$, uniformly over
all choices of at most $20k+1$ deleted elements, as required
for local repair.

\begin{proposition}\label{aq:chains}
Let $k\ge1$ be an integer, and let $H\le G$, $a\in G$, and
$S=E\sqcup U\subseteq G$ satisfy
$|S|\ge50k$, $E\subseteq a+H$, and
$|U|\le \frac{|S|}{4(k+1)}$.
Put $d=\operatorname{ord}(a+H)$ in $G/H$.
For every $A\subseteq S$ with $|S\setminus A|\le20k+1$
and all $z_1,\ldots,z_k\in G$,
\begin{equation*}\label{aq:chain-universal}
 F_k(A;z_1,\ldots,z_k)
 \le(k+1)\left(\frac6d+
       \frac{1024\sqrt{k+1}}{\sqrt{|S|}}\right)^k.
\end{equation*}
\end{proposition}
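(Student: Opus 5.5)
The plan is to copy the reserved-block argument of Lemma~\ref{lem:reservoir-chains}, with Proposition~\ref{aq:localtime} in place of Lemma~\ref{lem:reservoir-prefixes} as the one-step estimate. Unlike the reservoir case, there is no ``bad'' event to remove here. The core count in every unexposed set is bounded below deterministically.

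Fix $A\subseteq S$ with $|S\setminus A|\le 20k+1$, and write $E_A=A\cap E$ and $U_A=A\cap U$. For an index tuple $\boldsymbol b$, split the uniform ordering of $A$ into blocks $B_1,\dots,B_{k+1}$ as in Lemma~\ref{lem:reservoir-chains}. Reserve a largest block $B_r$, so that $|B_r|\ge|A|/(k+1)$. Then expose $B_1,\dots,B_{r-1},B_{k+1},\dots,B_{r+1}$ from the two ends. Each exposed block is a prefix or suffix of a uniform ordering of the current unexposed set $W$, and $W\supseteq B_r$.

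The key deterministic observation is the following:
\[
|W\cap E|\ge |W|-|U|\ge \frac{|A|}{k+1}-\frac{|S|}{4(k+1)}\ge\frac{|S|}{2(k+1)}\ge |U|\ge |W\cap U|.
\]
Here $|A|\ge |S|-20k-1\ge \tfrac34|S|$ because $|S|\ge50k$; one should check that the constants close, and $|S|\ge 50k$ is clearly chosen for this.

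Hence every unexposed set $W$ satisfies the hypotheses of Proposition~\ref{aq:localtime} with $E\cap W\neq\varnothing$ and $|U\cap W|\le|E\cap W|$. The order $d$ is unchanged because $E\cap W\subseteq a+H$. So for any target, the expected number of prefix (or suffix, by reversal) lengths hitting it is at most
\[
\frac6d+\frac{512}{\sqrt{|W\cap E|}}\le \frac6d+\frac{512\sqrt{2(k+1)}}{\sqrt{|S|}}\le L:=\frac6d+\frac{1024\sqrt{k+1}}{\sqrt{|S|}}.
\]

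Next, define $N_s$ as in Claim~\ref{clm:reservoir-good}: the number of size sequences $(j_1,\dots,j_s)$ with prescribed block sums, with the goodness requirement replaced by requiring the remaining set to have at least $|A|/(k+1)$ elements. I will show $\Ex N_s\le L\,\Ex N_{s-1}$ by conditioning on the sets of the first $s-1$ exposed blocks. Under this conditioning the remaining ordering is uniform, so the one-step bound above applies whenever the remaining set is large.

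Two points need care in this step. First, Proposition~\ref{aq:localtime} counts lengths $1\le\ell\le|W|-1$, and the requirement that a largest block be left untouched only shrinks the count, which is harmless. Second, the bound must cover every remaining set that can actually occur. The size restriction, that the remaining set keeps at least $|A|/(k+1)$ elements (true for all genuine tuples since $B_r$ is left), is exactly what makes the deterministic lower bound on $|W\cap E|$ valid at every step. Iterating gives $\Ex N_k\le L^k$.

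Finally, for fixed $r$ the map from $\boldsymbol b$ to the exposed size sequence is injective, so the tuples with reserved index $r$ contribute at most $L^k$. Summing over the $k+1$ choices of $r$ yields $F_k(A;\boldsymbol z)\le (k+1)L^k$.

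I expect the main obstacle to be the bookkeeping in the previous step: ensuring that every conditional application of Proposition~\ref{aq:localtime} is to a set $W$ with $|W\cap U|\le |W\cap E|$ and $|W\cap E|\gtrsim |S|/(k+1)$. The argument then has to absorb the arithmetic of the constants $20k+1$, $50k$ and $4(k+1)$ into the factor $1024\sqrt{k+1}$.
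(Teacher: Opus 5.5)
Your proposal is correct and is essentially the paper's proof: reserve a first largest block, expose the others from both ends, bound $|W\cap E|$ deterministically via $|W\cap E|\ge|W|-|U|$, use Proposition~\ref{aq:localtime} (reversed for suffixes) as the one-step estimate to get $\Ex N_s\le L\,\Ex N_{s-1}$, and sum over the $k+1$ reserved indices. One arithmetic slip needs fixing: $|S|\ge50k$ only gives $|A|\ge|S|-20k-1\ge|S|/2$, not $\tfrac34|S|$ (that would need $|S|\ge80k+4$), so the correct chain is $|W\cap E|\ge\frac{|S|}{2(k+1)}-\frac{|S|}{4(k+1)}=\frac{|S|}{4(k+1)}\ge|U|$; this is exactly the hypothesis on $|U|$ and gives $512/\sqrt{|W\cap E|}\le1024\sqrt{k+1}/\sqrt{|S|}$, matching the stated constant exactly.
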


\begin{proof}
Fix $A$ as in the statement, so
$|A|\ge|S|-20k-1\ge|S|/2$.
Let $x_1,\ldots,x_{|A|}$ be a uniformly random ordering of $A$,
and use the block decomposition from the proof of
Lemma~\ref{lem:reservoir-chains}, with $z_0=0$ and
$z_{k+1}=\Sigma(A)$.
For each index tuple, reserve its first largest block $B_r$;
its sum condition follows from those on the other blocks.

Fix $r$, and enlarge the sum by allowing all positive block
sizes with $|B_r|\ge|A|/(k+1)$.
Expose $B_1,\ldots,B_{r-1},B_{k+1},B_k,\ldots,B_{r+1}$
in this order.
For $1\le s\le k$, let $N_s=N_s(x_1,\ldots,x_{|A|})$
count the positive size choices for the first $s$ exposed
blocks that leave at least $|A|/(k+1)$ elements unexposed
and whose resulting blocks in the ordering have their
prescribed sums.
Put $N_0=1$.

Fix a deterministic choice of sizes for the first $s-1$
exposed blocks that leaves at least $|A|/(k+1)$ elements,
and reveal only their subsets.
The remaining set $A_s$ is then fixed and its ordering
is uniform. Moreover,
\[
 |A_s\cap E|
 \ge |A_s|-|U|
 \ge \frac{|A|}{k+1}-|U|
 \ge \frac{|S|}{4(k+1)}
 \ge |A_s\cap U|.
\]
If the partial choice is counted by $N_{s-1}$, let $B_i$
be the next block.
The conditional expected number of extensions counted
by $N_s$ is the sum of its prescribed-sum probabilities
over the allowed sizes, and is therefore at most
\[
 F_1(A_s;z_i-z_{i-1})
 \le \frac6d+\frac{512}{\sqrt{|A_s\cap E|}}
 \le \frac6d+\frac{1024\sqrt{k+1}}{\sqrt{|S|}}
 =:L,
\]
by Proposition~\ref{aq:localtime}, reversing the remaining
ordering when $B_i$ is exposed from the right.
If the partial choice is not counted by $N_{s-1}$,
it has no extensions counted by $N_s$.
Averaging over the revealed subsets and summing over the
partial size choices gives $\Ex N_s\le L\Ex N_{s-1}$,
and hence $\Ex N_k\le L^k$.

Argue similarly to the proof of Claim \ref{clm:reservoir-good}, each original index tuple reserving $B_r$ determines a
distinct complete size choice, counted by $N_k$ whenever
its prescribed sums hold.
Thus the total contribution for this $r$ is at most
$\Ex N_k\le L^k$.
Summing over the $k+1$ possible reserved indices proves the required estimate.
\end{proof}

\subsubsection{Large subgroup} When the quotient order is small, we instead exploit balancedness inside $H$. Although the estimates obtained in this way hold in general, they are useful for our purposes only when $|H|$ is sufficiently large, for instance, when $|H|\ge n^{1+\varepsilon}$.

Our first result provides a joint bound for the dependent block sums. 


\begin{lemma}\label{lem:multivariate-partition}
Let $A\subseteq H$ be $\alpha$-balanced. Partition a uniform
ordering of $A$ into a reserved block of size $B\ge1$ and
blocks $B_1,\ldots,B_t$ of sizes $u_1,\ldots,u_t$.
If every $u_i\ge80|A|/B$, then, for all $z_1,\ldots,z_t\in H$,
\[
 \Prb\bigl(\Sigma(B_i)=z_i\text{ for all }i\bigr)
 \le
 \prod_{i=1}^{t}\left[
 \frac1{|H|}
 +\frac{128}{\sqrt{\alpha B|A|u_i}}
 +e^{-3Bu_i/(80|A|)}\right].
\]
\end{lemma}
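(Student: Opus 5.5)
The plan is to use Fourier inversion on $H^t$ so that the joint probability factorizes over the blocks. The reserved block is what makes the blocks behave almost independently; Roos' permanent inequality turns that into a per-block Fourier factor, and Lemma~\ref{lem:Fourier-moment} then sums each factor.

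\textbf{Step 1: Fourier inversion.} Let $\mu$ be the uniform measure on $A$. Inversion on $H^t$ gives
\[
 \Prb\bigl(\Sigma(B_i)=z_i\ \forall i\bigr)
 =\frac1{|H|^t}\sum_{\chi_1,\ldots,\chi_t\in\widehat H}
 \Ex\Bigl[\prod_{i=1}^t\chi_i(\Sigma(B_i))\Bigr]\prod_{i=1}^t\overline{\chi_i(z_i)}.
\]
It therefore suffices to bound $|\Ex\prod_i\prod_{x\in B_i}\chi_i(x)|$. If this is at most a product $\prod_i g_i(\chi_i)$, the sum over $(\chi_1,\ldots,\chi_t)$ factorizes into $\prod_i\bigl(|H|^{-1}\sum_{\chi}g_i(\chi)\bigr)$.

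\textbf{Step 2: write the expectation as a permanent.} Assigning a uniform ordering to blocks is the same as taking a uniform bijection between $A$ and the $|A|$ slots. So the expectation equals $\operatorname{per}(M)/|A|!$, where $M$ is the $|A|\times|A|$ matrix whose rows are indexed by $x\in A$ and whose columns are:
\begin{itemize}
 \item $u_i$ copies of the column $(\chi_i(x))_x$, for each $i\le t$;
 \item $B$ copies of the all-ones column, for the reserved block.
\end{itemize}

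\textbf{Step 3: Roos' inequality (the main obstacle).} Roos' permanent inequality compares such a permanent with its ``independent'' counterpart, i.e.\ sampling with replacement. The trivial columns of the reserved block supply the slack.

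The target bound is
\[
 \Bigl|\Ex\prod_i\chi_i(\Sigma(B_i))\Bigr|\le\prod_{i=1}^t\Bigl(|\widehat\mu(\chi_i)|^{q_i}+e^{-3Bu_i/(80|A|)}\Bigr),
 \qquad q_i:=\frac{cBu_i}{|A|},
\]
for a suitable absolute constant $c$; something like $c\approx1/40$ should be compatible with the constants. Heuristically, each $\chi_i$-column ``sees'' mixing with the $B$ trivial columns at rate $B/|A|$, so the $u_i$ copies contribute about $u_iB/|A|$ genuine factors of $\widehat\mu(\chi_i)$. Roos' error term should be absorbed into the exponential. Concretely, I would apply Roos to the pair consisting of one block's columns and the reserved columns, conditionally on the other blocks, and then iterate over $i$. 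The exposure order is chosen so that the reserved block, of size $B$, is always present. This step is where the constants $80$ and $3/80$ are determined, and it is the hardest part.

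\textbf{Step 4: sum over characters.} Since $u_i\ge80|A|/B$, we have $q_i\ge2$ provided $c\ge1/40$, so Lemma~\ref{lem:Fourier-moment} applies to $\mu$ (which is $\alpha$-balanced in $H$). Three contributions arise:
\begin{itemize}
 \item The trivial character contributes $|H|^{-1}$.
 \item The nontrivial characters contribute at most
 \[
 \frac{20\operatorname{Col}(\mu)}{\sqrt{\alpha q_i}}=\frac{20}{|A|}\sqrt{\frac{|A|}{c\alpha Bu_i}}\le\frac{128}{\sqrt{\alpha B|A|u_i}},
 \]
 using $\operatorname{Col}(\mu)=1/|A|$ and $c\ge1/40$, since $20\sqrt{40}\approx126.5<128$.
 \item The exponential error, averaged over $\widehat H$ with the $|H|^{-1}$ normalization, contributes exactly $e^{-3Bu_i/(80|A|)}$.
\end{itemize}
Multiplying the $t$ factors gives the stated bound.
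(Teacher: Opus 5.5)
Your Steps 1, 2 and 4 coincide with the paper's argument. The inequality you set as the target of Step 3 is exactly what the paper proves, with $c=1/40$. The gap is Step 3 itself: you do not prove it, and the route you sketch does not work.

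Suppose you condition on the contents of $B_2,\ldots,B_t$, bound the conditional expectation of $\chi_1(\Sigma(B_1))$, and take absolute values. Then the phases $\prod_{i\ge2}\chi_i(\Sigma(B_i))$ are lost. You are left with $\Ex\bigl|\Ex[\chi_1(\Sigma(B_1))\mid B_2,\ldots,B_t]\bigr|$, a single factor with nothing left to iterate. Moreover, this conditional quantity involves the Fourier coefficient of the random leftover set (block $1$ together with the reserved block, of size $u_1+B$), not $\widehat\mu(\chi_1)$, and you have no control over its balancedness. Similarly, if by Roos' inequality you mean an approximation of the permanent by the with-replacement value $\prod_i\widehat\mu(\chi_i)^{u_i}$, its additive error does not factor over $i$. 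It therefore cannot produce a bound of product form once you sum over all $|H|^t$ character tuples in Step 1.

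The missing idea is to apply Roos' inequality once, to the whole matrix, in the form of Lemma~\ref{thm:Roos}.
\begin{itemize}
\item Put $n=|A|$. Take the array $b_{ir}\in H^t$ equal to $y_r$ in coordinate $\ell$ when position $i$ belongs to the $\ell$th block, and equal to $0$ when $i$ belongs to the reserved block. Use the combined character $\xi(v_1,\ldots,v_t)=\sum_\ell\eta_\ell(v_\ell)$.
\item The exponent $n^{-3}\sum_{i,j,r,s}\|\xi(b_{ir}-b_{jr}-b_{is}+b_{js})\|^2$ is a sum of nonnegative terms over pairs of positions. So you may keep only the $Bu_\ell$ pairs with $i$ in the $\ell$th block and $j$ in the reserved block. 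For these pairs the cross-difference is $y_r-y_s$ in coordinate $\ell$ and $0$ elsewhere.
\item This gives the lower bound $\sum_\ell\frac{Bu_\ell}{n^3}\sum_{r,s}\|\eta_\ell(y_r-y_s)\|^2$, which is additive in $\ell$. This is precisely how the reserved block produces the product.
\item The inequality $1-\cos(2\pi v)\le20\|v\|^2$ gives $\sum_{r,s}\|\eta_\ell(y_r-y_s)\|^2\ge n^2(1-a_\ell^2)/20$, where $a_\ell=|\widehat\mu(\chi_\ell)|$.
\item Finally, $2(1-a^2)+\log a\ge0$ on $[1/2,1]$ converts $e^{-Bu_\ell(1-a_\ell^2)/(20n)}$ into $a_\ell^{Bu_\ell/(40n)}$ when $a_\ell\ge1/2$. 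When $a_\ell<1/2$, the same exponential is at most $e^{-3Bu_\ell/(80n)}$.
\end{itemize}
This is your target with $c=1/40$, and your Step 4 then completes the proof.
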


\begin{proof}
The case $t=0$ is immediate. Assume $t\ge1$ and put
$n=|A|$. Fix an enumeration $A=\{y_1,\ldots,y_n\}$,
and let $\{I_0,I_1,\ldots,I_t\}$ be a partition of $[n]$ with 
$|I_0|=B$ and $|I_\ell|=u_\ell$.

Define the deterministic $n\times n$ array $(b_{ir})_{1\le i,r \le n}$
by
$b_{ir}=
 \bigl(\mathbf1_{\{i\in I_1\}}y_r,\ldots,\mathbf1_{\{i\in I_t\}}y_r\bigr)\in H^t$. Note that $b_{ir}$ has at most one non-zero entry. 
Let $\pi$ be a uniform permutation of $[n]$.
Then $y_{\pi(1)},\ldots,y_{\pi(n)}$ is a uniform
ordering of $A$, with
$B_\ell=\{y_{\pi(i)}:i\in I_\ell\}$.
We see that
$\sum_{i=1}^n b_{i\pi(i)} =\bigl(\Sigma(B_1),\ldots,\Sigma(B_t)\bigr)$. Let $\nu$ denote the law of this vector.

Fix $\chi_1,\ldots,\chi_t\in\widehat H$, and put
$$a_\ell=n^{-1}|\sum_{r=1}^n\chi_\ell(y_r)|.$$
Let $\eta_\ell:H\to\mathbb R/\mathbb Z$ be the additive
character satisfying
$e^{2\pi i\eta_\ell(x)}=\overline{\chi_\ell(x)}$.
Expanding $a_\ell^2$ and using
$1-\cos(2\pi v)\le20\|v\|^2$ gives
\[
 1-a_\ell^2
 =\frac1{n^2}\sum_{r,s=1}^n
   \bigl(1-\cos(2\pi\eta_\ell(y_r-y_s))\bigr)
 \le\frac{20}{n^2}\sum_{r,s=1}^n
   \|\eta_\ell(y_r-y_s)\|^2.
\]
As $\nu$ is the joint distribution of the block sums $\bigl(\Sigma(B_1),\ldots,\Sigma(B_t)\bigr)$, we have
\[
\widehat{\nu}(\chi_1,\ldots,\chi_t)
=
\mathbb E_\pi
\exp\Big(
2\pi i\sum_{\ell=1}^t
\eta_\ell\big(\sum_{i\in I_\ell}y_{\pi(i)}\big)
\Big)=
\mathbb E_\pi
\exp\Big(
2\pi i \xi\big( \sum_{i=1}^nb_{i\pi(i)} \big)\Big),
\]
where $\xi(v_1,\ldots,v_t)=\sum_{\ell=1}^t\eta_\ell(v_\ell)$.

We apply Lemma~\ref{thm:Roos} to $(b_{ir})$ and $\xi$ with $G=H^t$. For $i\in I_\ell$ and $j\in I_0$, we have
\[
 b_{ir}-b_{jr}-b_{is}+b_{js}
 =(0,\ldots,0,y_r-y_s,0,\ldots,0),
\]
where $y_r-y_s$ occupies coordinate $\ell$.
Since there are $Bu_\ell$ such ordered row pairs,
Lemma~\ref{thm:Roos} gives
\[
 |\widehat\nu(\chi_1,\ldots,\chi_t)|
\le\exp\left(
 -\sum_{\ell=1}^t\frac{Bu_\ell}{n^3}
 \sum_{r,s=1}^n\|\eta_\ell(y_r-y_s)\|^2
 \right)\le\prod_{\ell=1}^t
 \exp\left(-\frac{Bu_\ell}{20n}(1-a_\ell^2)\right).
\]

Fourier inversion on $H^t$ and the preceding product bound give
\[
\Prb\bigl(\Sigma(B_\ell)=z_\ell
            \text{ for all }1\le\ell\le t\bigr)\\
\le
 \prod_{\ell=1}^t
 \Bigl[\frac1{|H|}
 \sum_{\chi_\ell\in\widehat H}
 e^{-Bu_\ell(1-a_\ell^2)/20n}\Bigr].
\]

We now estimate each of the factors. Since $Bu_\ell\ge80n$, we have $Bu_\ell/40n\ge2$.
For $a_\ell\ge1/2$, the inequality
$2(1-a^2)+\log a\ge0$ for $1/2\le a\le1$ gives
\[
 e^{-Bu_\ell(1-a_\ell^2)/20n}
 \le a_\ell^{Bu_\ell/40n}.
\]
For $a_\ell<1/2$, the same exponential is at most
$e^{-3Bu_\ell/80n}$.
Hence for each $\ell$, 
$$\frac1{|H|}\sum_{\chi_\ell\in\widehat H}
 e^{-Bu_\ell(1-a_\ell^2)/20n}\le\frac1{|H|}
 +\frac1{|H|}\sum_{\chi_\ell\in\widehat H, \chi_\ell\ne1}
       a_\ell^{Bu_\ell/40n}
 +e^{-3Bu_\ell/80n}.$$
To this end, let $\mu$ be the uniform measure on $A$. Lemma~\ref{lem:Fourier-moment}, applied with
$q=Bu_\ell/40n\ge2$, gives
 $$\frac1{|H|}\sum_{\chi_\ell\ne1}
       a_\ell^{Bu_\ell/40n} \le \frac{20}{n\sqrt{\alpha Bu_\ell/40n}}.$$
Therefore 
\[
 \begin{aligned}
 \frac1{|H|}\sum_{\chi_\ell\in\widehat H}
 e^{-Bu_\ell(1-a_\ell^2)/20n} <\frac1{|H|}
 +\frac{128}{\sqrt{\alpha Bnu_\ell}}
 +e^{-3Bu_\ell/80n}.
 \end{aligned}
\]
Substituting this into the preceding product bound
proves the lemma.
\end{proof}

We remark that the above proof relies heavily on the results developed in Section~\ref{sec:Fourier-moment}. We next use the joint estimates of Lemma \ref{lem:multivariate-partition} to deal with nested chains.


\begin{proposition}
\label{prop:almost-coset-direct-chain}
Let $H\le G$, $a\in G$, and $S=A\sqcup U\subseteq G$, where
$A\subseteq a+H$ is nonempty,
$A-a$ is $\alpha$-balanced in $H$ and $|U|\le |A|$.
For $k\ge1$, suppose $\alpha|A|\ge 512k^2\log(|A|+1)$.
For fixed $z_1,\ldots,z_k\in G$, a uniform nested chain in $S$
(Definition~\ref{def:prefixes}) with $|R_i|=b_i$ satisfies
\begin{equation*}\label{eq:almost-coset-direct-chain}
 \sum_{1\le b_1<\cdots<b_k<|S|}
 \Prb\bigl(\Sigma(R_i)=z_i\text{ for all }i\bigr)
 \le\left[2^{13}k^2\left(
       \frac{|A|}{|H|}+\frac1{\sqrt{\alpha|A|}}
                    +\frac{\log(|A|+1)}{|A|}\right)\right]^k.
\end{equation*}
\end{proposition}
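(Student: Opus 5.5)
We follow the block-decomposition scheme of Lemma~\ref{lem:reservoir-chains} and Proposition~\ref{aq:chains}. Take a uniform ordering of $S$ and fix $\boldsymbol b$. Cut the ordering into blocks $B_1,\dots,B_{k+1}$ and reserve the first largest block $B_r$, so $|B_r|\ge|S|/(k+1)$. Prescribing the $z_i$ is the same as prescribing every block sum, and the condition on $B_r$ is then automatic.

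We first condition on the positions and identities of the outsiders $U$. Each non-reserved block $B_i$ then splits into its outsider part, now fixed, and its core part $B_i\cap A$ of size $u_i$. Translating by $-a$, the condition $\Sigma(B_i)=z_i-z_{i-1}$ becomes $\Sigma((B_i\cap A)-a)=w_i$ for some $w_i\in H$ determined by the revealed data; if the coset is wrong the probability is zero. Given the positions of $U$, the core elements fill the remaining positions in uniformly random order. So we are looking at a uniform ordering of $A-a$, which is $\alpha$-balanced in $H$, cut into core blocks of sizes $u_1,\dots,u_{k+1}$. The reserved core block has size $B:=|B_r\cap A|$.

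We control $B$ by a hypergeometric tail. Since $|U|\le|A|$, with probability $1-e^{-\Omega(|A|/k)}$ we have $B\ge|B_r|/4\ge|A|/(4(k+1))$. The exceptional event contributes at most $|S|^ke^{-c|A|/k}$, which is negligible compared with $(\log(|A|+1)/|A|)^k$ because $|A|\ge512k^2\log(|A|+1)/\alpha$.

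On the good event, call a core block \emph{long} if $u_i\ge80|A|/B$, i.e.\ roughly $u_i\ge320(k+1)$, and \emph{short} otherwise. For the long blocks we apply Lemma~\ref{lem:multivariate-partition} to the ordering of $A-a$. We absorb the short blocks into the argument by first exposing them from the ends, as in the proof of Lemma~\ref{lem:reservoir-chains}. Then each short block, including one with $u_i=0$ whose sum is then deterministic, gets a completion factor from Observation~\ref{obs:completion}, which is at most $2/|A_s\cap A|\lesssim k/|A|$ per admissible size. Alternatively, a short block can simply have its sum condition dropped while its size still ranges over $O(k)$ values. The long blocks then form a partition of the remaining core, to which Lemma~\ref{lem:multivariate-partition} applies (the remaining set is still balanced up to a constant loss, since few elements were removed; if needed we pass to an $\alpha/2$-balanced statement).

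It remains to sum over $\boldsymbol b$, organised by block sizes. For a long block, summing $\frac1{|H|}+\frac{128}{\sqrt{\alpha B|A|u}}+e^{-3Bu/(80|A|)}$ over $u\le|S|$ gives
\[
\frac{2|A|}{|H|}+\frac{O(\sqrt{k})}{\sqrt{\alpha|A|}}+O\!\Bigl(\frac{k}{|A|}\Bigr),
\]
using $\sum_u u^{-1/2}\le2\sqrt{|S|}$ and $B\gtrsim|A|/k$. The sum over outsider counts in each block is handled by the fact that the outsider positions are uniform. For each short block, we pay $O(k)$ choices of size times the completion factor $O(k\log(|A|+1)/|A|)$ (the $\log$ arises from summing the completion bound over the block's exact position, as in Lemma~\ref{lem:reservoir-prefixes}). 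Multiplying over the $k$ blocks, and over the $2^{k}$ short/long patterns and $k+1$ choices of $r$, gives the bound $[2^{13}k^2(\cdots)]^k$.

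The main obstacle is the bookkeeping in the mixed short/long step. We need Lemma~\ref{lem:multivariate-partition} to apply conditionally after short blocks and outsiders are revealed, with the balancedness and the bound $B\gtrsim|A|/k$ surviving. We also need the summation over block sizes, which now includes the outsider counts, to cost only $O(k^2)$ per block rather than a power of $|S|$.
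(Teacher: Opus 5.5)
Your architecture matches the paper's: condition on the outsiders, reserve a block with many core elements, pay completion factors for blocks with few core elements, and apply Lemma~\ref{lem:multivariate-partition} to the rest. Your hypergeometric control of the reserved block is also a valid substitute for the paper's choice of reserving the block with the largest core count, which gives $j_\rho\ge|A|/(k+1)$ deterministically. The first genuine gap is the short/long threshold. At $u=80|A|/B$ the error term $e^{-3Bu/(80|A|)}$ of Lemma~\ref{lem:multivariate-partition} equals $e^{-3}$. Its sum over the long sizes is therefore about $\frac{80|A|}{3B}e^{-3}\ge\frac{80}{3}e^{-3}>1$, and of order $k$ when $B\approx|A|/(4(k+1))$, not $O(k/|A|)$. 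Each long block then contributes a factor bounded below by a constant, and the product bound proves nothing. The threshold must be of order $(|A|/B)\log|A|\asymp k\log|A|$. The paper takes $T=\lceil80(k+1)\log|A|\rceil$, so that $\sum_{u\ge T}e^{-3u/(80(k+1))}\le28(k+1)/|A|^3$. The price is that a short block now has up to $T$ possible core sizes, each paying $2/(|A|-kT)\le4/|A|$. This is the true source of the $\log(|A|+1)/|A|$ term, not a sum over the block's position; in your own accounting short blocks would cost only $O(k^2/|A|)$, so the logarithm you insert has no origin. This is also where the hypothesis $\alpha|A|\ge512k^2\log(|A|+1)$ is really used: it gives $kT\le\alpha|A|/2$, so the core left after exposing the short blocks is still $\alpha/2$-balanced.

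The second gap is the blocks containing no core element ($u_i=0$). Observation~\ref{obs:completion} applied to the core says nothing about such a block, and once you condition on the positions and identities of $U$ its sum is deterministic. Dropping its condition is not allowed either: it removes one small factor and leaves a bound of the form $O(k)\,\eta^{k-1}$ instead of $\eta^k$. Meanwhile, for a fixed left endpoint, the expected number of right endpoints producing such a block is $\sum_{t\ge1}(r)_t/(m+r)_t$ (truncated at the end of the ordering), where $r=|U|$ and $m=|A|$. This is of order $1$ when $|U|\asymp|A|$. The paper handles these blocks in Claim~\ref{cl:direct-zero}, before revealing $U$. A pure-outsider block of size $v$ gets the completion bound $1/(r-v+1)$ inside the unexposed part of $U$, and this is multiplied by the weight $(r)_v/(m+r)_v$. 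The telescoping identity $\sum_{v=1}^{R}\frac{(R)_v}{(m+R)_v(R-v+1)}\le\frac1m$ then yields a factor $1/m$ per such block. Neither ingredient suffices alone: rarity fails when $|U|$ is large, and completion fails when $|U|$ is small. Similarly, your remark that the outsider positions are uniform has to become an actual estimate. For fixed core counts $\boldsymbol j$, the total weight $\sum_{\boldsymbol t}W(\boldsymbol j,\boldsymbol t)$ of the outsider-count vectors is at most $2^k$ (compare \eqref{eqn:sumW}). This is what keeps the summation over $\boldsymbol b$ from costing a power of $|S|$.
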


\begin{proof}
Put $m=|A|$ and $r=|U|$, and set
$T=\lceil80(k+1)\log m\rceil$.
The hypothesis gives $m\ge3$, $2kT\le\alpha m$, and
$m/(k+1)>T$.
We first decompose the chain sum according to the numbers
of elements of $A$ and $U$ in each block.
For a fixed index tuple $1\le b_1<\cdots<b_k<m+r$, take a
uniform ordering $x_1,\ldots,x_{m+r}$ of $S$ as in
Definition~\ref{def:prefixes}, and put
$B_i=\{x_{b_{i-1}+1},\ldots,x_{b_i}\}$,
where $b_0=0$ and $b_{k+1}=m+r$.
The conditions $P_{b_i}=z_i$ for all $1\le i\le k$ are equivalent to
$\Sigma(B_i)=z_i-z_{i-1}$ for every $1\le i\le k+1$, where $z_0:=0$ and
$z_{k+1}:=\Sigma(S)$. 

Let $j_i,t_i\ge0$ be prescribed numbers of elements of $A$
and $U$ in $B_i$, respectively. For a fixed vector
$\boldsymbol j=(j_1,\ldots,j_{k+1})$ with $\sum_i j_i=m$, write
\[
 \mathcal T(\boldsymbol j)
 =\left\{\boldsymbol t\in\mathbb Z_{\ge0}^{k+1}:
       \sum_i t_i=r,\quad j_i+t_i\ge1\text{ for every }i\right\}.
\]
Every pair $(\boldsymbol j,\boldsymbol t)$ determines a unique
index tuple $(b_i)$ through $b_h=\sum_{i=1}^h(j_i+t_i)$.
Define
\begin{equation}\label{eqn:Wjt}
 W(\boldsymbol j,\boldsymbol t)
 :=\Prb\bigl(|B_i\cap A|=j_i,\ |B_i\cap U|=t_i
               \text{ for every }i\bigr)=\frac{\prod_{i=1}^{k+1}\binom{j_i+t_i}{j_i}}
         {\binom{m+r}{r}},
\end{equation}
because the positions occupied by the elements of $U$ in the
uniform ordering of $S$ form a uniform $r$-subset of
$\{1,\ldots,m+r\}$.

In what follows, conditioning on $(\boldsymbol j,\boldsymbol t)$ means
conditioning on this event. The partitions of $A$ and $U$
among the blocks are then independent and uniform.

For each $\boldsymbol j$, let $\rho$ be the first index with
maximal $j_\rho$, and reserve $B_\rho$. The reserved block $B_\rho$ contains $j_\rho$ elements of $A$,
with $j_\rho\ge m/(k+1)>T$.
Since the reserved sum condition follows from the others,
the contribution of $\boldsymbol j$ to the chain sum is
\[
 \Phi(\boldsymbol j)
 :=\sum_{\boldsymbol t\in\mathcal T(\boldsymbol j)}
 W(\boldsymbol j,\boldsymbol t)\,
 \Prb\bigl(\Sigma(B_i)=z_i-z_{i-1}\text{ for every }i\ne\rho
           \mid\boldsymbol j,\boldsymbol t\bigr).
\]
For each index tuple we sum over the possible values of
$|B_i\cap A|$ and $|B_i\cap U|$ using the law of total probability.
By grouping the resulting terms by $\boldsymbol j$, we obtain 
\begin{equation}\label{eq:direct-count-decomposition}
 F_k(S;z_1,\ldots,z_k)
 =\sum_{\boldsymbol j\in\mathbb Z_{\ge0}^{k+1}:\,
                   \sum_i j_i=m}\Phi(\boldsymbol j).
\end{equation}

Fix $\boldsymbol j$ for the next two claims.
Let $I_0=\{i:j_i=0\}$ and $q=|I_0|$.
These blocks do not contain elements of $A$, so their sum conditions
cannot use the balancedness of $A-a$. We first bound their
weighted contribution, before revealing any element of $A$.

\begin{claim}\label{cl:direct-zero} We have
$$\sum_{\boldsymbol t\in\mathcal T(\boldsymbol j)}
 W(\boldsymbol j,\boldsymbol t)\,
 \Prb\bigl(\Sigma(B_i)=z_i-z_{i-1}\text{ for every }i\in I_0
           \mid\boldsymbol j,\boldsymbol t\bigr)
 \le\frac{2^k}{m^q}.$$
\end{claim}

\begin{claimproof}
Let $E_0$ be the event that
$\Sigma(B_i)=z_i-z_{i-1}$ for every $i\in I_0$.
Write $I_0=\{i_1<\cdots<i_q\}$ and $v_a=t_{i_a}\ge1$.
Condition on $|B_i\cap A|=j_i$ and $|B_i\cap U|=t_i$
for every $i$, and expose $B_{i_1},\ldots,B_{i_q}$ in this order.
Given the actual subsets already revealed, $B_{i_a}$ is a uniform
$v_a$-subset of $U\setminus\bigcup_{b<a}B_{i_b}$,
which has $r-\sum_{b<a}v_b$ elements.
Observation~\ref{obs:completion} bounds its prescribed-sum
probability by $1/(r-v_1-\cdots-v_a+1)$, with the trivial bound
$1$ if the block contains all unexposed elements of $U$. Thus
\begin{equation}\label{eqn:cv}
 \Prb(E_0\mid\boldsymbol j,\boldsymbol t)
 \le\prod_{a=1}^q\frac1{r-v_1-\cdots-v_a+1}=: c(\boldsymbol v).
\end{equation}
In what follows, we will bound $\sum_{\boldsymbol t\in\mathcal T(\boldsymbol j)}
 W(\boldsymbol j,\boldsymbol t) c(\boldsymbol v)$. 
 
Fix $\boldsymbol v=(v_1,\ldots,v_q)$ with total $v_1+\dots+v_q=s\le r$.
The numbers $t_i=|B_i\cap U|$ for $i\notin I_0$
are nonnegative and sum to $r-s$. We sum over the possible values of these $t_i$, using \eqref{eqn:Wjt} 
\[\sum_{(t_i)_{i\notin I_0}}W(\boldsymbol j,\boldsymbol t)=\binom{m+r}{r}^{-1}\sum_{\substack{t_i\ge0,\ i\notin I_0\\ \sum_{i\notin I_0}t_i=r-s}}\prod_{i\notin I_0}\binom{j_i+t_i}{j_i}.\] 
To simplify, we evaluate the inner sum using the generating function identity \(\sum_{t\ge0}\binom{j+t}{j}x^t=(1-x)^{-j-1}\). Since \(\sum_{i\notin I_0}j_i=m\) and there are \(k+1-q\) indices outside \(I_0\), we have \(\prod_{i\notin I_0}\sum_{t\ge0}\binom{j_i+t}{j_i}x^t=(1-x)^{-(m+k+1-q)}\). The inner sum is precisely the coefficient of \(x^{r-s}\) in this product, which equals \(\binom{m+r-s+k-q}{r-s}\) by the negative-binomial expansion. Therefore

\begin{equation}\label{eqn:sumW}
 \sum_{(t_i)_{i\notin I_0}}W(\boldsymbol j,\boldsymbol t)
 =\frac{\binom{m+r-s+k-q}{r-s}}{\binom{m+r}{r}}=\frac{(r)_s}{(m+r)_s}
   \prod_{b=1}^{k-q}\frac{m+r-s+b}{m+b}\le2^{k-q}\frac{(r)_s}{(m+r)_s},
\end{equation}
where $(x)_s=x(x-1)\cdots(x-s+1)$ and $(x)_0=1$.

We now sum the resulting bound over all $\boldsymbol v$
with $v_a\ge1$ and $\sum_{a=1}^q v_a\le r$. Since $c(\boldsymbol v)$ depends only on $\boldsymbol v$,
it can be taken outside the sum over $t_i$, $i\notin I_0$.
Thus the quantity in the claim satisfies
\[
 \sum_{\boldsymbol t\in\mathcal T(\boldsymbol j)}
 W(\boldsymbol j,\boldsymbol t)\,
 \Prb(E_0\mid\boldsymbol j,\boldsymbol t)\le
 \sum_{\substack{v_1,\ldots,v_q\ge1\\
                 v_1+\cdots+v_q\le r}}
 c(\boldsymbol v)
 \sum_{(t_i)_{i\notin I_0}}W(\boldsymbol j,\boldsymbol t)\le2^{k-q}M_q(r),
\]
where we used~\eqref{eqn:cv},\eqref{eqn:sumW}. Here $M_0(R)=1$ and for $h\ge1$
\[
 M_h(R) =\sum_{\substack{v_1,\ldots,v_h\ge1\\
                   v_1+\cdots+v_h\le R}}
 \frac{(R)_{v_1+\cdots+v_h}}
      {(m+R)_{v_1+\cdots+v_h}}
 \prod_{a=1}^h\frac1{R-v_1-\cdots-v_a+1}.
\]

It remains to prove $M_h(R)\le m^{-h}$ uniformly for
$0\le R\le m$. Grouping terms according to the first size
$v_1=v$ gives
\[
 M_h(R)
 =\sum_{v=1}^R
   \frac{(R)_v}{(m+R)_v(R-v+1)}\,M_{h-1}(R-v).
\]
Observe that the coefficients in this recursion satisfy
\[
 \sum_{v=1}^R\frac{(R)_v}{(m+R)_v(R-v+1)}
 =\frac1m\sum_{v=1}^R
   \left(
    \frac{(R)_{v-1}}{(m+R)_{v-1}}
    -\frac{(R)_v}{(m+R)_v}
   \right)=\frac1m\left(1-\frac1{\binom{m+R}{R}}\right)
 \le\frac1m.
\]
Starting from $M_0(R)=1$, induction on $h$ therefore gives
\[
 M_h(R)
 \le m^{-(h-1)}
      \sum_{v=1}^R\frac{(R)_v}{(m+R)_v(R-v+1)}
 \le m^{-h}.
\]
By taking $h=q$ and $R=r$, we thus obtain
\[
 \sum_{\boldsymbol t\in\mathcal T(\boldsymbol j)}
 W(\boldsymbol j,\boldsymbol t)\,
 \Prb(E_0\mid\boldsymbol j,\boldsymbol t)\le2^{k-q}m^{-q}\le2^km^{-q}.
 \]
\end{claimproof}

We next bound the probability that
$\Sigma(B_i)=z_i-z_{i-1}$ for every $i\ne\rho$ with $j_i>0$,
conditional on the subsets $B_i\cap U$ in all blocks.
We obtain the same bound for every possible choice of these
subsets and combine it with Claim~\ref{cl:direct-zero}.
For $u\ge T$, put
\begin{equation}\label{eqn:fu}
f(u)=\frac1{|H|}
      +\frac{256\sqrt{k+1}}{m\sqrt{\alpha u}}
      +e^{-3u/[80(k+1)]}.
\end{equation}

\begin{claim}\label{cl:direct-count}
Let $p$ be the number of indices $i\ne\rho$ with $0<j_i<T$.
Then
\[
 \Phi(\boldsymbol j)
 \le2^k m^{-q}\left(\frac4m\right)^p
       \prod_{i\ne\rho: \,j_i\ge T}f(j_i).
\]
\end{claim}

\begin{claimproof}
Fix $\boldsymbol t\in\mathcal T(\boldsymbol j)$ and, conditional
on $(\boldsymbol j,\boldsymbol t)$, reveal the subsets
$B_i\cap U$ in every block. The subsets $B_i\cap A$ remain
a uniform partition of $A$ with sizes $j_1,\ldots,j_{k+1}$.
The condition $\Sigma(B_i)=z_i-z_{i-1}$ is then equivalent to
$\Sigma(B_i\cap A)=(z_i-z_{i-1})-\Sigma(B_i\cap U)$.

Expose the subsets $B_i\cap A$ for the $p$ indices with
$0<j_i<T$, in increasing order of $i$. Conditional on the
actual subsets already revealed, the next subset is uniform
among the $j_i$-subsets of the unexposed elements of $A$.
At least $m-kT\ge m/2$ elements remain, and
$B_\rho\cap A$ remains unexposed, so the chosen subset is proper.
Observation~\ref{obs:completion} bounds its prescribed-sum
probability by $2/(m-kT)\le4/m$.

After these exposures, let $A'$ be the set of unexposed
elements of $A$.
At most $kT\le\alpha m/2$ elements have been removed, so
$|A'|\ge m/2$. For every coset $C$ of a proper subgroup of $H$, by definition
$$|(A'-a)\backslash C| \ge |(A-a)\backslash C| - kT \ge \alpha m-kT\ge\alpha m/2\ge\alpha |A'|/2.$$
 Thus $A'-a$ is $\alpha/2$-balanced in $H$.

Note that the reserved subset $B_\rho\cap A$ still has
$j_\rho\ge m/(k+1)$ elements. Every remaining block other than $B_\rho$ that contains
elements of $A$ has $j_i\ge T$, with
\begin{equation}\label{eqn:ji}
 j_i\ge T\ge80(k+1)\ge80|A'|/j_\rho.
\end{equation}
The prescribed sum of its translated subset
$(B_i\cap A)-a$ is
$(z_i-z_{i-1})-\Sigma(B_i\cap U)-j_i a$; if this target lies outside
$H$, the condition is impossible.

By \eqref{eqn:ji}, Lemma~\ref{lem:multivariate-partition} is applicable to $A'-a$
with reserved size $j_\rho$. Since $|A'|\ge m/2$ and
$j_\rho\ge m/(k+1)$, the factors in that lemma satisfy
$\frac{128}{\sqrt{(\alpha/2)j_\rho|A'|j_i}}
 \le\frac{256\sqrt{k+1}}{m\sqrt{\alpha j_i}}$,
 and $e^{-3j_\rho j_i/80|A'|}
 \le e^{-3j_i/80(k+1)}$.
Thus the joint probability is at most
$\prod_{i\ne\rho,\ j_i\ge T}f(j_i)$, with $f(.)$ from \eqref{eqn:fu}.

These bounds hold conditional on every choice of the
previously revealed subsets. Multiplying the bounds for
the $p$ exposures and then applying the joint bound above
shows that, for every choice of the subsets $B_i\cap U$,
the conditional probability of
$\Sigma(B_i)=z_i-z_{i-1}$ for every $i\ne\rho$ with $j_i>0$ is at most
$$\left(4/m\right)^p\prod_{i\ne\rho,\ j_i\ge T}f(j_i).$$
For $i\in I_0$, we have $B_i=B_i\cap U$, so the corresponding
sum conditions are determined by the revealed subsets.

Let \(E_0\) denote the event that \(\Sigma(B_i)=z_i-z_{i-1}\) for every \(i\in I_0\). Conditional on \((\boldsymbol j,\boldsymbol t)\), we first reveal all the subsets \(B_i\cap U\). Since \(B_i=B_i\cap U\) for \(i\in I_0\), the event \(E_0\) is completely determined by these revealed subsets. On the other hand, the preceding conditional bound for the sum conditions with \(i\ne\rho\) and \(j_i>0\) holds uniformly over every possible choice of the subsets \(B_i\cap U\). We may therefore average over these subsets, retaining the indicator of \(E_0\), to obtain
\[
\begin{aligned}
&\Prb\bigl(\Sigma(B_i)=z_i-z_{i-1}\text{ for every }i\ne\rho
\mid\boldsymbol j,\boldsymbol t\bigr)\le
\left(\frac4m\right)^p
\prod_{\substack{i\ne\rho\\j_i\ge T}}f(j_i)\,
\Prb\bigl(E_0\mid\boldsymbol j,\boldsymbol t\bigr).
\end{aligned}
\]
The factor \((4/m)^p\prod_{i\ne\rho,\ j_i\ge T}f(j_i)\) depends only on \(\boldsymbol j\), and is therefore independent of \(\boldsymbol t\). Multiplying the preceding inequality by \(W(\boldsymbol j,\boldsymbol t)\) and summing over all \(\boldsymbol t\in\mathcal T(\boldsymbol j)\), we obtain
\[
\begin{aligned}
\Phi(\boldsymbol j)
&\le
\left(\frac4m\right)^p
\prod_{\substack{i\ne\rho\\j_i\ge T}}f(j_i)
\sum_{\boldsymbol t\in\mathcal T(\boldsymbol j)}
W(\boldsymbol j,\boldsymbol t)
\Prb(E_0\mid\boldsymbol j,\boldsymbol t)\le
\frac{2^k}{m^q}
\left(\frac4m\right)^p
\prod_{\substack{i\ne\rho\\j_i\ge T}}f(j_i),
\end{aligned}
\]
where the last inequality follows from Claim~\ref{cl:direct-zero}. This proves the claim.
\end{claimproof}

To proceed, we sum over \(\boldsymbol j\) in \eqref{eq:direct-count-decomposition}. Recall that for each \(\boldsymbol j\), the reserved index \(\rho\) is chosen to be the first index at which \(j_i\) attains its maximum. We first fix \(\rho\), of which there are at most \(k+1\) choices. Since \(\sum_{i=1}^{k+1}j_i=m\), the remaining \(k\) coordinates determine \(j_\rho=m-\sum_{i\ne\rho}j_i\). Thus, for fixed \(\rho\), we only need to sum over the nonnegative integers \(j_i\), \(i\ne\rho\), subject to the restrictions that \(j_\rho\ge0\) and \(j_\rho\ge j_i\) for every \(i\ne\rho\), together with the tie-breaking condition defining \(\rho\).

By Claim~\ref{cl:direct-count}, the contribution of each such \(\boldsymbol j\) is bounded by \(2^k\prod_{i\ne\rho}g(j_i)\), where we set \(g(0)=1/m\), \(g(u)=4/m\) for \(1\le u<T\), and \(g(u)=f(u)\) for \(T\le u\le m\). Here we used the fact that \(j_\rho>0\), so the \(q\) zero coordinates counted in Claim~3.15 all occur among the indices \(i\ne\rho\). Since every factor \(g(j_i)\) is nonnegative, dropping the restrictions on \(j_\rho\) and its maximality can only increase the sum. We may therefore let each \(j_i\), \(i\ne\rho\), range independently over \(\{0,\ldots,m\}\). The resulting sum factors as
\[
\sum_{(j_i)_{i\ne\rho}}\prod_{i\ne\rho}g(j_i)
\le
\prod_{i\ne\rho}\left(\sum_{u=0}^m g(u)\right)
=
\left(\sum_{u=0}^m g(u)\right)^k.
\]
Passing back to $f$, the value \(u=0\) contributes \(1/m\), the values \(1\le u<T\) contribute at most \(4T/m\) in total, and the values \(T\le u\le m\) contribute \(\sum_{u=T}^m f(u)\). Summing over the at most \(k+1\) choices of \(\rho\), we obtain
\[
F_k(S;z_1,\ldots,z_k)
\le
(k+1)2^k
\left(
\frac1m+\frac{4T}{m}
+\sum_{u=T}^m f(u)
\right)^k.
\]
Finally, since $\sum_{u=1}^m u^{-1/2}\le2\sqrt m$ and
$T\ge80(k+1)\log m$, using \eqref{eqn:fu} we have
\[
 \sum_{u=T}^m f(u)
 \le\frac m{|H|}
    +\frac{512\sqrt{k+1}}{\sqrt{\alpha m}}
    +\frac{28(k+1)}{m^3}.
\]
Because $T\le81(k+1)\log(m+1)$,
$(k+1)^{1/k}\le k+1$, and $(k+1)^2\le4k^2$, we obtain
\begin{align*}
 F_k(S;z_1,\ldots,z_k)
 &\le(k+1)\left[
 \frac{8T+2}{m}+\frac{2m}{|H|}
 +\frac{1024\sqrt{k+1}}{\sqrt{\alpha m}}
 +\frac{56(k+1)}{m^3}\right]^k\\
 &\le\left[2^{13}k^2\left(
 \frac m{|H|}+\frac1{\sqrt{\alpha m}}
             +\frac{\log(m+1)}m\right)\right]^k. \qedhere
\end{align*}
\end{proof}

\section{Local repair}
\label{sec:repair-full}

We use the local-repair mechanism of
Pham--Sauermann~\cite[Section~5]{PS26}. Repeated partial
sums correspond to zero-sum intervals; swapping nearby entries
eliminates the corresponding repetitions. The following reformulation takes as
inputs the integrated prefix and nested-chain estimates developed
in the preceding section. Uniformity after deleting a bounded
number of elements allows us to reveal the entries near a proposed
swap while controlling the remaining intervals.

\begin{proposition}\label{prop:repair-full}
For every integer $D\ge6$ there is $c_D>0$ with the following property.
Let $S\subseteq G\setminus\{0\}$ satisfy $|S|\ge50D+2$. Suppose
$q,F\ge0$ satisfy the following bounds for every $A\subseteq S$ with
$|S\setminus A|\le20D+1$: the prefix sums $P_\ell$ of a uniform
ordering of $A$ (Definition~\ref{def:prefixes}) satisfy
\begin{align}
 \sum_{\ell=1}^{|A|-1}\Prb(P_\ell=z)&\le q
       &&\text{for every }z\in G,\label{rt:onepoint}\\
 \sum_{1\le b_1<\cdots<b_D<|A|}
 \Prb(P_{b_i}=z_i\text{ for all }i)&\le F
       &&\text{for every }z_1,\ldots,z_D\in G.\label{rt:chain}
\end{align}
Then $S$ has a valid ordering whenever $q+|S|F<c_D$.
\end{proposition}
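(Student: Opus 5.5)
We follow the probabilistic local-repair scheme of Pham--Sauermann~\cite[Section~5]{PS26}, with \eqref{rt:onepoint} and \eqref{rt:chain} as the only anti-concentration inputs. Take a uniform ordering $x_1,\ldots,x_n$ of $S$, $n=|S|$. A repeated partial sum is a zero-sum interval $[a+1,b]$, i.e.\ $P_a=P_b$ with $a<b$. Since $0\notin S$, we have $b-a\ge2$. The plan has four steps. First, show that with positive probability the zero-sum intervals are few and well separated, meaning at most $D-1$ of them interact in any local window. Second, repair each such interval by one swap of $x_b$ (or $x_{a+1}$) with an element at a nearby position $y$, chosen from a fixed bounded range of candidates. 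Third, show that for most candidates the swap creates no new zero-sum interval and destroys none of the other repairs. Fourth, conclude that the expected number of unrepairable configurations is less than $1$.

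\emph{Counting obstructions.} Fix the endpoint $b$ of a zero-sum interval. Reveal a bounded set $Y$ of at most $20D+1$ elements near position $b$, meaning the swap candidates and their neighbours. The remaining elements $A=S\setminus Y$ are uniformly ordered. The event that some $a$ has $P_a=P_b$ then becomes a prefix sum of the $A$-ordering hitting a value determined by the revealed data. Summing over $a$, \eqref{rt:onepoint} gives expected count at most $q$ per fixed $b$-structure. Summing over $b$ shows the expected number of zero-sum intervals is at most $O(nq)$. This alone is not enough, so instead we use \eqref{rt:onepoint} only to control a single failure event at a single swap.

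\emph{Failure of a swap.} Swapping $x_b$ with $x_y$ changes the prefix sums $P_\ell$ only for $\ell$ between $b$ and $y$, and only by $\pm(x_y-x_b)$. A new zero-sum interval therefore corresponds to an equation $P_\ell=P_{\ell'}+w$, where $w$ ranges over boundedly many revealed values and $\ell$ is near $b$. After revealing the window elements, the expected number of such bad $\ell'$ is at most $C_Dq$ by \eqref{rt:onepoint} applied to the unrevealed set $A$. This is why uniformity over all $A$ with $|S\setminus A|\le20D+1$ is needed.

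\emph{Joint failure.} A repair fails only if all candidate swaps fail simultaneously, or if many zero-sum intervals cluster so that repairs interfere. Choosing $D$ candidates and requiring all of them to fail produces $D$ simultaneous prefix-sum equations in the unrevealed ordering, with positions $b_1<\cdots<b_D$. Inclusion over these positions is bounded by $F$ using \eqref{rt:chain}. Multiplying by the $n$ choices of $b$ and the $O_D(1)$ choices of revealed configuration gives a total of $O_D(nF)$. Isolated failures cost $O_D(q)$ each and are handled by choosing among alternatives. The expected number of bad events is therefore $O_D(q+nF)$, which is less than $1$ once $c_D$ is small. Taking a realization with no bad events and performing all repairs yields a valid ordering. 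The condition $|S|\ge50D+2$ ensures the windows fit.

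\emph{Main obstacle.} The hardest step is the bookkeeping that turns every way a repair can fail, including repairs of different intervals interacting, into either one equation counted by $q$ or $D$ simultaneous equations counted by $F$. The revealed set must stay of size at most $20D+1$ throughout. I would follow the case analysis of \cite[Section~5]{PS26} essentially verbatim, replacing their $\F_p$ anti-concentration lemmas with \eqref{rt:onepoint} and \eqref{rt:chain}. The only check needed is that each of their estimates used nothing beyond such integrated prefix bounds.
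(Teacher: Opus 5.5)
\textbf{Genuine gap.} Your outline has the right shape: reveal a window of at most $20D+1$ entries, pay $q$ for one prefix equation and $F$ for $D$ simultaneous ones. But the step that carries the proof is only asserted, and as stated it fails.

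\textbf{The $D$ equations need not be at distinct positions.} Nothing guarantees that ``$D$ failing candidates produce $D$ prefix-sum equations at positions $b_1<\cdots<b_D$''. Witnesses for different blocked partners may share their far endpoint, giving the same prefix position twice, and then \eqref{rt:chain} says nothing. A witness may also lie entirely inside the revealed window, giving no prefix equation at all. Clusters have the same problem: $D+1$ bad endpoints in a short window yield $D$ usable equations only if their left endpoints are distinct and lie below the window.

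The missing idea is a separate bad event. You must exclude a coincidence $\sum_{i\in J}\sigma(i)=\sum_{i\in J'}\sigma(i)$ between distinct subsets $J,J'\subseteq[b,b+20D]$ at a bad endpoint $b$. Off this event, two witnesses with a common far endpoint cancel to such a coincidence, and an in-window witness would be a zero-sum subset. So, after splitting witnesses by side (which is why one asks for $2D$ blocked partners), the far endpoints are distinct.

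This event is also the real source of the $O_D(q)$ term. A coincidence has probability at most $2/n$, and conditional on the window, $b\in B$ costs $q$. Summing over $b$ then gives $O_D(q)$. Your accounting never attaches the $q$-term to any specific event, and a per-endpoint $O_D(q)$ cost would sum to $O_D(nq)$, which is not small.

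You also need to exclude bad endpoints in $[n-30D,n]$, at cost $(30D+1)q$. The hypothesis $|S|\ge50D+2$ does not make the partner window fit there.

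\textbf{Sequential repairs.} Blocking has to be measured in $\sigma\circ\pi$, where the earlier swaps $\pi$ depend on $\sigma$. So your estimate must hold uniformly over exponentially many possible repair histories, and you do not address this. The paper handles it as follows.
\begin{itemize}
\item It processes bad endpoints from right to left, so $\pi$ fixes every position below $b$.
\item It defines blocking for an arbitrary admissible $\pi$.
\item It shows that the $D$ witness events depend on $\pi$ only through the image tuple $(\pi(I_1),\ldots,\pi(I_D))$, of which there are at most $2^{26D^2}$. This is because only positions near the window, or within $5D$ of a far endpoint, can be affected.
\end{itemize}
Your ``$O_D(1)$ choices of revealed configuration'' ignores swaps near the far endpoints, and those positions are not revealed. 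Deferring all of this to ``follow Pham--Sauermann verbatim'' leaves the actual content of the proposition unproved.
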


As we will see in Section~\ref{sec:main-proof}, the results of Section~\ref{sec:nested} imply that $q$ and $F$ are very small in most cases. Consequently, Proposition~\ref{prop:repair-full} applies to almost all $S$, with the remaining exceptional cases treated separately in Section \ref{sec:external-absorption}.

For Proposition~\ref{prop:repair-full} we choose swaps at the right endpoints of zero-sum intervals.
The setting below records four obstructions; excluding them will
permit a greedy choice of disjoint swaps.

\begin{setting}\label{setting} Let $n=|S|$, and choose a uniform bijection $\sigma:[n]\to S$.
For any ordering $\tau:[n]\to S$, write
$Z_\tau(a,b)=\sum_{i=a}^b\tau(i)$.
An interval $[a,b]$ is a \emph{zero interval of $\tau$}
if $Z_\tau(a,b)=0$.
Let
\[
 B=\{b\in[n]: Z_\sigma(a,b)=0
                 \text{ for some }2\le a<b\}
\]
be the set of \emph{bad endpoints of $\sigma$}.
Throughout the section, $B$ refers to the original ordering
$\sigma$. 

Throughout this section, permutations act on the position set $[n]$.
A permutation $\pi$ \emph{fixes} a set $I\subseteq[n]$ if
$\pi(i)=i$ for every $i\in I$, and is \emph{supported on}
$W\subseteq[n]$ if it fixes $[n]\setminus W$.

For distinct $x,y\in[n]$, write $(x \, y)$ for the
transposition exchanging $x$ and $y$ and fixing every
other position.
A permutation is \emph{admissible} if it is a product
of disjoint transpositions $(x \, y)$ with $x<y\le x+5D$. For a permutation $\pi$ of $[n]$ and $b\in B$, a position
$b<y\le \min\{b+5D,n\}$ is \emph{blocked for $\pi$} if
$\sigma\circ\pi\circ (b \,y)$ has a zero interval $[s,t]$,
with $2\le s<t\le n$, satisfying
$s\in[b+1,y]$ or $t\in[b,y-1]$.
Such an interval is a \emph{witness}.
\footnote{%
In the repair procedure, $\pi$ records earlier swaps.
Swapping $b,y$ in $\sigma\circ\pi$ changes precisely the
sums of the first $\ell$ entries for $b\le\ell<y$.
The endpoint conditions detect repetitions involving these
changed sums. Thus an unblocked swap repairs $b$ without
creating new repetitions among partial sums.}

We avoid the following events. Position intervals are intersected
with $[n]$ when necessary.
\begin{enumerate}[label=\(B_\arabic*\),start=0]
\item : There exist $b\in B$ with $b\le n-30D$ and distinct
      subsets $J,J'\subseteq[b,b+20D]$ such that
      $\sum_{i\in J}\sigma(i)=\sum_{i\in J'}\sigma(i)$. The empty subset is allowed, with sum $0$.
\item : $B\cap[n-30D,n]\ne\varnothing$.
\item : Some interval $[z-10D,z+10D]$ contains more than $D$ elements of $B$.
\item : For some $b\in B$ and admissible $\pi$ fixing all positions
      below $b$ (i.e. $\pi(i)=i$ for every $i<b$), at least $2D$ choices $y\in[b+1,b+5D]\cap[n]$
      are blocked for $\pi$.
\end{enumerate}
\end{setting}

Avoiding $B_1$ leaves room to the right of every bad endpoint;
avoiding $B_2$ and $B_3$ leaves an unused, unblocked swap partner.
The local uniqueness supplied by avoiding $B_0$ is used to bound
the latter two events. 

\begin{figure}[htbp]
\centering
\begin{tikzpicture}[x=1cm,y=1cm,font=\small]
\node[anchor=west,font=\small\bfseries] at (.5,3.95)
  {(a) Intervals affected by a swap};
\draw[abGrey,line width=.65pt] (.5,2.85)--(12.8,2.85);
\draw[abGrey!45,densely dashed] (4.2,.8)--(4.2,3.05);
\draw[abGrey!45,densely dashed] (7.5,.8)--(7.5,3.05);
\node[circle,fill=abPurple,inner sep=0pt,minimum size=3.5pt] at (4.2,2.85) {};
\node[circle,fill=abPurple,inner sep=0pt,minimum size=3.5pt] at (7.5,2.85) {};
\node[below=3pt] at (4.2,2.85) {$b$};
\node[below=3pt] at (7.5,2.85) {$y$};
\draw[abPurple,line width=.85pt,<->,>=Stealth]
  (4.2,2.98)..controls (4.8,3.7) and (6.9,3.7)..(7.5,2.98);
\node[fill=white,inner sep=2pt] at (5.85,3.54) {$\pi_{b,y}$};
\node[anchor=west,font=\footnotesize,text=abGrey] at (8.2,3.05)
  {$b<y\le b+5D$};

\draw[abBlue,line width=1.3pt] (.95,2.02)--(5.8,2.02);
\foreach \x in {.95,5.8}{\draw[abBlue,line width=.9pt] (\x,1.92)--(\x,2.12);}
\node[below=2pt,font=\footnotesize] at (.95,2.02) {$s$};
\node[below=2pt,font=\footnotesize] at (5.8,2.02) {$t$};
\node[anchor=west,text=abBlue] at (8.2,2.02) {$t\in[b,y-1]$};

\draw[abOrange,line width=1.3pt] (5.4,1.18)--(12.15,1.18);
\foreach \x in {5.4,12.15}{\draw[abOrange,line width=.9pt] (\x,1.08)--(\x,1.28);}
\node[below=2pt,font=\footnotesize] at (5.4,1.18) {$s$};
\node[below=2pt,font=\footnotesize] at (12.15,1.18) {$t$};
\node[anchor=west,text=abOrange] at (.95,1.18) {$s\in[b+1,y]$};

\node[anchor=west,font=\small\bfseries] at (.5,.22)
  {(b) Disjoint swaps, selected from right to left};
\draw[abGrey,line width=.65pt] (.5,-.9)--(12.8,-.9);
\foreach \b/\y/\i in {2/3.2/3,5/6.5/2,9/10.7/1}{
  \draw[abPurple,line width=.85pt]
    (\b,-.9)..controls (\b,-.15) and (\y,-.15)..(\y,-.9);
  \node[circle,fill=abPurple,inner sep=0pt,minimum size=3.5pt] at (\b,-.9) {};
  \node[circle,draw=abPurple,fill=white,line width=.7pt,inner sep=0pt,minimum size=3.5pt] at (\y,-.9) {};
  \node[below=3pt,font=\footnotesize] at (\b,-.9) {$b_{\i}$};
  \node[below=3pt,font=\footnotesize] at (\y,-.9) {$y_{\i}$};
  \node[font=\footnotesize,fill=white,inner sep=1.5pt] at ({(\b+\y)/2},-.28) {\i};
}
\draw[abGrey,line width=.65pt,-{Stealth[length=1.6mm]}] (10.7,-1.66)--(2,-1.66);
\node[fill=white,inner sep=2pt,font=\footnotesize,text=abGrey] at (6.35,-1.66)
  {repair order};
\end{tikzpicture}
\caption{Local repair by disjoint swaps.
(a) Intervals affected by swapping positions $b$ and $y$.
(b) Disjoint swaps selected from right to left.}
\label{fig:local-repair}
\end{figure}

To prove Proposition \ref{prop:repair-full} we will rely on three lemmas below.

First, the one-point estimate controls both local subset-sum coincidences
near bad endpoints and bad endpoints near the end of the ordering.

\begin{lemma}\label{lem:end-effects}
Under the assumptions of Proposition~\ref{prop:repair-full},
the events $B_0$ and $B_1$ 
satisfy $\Prb(B_0)\le2^{40D+3}q$ and
$\Prb(B_1)\le(30D+1)q$.
\end{lemma}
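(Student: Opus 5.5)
The plan is to handle both events by a union bound over a bounded number of configurations. For each configuration we will rearrange the uniform ordering into another uniform ordering of a retained set $A\subseteq S$ with $|S\setminus A|\le 20D+1$. In that new ordering the bad-endpoint condition becomes the statement that some prefix sum hits a prescribed value, so the one-point bound \eqref{rt:onepoint} applies.

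\emph{The event $B_1$.} Fix $b\in[n-30D,n]$. The position $b$ is a bad endpoint exactly when $Z_\sigma(a,b)=0$ for some $2\le a<b$. Consider the ordering
\[
\tau=\bigl(\sigma(b),\sigma(b-1),\ldots,\sigma(1),\sigma(b+1),\ldots,\sigma(n)\bigr).
\]
Since $\sigma$ is uniform, so is $\tau$, because this rearrangement is a fixed bijection of positions. The interval $[a,b]$ corresponds to the prefix of $\tau$ of length $\ell=b-a+1\in[2,b-1]\subseteq[1,n-1]$. Therefore
\[
\Prb(b\in B)\le\sum_{\ell=1}^{n-1}\Prb(P^\tau_\ell=0)\le q,
\]
applying \eqref{rt:onepoint} with $A=S$ and $z=0$. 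Summing over the $30D+1$ values of $b$ gives $\Prb(B_1)\le(30D+1)q$.

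\emph{The event $B_0$.} Fix $b\le n-30D$ and write $W=[b,b+20D]$, so $|W|=20D+1$. A coincidence between distinct $J,J'\subseteq W$ is a relation $\sum_{i\in W}\varepsilon_i\sigma(i)=0$ with $\varepsilon\in\{-1,0,1\}^W$ not identically zero. There are at most $3^{20D+1}\le 2^{40D+2}$ such patterns, and we fix one of them, indexed by offsets from $b$. Choose $j\in W$ with $\varepsilon_j\ne0$. If $j$ is the only nonzero index, the relation says $\sigma(j)=0$, which is impossible because $0\notin S$. Otherwise, reveal $\sigma(i)$ for $i\in W\setminus\{j\}$. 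The relation then forces $\sigma(j)$ to equal one specific value, and $\sigma(j)$ is uniform among the $n-20D$ unrevealed elements. So the relation holds with conditional probability at most $1/(n-20D)\le 2/n$.

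Next, condition on the full contents of the window, so that the remaining positions carry a uniform ordering of $A=S\setminus\sigma(W)$. Here $|S\setminus A|=20D+1$, as the hypothesis allows. The event $b\in B$ asks that the elements $\sigma(a),\ldots,\sigma(b-1)$ sum to $-\sigma(b)$ for some $2\le a<b$. These elements lie outside $W$ and occupy the first $b-1$ positions of the ordering of $A$. Reversing these first $b-1$ positions, as in the $B_1$ argument, keeps the ordering of $A$ uniform. The event becomes that some prefix sum of length between $1$ and $b-2$ equals the now fixed value $-\sigma(b)$. By \eqref{rt:onepoint}, its conditional probability is at most $q$.

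Multiplying the two conditional bounds gives
\[
\Prb(\text{relation holds and }b\in B)\le \frac{2q}{n}.
\]
Summing over at most $n$ values of $b$ and at most $2^{40D+2}$ patterns yields $\Prb(B_0)\le 2^{40D+3}q$.

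The main obstacle, and the step I would check most carefully, is the independence used in the $B_0$ argument. It requires that, conditionally on the window contents, the ordering of the complement is still uniform, and that the bad-endpoint event depends only on that ordering together with the fixed value $\sigma(b)$. Both hold because intervals ending at $b$ lie to the left of the window except for the position $b$ itself. This is also what lets the factor $2/n$ from the forced element $\sigma(j)$ absorb the sum over $b$. Without it we would need a two-point chain estimate rather than \eqref{rt:onepoint}.
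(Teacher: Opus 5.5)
Your proposal is correct and follows the paper's proof essentially step for step. The shared steps are: the reversal rearrangement reducing $b\in B$ to the one-point bound \eqref{rt:onepoint} (with $A=S$ for $B_1$, and $A=S\setminus\sigma(W)$ after revealing the window for $B_0$); the single forced value of $\sigma(j)$ giving the factor $2/n$; and the union bound over $b$ and the at most $2^{40D+2}$ coincidence patterns. The only differences are cosmetic: you index coincidences by signed patterns $\varepsilon\in\{-1,0,1\}^W$ rather than ordered pairs $(J,J')$, and your separate treatment of a singleton support is unnecessary, since the forced-value argument already covers it (the forced value is then $0$).
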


Once these events are excluded, a cluster of bad endpoints forces
several nested prefix-sum conditions. This is where the chain
estimate enters.

\begin{lemma}\label{lem:clusters-bad-endpoints}
Under the assumptions of Proposition~\ref{prop:repair-full},
the events $B_0,B_1,B_2$ 
satisfy $\Prb(B_2\setminus(B_0\cup B_1))\le n(20D)^DF$.
\end{lemma}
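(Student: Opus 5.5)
The plan is to take a cluster of $D+1$ bad endpoints, read off $D$ prescribed prefix-sum conditions for a suitably re-ordered uniform ordering of a retained set $A$, and then apply \eqref{rt:chain}. Suppose $B_2$ holds but $B_0,B_1$ fail. Then there are bad endpoints $b_0<b_1<\cdots<b_D$ in a window of length $20D$, so $b_D\le b_0+20D$. Each comes with some $2\le a_i<b_i$ satisfying $Z_\sigma(a_i,b_i)=0$. Put $w=b_0-1$. Since $B_1$ fails, $b_0\le n-30D$, so the window $W_{\rm pos}=[w+1,w+20D+1]$ lies inside $[n]$ and contains every $b_i$.

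Next I use the failure of $B_0$, applied at the bad endpoint $b_0$, in two ways.
\begin{itemize}
\item If $a_i\ge b_0$, then $[a_i,b_i]\subseteq[b_0,b_0+20D]$ is a nonempty zero-sum subset of the window, which has the same sum as the empty set. This is impossible, so $a_i\le b_0-1=w$ for every $i$.
\item If $a_i=a_j$ with $i<j$, then $[b_i+1,b_j]$ is a nonempty zero-sum subset of the window, again impossible. So the $a_i$ are distinct.
\end{itemize}
Writing $P$ for prefix sums of $\sigma$, the conditions become $P_{a_i-1}=P_{b_i}=P_w+c_i$, where $c_i=\sum_{j=w+1}^{b_i}\sigma(j)$ depends only on the entries in the window.

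To make the targets prescribed, I condition on $w$ and on the ordered contents of the window. Let $A=S\setminus\sigma(W_{\rm pos})$, so that $|S\setminus A|=20D+1$. Conditionally, the concatenation
\[
\sigma(w),\sigma(w-1),\ldots,\sigma(1),\ \text{followed by } \sigma(w+20D+2),\ldots,\sigma(n)
\]
is a uniform ordering of $A$. Its prefix sum of length $\ell_i:=w-a_i+1$ equals $P_w-P_{a_i-1}=-c_i$, which is now a fixed value. The lengths $\ell_1,\dots,\ell_D$ are distinct. They also satisfy $1\le\ell_i\le w-1<|A|$, because $a_i\ge2$ and $a_i\le w$. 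After sorting, they give an index tuple $\ell_{(1)}<\cdots<\ell_{(D)}$ with prescribed targets.

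The union bound then runs as follows.
\begin{itemize}
\item Choose $w$: at most $n$ ways.
\item Choose the offsets $b_i-w\in[1,20D+1]$ for $i=1,\ldots,D$: at most $(20D)^D$ ways, up to the harmless distinction between $20D$ and $20D+1$. This can be absorbed by taking the window of length $20D$ exactly, since $b_i-b_0\le 20D$ and we only need offsets $b_i-b_0\in[1,20D]$.
\item Given $w$, the offsets and the window contents, the targets $-c_i$ are determined.
\item Summing over the possible $a_i$, equivalently over the index tuples $(\ell_i)$, the conditional probability is at most $F$ by \eqref{rt:chain} applied to this $A$.
\end{itemize}
Averaging over the window contents gives $\Prb(B_2\setminus(B_0\cup B_1))\le n(20D)^DF$.

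The main obstacle is the one handled by the reversal trick. The natural conditions $P_{a_i-1}=P_w+c_i$ all involve the unknown $P_w$. Reading the prefix backwards from position $w$ converts them into prescribed prefix sums of a genuinely uniform ordering of $A$. The failure of $B_0$ is what guarantees both $a_i\le w$ and the distinctness of the $a_i$, which makes the $\ell_i$ a valid increasing tuple.
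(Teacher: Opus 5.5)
Your proposal is correct and follows the paper's proof essentially step for step. It uses the same window $[b_0,b_0+20D]$ and the same appeal to the failure of $B_0$ to force every $a_i<b_0$ and the $a_i$ to be distinct. It also uses the same reversed concatenation $\sigma(b_0-1),\ldots,\sigma(1),\sigma(b_0+20D+1),\ldots,\sigma(n)$, with targets $-\sum_{j=b_0}^{b_i}\sigma(j)$ at lengths $b_0-a_i$, and the same union bound over at most $n(20D)^D$ tuples.

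One bookkeeping point needs tightening. When you sort the $\ell_i$, the targets are permuted in a way that depends on the random $a_i$. For a fixed increasing tuple $b_1<\cdots<b_D$, the sum over $a$'s is therefore bounded by $D!\,F$, not by $F$. The fix is to relabel the $b_i$ according to the order of the $a_i$, as the paper does. Equivalently, observe that these $D!$ orderings are exactly what your count of $(20D)^D$ \emph{ordered} offset tuples absorbs, since $\binom{20D}{D}D!\le(20D)^D$.
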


Several blocked partners also force nested prefix-sum conditions.
The next estimate additionally accounts for the possible effects
of earlier repair swaps.

\begin{lemma}\label{lem:blocked-swap-partners}
Under the assumptions of Proposition~\ref{prop:repair-full},
the events $B_0,B_1,B_3$ 
satisfy $\Prb(B_3\setminus(B_0\cup B_1))\le
       2n(5D)^{2D}2^{26D^2}F$.
\end{lemma}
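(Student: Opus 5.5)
The plan is to reduce $B_3$, outside $B_0\cup B_1$, to a union of chain events of length $D$, and then bound each by \eqref{rt:chain} after revealing the entries near $b$.

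\emph{Step 1: fix the local data and reveal the window.} Fix $b$ with $b\le n-30D$; this is allowed since we are outside $B_1$. Note that $b\in B$ is itself a prefix-sum coincidence, which we keep as one of the conditions. Reveal the entries of $\sigma$ in the window $W=[b-?,\,b+20D]$; more precisely, reveal the set and order of the entries in $[b,b+20D]$ only. The remaining entries form a uniform ordering of $A=S\setminus\sigma([b,b+20D])$, with $|S\setminus A|\le 20D+1$, so \eqref{rt:chain} applies to $A$. Since $\pi$ is admissible and fixes all positions below $b$, it moves only positions in $[b,b+10D]$, so it changes only entries inside the revealed window. There are at most $(5D)^{2D}\cdot 2^{O(D^2)}$ relevant restrictions of $\pi$ to this window. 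We union bound over $b$ (a factor $n$), over these $\pi$, and over a choice of $2D$ blocked $y$'s, each with a witness.

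\emph{Step 2: translate each witness into a prefix-sum equation.} A witness $[s,t]$ for $y$ has one endpoint inside $[b,y]$, while the other endpoint $s'$ (or $t'$) lies elsewhere. Avoiding $B_0$ gives local uniqueness: distinct subsets of the window have distinct sums. Hence the other endpoint cannot also lie in the window $[b,b+20D]$, since then a nontrivial signed subset sum of window entries would vanish. So the outer endpoint lies outside the window, and the witness becomes a condition of the form ``prefix sum of the remaining ordering of $A$ at some position equals a value determined by the revealed window data.'' The same local uniqueness shows that distinct blocked $y$ cannot share the same outer endpoint with the same target. Indeed, equal targets would force two different window subsets to have equal sums, which is excluded by $B_0$. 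So among the $2D$ witnesses, the outer positions are distinct except in a controlled way.

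\emph{Step 3: extract $D$ distinct conditions and union bound.} The outer positions may lie on either side of the window, with at least $D$ on one side. On that side they give distinct positions $b_1<\dots<b_D$ in the ordering of $A$, after shifting indices to positions in $A$. The prescribed values come from a finite list determined by the window, with at most $2^{O(D)}$ choices per condition. Summing over positions is exactly the quantity in \eqref{rt:chain}, so each configuration contributes at most $F$. Multiplying by the counting factors gives $2n(5D)^{2D}2^{26D^2}F$; the factor $2$ accounts for the choice of side.

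\emph{Main obstacle.} The delicate point is Step 2: showing that after excluding $B_0$ the $2D$ witnesses yield $D$ genuinely \emph{distinct} outer positions with targets independent of the unrevealed ordering. One must also handle the case where the changes from $\pi$ mix entries so that the targets depend on $\pi$; this is why the union bound over $\pi$ costs the $2^{O(D^2)}$ factor.
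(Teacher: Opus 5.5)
Your proposal has a genuine gap: it does not handle the part of $\pi$ acting far from $b$, and that is the actual difficulty of this lemma.

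\textbf{The main gap: $\pi$ is not confined to the window.} Step~1 claims that an admissible $\pi$ fixing all positions below $b$ moves only positions in $[b,b+10D]$. This is false. Such a $\pi$ can be any product of short disjoint transpositions to the right of $b$. In the repair procedure these are the earlier swaps $(b_\ell\,y_\ell)$ with $b_\ell>b$, possibly far away. Only $\pi([b,b'])\subseteq[b,b+10D]$ holds, where $b'=b+5D$. For a right witness $[s,t]$ with $t>b'$, the sum is $\sum_{v\in\pi((b\,y)([s,t]))}\sigma(v)$. Its far part is $\sigma$ summed over $\pi([b'+1,t])$, and $\pi$ may contain transpositions straddling $t$, so this set need not be an interval near $t$. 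After revealing a window around $b$, the witness conditions are therefore not prefix-sum conditions on a uniform ordering of $A$. A union bound over restrictions of $\pi$ to the window does not cover the event, and the number of admissible $\pi$ can be exponential in $n$.

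The paper resolves this in Claim~\ref{claim:remote-witnesses}, which is where $2^{26D^2}$ comes from. Fix the far-endpoint vector $\bm x$ and put $I_i=\pi_i([u_i,x_i])$, with $\pi_i=(b\,y_i)$. The event then depends on $\pi$ only through $(\pi(I_1),\ldots,\pi(I_D))$. This tuple is determined by membership patterns on the $15D+1$ positions of $[b-5D,b'+5D]$ and on the $10D$ positions near each $x_i$. That gives at most $2^{D(15D+1)}2^{10D^2}\le2^{26D^2}$ distinct events. For each fixed $\pi$, the ordering $\sigma\circ\pi$ is again uniform, so each event has probability $\Prb(Z_{\sigma\circ\pi_i}(u_i,x_i)=0\ \forall i)$. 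Only then do you reveal $\sigma|_{[b,b']}$ and apply \eqref{rt:chain} with $|S\setminus A|=5D+1$. The targets $z_i=-\sum_{j=u_i}^{b'}\sigma(\pi_i(j))$ are then fixed, so no union bound over a ``finite list of targets'' is needed.

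\textbf{Secondary problems in Step~2.} First, avoiding $B_0$ gives distinct $\tau$-sums, with $\tau=\sigma\circ\pi$, only for subsets of $[b,b']$. This is because only $\pi([b,b'])$ is guaranteed to lie in $[b,b+20D]$. So $B_0$ excludes the far endpoint from $[b,b']$, not from $[b,b+20D]$. With your revealed window $[b,b+20D]$, right witnesses with $t\in(b',b+20D]$ are never converted into conditions on $A$; you should reveal only $[b,b']$.

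Second, \eqref{rt:chain} sums over strictly increasing positions. You therefore need the $D$ far endpoints on one side to be genuinely distinct, not ``distinct except in a controlled way''. This does follow from $B_0$. Suppose right witnesses for $y_i<y_j$ shared $t$. Cancelling the common tail $\sum_{v=b'+1}^{t}\tau(v)$ gives equal $\tau$-sums on $[s_i,b']\setminus\{y_i\}$ and $[s_j,b']\setminus\{y_j\}$. These sets differ at $y_j$, a contradiction; the left case is symmetric.

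Third, the condition $b\in B$ plays no role in the estimate and should simply be dropped, not kept as an extra chain condition. With these repairs the count is $2$ (side) $\times\, n$ (choice of $b$) $\times\,(5D)^{2D}$ (the tuples $(u_i,y_i)$) $\times\,2^{26D^2}\times F$. As written, you assert this constant rather than derive it.
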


\begin{proof}[Proof of Proposition~\ref{prop:repair-full}]
Assuming the three lemmas, their probability bounds imply
\[
 \Prb(B_0\cup B_1\cup B_2\cup B_3)
 \le \bigl(30D+1+2^{40D+3}\bigr)q
      +n\bigl((20D)^D+2(5D)^{2D}2^{26D^2}\bigr)F.
\]
Choose $c_D>0$ sufficiently small so that the right side is then less than $1/2$ when $q+nF<c_D$. Fix $\sigma$ avoiding all four events, and write
$B=\{b_1>\cdots>b_s\}$.

We prove by induction that, for $1\le j\le s+1$,
there is a permutation
$\pi_j=(b_1\,y_1)\circ\cdots\circ(b_{j-1}\,y_{j-1})$,
where the transpositions are disjoint,
$b_\ell<y_\ell\le b_\ell+5D$, and $y_\ell\notin B$
for every $1\le\ell<j$, such that every zero interval
$[a,t]$ of $\sigma\circ\pi_j$ with $2\le a<t\le n$
satisfies $t\in\{b_j,b_{j+1},\ldots,b_s\}$. After all $s$ swaps, no zero interval remains, so $\sigma\circ\pi_{s+1}$ is valid.

For $j=1$, take $\pi_1=\mathrm{id}$.
Now suppose the assertion holds for some $j\le s$,
and fix such a $\pi_j$.
We see that $\pi_j$ is admissible
and fixes $[b_j]$.
Put $Y_j=[b_j+1,b_j+5D]$.
The absence of $B_1$ gives $Y_j\subseteq[n]$.
By the absence of $B_3$, fewer than $2D$ positions in $Y_j$
are blocked for $\pi_j$, and by the absence of $B_2$,
$|Y_j\cap B|\le D$.
Moreover, if an earlier partner $y_\ell$ lies in $Y_j$,
then $b_j<b_\ell<y_\ell\le b_j+5D$.
The corresponding $b_\ell$ are distinct members of $Y_j\cap B$,
so at most $D$ earlier partners lie in $Y_j$.
Thus at least $5D-2D-D-D=D$ choices in
$Y_j\setminus(B\cup\{y_1,\ldots,y_{j-1}\})$ are unblocked.
Choose one, $y_j$, and put
$\pi_{j+1}=\pi_j\circ(b_j\,y_j)$.
The new transposition is disjoint from all earlier ones.

Let $[a,t]$ be a zero interval of $\sigma\circ\pi_{j+1}$ with
$2\le a<t\le n$.
Since $y_j$ is unblocked, $a\notin[b_j+1,y_j]$ and
$t\notin[b_j,y_j-1]$.
Thus $[a,t]$ contains both $b_j,y_j$ or neither, so
$Z_{\sigma\circ\pi_j}(a,t)=Z_{\sigma\circ\pi_{j+1}}(a,t)=0$.
The induction hypothesis gives $t\in\{b_j,\ldots,b_s\}$,
while the second exclusion gives $t\ne b_j$.
Hence $t\in\{b_{j+1},\ldots,b_s\}$, completing the induction.
\end{proof}
We now prove the lemmas.
\begin{proof}[Proof of Lemma~\ref{lem:end-effects}]
Fix $b\in[n]$, and let $\sigma'$
denote the sequence
$\sigma(b),\ldots,\sigma(1),
\sigma(n),\ldots,\sigma(b+1)$. This is a uniform ordering of $S$, since it is a fixed
rearrangement of $\sigma$.
For $2\le a<b$, we have
$Z_\sigma(a,b)=\sum_{i=1}^{b-a+1}\sigma'(i)$,
where $1\le b-a+1<n$.
Thus \eqref{rt:onepoint} gives
$$\Prb(b\in B)\le\sum_{\ell=1}^{n-1}\Prb_{\sigma'}(\sum_{i=1}^{\ell}\sigma'(i)=0)\le q.$$
By the union bound,
$$\Prb(B_1)\le \sum_{b=n-30D}^n\Prb(b\in B)\le (30D+1)q.$$

To bound $\Prb(B_0)$, fix $b\le n-30D$ and distinct subsets
$J,J'$ of $W=[b,b+20D]$.
Choose $v\in J\mathbin\triangle J'$ and reveal $\sigma$ on
$W\setminus\{v\}$.
Since $v$ belongs to exactly one of $J,J'$, the equality
$\sum_{i\in J}\sigma(i)=\sum_{i\in J'}\sigma(i)$
allows at most one value of $\sigma(v)$. Conditional on the revealed values, $\sigma(v)$ is uniform
among the $n-20D$ remaining elements of $S$.
Averaging over the revealed values therefore gives
\[
 \Prb\Bigl(\sum_{i\in J}\sigma(i)
          =\sum_{i\in J'}\sigma(i)\Bigr)
 \le \frac{1}{n-20D}\le\frac{2}{n}.
\]

Now reveal $\sigma|_W$.
Let $\sigma''$ denote the sequence
$\sigma(b-1),\ldots,\sigma(1),
\sigma(b+20D+1),\ldots,\sigma(n)$.
This is a uniform ordering of $A=S\setminus\sigma(W)$.
For $2\le a<b$, we have
$Z_\sigma(a,b)=\sigma(b)+\sum_{i=1}^{b-a}\sigma''(i)$, where $1\le b-a<|A|$.
For every possible $\sigma|_W$, it again follows from
\eqref{rt:onepoint} that
\[
 \Prb(b\in B\mid\sigma|_W)
 \le \sum_{\ell=1}^{|A|-1}
      \Prb_{\sigma''}\Bigl(\sum_{i=1}^{\ell}\sigma''(i)=-\sigma(b)
                \,\Bigm|\,\sigma|_W\Bigr)
 \le q.
\]
The event $\sum_{i\in J}\sigma(i)=\sum_{i\in J'}\sigma(i)$
is determined by $\sigma|_W$.
Therefore,
\[
 \Prb\Bigl(b\in B,\,
     \sum_{i\in J}\sigma(i)=\sum_{i\in J'}\sigma(i)\Bigr)
 \le q\,\Prb\Bigl(
     \sum_{i\in J}\sigma(i)=\sum_{i\in J'}\sigma(i)\Bigr)
 \le \frac{2q}{n}.
\]
There are at most $n2^{2(20D+1)}=n2^{40D+2}$
choices of $(b,J,J')$. A union bound over these choices yields
$$\Prb(B_0)\le n2^{40D+2}\cdot(2q/n)=2^{40D+3}q.$$
\end{proof}

\begin{proof}[Proof of Lemma~\ref{lem:clusters-bad-endpoints}]
Fix an ordering $\sigma$ for which $B_2$ occurs but neither
$B_0$ nor $B_1$ occurs.
Since $B_2$ occurs, choose $z$ such that
$|B\cap[z-10D,z+10D]|\ge D+1$.
Let $b_0$ be the smallest bad endpoint in this interval,
and choose $D$ others, $b_1,\ldots,b_D$.
Then $b_0<b_i\le b_0+20D$ for $1\le i\le D$,
and $b_0<n-30D$ since $B_1$ does not occur.
For each $1\le i\le D$, choose $2\le a_i<b_i$ with
$Z_\sigma(a_i,b_i)=0$, as guaranteed by $b_i\in B$.

The left endpoints $a_1,\ldots,a_D$ are distinct and
all lie below $b_0$. Indeed, otherwise two zero intervals
would yield a nonempty zero-sum subset of
$[b_0,b_0+20D]$, contrary to the absence of $B_0$.
Relabel so that $a_1<\cdots<a_D$.

Fix $2\le b_0\le n-30D$ and distinct positions
$b_1,\ldots,b_D\in(b_0,b_0+20D]$.
Put $W_0=[b_0,b_0+20D]$ and reveal $\sigma(i)$
for every $i\in W_0$.
Set $A=S\setminus\sigma(W_0)$, and let
$\sigma'(1),\ldots,\sigma'(|A|)$ denote the sequence
$\sigma(b_0-1),\ldots,\sigma(1),
\sigma(b_0+20D+1),\ldots,\sigma(n)$.
Conditional on $\sigma|_{W_0}$, this is a uniform ordering of $A$.
Put $P_\ell=\sigma'(1)+\cdots+\sigma'(\ell)$ and
$z_i=-\sum_{j=b_0}^{b_i}\sigma(j)$.
For $2\le a_i<b_0$, the equation $Z_\sigma(a_i,b_i)=0$
is equivalent to $P_{b_0-a_i}=z_i$, with the targets
$z_i$ fixed by $\sigma|_{W_0}$ as $b_i \in W_0$.

For $2\le a_1<\cdots<a_D<b_0$, set
$\ell_i=b_0-a_{D+1-i}$.
Then $1\le\ell_1<\cdots<\ell_D\le b_0-2<|A|$.
For every possible $\sigma|_{W_0}$, since
$|S\setminus A|=20D+1$, \eqref{rt:chain} gives
\[
\begin{aligned}
&\sum_{2\le a_1<\cdots<a_D<b_0}
   \Prb\bigl(Z_\sigma(a_i,b_i)=0\text{ for all }i
             \mid\sigma|_{W_0}\bigr)\\
&\quad \quad \le
   \sum_{1\le\ell_1<\cdots<\ell_D\le b_0-2}
   \Prb_{\sigma'}\bigl(P_{\ell_i}=z_{D+1-i}\text{ for all }i
             \mid\sigma|_{W_0}\bigr)
   \le F.
\end{aligned}
\]
Averaging and taking a union bound over the left endpoints,
we obtain, for each fixed $(b_0,b_1,\ldots,b_D)$, a bound
of $F$ on the probability that such zero intervals exist.
A further union bound over the at most $n(20D)^D$
ordered tuples $(b_0,b_1,\ldots,b_D)$ gives
\[
 \Prb(B_2\setminus(B_0\cup B_1))
 \le n(20D)^D F. \qedhere
\]
\end{proof}

\begin{proof}[Proof of Lemma~\ref{lem:blocked-swap-partners}]
We first start with a claim showing that even after applying fixed local permutations and allowing an arbitrary admissible permutation, it is unlikely to have many zero-sum intervals whose near endpoints lie in a short window, while their other endpoints lie outside the window, on the same side, at distinct positions.

\begin{claim}\label{claim:remote-witnesses}
Fix $2\le b<b'\le n-1$ with $b'-b=5D$,
positions $u_1,\ldots,u_D\in [b,b']$, and permutations
$\pi_1,\ldots,\pi_D$ supported on $[b,b']$. Then
\begin{equation}\label{rt:far-right}
 \Prb\left(\begin{array}{c}
       \exists\ x_1<\cdots<x_D,\quad x_i>b',\\
       \exists\text{ admissible }\pi:\quad
       Z_{\sigma\circ\pi\circ\pi_i}(u_i,x_i)=0\enskip \text{for } 1\le i\le D
       \end{array}\right)
 \le2^{26D^2}F.
\end{equation}
The same bound holds for intervals $[x_i,u_i]$ with
$x_1>\cdots>x_D$ and $x_i<b$.
\end{claim}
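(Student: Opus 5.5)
The plan is to reduce \eqref{rt:far-right} to the chain hypothesis \eqref{rt:chain}. We take a union bound over the finitely many ``local shapes'' an admissible $\pi$ can have near the window and near each $x_i$. For each fixed shape, the event becomes a statement about $D$ prefix sums of a uniform ordering of a set $A$ with $|S\setminus A|\le 20D+1$. We treat the right-hand version; the version with $x_i<b$ follows by reversing the ordering, as in the proof of Lemma~\ref{lem:end-effects}.

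\textbf{Step 1: reveal a neighbourhood of the window.} Put $W=[b-5D,b'+5D]$, so $|W|\le 15D+1\le 20D+1$, and reveal $\sigma|_W$. Since $\pi_i$ is supported on $[b,b']$ and $\pi$ is a product of disjoint transpositions of length at most $5D$, we have, for $x_i>b'+5D$,
\[
 Z_{\sigma\circ\pi\circ\pi_i}(u_i,x_i)
 =\sum_{j\in \pi(\pi_i([u_i,b']))\cup\pi((b',b'+5D])}\sigma(j)
 +\sum_{j\in\pi((b'+5D,x_i])}\sigma(j).
\]
The first sum is determined by $\sigma|_W$ together with the restriction of $\pi$ to transpositions meeting $[b,b'+5D]$. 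There are at most $2^{O(D^2)}$ such restrictions, since each of the $O(D)$ positions has at most $10D+1$ choices. Union-bound over them; the targets $t_i$ are then fixed by the revealed data. The few $x_i$ with $b'<x_i\le b'+5D$ are absorbed into the same enumeration, since then the whole interval lies in $W$ up to the pattern.

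\textbf{Step 2: local patterns at the far endpoints.} The key point is that the \emph{same} admissible $\pi$ serves every $i$. Hence the sets $\pi((b'+5D,x_i])$ are nested in $i$, and each differs from $(b'+5D,x_i]$ only by exchanging a set $L_i\subseteq(x_i-5D,x_i]$ for a set $R_i\subseteq(x_i,x_i+5D]$ of equal size. These are the transpositions of $\pi$ crossing $x_i+\tfrac12$. Encode $(L_i,R_i)$ by offsets relative to $x_i$. There are at most $2^{10D}$ choices for each $i$, so at most $2^{10D^2}$ overall, and together with Step 1 this stays below $2^{26D^2}$.

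Fix the offsets. The condition becomes $P_{x_i-b'-5D}+\Sigma(\sigma(R_i))-\Sigma(\sigma(L_i))=t_i$. Here $P$ denotes the prefix sums of $\sigma''=\sigma(b'+5D+1),\sigma(b'+5D+2),\ldots$, concatenated with the remaining entries below $W$, which is a uniform ordering of $A=S\setminus\sigma(W)$. Consistency of a single $\pi$ means the exchanges at different $x_i$ are disjoint transpositions. Therefore there is a \emph{deterministic} rearrangement $\rho$ of positions, depending only on the offsets and the $x_i$, in which each modified set becomes an honest prefix of $\sigma''\circ\rho$ of length $\ell_i=x_i-b'-5D$. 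Since $\rho$ is fixed, $\sigma''\circ\rho$ is uniform.

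\textbf{Step 3: sum over the $x_i$.} The main obstacle is that $\rho$ depends on the positions $x_i$, while \eqref{rt:chain} sums over index tuples for one fixed uniform ordering. I would handle this as follows. Choose $\rho$ canonically, by moving each element of $R_i$ to sit just before position $x_i-5D$ and shifting the intervening block right. With this choice, the event for each tuple is an event about the single ordering $\tau=\sigma''$: the prefix sums of $\tau$ at the lengths $\ell_i-5D$, corrected by fixed-offset entries, hit the targets. To remove the corrections, I would instead apply \eqref{rt:chain} to the ordering in which all entries at the exchanged offsets are revealed as well. Concretely, reveal the $\le 10D$ entries around the smallest $x_i$ first and proceed as in Claim~\ref{clm:reservoir-good}, exposing inward. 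If this proves too lossy, the fallback is to iterate the one-point bound $q$, replaced where necessary by the uniformity of \eqref{rt:chain} under deletions.

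Once each fixed shape is reduced to the probability of $D$ prescribed prefix sums in a uniform ordering of some $A$ with $|S\setminus A|\le 20D+1$, summed over increasing lengths, \eqref{rt:chain} bounds it by $F$. Multiplying by the $2^{O(D^2)}\le 2^{26D^2}$ shapes gives the claim. I expect the bookkeeping of Step 3, making the dependence of $\rho$ on $x_i$ harmless, to be the genuine difficulty.
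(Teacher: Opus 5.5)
There is a genuine gap. The argument is never completed: the step you flag as the crux is not actually an obstacle, while your Step 1 creates one that you do not resolve.

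\textbf{Step 3 is not a real obstacle.} Hypothesis \eqref{rt:chain} bounds a \emph{sum of probabilities} over index tuples. Each summand is unchanged if the uniform ordering is replaced by its composition with a fixed permutation, even one depending on $\bm x$ and on the shape. So once each event reads ``$D$ prefix sums of $\sigma''\circ\rho$ hit fixed targets'', summing over $\bm x$ already gives at most $F$, with no coupling needed. Your proposed remedies would not give the stated bound. Revealing entries around the $x_i$ makes the retained set depend on $\bm x$. Falling back on \eqref{rt:onepoint} yields a bound in terms of $q$, not $2^{26D^2}F$.

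\textbf{Step 1 loses constraints.} You reveal the enlarged window $W=[b-5D,b'+5D]$ before eliminating $\pi$. Take $\pi$ trivial near $b'$ and $x_1=b'+1$. Then the first equation is a function of $\sigma|_W$ alone, and only $D-1$ prefix conditions on $\sigma''$ remain. Nothing in the hypotheses controls $(D-1)$-fold chain sums, so ``absorbing'' such $x_i$ into the enumeration does not close the argument. Your displayed decomposition is also inaccurate: $\pi((b',b'+5D])$ can reach $b'+10D$, which lies outside $W$.

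\textbf{The missing step.} Apply your rearrangement idea globally, with $\rho=\pi$ itself, before revealing anything. For fixed $\bm x$, the equations depend on $\pi$ only through the image tuple $(\pi(I_1),\ldots,\pi(I_D))$, where $I_i=\pi_i([u_i,x_i])$. Count these tuples by membership vectors:
\begin{itemize}
\item the vectors vanish below $b-5D$;
\item the at most $15D+1$ positions of $[b-5D,b'+5D]$ contribute at most $2^{D(15D+1)}$ possibilities;
\item for $v>b'+5D$, coordinate $i$ equals $\mathbf 1_{\{\pi^{-1}(v)\le x_i\}}$ and can vary only for $v\in[x_i-5D+1,x_i+5D]$.
\end{itemize}
This gives at most $2^{D(15D+1)}2^{10D^2}\le2^{26D^2}$ tuples.

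Fix one admissible representative $\pi$ per tuple. Since $\sigma\circ\pi$ is again a uniform bijection, each event has the same probability as for $\pi=\mathrm{id}$. Hence the left side of \eqref{rt:far-right} is at most $2^{26D^2}\sum_{\bm x}\Prb\bigl(Z_{\sigma\circ\pi_i}(u_i,x_i)=0\text{ for all }i\bigr)$.

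Only now reveal $\sigma|_{[b,b']}$. The sequence $\sigma(b'+1),\ldots,\sigma(n),\sigma(1),\ldots,\sigma(b-1)$ is a uniform ordering of $A=S\setminus\sigma([b,b'])$, with $|S\setminus A|=5D+1$. Each equation becomes $P_{x_i-b'}=z_i$, with the fixed target $z_i=-\sum_{j=u_i}^{b'}\sigma(\pi_i(j))$. Every $x_i>b'$, however close to $b'$, gives a genuine prefix condition of length between $1$ and $|A|-1$. Therefore \eqref{rt:chain} gives $F$, and nothing is lost.
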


We will use this claim to control the effect of earlier swaps in our local-repair procedure. In particular, with high probability, only a small number of potential swap partners are blocked, ensuring that we can continue the repair process.

\begin{claimproof}
First fix the far-endpoint vector $\bm x=(x_1,\ldots,x_D)$
and put $I_i=\pi_i([u_i,x_i])$. The equations $Z_{\sigma\circ\pi\circ\pi_i}(u_i,x_i)=0$ depend
on an admissible $\pi$ only through the ordered image tuple
$(\pi(I_1),\ldots,\pi(I_D))$. We bound the number of such tuples
by recording, at each position $v\in [n]$, its $D$ membership indicators.

The membership vectors
$\bigl(\mathbf1_{\{v\in\pi(I_1)\}},\ldots,\mathbf1_{\{v\in\pi(I_D)\}}\bigr)$,
$v\in[n]$, determine the ordered image tuple.
Admissibility of $\pi$ gives $|\pi^{-1}(v)-v|\le5D$.
Since $I_i\subseteq[b,n]$, the vectors are zero for
$v<b-5D$.
The window $[b-5D,b'+5D]\cap[n]$ contains at most
$15D+1$ positions.
Each position has at most $2^D$ possible membership vectors,
so there are at most $(2^D)^{15D+1}=2^{D(15D+1)}$
possible assignments of membership vectors to all positions
in this window.

For $v>b'+5D$, we have $\pi^{-1}(v)>b'$.
Since $\pi_i$ fixes positions beyond $b'$,
$I_i\cap[b'+1,n]=[b'+1,x_i]$, so
$\mathbf1_{\{v\in\pi(I_i)\}}
=\mathbf1_{\{\pi^{-1}(v)\le x_i\}}$.
As $\pi$ varies, this coordinate can change only when
$x_i\in[v-5D,v+5D-1]$, equivalently
$v\in[x_i-5D+1,x_i+5D]$.
For each $i$, this allows at most $10D$ positions.
Thus, among the membership vectors at positions $v>b'+5D$,
at most $10D^2$ binary coordinates can vary as $\pi$ varies.
Therefore, the number of ordered image tuples $(\pi(I_1),\ldots,\pi(I_D))$
is at most
\[
 2^{D(15D+1)}\,2^{10D^2}\le2^{26D^2}.
\]
To summarize, for fixed $\bm x$, the event
$Z_{\sigma\circ\pi\circ\pi_i}(u_i,x_i)=0$ for all $i$
depends on $\pi$ only through
$(\pi(I_1),\ldots,\pi(I_D))$.
There are at most $2^{26D^2}$ distinct events arise as $\pi$
ranges over the admissible permutations.

For every fixed $\pi$, the ordering $\sigma\circ\pi$
has the same distribution as $\sigma$, so each event
has probability
$\Prb(Z_{\sigma\circ\pi_i}(u_i,x_i)=0\text{ for all }i)$.
A union bound over these events and over $\bm x$ gives
\[
\Prb\bigl(\exists\bm x,\text{ admissible }\pi:
       Z_{\sigma\circ\pi\circ\pi_i}(u_i,x_i)=0
       \text{ for all }i\bigr)\le 2^{26D^2}
   \sum_{b'<x_1<\cdots<x_D\le n}
       \Prb\bigl(Z_{\sigma\circ\pi_i}(u_i,x_i)=0
                    \text{ for all }i\bigr).
\]
Now reveal $\sigma(j)$ for every $j\in[b,b']$ and put
$A=S\setminus\sigma([b,b'])$.
Let $\sigma'(1),\ldots,\sigma'(|A|)$ denote the sequence
$\sigma(b'+1),\ldots,\sigma(n),\sigma(1),\ldots,\sigma(b-1)$.
Conditional on $\sigma|_{[b,b']}$, this is a uniform ordering
of $A$.
Put $P_\ell=\sigma'(1)+\cdots+\sigma'(\ell)$ and
$z_i=-\sum_{j=u_i}^{b'}\sigma(\pi_i(j))$.
Since $\pi_i$ fixes positions beyond $b'$, the equation
$Z_{\sigma\circ\pi_i}(u_i,x_i)=0$ is equivalent to
$P_{x_i-b'}=z_i$.
The targets $z_i$ are fixed by the revealed values
and do not depend on $\bm x$.

The prefix lengths satisfy
$1\le x_1-b'<\cdots<x_D-b'\le n-b'<|A|$.
For every possible $\sigma|_{[b,b']}$, since $|S\setminus A|=5D+1$, \eqref{rt:chain} gives
\[
\sum_{b'<x_1<\cdots<x_D\le n}
   \Prb\bigl(P_{x_i-b'}=z_i\text{ for all }i
             \mid\sigma|_{[b,b']}\bigr)=
   \sum_{1\le\ell_1<\cdots<\ell_D\le n-b'}
   \Prb\bigl(P_{\ell_i}=z_i\text{ for all }i
             \mid\sigma|_{[b,b']}\bigr)
   \le F.
\]
By averaging over $\sigma|_{[b,b']}$ and substituting into
the preceding union bound, we complete the proof of \eqref{rt:far-right}. For the left version, we just need to reflect positions by $v\mapsto n+1-v$.
This preserves admissibility and turns the decreasing left endpoints
into increasing right endpoints.
\end{claimproof}

\begin{figure}[htbp]
\centering
\begin{tikzpicture}[x=1cm,y=1cm,font=\small]

\begin{scope}[yshift=5.5cm]

\node[anchor=west,font=\small\bfseries] at (.4,3.1)
  {(a) Blocked swaps produce right or left witnesses};

\node[anchor=east,font=\footnotesize,text=abBlue] at (1.5,2.35) {Right witness};

\fill[abOrange!14] (3.0,2.20) rectangle (8.15,2.48);
\draw[abGrey,line width=.65pt] (1.8,2.34)--(12.0,2.34);

\foreach \x/\lab in {
  3.0/b,4.1/s,5.8/y,8.15/{b'},11.1/t
}{
  \draw[abGrey,line width=.65pt] (\x,2.28)--(\x,2.40);
  \node[above=3pt,font=\footnotesize] at (\x,2.34) {$\lab$};
}

\node[circle,fill=abPurple,inner sep=0pt,minimum size=3.6pt] at (3.0,2.34) {};
\node[circle,fill=abPurple,inner sep=0pt,minimum size=3.6pt] at (5.8,2.34) {};

\draw[abBlue,line width=1.25pt] (4.1,2.02)--(11.1,2.02);
\foreach \x in {4.1,11.1}{
  \draw[abBlue,line width=.9pt] (\x,1.95)--(\x,2.09);
}

\node[font=\scriptsize,text=abGrey] at (9.6,2.65) {$W=[b,b']$};

\node[anchor=east,font=\footnotesize,text=abOrange] at (1.5,1.00) {Left witness};

\fill[abOrange!14] (3.0,.85) rectangle (8.15,1.13);
\draw[abGrey,line width=.65pt] (1.8,.99)--(12.0,.99);

\foreach \x/\lab in {
  2.2/s,3.0/b,4.7/t,5.8/y,8.15/{b'}
}{
  \draw[abGrey,line width=.65pt] (\x,.93)--(\x,1.05);
  \node[above=3pt,font=\footnotesize] at (\x,.99) {$\lab$};
}

\node[circle,fill=abPurple,inner sep=0pt,minimum size=3.6pt] at (3.0,.99) {};
\node[circle,fill=abPurple,inner sep=0pt,minimum size=3.6pt] at (5.8,.99) {};

\draw[abOrange,line width=1.25pt] (2.2,.67)--(4.7,.67);
\foreach \x in {2.2,4.7}{
  \draw[abOrange,line width=.9pt] (\x,.60)--(\x,.74);
}

\end{scope}

\begin{scope}[yshift=1.8cm]

\node[anchor=west,font=\small\bfseries] at (.4,3.2)
  {(b) Distinct far endpoints on the right};

\fill[abOrange!14] (1.9,2.35) rectangle (8.2,2.62);
\draw[abGrey,line width=.65pt] (1.5,2.48)--(12.0,2.48);

\foreach \x/\lab in {
  1.9/b,3.0/{s_i},4.3/{y_i},5.0/{s_j},6.2/{y_j},8.2/{b'},11.2/t
}{
  \draw[abGrey,line width=.65pt] (\x,2.42)--(\x,2.54);
  \node[above=3pt,font=\scriptsize] at (\x,2.48) {$\lab$};
}

\node[anchor=east,font=\footnotesize,text=abBlue] at (1.7,1.95) {$[s_i,b']\setminus\{y_i\}$};
\draw[abBlue,line width=2pt]
  (3.0,1.95)--(4.17,1.95)
  (4.43,1.95)--(8.2,1.95);

\node[anchor=east,font=\footnotesize,text=abOrange] at (1.7,1.35) {$[s_j,b']\setminus\{y_j\}$};
\draw[abOrange,line width=2pt]
  (5.0,1.35)--(6.07,1.35)
  (6.33,1.35)--(8.2,1.35);

\draw[abGrey!70,line width=2pt]
  (8.35,1.95)--(11.2,1.95)
  (8.35,1.35)--(11.2,1.35);

\node[font=\scriptsize,text=abGrey] at (9.8,1.08) {common far tail};

\node[circle,fill=abBlue,inner sep=0pt,minimum size=5.2pt] at (6.2,1.95) {};
\node[circle,draw=abOrange,fill=white,line width=.9pt,inner sep=0pt,minimum size=6.0pt] at (6.2,1.35) {};

\draw[abGrey!55,densely dashed] (6.2,2.30)--(6.2,1.20);

\node[font=\footnotesize] at (6.45,.72)
  {$y_j\in [s_i,b']\setminus\{y_i\},\qquad
    y_j\notin [s_j,b']\setminus\{y_j\}$};

\end{scope}

\end{tikzpicture}

\caption{ (a) If a candidate swap partner $y\in[b+1,b']$ is blocked, then after swapping $b$ and $y$ there is a zero-sum witness interval extending either to the right of $W=[b,b']$ or to the left of $W$.
(b) If two right witnesses had the same far endpoint $t$, cancelling the common far tail would produce equal sums on two distinct subsets of $[b,b']$.}
\label{fig:lemma44}
\end{figure}

Back to Lemma~\ref{lem:blocked-swap-partners}. Fix an ordering $\sigma$ for which $B_3$ occurs but neither
$B_0$ nor $B_1$ occurs.
By the definition of $B_3$, choose $b\in B$ and an admissible
$\pi$ fixing every position below $b$ such that at least $2D$
positions $y\in[b+1,b+5D]\cap[n]$ are blocked for $\pi$.
Put $b'=b+5D$ and $\tau=\sigma\circ\pi$.
Since $B_1$ does not occur, $b<n-30D$ and $[b,b']\subseteq[n]$.
Moreover, $\pi([b,b'])\subseteq[b,b+10D]\subseteq[b,b+20D]$,
since $\pi$ fixes positions below $b$ and moves each position
by at most $5D$.
The absence of $B_0$ therefore implies that distinct subsets
of $[b,b']$ have distinct $\tau$-sums. In particular, every
nonempty subset has nonzero $\tau$-sum.

A witness $[s,t]$ for a position $y\in[b+1,b']$
blocked for $\pi$ cannot lie wholly in $[b,b']$:
its image under $(b\,y)$ would be a nonempty subset
of $[b,b']$ with $\tau$-sum zero.
Thus every witness is either a \emph{right witness},
with $b<s\le y$ and $t>b'$, or a \emph{left witness},
with $s<b$ and $b\le t<y$.
Choose one witness for each position $y\in[b+1,b']$
blocked for $\pi$.
Since there are at least $2D$ such positions, there are
distinct $y_1,\ldots,y_D$ whose chosen witnesses $[s_i,t_i]$
are all right witnesses or all left witnesses.

For right witnesses, suppose $t_i=t_j=t$ and $y_i<y_j$.
For $h\in\{i,j\}$, the witness equation is
\[
 0=Z_{\tau\circ(b\,y_h)}(s_h,t)
  =\tau(b)+\sum_{v\in[s_h,b']\setminus\{y_h\}}\tau(v)
          +\sum_{v=b'+1}^{t}\tau(v).
\]
Cancelling the common terms gives equal $\tau$-sums on
$[s_i,b']\setminus\{y_i\}$ and $[s_j,b']\setminus\{y_j\}$.
The first subset contains $y_j$, since $s_i\le y_i<y_j\le b'$,
whereas the second does not.
This contradicts the distinctness of subset sums in $[b,b']$,
so the endpoints $t_i$ are distinct.

For left witnesses, suppose $s_i=s_j=s$ and $y_i<y_j$.
For $h\in\{i,j\}$, the witness equation is
\[
 0=Z_{\tau\circ(b\,y_h)}(s,t_h)
  =\sum_{v=s}^{b-1}\tau(v)+\tau(y_h)
                          +\sum_{v=b+1}^{t_h}\tau(v).
\]
Cancelling the common sum $\sum_{v=s}^{b-1}\tau(v)$
gives equal $\tau$-sums on
$[b+1,t_i]\cup\{y_i\}$ and $[b+1,t_j]\cup\{y_j\}$.
The second subset contains $y_j$, whereas the first does not,
since $t_i<y_i<y_j$.
This gives the same contradiction, so the endpoints $s_i$
are also distinct.

Relabel the witnesses and their corresponding positions $y_i$
so that $t_1<\cdots<t_D$ on the right, or
$s_1>\cdots>s_D$ on the left.
Apply Claim~\ref{claim:remote-witnesses} with
$\pi_i=(b\,y_i)$, taking $(u_i,x_i)=(s_i,t_i)$ on the right
and $(u_i,x_i)=(t_i,s_i)$ on the left.
For each fixed $b$ and side, each local endpoint $u_i$
and position $y_i$ has at most $5D$ possible values.
A union bound over the two sides and the at most
$n(5D)^{2D}$ choices of $b$ and these ordered tuples gives
\[
 \Prb(B_3\setminus(B_0\cup B_1))
 \le 2n(5D)^{2D}2^{26D^2}F. \qedhere
\]
\end{proof}

\section{Absorption}
\label{sec:external-absorption}

This section treats the dense-subgroup case (Item (3) of the
affine-core case) described in Subsection~\ref{sub:overview}.
Here almost all elements of $S$ lie in a subgroup in which they
have relatively high density. The anti-concentration estimates
from the preceding sections are not strong enough for our
local-repair criterion. Instead, we adapt the path-and-absorption
method of Bedert et al.~\cite{BBKMM}. The basic idea is to interpret an ordering as a rainbow path
in a Cayley graph: the vertices represent partial sums, and
each edge is coloured by the group element added at that step.
We seek a simple rainbow path using all but at most one
colour of $S$. Our strategy has three main components: (i) we construct long rainbow paths using almost all
colours from a suitable dense subgroup; (ii) we build an exceptional path to accommodate the
elements outside the subgroup, while keeping the different paths disjoint; (iii) we reserve some colours for a final absorption procedure, which incorporates the colours left unused
by the preceding paths. The resulting construction gives the desired valid ordering.
We explain the individual ingredients and their roles below.

\subsection{Definitions and path lemmas}

We represent an ordering by a walk whose edge colours are the ordered
elements. The following terminology describes the paths and random
sets used in the construction.

\begin{definition}\label{def:cayley}
A \emph{path} in $G$ is an ordered tuple $(v_0,\ldots,v_m)$
of distinct elements of $G$, where $m\ge0$.
Its edge $v_{i-1}\to v_i$ has colour $v_i-v_{i-1}$
for $1\le i\le m$.
The path is \emph{rainbow} if its edge colours are pairwise distinct. A \emph{colour word} $(c_1,\ldots,c_\ell)\in G^\ell$
with \emph{anchor} $u\in G$ specifies the walk
$(u,u+c_1,\ldots,u+\sum_{i=1}^{\ell}c_i)$.

For $A\subseteq G$, the Cayley digraph $\operatorname{Cay}_G(A)$
has vertex set $G$ and edges $x\to x+a$ of colour $a$
for $x\in G$ and $a\in A$.
Its paths are precisely the paths in $G$ whose colours lie in $A$.
A \emph{palette} is a specified set of available edge colours.

For disjoint vertex sets $X,Y$ in a digraph $\Gamma$,
let $e_\Gamma(X,Y)$ count the directed edges from $X$ to $Y$.
The digraph has \emph{no $\zeta$-sparse cut} if
$e_\Gamma(X,Y)\ge\zeta|X||Y|$ for every partition
$V(\Gamma)=X\sqcup Y$ into nonempty parts.

A \emph{$p$-random subset} of a finite set includes each
element independently with probability $p$.
Different random subsets are not assumed independent
unless this is stated.
\end{definition}

\begin{remark} 
A rainbow path in $\operatorname{Cay}_G(S)$ using all but
at most one colour of $S$ gives a valid ordering of $S$.
Indeed, let $v_0,\ldots,v_m$ be its vertices in order.
If every colour of $S$ is used, then
$(v_1-v_0,\ldots,v_m-v_{m-1})$ is a valid ordering of $S$.
If one colour $c\in S$ is unused, place it first,
giving the ordering $(c,v_1-v_0,\ldots,v_m-v_{m-1})$.
Its nonempty partial sums are
$c+v_i-v_0$, $0\le i\le m$, which are distinct.
\end{remark}

\begin{definition}\label{def:pairs}
For $g\in G$, a \emph{$g$-pair} is an ordered pair $(a,b)$
of distinct elements of $G$ with $a+b=g$.
A family $\mathcal P$ of $g$-pairs is \emph{disjoint}
if no element occurs in two different pairs. Write
$\operatorname{col}(\mathcal P)
=\bigcup_{(a,b)\in\mathcal P}\{a,b\}$,
also denoted by $\bigcup\mathcal P$.

Let $\mathcal P$ be a disjoint family of $g$-pairs,
and choose a subfamily $\mathcal P_{\rm flex}\subseteq\mathcal P$,
whose members we call \emph{flexible pairs}.
For an integer $0\le\ell\le|\mathcal P_{\rm flex}|$,
a \emph{$(\mathcal P_{\rm flex},\ell)$-absorbing incidence family}
is a sequence
$\mathcal P_1,\ldots,\mathcal P_{|\mathcal P|-\ell}
\subseteq\mathcal P$, each of size at most $40$, such that,
for every $\ell$-element subfamily
$\mathcal P'\subseteq\mathcal P_{\rm flex}$,
one can choose pairwise distinct pairs $p_i\in\mathcal P_i\setminus\mathcal P'$
for $1\le i\le|\mathcal P|-\ell$.
\end{definition}

In short, the flexible part of the construction uses disjoint colour pairs with
a common sum. An incidence family specifies the pairs available at
each position and permits a fixed number of flexible pairs to be
omitted, whichever pairs are chosen.


We will realize these choices by parallel two-edge paths with the same
endpoints. Joining several such structures produces a ``waveform", from
which a rainbow path can later be selected.

\begin{definition}\label{def:waveform}
Let $g\in G\setminus\{0\}$, $v\in G$, and
$E\subseteq G\setminus\{0\}$.
For a nonempty disjoint family $\mathcal Q$ of $g$-pairs
in $E$, the \emph{theta graph} $\Theta(v,\mathcal Q)$
is the union of the two-edge paths
$v\xrightarrow{\,a\,}v+a\xrightarrow{\,b\,}v+a+b=v+g$,
$(a,b)\in\mathcal Q$.

For $m\ge1$, let $\mathcal P_1,\ldots,\mathcal P_m$
be nonempty families of disjoint $g$-pairs with entries
in $E$; different families may overlap.
A \emph{waveform} starting at $u\in G$ consists of
pairwise vertex-disjoint theta graphs
$\Theta(v_i,\mathcal P_i)$, $1\le i\le m$, avoiding $u$,
together with the \emph{connector edges}
$u\to v_1$ and $v_i+g\to v_{i+1}$ for $1\le i<m$.
Their \emph{connector colours} $v_1-u$ and $v_{i+1}-v_i-g$ must be
pairwise distinct elements of
$E\setminus\bigcup_{i=1}^m\operatorname{col}(\mathcal P_i)$.
The waveform ends at $v_m+g$.
\end{definition}

\begin{remark}[Collapsing a waveform]
Consider a waveform formed from a
$(\mathcal P_{\rm flex},\ell)$-absorbing incidence family
$\mathcal P_1,\ldots,\mathcal P_m$.
For any $\ell$-element subfamily
$\mathcal P'\subseteq\mathcal P_{\rm flex}$, choose pairwise
distinct $p_i\in\mathcal P_i\setminus\mathcal P'$.
Retaining only the two-edge path corresponding to $p_i$
in each theta graph $\Theta(v_i,\mathcal P_i)$,
together with all connector edges, \emph{collapses}
the waveform to a rainbow path.
This path uses exactly
$\operatorname{col}(\mathcal P\setminus\mathcal P')$
together with the connector colours.
Figure~\ref{fig:waveform-collapse} illustrates this selection.
\end{remark}

\begin{figure}[htbp]
\centering
\begingroup%
\definecolor{wfAOne}{HTML}{0066CC}%
\definecolor{wfBOne}{HTML}{E77700}%
\definecolor{wfATwo}{HTML}{008A64}%
\definecolor{wfBTwo}{HTML}{D62728}%
\definecolor{wfAThree}{HTML}{8230B5}%
\definecolor{wfBThree}{HTML}{0099AD}%
\definecolor{wfCZero}{HTML}{795548}%
\definecolor{wfCOne}{HTML}{8C8B00}%
\definecolor{wfCTwo}{HTML}{C6007E}%
\definecolor{abGrey}{HTML}{65717A}%
\tikzset{figedge/.append style={line width=1.3pt}}%
\definecolor{wfAFour}{HTML}{946135}
\definecolor{wfBFour}{HTML}{9F9A00}
\definecolor{wfHalo}{HTML}{E1C6FA}
\begin{tikzpicture}[x=1cm,y=1cm,font=\small]
\node at (5.7,2.12) {$p_j=(a_j,b_j),\quad a_j+b_j=g$};
\coordinate (U) at (-.85,0);
\foreach \i/\x in {1/0,2/4.3,3/8.6}{
 \coordinate (L\i) at (\x,0);
 \coordinate (R\i) at (\x+2.8,0);
 \node at (\x+1.4,1.65) {$\mathcal P_{\i}$};
}
\coordinate (A11) at (1.4,1.10);
\coordinate (A12) at (1.4,0);
\coordinate (A13) at (1.4,-1.10);
\coordinate (A21) at (5.7,1.10);
\coordinate (A23) at (5.7,.37);
\coordinate (A22) at (5.7,-.37);
\coordinate (A24) at (5.7,-1.10);
\coordinate (A33) at (10.0,1.10);
\coordinate (A34) at (10.0,-1.10);
\draw[draw=wfHalo,line width=6pt,line join=round,line cap=round]
 (U)--(L1)--(A11)--(R1)--(L2)--(A22)--(R2)--(L3)--(A33)--(R3);
\foreach \l/\m/\r/\j/\ca/\cb in {
 L1/A11/R1/1/wfAOne/wfBOne,
 L1/A12/R1/2/wfATwo/wfBTwo,
 L1/A13/R1/3/wfAThree/wfBThree,
 L2/A21/R2/1/wfAOne/wfBOne,
 L2/A23/R2/3/wfAThree/wfBThree,
 L2/A22/R2/2/wfATwo/wfBTwo,
 L3/A33/R3/3/wfAThree/wfBThree}{
 \draw[figedge,draw=\ca] (\l)--node[pos=.63,sloped,above=1pt,font=\scriptsize] {$a_{\j}$} (\m);
 \draw[figedge,draw=\cb] (\m)--node[pos=.37,sloped,above=1pt,font=\scriptsize] {$b_{\j}$} (\r);
}
\foreach \l/\m/\r in {L2/A24/R2,L3/A34/R3}{
 \draw[figedge,draw=wfAFour,dashed] (\l)--node[pos=.63,sloped,below=1pt,font=\scriptsize] {$a_4$} (\m);
 \draw[figedge,draw=wfBFour,dashed] (\m)--node[pos=.37,sloped,below=1pt,font=\scriptsize] {$b_4$} (\r);
}
\foreach \l/\r/\col in {U/L1/wfCZero,R1/L2/wfCOne,R2/L3/wfCTwo}{
 \draw[figedge,draw=\col] (\l)--(\r);
}
\node[figvertex,label=below:{$u$}] at (U) {};
\foreach \i in {1,2,3}{
 \node[figvertex] at (L\i) {};
 \node[anchor=north,inner sep=0pt,yshift=-5pt] at (L\i) {$v_{\i}$};
 \node[figvertex] at (R\i) {};
 \node[anchor=north west,inner sep=0pt,xshift=2pt,yshift=-5pt] at (R\i) {$v_{\i}+g$};
}
\foreach \v in {A11,A12,A13,A21,A23,A22,A24,A33,A34}{\node[figvertex] at (\v) {};}
\end{tikzpicture}
\endgroup
\caption{A waveform and its collapse, adapted from
\cite{BBKMM}. Each pair in $\mathcal P_i$ gives a two-edge
path from $v_i$ to $v_i+g$. Omitting $p_4$, the highlighted
path selects $p_1,p_2,p_3$ and retains all connector edges.}
\label{fig:waveform-collapse}
\end{figure}

We now explain in more details how these ingredients are used in the proof. We first
establish the construction when all colours belong to a dense subgroup,
and then adapt it to the case where a small number of colours lie outside
that subgroup.
\begin{itemize}
\item (Regularization and robust main paths.)
Lemma~\ref{ext:regularity} allows us to pass to a subgroup $H$ containing
almost all the core colours, where the associated Cayley digraph has no
sparse cut. Lemma~\ref{ext:mainpath} then supplies long rainbow paths
between prescribed vertices, uniformly even after certain reserved
colours have been forbidden.
\vskip .1in
\item  (Flexible pairs and waveforms.) We reserve disjoint $g$-pairs for a common nonzero sum $g$.
The matching template in Lemma~\ref{ext:matching-template} and
Corollary~\ref{ext:flex} produces an absorbing incidence family:
after any $\ell$ flexible pairs are set aside, the remaining pairs
can be assigned bijectively to the theta graphs.
Lemma~\ref{ext:wave} embeds the corresponding waveform using
fresh connector colours and vertices from a reserved random set.
\vskip .1in

\item  (Absorption of unused colours.) The main paths leave a small set $L$ of colours unused.
Proposition~\ref{prop:corrected-pair-absorber} constructs a rainbow
path using these colours together with exactly $\ell$ selected flexible
pairs, omitting at most one colour. Unlike the setting of
\cite{BBKMM}, distinct colours $a$ and $-a$ can occur;
when they are the only two unused colours, a two-stage extension
avoids the repeated vertex that would arise from the usual step.
Lemma~\ref{lem:restricted-pair-absorber} supplies the version needed
when the possible starting vertices lie in only a few $H$-cosets.
\vskip .1in

\item (Dense subgroup and exceptional colours.)
Proposition~\ref{prop:repaired-dense-source} handles the case
in which all colours lie in a subgroup $H$ and occupy at
least three quarters of it.
When exceptional colours remain, Lemma~\ref{lem:fixed-order-exceptional}
incorporates them in a prescribed order, preceding each
by a padding colour from $H$.
The resulting exceptional path joins the relevant cosets.
If it returns to its starting coset, we remove its last
edge and leave that colour for the absorber, allowing
the two main paths to lie in distinct cosets.

\vskip .1in

\item (Final assembly.) In Proposition~\ref{prop:localized-dense-core} we reserve disjoint
vertex sets for the exceptional path, the waveform, the main paths,
and the absorber. After constructing these pieces, we identify the
unused colours, select the flexible pairs needed by the absorber,
and collapse the waveform using all remaining pairs.
The paths can then be concatenated without repeating either vertices
or colours. They use all but at most one colour of $S$, which yields
a valid ordering by the observation after Definition~\ref{def:cayley}.
\end{itemize}

Putting together, our construction is probabilistic in the choice of auxiliary colour and vertex sets, but the final assembly is deterministic. The main probabilistic input is the robust rainbow-path result, Lemma~\ref{ext:mainpath}, taken from~\cite{BBKMM}. For the waveforms and absorbers, Lemma~\ref{ext:wave}, Proposition~\ref{prop:corrected-pair-absorber}, and Lemma~\ref{lem:restricted-pair-absorber} use independent sampling, Chernoff bounds, and union bounds to ensure that sufficiently many local extensions are available simultaneously. The same approach is used for the exceptional paths in Lemma~\ref{lem:fixed-order-exceptional}. Once these properties hold, we fix the sampled sets and assemble the desired rainbow path deterministically.

\subsection{Regularization and robust rainbow paths} The next lemma finds a subgroup containing almost all of $S$
such that the Cayley graph generated by the retained colours
satisfies a quantitative sparse-cut bound.
It is the abelian specialization of
\cite[Corollary~4.1]{BBKMM}.
In the localization argument, we apply it to $S\cap K$ inside
$K$, so the relevant density is $|S\cap K|/|K|$.

\begin{lemma}
[{\cite[Corollary~4.1]{BBKMM}}]\label{ext:regularity}
Let $\varnothing\ne S\subseteq G$ and $0<\epsilon<1/2$.
There is a subgroup $H\le G$ such that
$|S\cap H|\ge(1-\epsilon)|S|$ and
$\operatorname{Cay}_H(S\cap H)$ has no
$\frac{\epsilon}{1000}\left(\frac{|S|}{|G|}\right)^3$-sparse cut.
\end{lemma}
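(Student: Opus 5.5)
The plan is an iterative densification argument in the spirit of \cite{BBKMM}. We build a descending chain of subgroups $G=H_0>H_1>\cdots$ and track $S_j=S\cap H_j$. At each stage we test whether $\Gamma_j=\operatorname{Cay}_{H_j}(S_j)$ has a $\zeta_j$-sparse cut. Here we take $\zeta_j=\frac{\epsilon_j}{1000}\delta_j^3$, with $\delta_j=|S_j|/|H_j|$ and small losses $\epsilon_j$ that sum to at most $\epsilon$; a natural choice is $\epsilon_j=\epsilon 2^{-j-1}$, though we may choose geometric decay tied to the density increment instead. If there is no sparse cut, we stop and output $H=H_j$. Since $|S_j|\ge(1-\epsilon)|S|$ and $|H_j|\le|G|$, the density satisfies $\delta_j\ge(1-\epsilon)|S|/|G|$. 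Keeping track of constants, this gives the claimed bound $\frac{\epsilon}{1000}(|S|/|G|)^3$. To make the constant work, it may be cleaner to run the iteration with target sparseness $\frac{\epsilon}{1000}(|S|/|G|)^3$ directly at every stage.

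The key step is to extract, from a sparse cut $H_j=X\sqcup Y$, a proper subgroup $H_{j+1}<H_j$ that keeps almost all of $S_j$. This is the abelian specialization of the structural step in \cite{BBKMM}. The edge count is $e(X,Y)=\sum_{s\in S_j}|(X+s)\cap Y|$, so sparseness means that for most $s\in S_j$ the translate $X+s$ nearly equals $X$. Equivalently, $|X\triangle(X+s)|$ is small relative to $|X||Y|/|H_j|$ on average. Let $Q=\{s\in H_j:|X\triangle (X+s)|\le\eta\min(|X|,|Y|)\}$ be the set of approximate periods of $X$. For small $\eta$, $Q$ is contained in a proper subgroup $K$: indeed, $Q$ is nearly closed under addition, since $|X\triangle(X+s+t)|\le|X\triangle(X+s)|+|X\triangle(X+t)|$. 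We then use the standard fact that a set of popular approximate periods of a set of density bounded away from $0$ and $1$ generates a subgroup whose cosets $X$ nearly respects. In particular $\langle Q\rangle$ is proper, or else $X$ would be nearly $H_j$-invariant, contradicting $\min(|X|,|Y|)>0$ with quantified slack. Markov's inequality on the edge count shows $|S_j\setminus Q|\le\epsilon_j|S_j|$, provided $\zeta_j$ is chosen comparably to $\eta\epsilon_j\delta_j$. We set $H_{j+1}=\langle Q\rangle$.

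The iteration terminates because each step strictly decreases $|H_j|$, and the total retained mass is at least $\prod(1-\epsilon_j)|S|\ge(1-\epsilon)|S|$. Density can only increase, since $|S_{j+1}|/|H_{j+1}|$ gains a factor at least $2(1-\epsilon_j)$. This forces at most $O(\log(|G|/|S|))$ steps, and it also ensures that $\epsilon_j$ may be taken summable. The main obstacle is the approximate-period step, namely proving rigorously that $\langle Q\rangle\neq H_j$. To handle it, we would first try the Kneser-type argument already used in Lemma~\ref{lem:Fourier-moment}: taking a large multiple $mQ$ still gives approximate periods with error $m\eta$. If we choose $m\eta<1/2$ and $m$ larger than the number of growth steps permitted by Theorem~\ref{thm:analytic-kneser}, then the stabilizer of $mQ$ is a subgroup of approximate periods containing $Q$ up to translation, and it must be proper. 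Since the statement is quoted from \cite[Corollary~4.1]{BBKMM}, we may alternatively simply cite it, noting that their proof for general groups specializes verbatim to abelian $G$.
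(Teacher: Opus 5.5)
The paper gives no proof of this lemma. It quotes \cite[Corollary~4.1]{BBKMM}, which is proved there for arbitrary finite groups, so your fallback of citing it is exactly the paper's route. Your self-contained argument is a genuinely different, abelian-only route, and it does work, but only through the Kneser argument in your last paragraph.

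The justification you give first for the key step is false. Near-closure of $Q$ under addition, with errors that add up, does not place $Q$ in a proper subgroup. Take $\mathbb{Z}_N$ with $N$ prime and $X$ an interval of length $\lfloor N/2\rfloor$. Every $s$ with $|s|\le \eta N/5$ is then an $\eta$-approximate period, and these elements generate $\mathbb{Z}_N$. For the same reason, $\langle Q\rangle=H_j$ does not make $X$ nearly $H_j$-invariant, because generating $H_j$ can require far more than $1/\eta$ summands.

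What rescues the step is that $Q$ is \emph{large} while $\eta$ is small compared with the density. Since $Q$ contains all but $\epsilon_j|S_j|$ elements of $S_j$, we have $|Q|\ge(1-\epsilon_j)\delta_j|H_j|$. Markov only requires $\eta=2\zeta/(\epsilon_j\delta_j)$, which is much smaller than $\delta_j$. Because $0\in Q$, Theorem~\ref{thm:analytic-kneser} gives $|(i+1)Q|\ge|iQ|+|Q|/2$ unless $Q$ lies in the stabilizer of $(i+1)Q$. Hence for some $i\le m\approx 2/\delta_j$ the stabilizer $P$ of $iQ$ contains $Q$ outright, not merely up to translation. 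Moreover $P\subseteq iQ$, and every element of $iQ$ is an $i\eta$-approximate period. Averaging $|X\triangle(X+s)|$ over all $s\in H_j$ gives $2|X||Y|/|H_j|\ge\min(|X|,|Y|)$, so $iQ\ne H_j$ once $m\eta<1$. Therefore $P$ is proper and $\langle Q\rangle\le P$; you should make this explicit rather than leave it as something you ``would try''.

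For the bookkeeping, use the fixed target $\zeta=\frac{\epsilon}{1000}(|S|/|G|)^3$ at every stage, as you suggest. Your first choice $\zeta_j=\frac{\epsilon_j}{1000}\delta_j^3$ already loses a factor $2$ at $j=0$. Take $\epsilon_j=\epsilon 2^{-j-1}$ and use the doubling $\delta_j\ge 2^j(1-\epsilon)\delta_0$. Then $m\eta_j=O(2^{-j}\delta_0)$, so $m\eta_j<1$ holds with room to spare, and the retained mass is at least $(1-\epsilon)|S|$.

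Compared with the citation, your route gives a short self-contained proof for abelian groups using only Kneser's theorem, which the paper already uses in Lemma~\ref{lem:Fourier-moment}. It even has slack: it allows $\zeta$ of order $\epsilon(|S|/|G|)^2$. That order is optimal up to constants, as an interval $S$ of density $\delta$ in $\mathbb{Z}_N$ shows. The cited result, by contrast, also covers nonabelian groups, where Kneser's theorem is unavailable.
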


The main path must be chosen after the reserved colours are known.
The next input is therefore uniform over every forbidden colour
set contained in the sampled palette $D$, with both endpoints
prescribed. These are features of \cite[Lemma~5.7]{BBKMM}; we
specialize its forbidden vertex set to the empty set and its density
parameter to $|S|/|G|$.

\begin{lemma}[
{\cite[Lemma~5.7]{BBKMM}}]\label{ext:mainpath}
Let $S\subseteq G\setminus\{0\}$ satisfy
$8|G|^{-1/9500}\le\zeta,\mu,q,|S|/|G|\le1$.
Assume $\operatorname{Cay}_G(S)$ has no $\zeta$-sparse cut
(Definition~\ref{def:cayley}).
Sample a $q$-random vertex set $M \subseteq G$ and, independently, a
$q'$-random subset $D\subseteq S$, where
$q\ge(1+\mu)|S|/|G|$ and $0\le q'\le1-\mu q/4$.
With probability at least $1-7/|G|$, simultaneously for every
$S_F\subseteq D$ and all distinct $u,v\in G$, there is a rainbow
path from $u$ to $v$ with colours in $S\setminus S_F$, omitting
at most $\mu q|G|$ colours of that set, and with all internal vertices
in $M$.
\end{lemma}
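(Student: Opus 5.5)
This lemma is a specialization of \cite[Lemma~5.7]{BBKMM}, so my plan is to deduce it by checking hypotheses and matching parameters rather than reproving it. In the source, the lemma is stated for a general finite group $G$ and a colour set $S$ of density at least some parameter $\delta$. It also allows a forbidden vertex set $V_F$, which the paths must avoid, of size at most a small multiple of $\mu q|G|$. I would apply it with $G$ abelian (a special case), $V_F=\varnothing$ (so any smallness condition on $V_F$ holds trivially), and $\delta=|S|/|G|$. The lower bound $8|G|^{-1/9500}\le\zeta,\mu,q,\delta\le1$ and the no-$\zeta$-sparse-cut condition on $\operatorname{Cay}_G(S)$ are exactly the hypotheses assumed here. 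The sampling assumptions $q\ge(1+\mu)|S|/|G|$ and $q'\le 1-\mu q/4$ are those of the source with $\delta$ replaced by $|S|/|G|$.

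The main point to verify is the order of quantifiers. Lemma~5.7 of \cite{BBKMM} produces a single good event for $(M,D)$, of probability at least $1-7/|G|$. On that event, paths exist simultaneously for all $S_F\subseteq D$ and all distinct endpoints. This is what our application needs: $D$ is sampled once, and $S_F$ is fixed only later, after the reserved colours are revealed. I would therefore confirm that the source's conclusion reads ``with probability $\ge 1-7/|G|$, for every $S_F\subseteq D$ and $u\ne v$'', and not ``for each fixed $S_F$, with high probability''. The conclusions then translate verbatim: a rainbow $u$--$v$ path with colours in $S\setminus S_F$, all internal vertices in $M$, omitting at most $\mu q|G|$ colours of $S\setminus S_F$.

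For the reader's orientation, I would also recall why the statement is plausible, though this is not needed for the deduction. The absence of a sparse cut gives robust expansion in $\operatorname{Cay}_G(S)$. Since $q$ exceeds the colour density by a factor $1+\mu$, the random set $M$ has slightly more vertices than there are colours. This lets one greedily build a long rainbow path inside $M$ covering all but $\mu q|G|$ colours, and then connect to the prescribed $u,v$ by short rainbow paths via expansion. The condition $q'\le1-\mu q/4$ guarantees that forbidding any $S_F\subseteq D$ still leaves a $\mu q/4$-fraction of each colour class in play, which is what keeps the construction valid uniformly in $S_F$.

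The one real obstacle is bookkeeping: the exact normalization of the density parameter and of the omitted-colour count in \cite{BBKMM}. If the source phrases the omission bound relative to $|M|$ rather than $q|G|$, I would absorb the discrepancy using Chernoff concentration $|M|\le(1+o(1))q|G|$. Doing so costs a negligible amount of probability, and it may require shrinking $\mu$ by a constant, which is harmless given the lower bound $8|G|^{-1/9500}$.
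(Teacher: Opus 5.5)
Your proposal is correct and matches the paper. The paper gives no independent proof: it specializes \cite[Lemma~5.7]{BBKMM} by taking the forbidden vertex set empty and the density parameter equal to $|S|/|G|$, and, as you emphasize, it relies on the source's conclusion holding simultaneously for all $S_F\subseteq D$ and all endpoints on a single event of probability at least $1-7/|G|$. One caution: your fallback of shrinking $\mu$ by a constant is not automatically harmless, since it can violate $\mu\ge 8|G|^{-1/9500}$ when $\mu$ is near that bound. However, no such adjustment is needed, because the paper uses the cited statement verbatim.
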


We will also use the following standard Chernoff bounds.

\begin{lemma}[Chernoff bounds;
  {\cite[Theorems~A.1.12 and~A.1.13]{AS08}}]
\label{lem:Chernoff}
Let $X$ be a sum of independent Bernoulli variables and $\mu=\Ex X$.
For $0<\theta<1$,
\begin{itemize}
\item $\Prb(X\le(1-\theta)\mu)\le e^{-\theta^2\mu/2}$;
\item $\Prb(X\ge(1+\theta)\mu)\le e^{-\theta^2\mu/3}$.
\end{itemize}
\end{lemma}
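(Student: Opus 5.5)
This is the standard multiplicative Chernoff bound for sums of independent Bernoulli variables. I would prove it by the exponential-moment method rather than quoting it. Write $X=\sum_i X_i$ with $X_i\sim\mathrm{Ber}(p_i)$ independent and $\mu=\sum_i p_i$. For any $\lambda>0$, independence and $1+x\le e^x$ give
\[
\Ex e^{\lambda X}=\prod_i\bigl(1+p_i(e^\lambda-1)\bigr)\le \exp\bigl(\mu(e^\lambda-1)\bigr),
\]
and likewise $\Ex e^{-\lambda X}\le\exp\bigl(\mu(e^{-\lambda}-1)\bigr)$.

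\emph{Upper tail.} Markov's inequality applied to $e^{\lambda X}$ gives
\[
\Prb(X\ge(1+\theta)\mu)\le \exp\bigl(\mu(e^\lambda-1)-\lambda(1+\theta)\mu\bigr).
\]
Choosing $\lambda=\log(1+\theta)$ yields $\bigl(e^\theta/(1+\theta)^{1+\theta}\bigr)^\mu$. It remains to check the elementary inequality
\[
h(\theta):=(1+\theta)\log(1+\theta)-\theta-\theta^2/3\ge0 \quad\text{for } 0<\theta<1.
\]
This follows from $h(0)=h'(0)=0$ together with $h''(\theta)=1/(1+\theta)-2/3\ge0$, since $1+\theta\le 3/2$ on $[0,1/2]$. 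On $[1/2,1]$, $h'$ first increases and then decreases, and $h'(1)=\log 2-2/3>0$, so $h'\ge0$ throughout.

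\emph{Lower tail.} Markov's inequality applied to $e^{-\lambda X}$ with $\lambda=-\log(1-\theta)$ gives
\[
\Prb(X\le(1-\theta)\mu)\le\bigl(e^{-\theta}/(1-\theta)^{1-\theta}\bigr)^\mu.
\]
So we need $(1-\theta)\log(1-\theta)+\theta-\theta^2/2\ge0$. Expanding in power series, the left side equals $\sum_{k\ge3}\theta^k/\bigl(k(k-1)\bigr)\ge0$.

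Neither step is a real obstacle. The only care needed is in the two calculus inequalities. The upper-tail one is the more delicate, because the constant $1/3$ is close to sharp near $\theta=1$. I would handle it by the second-derivative argument above, or simply cite \cite[Theorems~A.1.12 and~A.1.13]{AS08} as the paper does.
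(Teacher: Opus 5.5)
Your proof is correct: the paper gives no argument of its own and simply cites Alon--Spencer, where these bounds come from the same exponential-moment method (Markov's inequality applied to $e^{\pm\lambda X}$), and your two elementary inequalities are what reduce the resulting bounds $\bigl(e^{\theta}/(1+\theta)^{1+\theta}\bigr)^\mu$ and $\bigl(e^{-\theta}/(1-\theta)^{1-\theta}\bigr)^\mu$ to the stated exponentials. One wording fix: on $[1/2,1]$ the function $h'$ is simply decreasing (on all of $[0,1]$ it is concave, increasing and then decreasing), so $h'\ge\min\{h'(0),h'(1)\}=0$ throughout $[0,1]$.
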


\subsection{Matching templates and waveforms}

We need a waveform that can be collapsed after any $\ell$ flexible pairs have been reserved for the absorber. Montgomery's matching template, in the form of \cite[Proposition~9.3]{BBKMM}, provides the required pair choices.

\begin{lemma}
\label{ext:matching-template}
For positive integers $\ell\le t$, there is a bipartite graph with
parts $X\sqcup Y$ and $Z$, where $|X|=t+\ell$, $|Y|=2t$ and
$|Z|=3t$, of maximum degree at most $40$, such that deleting any
$\ell$ vertices of $X$ leaves a perfect matching between the two parts.
\end{lemma}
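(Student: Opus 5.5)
This is Montgomery's matching template. I would construct it probabilistically and verify it with Hall's theorem, following \cite[Proposition~9.3]{BBKMM}.

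\textbf{Construction.} Take $Z$ of size $3t$ and $X\sqcup Y$ of size $3t$. Form the union of a bounded number $K$ (say $K\le 20$) of independent uniformly random perfect matchings between $Z$ and $X\sqcup Y$. In addition, add a few random bipartite matchings between $X$ and a fixed subset of $Z$ of size $t+\ell$. The maximum degree is then at most $2K\le 40$. The shape of the construction matters: deleting $\ell$ vertices of $X$ leaves $3t$ vertices on each side. So we need, for every $\ell$-set $R\subseteq X$, a perfect matching between $(X\setminus R)\sqcup Y$ and $Z$.

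\textbf{Hall's condition.} For a fixed $R$, I would verify Hall's condition for sets $W\subseteq (X\setminus R)\sqcup Y$, splitting by size.

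\emph{Small sets} ($|W|\le \delta t$). A union of $K$ random matchings is, with high probability, a bipartite expander in which such sets expand by a factor greater than $2$, uniformly over all $W$. Deleting $R$ only removes vertices from the left side, so $|N(W)|\ge|W|$ holds regardless of $R$.

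\emph{Large sets} ($|W|\ge 3t-\delta t$). I would argue by complements on the $Z$ side. If a set $Z'\subseteq Z$ has a small neighbourhood, then since $Z'$ expands into $X\sqcup Y$ and $R$ has only $\ell\le t$ elements, the neighbourhood of $Z'$ outside $R$ still has size at least $|Z'|$.

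\emph{Middle range.} Show that any $W$ and $Z'$ of sizes $\delta t\le|W|,|Z'|$ with $|W|+|Z'|> 3t$ have an edge between them. The union bound for this is comfortable, because each random perfect matching avoids a given pair of linear-size sets with probability $e^{-\Omega(t)}$ per matching.

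\textbf{Main obstacle.} The delicate point is uniformity over the $\binom{t+\ell}{\ell}$ choices of $R$. When $\ell$ is comparable to $t$, this count is exponential in $t$. A naive union bound must therefore beat about $2^{t+\ell}$. I expect this is the point where the degree bound $40$ is needed: with $K\approx 20$ matchings, the failure probability per configuration should be small enough to cover it. Rather than union-bounding over $R$, I would first derive an $R$-independent expansion property, namely that every $W$ has $|N(W)|\ge\min\{2|W|,\ldots\}$, with $Y$ providing slack. I would then deduce Hall's condition deterministically for all $R$ simultaneously, using only that deleting $R$ shrinks the left side by $\ell$ while $Y$ and the remaining part of $X$ still expand.

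Small $t$ are handled by the case $t$ bounded, where a direct construction such as complete bipartite blocks of bounded size suffices.
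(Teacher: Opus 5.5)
Your overall framework is the right one: a random bounded-degree bipartite graph, Hall's theorem split into small, middle and large ranges, and properties that do not depend on $R$. It has the shape of Montgomery's argument behind \cite[Proposition~9.3]{BBKMM}, which the paper cites without proof. However, the step that carries the robustness is wrong. In the large-$W$ case you need $|N(Z')\setminus R|\ge|Z'|$ for every small $Z'\subseteq Z$ and every $\ell$-set $R\subseteq X$. You justify this by saying that $Z'$ expands into $X\sqcup Y$ and $R$ has only $\ell\le t$ elements, and that gives nothing. The maximum degree is $40$, so $|N(Z')|\le40|Z'|$, and whenever $|Z'|\le\ell/40$ an adversarial $R$ can contain all of $N(Z')\cap X$. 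For instance, if some $z\in Z$ has all its neighbours in $X$, deleting them isolates $z$ as soon as $\ell\ge40$. The only part of the neighbourhood that survives every deletion is $N(Z')\cap Y$. So the property you must build into the construction and then prove is expansion of small subsets of $Z$ into the undeletable part $Y$ alone, say $|N(Z')\cap Y|\ge|Z'|$ for all $|Z'|\le\delta t$. This is what $Y$ and the asymmetry $|Y|=2t<3t=|Z|$ are for. Your proposal never states or verifies it, and ``with $Y$ providing slack'' does not replace it.

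The same omission affects your middle range. As phrased, for arbitrary $W\subseteq X\sqcup Y$, the claim is false for every graph of maximum degree $40$ once $\ell\ge40\delta t$. Take $|Z'|=\delta t$ and $W=(X\sqcup Y)\setminus N(Z')$; then $|W|+|Z'|\ge3t+\ell-39\delta t>3t$, yet there are no edges between them. You have to use that $W\subseteq(X\setminus R)\sqcup Y$ forces $|W\cap X|\le t$, hence $|Y\setminus W|<|Z'|$ whenever $|W|+|Z'|>3t$. For small $Z'$ an edge is then forced by $|N(Z')\cap Y|\ge|Z'|$, so again the $Y$-expansion is the crux. Once everything is phrased over pairs $(W,Z')$ with $|W\cap X|\le t$, the feared union bound over $\binom{t+\ell}{\ell}$ choices of $R$ never arises.

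Several details also need repair.
\begin{itemize}
\item The construction does not type-check: $|X\sqcup Y|=3t+\ell$, so there is no perfect matching between $Z$ and $X\sqcup Y$. You need an explicit construction for which the $Y$-expansion provably holds.
\item With the degree capped at $40$, you cannot take $\delta$ arbitrarily small. You must check with explicit constants that the three ranges overlap.
\item For small $t$, the complete bipartite graph has degree $3t+\ell$, which is at most $40$ only when $3t+\ell\le40$. A disjoint union of bounded blocks fails, because $R$ may concentrate in a single block.
\end{itemize}
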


Interpreting the template vertices as colour pairs gives the incidence
family required in Definition~\ref{def:pairs}.

\begin{corollary}[
{\cite[Corollary~9.5]{BBKMM}}]\label{ext:flex}
Let $1\le\ell\le t$ be integers, and let $\mathcal P$ be a
disjoint family of $3t+\ell$ pairs having the same sum.
There is a subfamily
$\mathcal P_{\rm flex}\subseteq\mathcal P$ of size $t+\ell$
and a $(\mathcal P_{\rm flex},\ell)$-absorbing incidence family
$\mathcal P_1,\ldots,\mathcal P_{3t}$
(Definition~\ref{def:pairs}) satisfying
$\bigcup_{i=1}^{3t}\mathcal P_i=\mathcal P$.
\end{corollary}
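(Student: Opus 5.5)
The statement to prove is Corollary~\ref{ext:flex}. The plan is to read it off Lemma~\ref{ext:matching-template} by labelling template vertices with pairs.

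Apply Lemma~\ref{ext:matching-template} with the given $\ell\le t$. This gives a bipartite graph $\Gamma$ with parts $X\sqcup Y$ and $Z$, where $|X|=t+\ell$, $|Y|=2t$, $|Z|=3t$, and maximum degree at most $40$. Since $|\mathcal P|=3t+\ell=|X\sqcup Y|$, fix a bijection $\phi:X\sqcup Y\to\mathcal P$ and set $\mathcal P_{\rm flex}=\phi(X)$, which has size $t+\ell$. Enumerate $Z=\{z_1,\ldots,z_{3t}\}$ and define
\[
\mathcal P_i=\phi(N_\Gamma(z_i))\subseteq\mathcal P,\qquad 1\le i\le 3t.
\]
Each $\mathcal P_i$ has size at most $40$ by the degree bound. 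The sequence has the required length $|\mathcal P|-\ell=3t$.

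For the absorbing property, let $\mathcal P'\subseteq\mathcal P_{\rm flex}$ have $\ell$ elements and put $X'=\phi^{-1}(\mathcal P')\subseteq X$. By the lemma, $\Gamma-X'$ has a perfect matching between $(X\setminus X')\sqcup Y$ and $Z$; both sides have size $3t$. Let $w_i$ be the partner of $z_i$ and set $p_i=\phi(w_i)$. Since $w_i\in N_\Gamma(z_i)\setminus X'$, we get $p_i\in\mathcal P_i\setminus\mathcal P'$. The $p_i$ are pairwise distinct because the matching is injective and $\phi$ is a bijection.

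It remains to check $\bigcup_i\mathcal P_i=\mathcal P$, that is, every vertex of $X\sqcup Y$ has a neighbour in $Z$. A vertex of $Y$ lies in every perfect matching of the lemma, so it has a neighbour. For $x\in X$, delete $\ell$ vertices of $X$ other than $x$. This is possible when $t+\ell-1\ge\ell$, which holds since $t\ge1$. The resulting perfect matching covers $x$, so $x$ has a neighbour. This step is the only point needing any care; otherwise the corollary is a direct translation of the template.
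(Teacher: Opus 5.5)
Your proposal is correct and follows essentially the same route as the paper: identify $\mathcal P$ with $X\sqcup Y$, take $\mathcal P_{\rm flex}$ to be the image of $X$, and let the $\mathcal P_i$ be the images of the neighbourhoods of the vertices of $Z$. Your explicit check that each $x\in X$ is left undeleted by some choice of $\ell$ deleted vertices (using $t\ge1$) just spells out the paper's one-line justification of $\bigcup_i\mathcal P_i=\mathcal P$.
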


\begin{proof}
Identify the pairs of $\mathcal P$ with $X\sqcup Y$ in the matching
template, and let $\mathcal P_{\rm flex}$ correspond to $X$.
For each of the $3t$ vertices of $Z$, let $\mathcal P_i$ be its
neighbourhood. The degree bound gives $|\mathcal P_i|\le40$.
After deleting any $\ell$ pairs of $\mathcal P_{\rm flex}$,
the perfect matching chooses distinct representatives from every
$\mathcal P_i$ and uses every remaining pair. This is precisely the
required incidence family. Moreover $\bigcup_i\mathcal P_i=\mathcal P$:
every pair can be left undeleted in one of these matchings.
\end{proof}

The next lemma embeds the prescribed theta graphs in a
random vertex set and joins them with fresh connector
colours, simultaneously for every starting vertex.

\begin{lemma}[
{\cite[Lemma~9.8]{BBKMM}}]\label{ext:wave}
Let $|G|\ge2$, let $g\in G\setminus\{0\}$, and let
$E\subseteq G\setminus\{0\}$, and let $m\ge1$ be an integer. Let
$\mathcal P_1,\ldots,\mathcal P_m$ be families of disjoint
$g$-pairs in $E$, with $1\le|\mathcal P_i|\le40$; pair families
at different indices may overlap. Suppose $0<p\le1$ and
$|E|p^{42}/2^{18}\ge\max\{m,\log|G|\}$.
For a $p$-random vertex set $R\subseteq G$, with probability at least $1-1/|G|$,
every $u\in G$ is the start of a waveform corresponding to this
sequence (Definition~\ref{def:waveform}), with all its other vertices in $R$.
\end{lemma}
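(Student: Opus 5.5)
We embed the theta graphs one at a time, greedily, in the order $\Theta(v_1,\mathcal P_1),\ldots,\Theta(v_m,\mathcal P_m)$, and show that with high probability over $R$ every greedy step has many valid choices. Fix a starting vertex $u$. At step $i$ the current endpoint is $w_{i-1}$ (with $w_0=u$ and $w_{i-1}=v_{i-1}+g$ for $i\ge2$). We must choose a connector colour $c\in E\setminus\bigcup_j\operatorname{col}(\mathcal P_j)$, not used before, and put $v_i=w_{i-1}+c$. Step $i$ succeeds if the following vertices all lie in $R$ and avoid every previously used vertex and $u$:
\[
 v_i,\qquad v_i+a\ \ \text{for }(a,b)\in\mathcal P_i,\qquad v_i+g.
\]
This is at most $42$ vertices, which is the source of the exponent $42$ in the hypothesis. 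The connector colours stay distinct and avoid the pair colours by construction. Vertex-disjointness is handled by forbidding the at most $42m$ used vertices.

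The first step is to reduce to colours that are well separated. The union $\bigcup_j\operatorname{col}(\mathcal P_j)$ may be large, but we can discard it only if it is a small fraction of $E$. Write $E'$ for the set of available connector colours. If $E'$ is too small, we first shrink to a suitable subfamily, or, since $|\mathcal P_j|\le 40$ with pairs counted with overlap, we show that at least $|E|/2$ colours remain usable. Otherwise we would use the fact that pair colours themselves can serve as connectors for non-incident thetas. I expect the original lemma's hypotheses to ensure this; I would follow \cite[Lemma~9.8]{BBKMM} and treat the point as bookkeeping.

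The probabilistic core is as follows. For a fixed current vertex $w$ and a fixed pair family $\mathcal Q$, consider the candidate sets $X_c=\{w+c\}\cup\{w+c+a:(a,b)\in\mathcal Q\}\cup\{w+c+g\}$ for $c\in E'$. Each $X_c$ lies in $R$ with probability $p^{|X_c|}\ge p^{42}$. These events are not independent, since the sets $X_c$ overlap. However, each vertex lies in at most $42$ of the $X_c$ (translates by $c$ of a fixed $42$-set). So we can greedily extract a subfamily of at least $|E'|/42^2$ pairwise disjoint candidates. By Chernoff (Lemma~\ref{lem:Chernoff}), with probability $1-\exp(-\Omega(|E|p^{42}/2^{12}))$ at least $|E|p^{42}/2^{13}$ of these disjoint candidates lie in $R$. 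A union bound over $w\in G$ and over the $m$ distinct families $\mathcal P_i$ costs a factor $|G|m\le |G|^2$. The hypothesis $|E|p^{42}/2^{18}\ge\log|G|$ makes the total failure probability at most $1/|G|$.

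On this good event the greedy construction proceeds deterministically for every $u$. At step $i$ we have at least $|E|p^{42}/2^{13}$ disjoint good candidates. Each previously used vertex (at most $42m+1$ of them) kills at most one disjoint candidate, and each used connector colour kills at most one candidate. Since $|E|p^{42}/2^{18}\ge m$, more than $43m+1$ candidates are available, so one survives. The main obstacle is the dependence among the candidate events together with making the bound uniform in $w$ and $\mathcal Q$. The disjoint-subfamily trick combined with Chernoff and a union bound over the at most $|G|\cdot m$ (endpoint, family) pairs is what I would try first. A subtle point is that $w$ depends on earlier choices, which is why the estimate is required simultaneously for all $w$.
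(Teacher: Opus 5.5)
Your proposal is essentially the paper's argument. For each $(w,i)$ you fix, before sampling $R$, a greedily chosen family of pairwise vertex-disjoint candidates $w+c+B_i$, where $B_i=\{0,g\}\cup\{a:(a,b)\in\mathcal P_i\}$, using $|B_i-B_i|\le 42^2$. You then apply Chernoff, take a union bound over the $|G|m$ choices of $(w,i)$, and build the waveform greedily for every start $u$, with each used vertex and each used connector colour excluding at most one candidate. Two steps of your write-up do not close as stated, although both are easy to repair with the same method.

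\emph{The set $F$ of pair colours.} The treatment of $F=\bigcup_j\operatorname{col}(\mathcal P_j)$ cannot be left as bookkeeping. Your fallback of using pair colours as connectors is not allowed, because Definition~\ref{def:waveform} requires connector colours to lie in $E\setminus F$. The hypothesis settles this directly. Each family contributes at most $80$ colours, so $|F|\le 80m$, and $m\le |E|p^{42}/2^{18}\le |E|/2^{18}$. Hence $|F|\le |E|/2^{11}$ and $|E\setminus F|\ge(1-2^{-11})|E|$. The same estimate gives $m<|G|$, which your union bound needs.

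\emph{The final count.} From $|E|p^{42}/2^{13}$ survivors you only get $32m$. This does not exceed the up to $1+43(m-1)$ excluded candidates once $m\ge4$, so your claim of ``more than $43m+1$'' does not follow. Use the sharper bound on $|E\setminus F|$ instead. Then the disjoint family has size at least $|E|/2048$, and the mean number of survivors is $\mu\ge |E|p^{42}/2^{11}\ge 128m$. Chernoff gives at least $64m>1+43(m-1)$ survivors, except with probability $e^{-\mu/8}\le |G|^{-16}$. The union bound over fewer than $|G|^2$ choices of $(w,i)$ then finishes.

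\emph{A smaller point.} You should check, as the paper does, that the elements of $B_i$ are distinct. This holds because $g\ne0$, pair entries are nonzero (so each first entry $a$ satisfies $a\ne 0$ and $a\ne g$), and the pairs are disjoint. Then each candidate really is a theta graph on $|B_i|\le 42$ vertices.
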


For completeness, we give the proof in
Appendix~\ref{app:waveforms}, following
\cite[Lemma~9.8]{BBKMM}.

\subsection{Pair absorbers}
The next proposition combines at most $\ell$ unused colours
with the colours of $\ell$ selected equal-sum pairs into a
rainbow path, omitting at most one colour in total.
The path starts at any prescribed vertex and has all other
vertices in a random set. In the assembly, the pairs are
selected from the flexible family, so the waveform can be
collapsed while avoiding them.

The proof adapts \cite[Lemma~9.9]{BBKMM}, using a two-stage
extension when the remaining unused colours are $a$ and $-a$
with $a\ne-a$.

\begin{proposition}
\label{prop:corrected-pair-absorber}
Let $|G|\ge2$ and $g\in G$.
Let $0<p\le1$, and let $t\ge40$ and $\ell\ge0$ be integers
satisfying $tp^3\ge\max\{2^{16}\log|G|,512\ell\}$.
Let $\mathcal P$ be a disjoint family of at least
$t+\ell$ $g$-pairs with entries in $G\setminus\{0\}$.
For a $p$-random set $V_T\subseteq G$, with probability
at least $1-1/|G|$, simultaneously for every $v\in G$
and every $L\subseteq G\setminus
(\{0\}\cup\operatorname{col}(\mathcal P))$ with $|L|\le\ell$,
there exist a subfamily $\mathcal P'\subseteq\mathcal P$
with $|\mathcal P'|=\ell$ and a rainbow path starting at $v$,
with all other vertices in $V_T$, using only colours in
$L\cup\operatorname{col}(\mathcal P')$ and omitting
at most one colour of this set.
\end{proposition}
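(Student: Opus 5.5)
We will build the path greedily, one colour of $L$ at a time, using a random-greedy extension property of $V_T$. The basic step is the "pair swap" of \cite[Lemma~9.9]{BBKMM}. Suppose the current path ends at $w$ and $c\in L$ is an unused colour. We pick an unused pair $(a,b)\in\mathcal P$ and append the three-edge walk
\[
 w\to w+c\to w+c+a\to w+c+a+b=w+c+g,
\]
or a reordering of the colours $c,a,b$. Each such step absorbs one colour of $L$ and one pair, with colours distinct because $c\notin\operatorname{col}(\mathcal P)\cup\{0\}$. After $|L|$ steps we append the remaining $\ell-|L|$ pairs as plain two-edge steps $w\to w+a\to w+g$. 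This gives exactly $\ell$ selected pairs $\mathcal P'$ and uses all of $L\cup\operatorname{col}(\mathcal P')$. The one permitted omission is reserved for the degenerate final situation described below.

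The probabilistic heart is the following claim. With probability at least $1-1/|G|$, for every vertex $w$, every colour $c\ne0$, and every set $F\subseteq G$ of forbidden vertices with $|F|\le 4\ell+4$ (the current path), together with any $\le\ell$ used pairs, there are many pairs $(a,b)\in\mathcal P$ such that the three new vertices lie in $V_T\setminus F$.

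We fix $w$ and $c$ and restrict to $t$ of the pairs. The vertex triples $\{w+c,\,w+c+a,\,w+c+g\}$ share the two vertices $w+c$ and $w+c+g$, so those two are a common obstruction. To handle this, we also use the alternative order $w\to w+a\to w+a+c\to w+a+c+b$, whose triples are $\{w+a,\,w+a+c,\,w+c+g\}$. Among either family the middle vertices are distinct across pairs (pairs are disjoint), and each vertex lies in boundedly many triples. We first condition on the shared vertices being in $V_T$, which happens with probability $p$ or $p^2$. Chernoff (Lemma~\ref{lem:Chernoff}) applied to a subfamily of triples with disjoint non-shared vertices then gives at least $tp^3/8$ good pairs, except with probability $e^{-tp^3/64}$.

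This is not quite uniform over $w$, since the shared vertex $w+c$ itself may fall outside $V_T$. We therefore phrase the step so that the first new vertex is $w+a$ with $a$ ranging over pair entries. The only deterministic shared vertex is then $w+c+g$, and we are free to choose which of $c$ and its pair completion goes first, keeping the endpoint in $V_T$ by a bounded lookahead. A union bound over the $|G|^2$ choices of $(w,c)$ works since $tp^3\ge2^{16}\log|G|$. Forbidding $F$ and the used pairs removes at most $O(\ell)$ good pairs, which is fewer than $tp^3/16$ since $tp^3\ge512\ell$. Hence an extension always exists.

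The main obstacle is the case flagged before the statement. When the last two unused colours are $a'$ and $-a'$ with $a'\ne-a'$, absorbing them consecutively returns to a previously visited vertex. We treat it with a two-stage extension. We absorb $a'$ together with a pair $(x,y)$ in a three-edge step, then insert a further plain pair step $w\to w+x'\to w+g$ before absorbing $-a'$. The net displacement then separates the endpoints, so the vertex $w$ is not revisited; the same extension claim, with $c$ replaced by $-a'$ and a shifted base vertex, supplies this. If $\ell=|L|$ leaves no spare pair, we instead omit one of $a'$ or $-a'$, which is the allowed single omission. Similar care is needed if $c=g$ or $c=-g$, which could create a repeated vertex $w+c+g=w$ or $w+c=w+g$. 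We handle these cases either by the alternative ordering or by omitting that colour. This uses the at-most-one-omission allowance, and we must check that at most one such colour ever needs omitting.
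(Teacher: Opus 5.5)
There is a genuine gap, and it is in your basic step. Absorbing $c\in L$ together with a whole pair $(a,b)$ ends at $w+c+g$ for every ordering of the three colours, and a plain pair step ends at $w+g$. These endpoints do not depend on which pair you choose.

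\begin{itemize}
\item Your extension claim therefore cannot hold uniformly. For each $(w,c)$ it fails outright whenever $w+c+g\notin V_T$, which has probability $1-p$. You cannot ``condition on'' that shared vertex inside a statement that must hold for all $w$ simultaneously.
\item If $w+c+g$ is an earlier vertex of the path, forbidding $F$ kills every candidate at once, not $O(\ell)$ of them.
\item Your target is itself infeasible. Any nontrivial path from $v$ that uses every colour of $L\cup\operatorname{col}(\mathcal P')$ ends at $v+\Sigma(L)+\ell g$, whatever $\mathcal P'$ and the order are. So for every $v$ with $v+\Sigma(L)+\ell g\notin V_T$ (all but about $p|G|$ choices of $v$), no such path exists.
\end{itemize}
The single permitted omission is therefore needed in the generic case, not only in degenerate ones, and no reordering or bounded lookahead can fix this.

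The missing idea is to split each pair across steps, carrying its second entry forward as a pending unused colour. In the paper, every extension word anchored at the current endpoint $u$ begins with the first entry $a_j$ of a fresh pair. It then uses one or two currently unused colours, giving words $(a_j,c)$ or $(a_j,c,d)$ with $c\ne d$ and $c+d\ne0$, after which $b_j$ joins the unused set.

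All new vertices $u+a_j$, $u+a_j+c$, $u+a_j+c+d$, including the endpoint, are shifted by $a_j$. So for fixed $(u,c,d)$ the candidates can be greedily packed into at least $t/40$ paths meeting only at $u$, each surviving independently with probability at least $p^3$. Chernoff and a union bound over at most $2|G|^3$ choices then give more than $4\ell$ survivors for every anchor simultaneously. This is enough to avoid the at most $\ell$ used pairs and $3\ell$ used vertices.

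Each three-edge step shrinks the unused set by one, so at most $|L|$ pairs reduce it to a singleton. The $a,-a$ obstruction arises exactly here: $(a_j,a,-a)$ would revisit $u+a_j$, so one first takes a two-edge step $(a_j,a)$. Two-edge steps $(a_j,c)$ then top up to exactly $\ell$ pairs, and the last pending partner is the one omitted colour. When $L=\varnothing$, one starts by adding only the edge of colour $a_1$.
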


\begin{proof}
Write $\mathcal P=\{(a_j,b_j):1\le j\le|\mathcal P|\}$.
We first establish the extensions needed in the construction.

\begin{claim}
\label{cl:pair-extensions}
With probability at least $1-1/|G|$, the following holds
simultaneously for every $u\in G$.
For each $a\in G\setminus\{0\}$, there are more than $4\ell$
indices $j$ such that the colour words $(a_j,a)$, anchored
at $u$, give rainbow paths whose vertex sets meet only
at $u$ and whose other vertices all lie in $V_T$.
The same holds for the words $(a_j,a,b)$ for every
distinct $a,b\in G\setminus\{0\}$ with $a+b\ne0$.
\end{claim}

\begin{claimproof}
Fix an anchor $u\in G$ and distinct $a,b\in G\setminus\{0\}$
with $a+b\ne0$.
The colour word $(a_i,a,b)$, anchored at $u$, gives a
rainbow path unless $a_i\in\{-a,-a-b,a,b\}$,
excluding at most four pair indices.
Call each resulting path a \emph{candidate}.
Its vertices other than $u$ are
$u+a_i$, $u+a_i+a$, and $u+a_i+a+b$.
Candidates indexed by $i,j$ share a vertex other than $u$
only if
$a_j-a_i\in\{0,a,a+b\}-\{0,a,a+b\}$.
Since this difference set has size at most nine, each
candidate meets at most nine candidates outside $u$,
counting itself.
For the two-edge word $(a_i,a)$, anchored at $u$, the
excluded values are $a_i\in\{-a,a\}$, and each candidate
meets at most four candidates outside $u$.
Thus, in either case, at least $t-4$ candidates remain,
and each meets at most nine candidates outside $u$.
Greedy selection therefore gives at least $(t-4)/9\ge t/40$
candidates whose vertex sets meet only at $u$, since $t\ge40$.

Before sampling $V_T$, fix one such collection for every
$u\in G$ and $a\in G\setminus\{0\}$ in the two-edge case,
and for every $u\in G$ and ordered pair of distinct
$a,b\in G\setminus\{0\}$ with $a+b\ne0$ in the three-edge case.
A candidate \emph{survives} if all its vertices other
than its anchor lie in $V_T$.
Within each collection, the survival events are independent,
each with probability at least $p^3$.
The number of survivors therefore has mean at least $tp^3/40$.
Lemma~\ref{lem:Chernoff} gives at least
$tp^3/80>4\ell$ survivors, except with probability at most
$e^{-tp^3/320}$.
Let $\mathcal E$ be the event that every fixed collection
has more than $4\ell$ survivors.
There are at most $2|G|^3$ collections, so a union bound gives
\[
 \Prb(\mathcal E^c)
 \le 2|G|^3e^{-tp^3/320}
 \le \frac1{|G|}. \qedhere
\]
\end{claimproof}

Fix an outcome satisfying Claim~\ref{cl:pair-extensions},
and take $v\in G$ and $L$ as in the statement.
If $\ell=0$, then $L=\varnothing$; take
$\mathcal P'=\varnothing$ and the single-vertex path at $v$.
Henceforth assume $\ell>0$.
For the construction below, relabel the pairs in their
order of selection.
After selecting $i<\ell$ pairs $(a_1,b_1),\ldots,(a_i,b_i)$
from $\mathcal P$, the path has at most $3i$ edges.
Among the more than $4\ell$ paths supplied by
Claim~\ref{cl:pair-extensions} for the current endpoint
and required colours, at most $i$ use a selected pair
and at most $3i$ meet an earlier vertex of the constructed path.
Thus one remains that uses an unselected pair and meets
the constructed path only at its current endpoint.
At each extension below, choose such a path and denote
its pair by $(a_{i+1},b_{i+1})$.

Start with the single-vertex path at $v$ and put $L_0=L$.
After selecting $(a_1,b_1),\ldots,(a_i,b_i)$, we maintain
that the path uses only colours from
$L\cup\{a_1,b_1,\ldots,a_i,b_i\}$, each at most once,
and that $L_i$ consists of the elements of
$L\cup\{a_1,b_1,\ldots,a_i,b_i\}$
that have not yet been used as edge colours of the path.
Since $\mathcal P$ is disjoint and $L$ avoids all its entries,
$L_i$ contains no entry of any unselected pair.
If there exist distinct $a,b\in L_i$ with $a+b\ne0$,
choose such $a,b$, which are unused colours from
$L\cup\{a_1,b_1,\ldots,a_i,b_i\}$.
Choose the next pair $(a_{i+1},b_{i+1})$ as above,
add the three edges with colours $(a_{i+1},a,b)$
at the current endpoint, and set
$L_{i+1}=(L_i\setminus\{a,b\})\cup\{b_{i+1}\}$.
The maintained conditions are preserved and
$|L_{i+1}|=|L_i|-1$.

If $|L_i|\ge2$ but no such $a,b$ exist, then
$L_i=\{a,-a\}$ with $a\ne-a$. Choose an unselected pair $(a_{i+1},b_{i+1})$ as above,
add the two edges with colours $(a_{i+1},a)$ at the
current endpoint, and set $L_{i+1}=\{b_{i+1},-a\}$.
Since the unselected pair $(a_{i+1},b_{i+1})$ has no entry
in $L\cup\{a_1,b_1,\ldots,a_i,b_i\}$, we have
$b_{i+1}\notin L_i=\{a,-a\}$.
Thus the new unused colours $b_{i+1}$ and $-a$ are
distinct and have nonzero sum.
The above three-edge step then reduces the unused set
to a singleton.
This two-stage repair occurs at most once, so for nonempty
$L$, at most $|L|\le\ell$ pairs suffice to reach a singleton.

If $L=\varnothing$, fix any $a\in\operatorname{col}(\mathcal P)$
and choose a surviving two-edge candidate anchored at $v$.
Denote its pair by $(a_1,b_1)$, so its colour word is $(a_1,a)$.
Select this pair, add only the first edge of the candidate,
and put $L_1=\{b_1\}$.

The unused set is now a singleton.
If $L_i=\{a\}$ and $i<\ell$, choose an unselected pair
$(a_{i+1},b_{i+1})$ as above, add the two edges with colours
$(a_{i+1},a)$ at the current endpoint, and set
$L_{i+1}=\{b_{i+1}\}$.
Repeat this step until exactly $\ell$ pairs have been selected.
Every colour added is new by the maintained conditions,
and the final unused colour is the only omission.

Let $\mathcal P'=\{(a_i,b_i):1\le i\le\ell\}$ be the
subfamily of selected pairs.
Then $|\mathcal P'|=\ell$, and the constructed rainbow path
starts at $v$, has all other vertices in $V_T$, and uses
only colours in $L\cup\operatorname{col}(\mathcal P')$,
omitting at most one colour of this set.
\end{proof}

The following version restricts the starting vertices to
at most two $H$-cosets and the allowed colours to a set
$A\subseteq H$ together with at most one element outside $H$.
Its sampling condition and failure bound depend on
$|H|$ and $|A|$, rather than $|G|$.

\begin{lemma}
\label{lem:restricted-pair-absorber}
Let $H\le G$, $A\subseteq H\setminus\{0\}$, and $g\in H$.
Let $b\in(G\setminus H)\cup\{0\}$, and let $W$ be a union
of at most two $H$-cosets.
Let $0<p\le1$, and let $t\ge40$ and $\ell\ge0$ be integers
satisfying
$tp^3\ge\max\{2^{17}(1+\log(|H||A|)),512\ell\}$.
Let $\mathcal P$ be a disjoint family of at least
$t+\ell$ $g$-pairs with entries in $A$.
For a $p$-random set $V_T\subseteq G$, with probability
at least $1-(|H||A|)^{-2}$, simultaneously for every $v\in W$
and every $L\subseteq(A\cup\{b\})\setminus
(\{0\}\cup\operatorname{col}(\mathcal P))$ with $|L|\le\ell$,
there exist a subfamily $\mathcal P'\subseteq\mathcal P$
with $|\mathcal P'|=\ell$ and a rainbow path starting at $v$,
with all other vertices in $V_T$, using only colours in
$L\cup\operatorname{col}(\mathcal P')$ and omitting
at most one colour of this set.
\end{lemma}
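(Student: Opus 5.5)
The plan is to rerun the proof of Proposition~\ref{prop:corrected-pair-absorber} almost verbatim. The only new ingredient is a more economical union bound in the analogue of Claim~\ref{cl:pair-extensions}. We restrict the anchors, and the colours that can ever be required, to a set whose size is polynomial in $|H||A|$ rather than in $|G|$.

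\textbf{Step 1: bounding the relevant anchors and colours.} Every colour used is in $A\cup\{b\}$. All colours except possibly $b$ lie in $H$, and $b$ is used at most once along the path. Hence every vertex of a constructed path starting at $v\in W$ lies in $W\cup(W+b)$, a union of at most four $H$-cosets. The relevant anchors $u$ are therefore at most $4|H|$. The required colours $a$ (and $b'$ in three-edge words) come from $L_i\subseteq (A\cup\{b\})\cup\operatorname{col}(\mathcal P)\subseteq A\cup\{b\}$, giving at most $|A|+1$ choices each. Pair entries $a_j$ lie in $A$.

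\textbf{Step 2: the extension claim.} For each such anchor $u$ and each required $a$, or each ordered pair $(a,b')$ of distinct elements of $A\cup\{b\}$ with $a+b'\ne0$, I would choose the candidate collection deterministically before sampling. The greedy argument of Claim~\ref{cl:pair-extensions} applies unchanged: it excludes at most four indices and each candidate conflicts with at most nine others. This yields at least $t/40$ candidates with pairwise vertex sets meeting only at $u$. Survival events inside a collection are independent, each with probability at least $p^3$. By Lemma~\ref{lem:Chernoff}, a collection has at most $4\ell$ survivors with probability at most $e^{-tp^3/320}$, using $tp^3\ge512\ell$. The number of collections is at most $8|H|(|A|+1)^2\le 32|H||A|^2$. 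The hypothesis $tp^3\ge 2^{17}(1+\log(|H||A|))$ gives $e^{-tp^3/320}\le e^{-400(1+\log(|H||A|))}$, so the union bound is far below $(|H||A|)^{-2}$.

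\textbf{Step 3: the deterministic construction.} On this event, fix $v\in W$ and $L$, and run exactly the greedy construction from the proof of Proposition~\ref{prop:corrected-pair-absorber}. This consists of the three-edge steps $(a_{i+1},a,b')$, the two-stage fix when $L_i=\{a,-a\}$, the initial half-step when $L=\varnothing$, and the two-edge padding steps $(a_{i+1},a)$ up to $\ell$ pairs. Each step needs an extension from the current endpoint, which lies in $W\cup(W+b)$ by Step~1. The required colours are unused elements of $A\cup\{b\}$, so each needed extension is covered by the claim. The counting argument (more than $4\ell$ available, at most $i$ blocked by selected pairs, at most $3i$ by earlier vertices) is identical.

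\textbf{Main obstacle.} The point needing care is Step~1: verifying that the endpoint never leaves the four cosets. We must also check that when $b$ occurs in a three-edge word $(a_j,a,b')$, the intermediate vertices still lie in the union of $W$ and $W+b$. This holds because $a_j\in H$ and the prefix containing $b$ shifts by exactly one coset. We must also check that the case $b=0$ (no outsider) is harmless, since then $L\subseteq A$.
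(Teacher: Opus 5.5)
Your proposal is correct and follows the paper's proof essentially line by line. The paper likewise restricts anchors to $U=W\cup(W+b)$ with $|U|\le4|H|$ and the other colours to $(A\cup\{b\})\setminus\{0\}$, fixes the packed collections before sampling, and union-bounds over at most $8|H|(|A|+1)^2$ collections. It then reruns the construction from Proposition~\ref{prop:corrected-pair-absorber}, noting that the endpoint stays in $U$ because $b$ is the only colour outside $H$ and is used at most once. The auxiliary colour in the $L=\varnothing$ step lies in $\operatorname{col}(\mathcal P)\subseteq A$, so it is covered by your collections.
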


\begin{proof}
Write $\mathcal P=\{(a_j,b_j):1\le j\le|\mathcal P|\}$
and put $U=W\cup(W+b)$, so $|U|\le4|H|$.
We repeat the packing argument from the proof of
Proposition~\ref{prop:corrected-pair-absorber},
restricting the anchors to $U$ and the other colours
to $(A\cup\{b\})\setminus\{0\}$.
Specifically, for each $u\in U$ and
$c\in(A\cup\{b\})\setminus\{0\}$, consider the words
$(a_j,c)$ anchored at $u$, as $j$ varies.
Likewise, for each $u\in U$ and ordered pair of distinct
$c,d\in(A\cup\{b\})\setminus\{0\}$ with $c+d\ne0$,
consider the words $(a_j,c,d)$.
For each such choice, the packing argument gives at least
$t/40$ rainbow paths whose vertex sets meet only at $u$.
Fix one such collection for every choice before sampling $V_T$.

A path \emph{survives} if all its vertices other than its
starting vertex lie in $V_T$.
Within each fixed collection, the survival events are
independent, each with probability at least $p^3$.
The number of surviving paths therefore has mean at least
$tp^3/40$.
Since $tp^3/80>4\ell$, Lemma~\ref{lem:Chernoff}
bounds the probability of at most $4\ell$ surviving paths
by $e^{-tp^3/320}$.
Let $\mathcal E$ be the event that every fixed collection
has more than $4\ell$ surviving paths.
Since there are at most $8|H|(|A|+1)^2$ collections,
\[
 \Prb(\mathcal E^c)
 \le 8|H|(|A|+1)^2e^{-tp^3/320}
 \le (|H||A|)^{-2}.
\]
The last inequality follows from the hypothesis and
$|H|\ge|A|\ge2(t+\ell)\ge80$.

Fix an outcome in $\mathcal E$, and take $v\in W$ and $L$
as in the statement.
If $\ell=0$, then $L=\varnothing$; take
$\mathcal P'=\varnothing$ and the single-vertex path at $v$.
Henceforth assume $\ell>0$.
We use the construction from the proof of
Proposition~\ref{prop:corrected-pair-absorber},
relabelling the pairs in their order of selection.

Every unused colour in that construction lies in
$(A\cup\{b\})\setminus\{0\}$, so the required two-edge
and three-edge words are among those considered above.
This also covers the auxiliary colour chosen when
$L=\varnothing$, since it belongs to
$\operatorname{col}(\mathcal P)\subseteq A$.
All colours other than $b$ lie in $H$, and $b$ is used
at most once.
Thus the current endpoint lies in $v+H\subseteq W$
before $b$ is used and in $v+b+H\subseteq W+b$ afterwards.
In particular, it always belongs to $U$.

After selecting $k<\ell$ pairs
$(a_1,b_1),\ldots,(a_k,b_k)$ from $\mathcal P$,
the constructed path has at most $3k$ edges.
In the fixed collection for its current endpoint and
the required colours, at most $k$ paths use a selected pair.
Since the paths in this collection meet only at their
starting vertex, at most $3k$ meet another vertex of
the constructed path.
As more than $4\ell$ paths survive, one remains that uses
an unselected pair and meets the constructed path only
at its current endpoint.
At each extension, choose such a path and denote its pair
by $(a_{k+1},b_{k+1})$.

The same construction therefore selects exactly $\ell$
pairs and leaves at most one unused colour.
Let $\mathcal P'=\{(a_j,b_j):1\le j\le\ell\}$.
The resulting rainbow path starts at $v$, has all other
vertices in $V_T$, and uses only colours in
$L\cup\operatorname{col}(\mathcal P')$, omitting at most
one colour of this set.
\end{proof}

\subsection{Dense subsets of a subgroup}

For dense colour sets, the following degree count verifies the sparse-cut condition required by Lemma~\ref{ext:mainpath}.
We will also use this observation in the localization argument.

\begin{observation}
\label{obs:dense-cayley-cut}
For $D\subseteq H$ with $|D|>|H|/2$,
$\operatorname{Cay}_H(D)$ has no
$(2|D|/|H|-1)$-sparse cut.
\end{observation}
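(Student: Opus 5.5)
The plan is a direct degree count in $\operatorname{Cay}_H(D)$. Fix a partition $H=X\sqcup Y$ into nonempty parts, and put $\delta=2|D|/|H|-1>0$. The goal is to show $e(X,Y)\ge\delta|X||Y|$, where $e(X,Y)$ counts directed edges $x\to x+d$ with $x\in X$, $d\in D$ and $x+d\in Y$.

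Every vertex $x$ has exactly $|D|$ out-neighbours $x+D$, which are distinct. So for each $x\in X$ I would write
\[
 |(x+D)\cap Y| = |D|-|(x+D)\cap X|.
\]
The task is then to bound from above the number of out-neighbours of $x$ that stay inside $X$. Two natural bounds are available. Trivially $|(x+D)\cap X|\le |X|$. Also $x+D$ and $X$ both lie in $H$, so inclusion–exclusion gives
\[
 |(x+D)\cap Y|\ge |D|+|Y|-|H| = |D|-|X|.
\]
Summing over $x\in X$ yields $e(X,Y)\ge |X|\,\max\{0,|D|-|X|\}$. By the symmetric count of in-edges into each $y\in Y$, using $y-D$, we also get $e(X,Y)\ge|Y|\,\max\{0,|D|-|Y|\}$.

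Assume by symmetry that $|X|\le|H|/2$. Then
\[
 |D|-|X|\ge |D|-|H|/2=\tfrac{\delta}{2}|H|.
\]
Since $|Y|\le|H|$, this gives
\[
 e(X,Y)\ge |X|\cdot\tfrac{\delta}{2}|H| \ge \tfrac{\delta}{2}|X||Y|.
\]
This is off by a factor of $2$ from the target. The main obstacle is recovering that factor.

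To recover it, I would use the sharper estimate $e(X,Y)\ge |X|(|D|-|X|)$ and compare it with $\delta|X||Y|=\delta|X|(|H|-|X|)$. So it suffices to check
\[
 |D|-|X|\ge \delta(|H|-|X|).
\]
Writing $|D|=\tfrac{1+\delta}{2}|H|$, this becomes $(1-\delta)|X|\le\tfrac{1-\delta}{2}|H|$, which is exactly $|X|\le|H|/2$ since $\delta\le1$. For the case $|X|>|H|/2$, the in-degree version gives the same inequality with $X$ and $Y$ swapped, namely $|D|-|Y|\ge\delta(|H|-|Y|)$. This closes both cases, because each part has size at most $|H|/2$ in one of the two orientations.
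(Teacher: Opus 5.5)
Your proof is correct and is essentially the paper's: both count, from a part of size at most $|H|/2$, at least $|D|$ minus that part's size neighbours in the other part, and then verify the inequality $|D|-|Y|\ge(2|D|/|H|-1)(|H|-|Y|)$. The only difference is how edge direction is handled: the paper counts out-edges from the small part and invokes $e_\Gamma(X,Y)=e_\Gamma(Y,X)$ (equal in- and out-degrees), whereas you count in-neighbourhoods $y-D$ directly when the small part is $Y$, which amounts to the same thing.
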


\begin{proof}
Put $\Gamma=\operatorname{Cay}_H(D)$.
Consider a partition $H=X\sqcup Y$ into nonempty parts,
labelling them so that $|Y|\le |H|/2$.
Each vertex in $Y$ has at least $|D|-|Y|$
out-neighbours in $X$, so
\[
 e_\Gamma(Y,X)
 \ge |Y|(|D|-|Y|)
 \ge (2|D|/|H|-1)|X||Y|.
\]
Since every vertex has equal indegree and outdegree,
$e_\Gamma(X,Y)=e_\Gamma(Y,X)$, proving the assertion.
\end{proof}

The next proposition handles the case in which all colours lie in a subgroup and occupy at least three quarters of it. It is a consequence of \cite[Theorem~9.10]{BBKMM}.

\begin{proposition}[Valid orderings for dense subsets of a subgroup]\label{prop:repaired-dense-source}
There is an absolute constant $C$ such that, for every
group $H$ with $|H|\ge C$, every $S\subseteq H\setminus\{0\}$
with $|S|\ge3|H|/4$ has a valid ordering.
\end{proposition}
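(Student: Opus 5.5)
The plan is to assemble the valid ordering from the tools of this section, following \cite[Theorem~9.10]{BBKMM}. It suffices to find a rainbow path in $\operatorname{Cay}_H(S)$ that uses all but at most one colour of $S$; the remark after Definition~\ref{def:cayley} then turns it into a valid ordering. Put $n=|S|\ge 3|H|/4$. By Observation~\ref{obs:dense-cayley-cut}, $\operatorname{Cay}_H(S')$ has no $\zeta$-sparse cut with $\zeta\ge 1/3$ for every $S'\subseteq S$ with $|S'|\ge 2|H|/3$. So no regularization step is needed, and Lemma~\ref{ext:mainpath} applies directly in $H$ with constant density parameters.

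\textbf{Step 1 (flexible pairs).} First fix $g\in H\setminus\{0\}$ that has many disjoint $g$-pairs in $S$. Averaging over $g$, the number of ordered pairs $(a,b)\in S^2$ with $a+b=g$ is about $n^2/|H|\ge 9|H|/16$ for a typical $g$. Greedily extracting disjoint pairs therefore yields a family of at least $c|H|$ pairs. Keep only a small linear number of them: a family $\mathcal P$ of $3t+\ell$ pairs, where $t=\beta|H|$ and $\ell=\gamma|H|$ for small constants $\gamma\ll\beta$. Apply Corollary~\ref{ext:flex} to obtain $\mathcal P_{\rm flex}$ and an absorbing incidence family $\mathcal P_1,\dots,\mathcal P_{3t}$.

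\textbf{Step 2 (random reservations).} Split $H$ into random vertex sets $R$ (for the waveform), $V_T$ (for the absorber) and $M$ (for the main path), with constant densities $p$, $p'$ and $q$, made disjoint by using a random labelling. Split $S\setminus\operatorname{col}(\mathcal P)$ into a random connector palette and the remainder. Then apply the three results on their events:
\begin{itemize}
\item Lemma~\ref{ext:wave} with $E=S$ supplies waveforms from every start, with $m=3t$ and the condition $|E|p^{42}\gtrsim t$ holding because $\beta$ is tiny.
\item Proposition~\ref{prop:corrected-pair-absorber}, applied to $\mathcal P_{\rm flex}$ (size $t+\ell$), supplies the absorber; here $tp'^3\ge 512\ell$ holds once $\gamma\ll\beta p'^3$.
\item Lemma~\ref{ext:mainpath} supplies the main path. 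Its forbidden set $S_F$ consists of the colours of $\mathcal P$ and of the connectors, taken inside a random palette $D$, and the path omits at most $\mu q|H|\le\ell$ colours once $\mu$ is chosen small.
\end{itemize}
A union bound shows that all three events hold simultaneously for $|H|\ge C$.

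\textbf{Step 3 (deterministic assembly).} Start the waveform at an arbitrary $u$, and let $w$ be its end. Run the main path from $w$ to a chosen vertex $v\in V_T$, or its first vertex outside, using colours $S\setminus S_F$ and internal vertices in $M$. Let $L$ be the set of at most $\ell$ colours the main path omits, together with unused connector-palette colours; those should be assigned to $S_F$ only when actually used, or folded into the main palette. Apply the absorber at $v$ with this $L$ to obtain $\mathcal P'\subseteq\mathcal P_{\rm flex}$ of size $\ell$. Finally, collapse the waveform avoiding $\mathcal P'$. This uses exactly $\operatorname{col}(\mathcal P\setminus\mathcal P')$ plus the connectors.

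The pieces have disjoint vertex sets, except at the junction vertices, and disjoint colour sets. Concatenating them gives a rainbow path that omits at most one colour of $S$.

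\textbf{Main obstacle.} The hardest part is the bookkeeping that ensures every colour of $S$ lands in exactly one piece. Each connector colour must be used by the waveform, and the main path's omissions, which Lemma~\ref{ext:mainpath} does not control precisely, must fall into $L$ with $|L|\le\ell$. Junction vertices must not be repeated, so the main path's endpoints should be fixed as the waveform end and a vertex whose absorber path lies in $V_T$. I would follow \cite[Theorem~9.10]{BBKMM} closely for the parameter hierarchy $\gamma\ll\mu\ll\beta\ll p,p',q$. The only genuinely new point is the case $L=\{a,-a\}$, which is already handled inside Proposition~\ref{prop:corrected-pair-absorber}.
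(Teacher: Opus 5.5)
Your assembly (equal-sum pairs, waveform, robust main path, pair absorber, collapse) is the one the paper uses. However, the plan has a genuine gap when $|S|$ is close to $|H|$. The failing step is your claim in Step~2 that Lemma~\ref{ext:mainpath} applies ``directly with constant density parameters''.

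That lemma needs the internal-vertex set to have density $q\ge(1+\mu)|S|/|H|$. Your partition forces $q\le1-p-p'$ with $p,p'$ constants, so you only cover $3|H|/4\le|S|\le(1-p-p')|H|/(1+\mu)$. Shrinking the densities cannot close the gap completely:
\begin{itemize}
\item The theta graphs $\Theta(v_i,\mathcal P_i)$ are vertex-disjoint, each has $|\mathcal P_i|+2$ vertices, and $\bigcup_i\mathcal P_i=\mathcal P$. Hence the waveform contains at least $\ell$ vertices that the collapsed path never visits.
\item To guarantee $|L|\le\ell$ you need $\ell\ge\mu q|H|$, and Lemma~\ref{ext:mainpath} requires $\mu\ge8|H|^{-1/9500}$.
\item The final path has at least $|S|$ vertices, disjoint from those wasted ones. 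So everything fits in $H$ only if $|H|-|S|\ge\ell\gtrsim|H|^{1-1/9500}$.
\end{itemize}
In particular, your method cannot produce a valid ordering of, say, $H\setminus\{0\}$.

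The missing ingredient is a separate argument for the very dense range. Write $h=|H|$ and $\gamma=2^{-20}$. The paper first applies the M\"uyesser--Pokrovskiy theorem (Theorem~\ref{ext:verydense}) when $|S|\ge h-h^{1-\gamma}$. Only in the complementary range does it run the absorption construction, with polynomially small parameters
\[
p=\tfrac12h^{-2\gamma},\quad q=1-2p,\quad \mu=2^{-80}h^{-90\gamma},\quad t=\lfloor2^{-66}h^{1-84\gamma}\rfloor,\quad \ell=\lfloor tp^3/512\rfloor .
\]
These are chosen so that $(1+\mu)|S|/h\le(1+\mu)(1-h^{-\gamma})\le q$, while Lemma~\ref{ext:wave} and Proposition~\ref{prop:corrected-pair-absorber} still apply.

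A smaller point you should also fix: Lemma~\ref{ext:mainpath} only allows forbidden colours inside its random palette $D$. So sample $D$ first, and then choose $g$, $\mathcal P$ and the connector colours inside it; the paper takes a $\tfrac18$-random $E\subseteq S$ with $D=E$. As written, you fix $\mathcal P$ deterministically in Step~1 and apply Lemma~\ref{ext:wave} with $E=S$, which does not guarantee that the waveform's colours lie in $D$.
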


We include a proof in Appendix~\ref{app:dense}, following the assembly of \cite[Section~9]{BBKMM} and using Proposition~\ref{prop:corrected-pair-absorber} to handle distinct inverse colours.

\subsection{Localization to subgroup cosets}

We now return to the dense-subgroup case described in
Subsection~\ref{sub:overview}. Almost all elements of $S$
lie in a subgroup $K$, where $S\cap K$ has relatively high
density. Applying Lemma~\ref{ext:regularity} inside $K$
gives a subgroup $H\le K$ in which we construct the main
paths and absorbers. We must also incorporate the
exceptional colours in $S\setminus H$.

The next lemma constructs a path through these exceptional
colours in a prescribed order. Following
\cite[Lemma~6.1]{BBKMM}, we precede each exceptional colour
by a \emph{padding colour} from $H$, giving a colour word
\[
(e_1,x_1,\ldots,e_r,x_r),
\qquad e_i\in H,\quad x_i\notin H.
\]
Fixing the order of the $x_i$ confines the possible starting
vertices at each step to a single $H$-coset. This allows
the probability estimates to depend on the subgroup and
colour-set sizes rather than on $|G|$. We then combine
this path with the preceding ingredients to prove
Proposition~\ref{prop:localized-dense-core}.

\begin{lemma}
\label{lem:fixed-order-exceptional}
Let $H\le G$ and $E\subseteq H\setminus\{0\}$.
Fix distinct $x_1,\ldots,x_r\in G\setminus H$, where $r\ge1$,
and let $0<p\le1$ satisfy
$|E|p^2\ge\max\{10r,32\log(|H|r)\}$.
For a $p$-random vertex set $V_A\subseteq G$, with probability
at least $1-(|H|r)^{-3}$, simultaneously for every $v\in H$,
there exist $e_1,\ldots,e_r\in E$ such that the colour word
$(e_1,x_1,\ldots,e_r,x_r)$, anchored at $v$, gives a rainbow
path with all vertices other than $v$ in $V_A$.
\end{lemma}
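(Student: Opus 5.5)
The plan mirrors the packing-and-survival argument of Claim~\ref{cl:pair-extensions}, with the anchors restricted to the few $H$-cosets visited by the path. The key observation is that, once $v\in H$ is fixed, every prefix sum of the word $(e_1,x_1,\ldots,e_r,x_r)$ is determined up to the choice of the $e_i$. After the step $e_i$ the walk lies in the coset $y_{i-1}+H$, where $y_j=x_1+\cdots+x_j$. After the step $x_i$ it lies in $y_i+H$. So all relevant vertices lie in $\bigcup_{j=0}^{r}(y_j+H)$, a set of at most $(r+1)|H|$ elements. This is what lets the failure probability be measured against $|H|r$ rather than $|G|$.

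I will build the path greedily, one pair of steps at a time. Suppose the current endpoint is $w\in y_{i-1}+H$. A candidate extension is given by a colour $e\in E$, and it adds the two vertices $w+e$ and $w+e+x_i$. It is rainbow-compatible as long as $e$ differs from the at most $i-1\le r$ padding colours already used. Note that $e\ne x_j$ is automatic, since $e\in H$ and $x_j\notin H$. The two new vertices must also avoid the at most $2r+1$ vertices already on the path. Each old vertex excludes at most two values of $e$ (one for each new vertex), so at most about $5r+2$ values of $e$ are forbidden in total.

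The probability is handled by a deterministic pre-selection, as in Claim~\ref{cl:pair-extensions}. Fix an anchor $w\in\bigcup_j(y_j+H)$ and an index $i$. The vertex pairs $\{w+e,\,w+e+x_i\}$, $e\in E$, overlap only sparsely: $w+e=w+e'+x_i$ is impossible because $x_i\notin H$, and the identity $w+e=w+e'$ forces $e=e'$. Hence these $|E|$ two-vertex sets are pairwise disjoint. The events that both vertices lie in $V_A$ are therefore independent, each with probability $p^2$, and the number of survivors has mean $|E|p^2$. By Lemma~\ref{lem:Chernoff}, at least $|E|p^2/2\ge 5r>5r+2-\ldots$ candidates survive, except with probability $e^{-|E|p^2/8}$. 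To make the counting clean I will demand more than $5r+2$ survivors, tuning the constant; the hypothesis $|E|p^2\ge10r$ is designed for exactly this comparison, and if it is tight I will count forbidden $e$ more carefully. The hypothesis $|E|p^2\ge32\log(|H|r)$ then makes $e^{-|E|p^2/8}\le(|H|r)^{-4}$. A union bound over the at most $(r+1)|H|\cdot r$ choices of $(w,i)$ gives total failure probability at most $(|H|r)^{-3}$, up to constants that I expect to absorb. On the good event, the greedy construction runs for every $v\in H$ simultaneously: at step $i$ there are more surviving candidates than forbidden ones, so a valid $e_i$ exists.

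The main obstacle is the bookkeeping in the counting step: making the number of forbidden $e$ at most $|E|p^2/2$ with only $10r$ available, and verifying the union-bound constant when $r=1$ or $|H|$ is small. If the margin is too thin, I will count only repetitions that can actually occur. New vertices in coset $y_{i-1}+H$ can collide only with earlier vertices in that same coset, and these are rare unless the $y_j$ repeat modulo $H$. Still, I would first try the crude count and adjust constants.
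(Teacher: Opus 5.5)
Your approach is the paper's own proof. For each step $i$ and each anchor $u$, the $|E|$ candidate vertex sets $\{u+e,\,u+e+x_i\}$ are pairwise disjoint, so the number of survivors is $\operatorname{Bin}(|E|,p^2)$. A Chernoff bound with $\theta=1/2$ and a union bound then make the greedy extension work for every $v\in H$ at once. Two bookkeeping slips need fixing, but neither needs a new idea.

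\textbf{The union bound.} You take all $(r+1)|H|$ anchors in $\bigcup_j(y_j+H)$ together with all $r$ indices. This gives a failure probability of $(r+1)|H|r\cdot(|H|r)^{-4}=(r+1)(|H|r)^{-3}$. The factor $r+1$ is not a constant you can absorb, so the bound as written misses the stated probability. The fix is already in your setup: at step $i$ the greedy construction only uses anchors $u\in y_{i-1}+H$. Restricting to these gives exactly $|H|r$ events, and the bound becomes $|H|r\,e^{-|E|p^2/8}\le|H|r\,(|H|r)^{-4}=(|H|r)^{-3}$.

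\textbf{The excluded count.} No tuning of the $10r$ threshold is needed, and the tally of ``about $5r+2$'' is simply an overcount. Before step $i$ the path has $2i-1$ vertices and $i-1$ padding colours. The endpoint $u$ rules out no candidate, since $e\ne0$ and $x_i\notin H$. Because the candidate sets are pairwise disjoint, each of the other $2(i-1)$ vertices rules out at most one candidate. So at most $3(i-1)<5r$ candidates are excluded; even your two-per-vertex count gives only $5(i-1)<5r$. On the good event, more than $|E|p^2/2\ge5r$ candidates survive, so a valid $e_i$ always exists.
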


\begin{proof}
Fix $1\le i\le r$ and $u\in\sum_{j=1}^{i-1}x_j+H$.
For each $e\in E$, the colour word $(e,x_i)$, anchored at $u$,
gives a rainbow path; call it the candidate indexed by $e$.
Its vertices other than $u$ are $u+e$ and $u+e+x_i$.
These lie in the distinct cosets $u+H$ and $u+x_i+H$,
and the two-element sets are pairwise disjoint as $e$ varies.
A candidate \emph{survives} if both vertices lie in $V_A$.
Thus the number $X_{u,i}$ of surviving candidates has
distribution $\operatorname{Bin}(|E|,p^2)$.

Since $|E|p^2\ge10r$, Lemma~\ref{lem:Chernoff} gives
$\Prb(X_{u,i}<5r)\le e^{-|E|p^2/8}$.
Let $\mathcal E$ be the event that $X_{u,i}\ge5r$
for every such choice of $(u,i)$.
A union bound over the $|H|r$ choices gives
\[
 \Prb(\mathcal E^c)
 \le |H|r\,e^{-|E|p^2/8}
 \le (|H|r)^{-3}.
\]

Fix an outcome in $\mathcal E$ and any $v\in H$.
Start with the single-vertex path at $v$ and put $u_1=v$.
Before step $i$, maintain that the current path is rainbow,
has colour word $(e_1,x_1,\ldots,e_{i-1},x_{i-1})$,
has all vertices other than $v$ in $V_A$, and ends at
$u_i\in\sum_{j=1}^{i-1}x_j+H$.
These conditions hold initially for $i=1$.

On $\mathcal E$, there are at least $5r$
surviving candidates anchored at $u_i$ for step $i$.
Exclude the surviving candidates indexed by
$e_1,\ldots,e_{i-1}$, at most $i-1$ in total.
Also exclude any surviving candidate that meets the current path
outside $u_i$. Distinct candidates meet only at $u_i$, so each
of the $2(i-1)$ other vertices of the current path excludes at
most one candidate. 
In total, at most $3(i-1)<5r$ candidates are excluded,
so a surviving candidate remains.

Choose a remaining candidate indexed by $e_i$ and extend
the current path by the two edges
$u_i\to u_i+e_i\to u_i+e_i+x_i$.
Its two new vertices, $u_i+e_i$ and $u_i+e_i+x_i$, lie in $V_A$ and avoid the existing path.
The colour $e_i$ is new by choice, while $x_i$ differs from
all earlier $x_j$ and every padding colour.
The extended path is therefore rainbow.
Its endpoint is
$u_{i+1}:=u_i+e_i+x_i
 \in\sum_{k=1}^{i}x_k+H$,
which establishes the invariant for the next step.

After $r$ steps, the resulting path has the required colour
word and all vertices other than $v$ in $V_A$.
The construction works for every $v\in H$ on $\mathcal E$.
\end{proof}

The next proposition handles the remaining structural case
of Theorem~\ref{thm:Graham}, in which almost all of $S$ lies
in a subgroup $K$. It gives a valid ordering of the full set
$S$, including the exceptional elements outside $K$.

The required size of $S\cap K$ is polynomial in the reciprocal
of its density in $K$. This allows the core density to decrease
as a sufficiently small power of $|S|$, as may occur in the
structural reduction.

\begin{proposition}[Valid orderings from a dense subgroup core]\label{prop:localized-dense-core}
There is an absolute constant $C$ such that the following holds.
Let $K\le G$ and $S\subseteq G\setminus\{0\}$ with
$S\cap K\ne\varnothing$, and put $\delta=|S\cap K|/|K|$.
If $|S\setminus K|\le2^{-30}|S\cap K|$ and
$|S\cap K|\ge C\delta^{-28500}$, then $S$ has a valid ordering.
\end{proposition}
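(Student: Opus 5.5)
We first apply Lemma~\ref{ext:regularity} to $S\cap K$ inside $K$ with a small absolute $\epsilon$ (say $\epsilon=2^{-10}$). This gives $H\le K$ with $|S\cap H|\ge(1-\epsilon)|S\cap K|$, and $\operatorname{Cay}_H(S\cap H)$ has no $\zeta$-sparse cut with $\zeta=\frac{\epsilon}{1000}\delta^3$. Put $S_H=S\cap H$. The exceptional colours $X=S\setminus H$ number at most $(\epsilon+2^{-30})|S\cap K|\le 2\epsilon|S_H|$.

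The density of $S_H$ in $H$ is at least $(1-\epsilon)\delta$. The hypothesis $|S\cap K|\ge C\delta^{-28500}$ gives $|H|\ge|S_H|\ge C'\delta^{-28500}$. Hence every quantity of the form $8|H|^{-1/9500}$ is at most a small power of $\delta$, so the hypotheses of Lemma~\ref{ext:mainpath} on $H$ hold with $\zeta,\mu,q\asymp\delta^{O(1)}$. The exponent $28500=3\cdot9500$ is chosen exactly so that $\zeta\asymp\delta^3$ beats $|H|^{-1/9500}$. The same largeness makes $\log(|H||A|)$ negligible against $t p^3$ in Lemma~\ref{lem:restricted-pair-absorber}, Lemma~\ref{ext:wave} and Lemma~\ref{lem:fixed-order-exceptional}, with sampling probabilities $p=\delta^{O(1)}$.

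The construction inside $H$ follows the appendix proof of Proposition~\ref{prop:repaired-dense-source}, with the exceptional path added. The steps, in order, are these.
\begin{enumerate}
\item Fix a nonzero $g\in H$ for which $S_H$ contains many disjoint $g$-pairs. By averaging over $g$, some $g$ has $\gtrsim |S_H|^2/|H|=\delta|S_H|$ pairs, from which a disjoint family can be taken greedily. Take $3t+\ell$ of these pairs, with $\ell\gg\mu q|H|+2$ but $t\ll|S_H|$, and apply Corollary~\ref{ext:flex}.
\item Reserve random disjoint vertex sets $V_A,V_W,M_1,M_2,V_T$, and random colour sets: padding colours $E\subseteq S_H$ and connector colours, both disjoint from the pairs.
\item Order $X=\{x_1,\ldots,x_r\}$ arbitrarily and use Lemma~\ref{lem:fixed-order-exceptional} to get an exceptional path from $0$ through $V_A$. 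Its endpoint lies in the coset $\sum x_i+H$. If that coset is $H$ itself, delete the last edge and treat that colour $b$ as an unused colour for the absorber. This is why Lemma~\ref{lem:restricted-pair-absorber} permits one colour $b\notin H$, and anchors in at most two cosets.
\item Attach a first main path via Lemma~\ref{ext:mainpath} inside the current coset, translated from $H$, with internal vertices in $M_1$. Then attach the waveform of Lemma~\ref{ext:wave} with vertices in $V_W$. Then attach a second main path with internal vertices in $M_2$; each main path forbids the reserved colours $S_F$. Finally attach the absorber of Lemma~\ref{lem:restricted-pair-absorber} with vertices in $V_T$. The absorber takes as $L$ the colours omitted by the main paths, the unused padding and connector colours, and possibly $b$, and it selects $\ell$ flexible pairs $\mathcal P'$. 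Collapse the waveform avoiding $\mathcal P'$.
\end{enumerate}
The resulting walk is rainbow and vertex-disjoint by construction, and it omits at most one colour. By the remark after Definition~\ref{def:cayley}, this yields a valid ordering.

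The main obstacle is bookkeeping the cosets and uniformity. Lemma~\ref{ext:mainpath} is stated for the whole group with arbitrary endpoints, so we must apply it in $H$ and translate to the coset where the exceptional path ends. The main paths must also avoid the vertices already used by the exceptional path, which is handled by keeping all random vertex sets disjoint. Also, $|L|$ must stay below $\ell$ uniformly. This needs $\mu q|H|+|E|+\#\text{connectors}+1\le\ell$ while $tp^3\ge512\ell$, which forces the polynomial dependence on $\delta$. I expect verifying that these parameter constraints are simultaneously satisfiable to be the delicate step. My first attempt sets $\mu=q$-scale parameters at $\delta^{c}$ for suitable constants, then checks each lemma's inequality using $|H|\ge C\delta^{-28500}$.
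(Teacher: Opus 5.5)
\paragraph{The gap: leftover reserved colours.} The step you flag as delicate is where the plan breaks: the constraint $\mu q|H|+|E|+\#\text{connectors}+1\le\ell$ can never hold. Lemma~\ref{ext:wave} needs its colour set to have at least $3\cdot2^{18}t/p^{42}$ elements. Only $2(3t+\ell)+3t\le 11t$ of them are pair entries or used connectors. So more than $t\ge\ell$ connector candidates stay unused, and in your scheme they all land in $L$. The same happens with padding colours: Lemma~\ref{lem:fixed-order-exceptional} needs $|E|\ge 10r/p^2$, and $r$ may be a constant fraction of $|S\cap H|$.

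The missing idea is that the main path, built \emph{last}, must reabsorb every leftover reserved colour. Lemma~\ref{ext:mainpath} is robust for all forbidden sets inside a pre-sampled random set $D$. So the paper samples the padding set $E$ and the waveform set $S'$ as random subsets of the palette $A_1$; the $g$-pairs and connectors are taken from $S'$. Your pairs, drawn from all of $S\cap H$, would not lie in any such $D$. This makes $D_1\supseteq E\cup S'$ a $5/6$-random subset of $A_1$. The paper fixes the exceptional path $P_A$ and the waveform first. Only afterwards does it take a main path $P_1$ forbidding exactly the colours actually used, $F=S_F\cup[A_1\cap(\operatorname{col}(P_2)\cup\operatorname{col}(P_A))]\subseteq D_1$. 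Then $L$ consists only of at most $2\mu qh$ colours omitted by the main paths, plus possibly $b$.

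\paragraph{Second gap: palette density.} Lemma~\ref{ext:mainpath} also requires $q\ge(1+\mu)|A|/|H|$ for the palette $A$: the main-path vertex probability must exceed the palette density. Your two main paths in one coset need disjoint $M_1,M_2$, each above the density of its own palette. Moreover, the second path can forbid only colours from a pre-sampled random set, not the arbitrary colour set of the first. Neither issue is addressed.

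More seriously, nothing covers $S\cap H$ filling most of $H$. For example, take $S\cap K=K\setminus\{0\}$ plus one element outside $K$; regularization then forces $H=K$. Since $(1+\mu)(1-1/h)>1$, Lemma~\ref{ext:mainpath} cannot be applied to this palette with any $q\le1$. Shrinking $V_A,V_W,V_T$ does not help, and making them tiny breaks $|E|p^2\ge10r$.

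The paper splits into cases.
\begin{itemize}
\item For $a_0\le 3h/4$, one main path suffices.
\item For $a_0>3h/4$ and $J\ne\varnothing$, it splits $A_0$ into random palettes $A_1,A_2$ of density in $[17/32,5/6]$, with $A_1\cap A_2\subseteq D_1$. It runs $P_2$ in $H$ and $P_1$ in $\Gamma=u+H$, both using the same $V_M$. The condition $\Gamma\cap H=\varnothing$ is exactly what deleting the last exceptional edge $b$ achieves when $\Sigma(J)\in H$; in your scheme that deletion serves no purpose.
\item For $J=\varnothing$ and $a_0>3h/4$, it invokes Proposition~\ref{prop:repaired-dense-source}.
\end{itemize}
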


\begin{proof}
We first regularize $S\cap K$ and prepare the random colour
and vertex sets. We then obtain main paths inside subgroup
cosets, an exceptional path, a waveform and a pair absorber.
Three claims establish these inputs and their assembly into
a rainbow path using all but at most one colour of $S$.

Apply Lemma~\ref{ext:regularity} to $S\cap K$ in $K$
with $\epsilon=2^{-35}$, and let $H\le K$ be the resulting
subgroup. Put $A_0=S\cap H$, $a_0=|A_0|$,
$J=S\setminus H$, $r=|J|$, and $h=|H|$.
Then $a_0\ge(1-2^{-35})|S\cap K|$,
$r\le2^{-29}a_0$, and $a_0/h\ge\delta/2$.
Moreover, $\operatorname{Cay}_H(A_0)$ has no
$\zeta$-sparse cut, where $\zeta=2^{-35}\delta^3/1000$.
Put $p=1/32$, $q=29/32$, $c=2^{-240}$,
$t=\lfloor c\delta a_0\rfloor$,
$\ell=\lfloor ct\rfloor$, and $\mu=c^3\delta^2$.
For sufficiently large absolute $C$, these parameters satisfy
\begin{equation}\label{eq:core-capacity}
 \begin{gathered}
 8h^{-1/9500}\le\min\{\zeta,\mu,\delta/8,q\},\quad t\ge40,\\
 tp^3\ge\max\{2^{17}(1+\log(ha_0)),512\ell\},\quad
 2\mu qh+1\le\ell.
 \end{gathered}
\end{equation}

If $J=\varnothing$ and $a_0>3h/4$,
Proposition~\ref{prop:repaired-dense-source} applies.
Henceforth exclude this case.
If $a_0\le3h/4$, put $A_1=A_0$ and $A_2=\varnothing$;
call this the \emph{unsplit case}.
Otherwise, assign each element of $A_0$ to $A_1$ only,
$A_2$ only, or both, with probabilities $1/4,1/4,1/2$,
independently for different elements;
call this the \emph{split case}.
In both cases, $A_1\cup A_2=A_0$.
In the split case, call the palettes \emph{good} if
\[
 5a_0/8\le |A_i|\le5h/6
 \quad\text{and}\quad
 |A_i|\ge17h/32
 \qquad(i=1,2).
\]
In the unsplit case, regard the palette $A_1$ as good.

Assign each element of $A_1$
to $E$, to $S'$, or to neither, with probabilities
$1/4,1/4,1/2$, independently for different elements.
The disjoint sets $E,S'\subseteq A_1$ supply the colours
for the exceptional path and the waveform, respectively.
In the unsplit case, enlarge $E\cup S'$ to a $5/6$-random
subset of $A_1$ by including each remaining element
independently with probability $2/3$.
If $r>0$, fix an ordering $x_1,\ldots,x_r$ of $J$
and put $b=x_r$; otherwise put $b=0$.
Independently of these colour choices, form a partition
$G=V_A\sqcup V_R\sqcup V_M\sqcup V_T$ by assigning each
vertex to the four parts with probabilities $p,p,q,p$,
independently for different vertices.
Put
\[
 \mathcal C=\{H,\Sigma(J)+H,\Sigma(J)-b+H\},\quad
 D_1=(A_1\cap A_2)\cup E\cup S',\quad
 D_2=(E\cup S')\cap A_2.
\]
The family $\mathcal C$ contains at most three cosets.
The next claim provides paths after any choice of forbidden
colours from $D_1$ or $D_2$.

\begin{claim}[Robust main paths in the relevant cosets]\label{cl:localized-palettes}
With probability at least $1-48/h$, the following hold simultaneously.
\begin{itemize}
\item The palettes are good.
\item $|E|,|S'|\ge a_0/8$.
\item For each nonempty palette $A_i$, every coset $\Gamma\in\mathcal C$,
all distinct $u,v\in\Gamma$, and every $F\subseteq D_i$,
there is a rainbow path from $u$ to $v$, with all internal
vertices in $V_M\cap\Gamma$, using only colours in
$A_i\setminus F$ and omitting at most $\mu qh$ colours
of this set.
\end{itemize}
\end{claim}

\begin{claimproof}
In the split case, each $A_i$ is a $3/4$-random subset of $A_0$.
Lemma~\ref{lem:Chernoff} bounds the probability that the
palettes are not good by $6e^{-a_0/1152}$.
For the condition $|A_i|\ge17h/32$, note that the mean
is at least $18h/32$.
By Observation~\ref{obs:dense-cayley-cut},
$\operatorname{Cay}_H(A_i)$ has no $1/16$-sparse cut
whenever $|A_i|\ge17h/32$, and hence no $\zeta$-sparse cut.
In the unsplit case, the latter property follows directly
from the choice of $H$.

In the split case, conditional on $A_2$, each of its elements
belongs to $A_1$ with probability $2/3$ and then to $E\cup S'$
with probability $1/2$.
Thus $D_2$ is a $1/3$-random subset of $A_2$.
Conditional on $A_1$, the set $A_1\cap A_2$ is $2/3$-random
and independent of the $1/2$-random set $E\cup S'$,
so $D_1$ is a $5/6$-random subset of $A_1$.
In each calculation, the membership decisions are independent
for different elements.
In the unsplit case, the enlarged superset of $D_1$
is $5/6$-random in $A_1$.

For each application of Lemma~\ref{ext:mainpath}, condition
only on the palette $A_i$ to which it is applied.
For every possible $A_i$ satisfying the required size bounds,
its Cayley digraph has no $\zeta$-sparse cut, and its random
set of deletable colours has the distribution just described.
Translate each $\Gamma\in\mathcal C$ to $H$;
the corresponding restriction of $V_M$ is $q$-random
and independent of all colour choices.
Apply the lemma with parameters $\zeta,\mu,q$ and
$q'=5/6$ for $A_1$, or $q'=1/3$ for $A_2$.
The lower bounds follow from \eqref{eq:core-capacity}
and $|A_i|/h\ge\delta/8$.
The remaining conditions follow from
$(1+\mu)5/6\le q$ and $5/6\le1-\mu q/4$.
In the unsplit case, use the enlarged superset of $D_1$;
the conclusion then holds for every $F\subseteq D_1$.

For each palette and coset, the conditional failure
probability is at most $7/h$.
Averaging and taking a union bound over at most two palettes
and three cosets shows that the probability that the palettes
are good but some required path does not exist is at most $42/h$.
Conditional on any palettes of the required sizes,
$|E|$ and $|S'|$ each have mean at least $5a_0/32$.
Lemma~\ref{lem:Chernoff} therefore bounds the probability
that either is below $a_0/8$ by $2e^{-a_0/320}$.
Thus the probability that any assertion of the claim fails
is at most
\[
 6e^{-a_0/1152}+2e^{-a_0/320}+\frac{42}{h}
 \le\frac{48}{h},
\]
for $h\le2a_0^2$.
\end{claimproof}

Whenever the palettes are good and $|E|,|S'|\ge a_0/8$,
make the following choices using only the colour sets
$A_1,A_2,E,S'$, without using the vertex partition.
Among the $\binom{|S'|}{2}$ unordered pairs from $S'$,
at most $|S'|/2$ have sum zero.
By averaging, some $g\in H\setminus\{0\}$ is the sum of
at least
$|S'|(|S'|-2)/2h \ge \delta a_0/1000>3t+\ell$ such pairs.
These pairs are disjoint, since each element determines
its partner; orient them arbitrarily to obtain $g$-pairs.
Choose a subfamily $\mathcal P$ of exactly $3t+\ell$ such pairs.
Since $1\le\ell\le t$, Corollary~\ref{ext:flex} supplies
$\mathcal P_{\rm flex}\subseteq\mathcal P$ of size $t+\ell$
and a $(\mathcal P_{\rm flex},\ell)$-absorbing incidence family
$\mathcal P_1,\ldots,\mathcal P_{3t}$ with $\bigcup_{i=1}^{3t}\mathcal P_i=\mathcal P$.
Put $W=(\Sigma(J)+H)\cup(\Sigma(J)-b+H)$;
when $r=0$, this is $H$.

\begin{claim}[Exceptional paths, waveforms, and absorbers]\label{cl:localized-reserved}
Conditional on the colour assignments, if the palettes are
good and $|E|,|S'|\ge a_0/8$, then the following hold
simultaneously with probability at least $1-4/h$
over the vertex partition.
\begin{enumerate}[label=(\roman*)]
\item If $r>0$, then for every $v\in H$ there exist
      $e_1,\ldots,e_r\in E$ such that the colour word
      $(e_1,x_1,\ldots,e_r,x_r)$, anchored at $v$, gives
      a rainbow path with all other vertices in $V_A$.
\item Every $u\in W$ starts a waveform formed from
      $\mathcal P_1,\ldots,\mathcal P_{3t}$, with connector
      colours in $S'$ and all other vertices in $V_R$.
\item For every $y\in W$ and every
      $L\subseteq(A_0\cup\{b\})\setminus
      (\{0\}\cup\operatorname{col}(\mathcal P_{\rm flex}))$
      with $|L|\le\ell$, there exist a subfamily
      $\mathcal P'\subseteq\mathcal P_{\rm flex}$ with
      $|\mathcal P'|=\ell$ and a rainbow path starting at $y$,
      with all other vertices in $V_T$, using only colours in
      $L\cup\operatorname{col}(\mathcal P')$ and omitting
      at most one colour of this set.
\item $V_A\cap H\ne\varnothing$ and $|V_M\cap\Gamma|\ge2$
      for every $\Gamma\in\mathcal C$.
\end{enumerate}
\end{claim}

\begin{claimproof}
Suppose $r>0$.
Lemma~\ref{lem:fixed-order-exceptional} applies to
$x_1,\ldots,x_r$, since
$|E|p^2\ge a_0/8192\ge10r$ and
$a_0/8192\ge32\log(hr)$.
The logarithmic bound follows from \eqref{eq:core-capacity},
using $t\le a_0$ and $r\le a_0$.
Thus the probability that (i) fails is at most $(hr)^{-3}$;
there is no failure event in (i) when $r=0$.

Apply Lemma~\ref{ext:wave} in $H$ with colour set $S'$
and families $\mathcal P_1,\ldots,\mathcal P_{3t}$,
translating each coset of $W$ to $H$.
All waveform colours belong to $S'\subseteq H$,
so the waveform remains in its starting coset.
The required size condition follows from
\[
 \frac{|S'|p^{42}}{2^{18}}
 \ge2^{-231}a_0\ge\max\{3t,\log h\},
\]
using $t\le2^{-240}\delta a_0$ and \eqref{eq:core-capacity}.
Since $W$ contains at most two cosets, the probability
that (ii) fails is at most $2/h$.

Apply Lemma~\ref{lem:restricted-pair-absorber} with $A=A_0$,
$b$, $W$, and $\mathcal P_{\rm flex}$.
Its parameter conditions are included in \eqref{eq:core-capacity},
so the probability that (iii) fails is at most $(ha_0)^{-2}$.
The probability that (iv) fails is at most
$(1-p)^h+3(1+h)(1-q)^{h-1}$.

These estimates condition only on the fixed colour choices
and use the marginal distribution of the relevant vertex part.
For sufficiently large $C$, the bounds in (i) and (iii)
are each at most $1/2h$, as is each of the two terms
in the bound for (iv).
Together with the bound $2/h$ for (ii), a union bound shows
that the probability of any failure is at most $4/h$.
\end{claimproof}

The conditional failure bound in
Claim~\ref{cl:localized-reserved} is uniform over all colour
choices with good palettes and $|E|,|S'|\ge a_0/8$.
Averaging this bound over the colour choices and combining
it with Claim~\ref{cl:localized-palettes}, a union bound
shows that all required conclusions hold with probability
at least $1-52/h>0$.
Fix an outcome for which they all hold.

\begin{figure}[htbp]
\centering
\begingroup%
\definecolor{origWave1}{HTML}{850B00}%
\definecolor{origWave2}{HTML}{00BCFF}%
\definecolor{origWave3}{HTML}{FF90BC}%
\definecolor{origWave4}{HTML}{2C370B}%
\definecolor{origWave5}{HTML}{F400F4}%
\definecolor{origWave6}{HTML}{9B6443}%
\definecolor{origWave7}{HTML}{21C800}%
\definecolor{origWave8}{HTML}{212C59}%
\definecolor{origWave9}{HTML}{A6596F}%
\definecolor{origPairA}{HTML}{0B7A00}%
\definecolor{origPairB}{HTML}{B1B17A}%
\tikzset{
 origWaveEdge/.style={figedge,line width=1.05pt},
 origWaveGhost/.style={figedge,line width=.72pt,dash pattern=on 2.2pt off 1.6pt},
 origWaveUnused/.style={circle,draw=abGrey!60,fill=white,inner sep=0pt,minimum size=3.4pt}
}%
\begin{tikzpicture}[x=1cm,y=1cm,font=\small]
\foreach \lo/\hi/\col in {0.000000/3.133333/abOrange,3.133333/6.266667/abPurple,6.266667/10.366667/abBlue,10.366667/13.500000/abRed}{
 \fill[\col!5] (\lo,.5) rectangle (\hi,1.7);
 \fill[\col!5] (\lo,-1.7) rectangle (\hi,-.5);
}
\draw[abGrey!55,rounded corners=3pt] (0.000000,0.500000) rectangle (13.500000,1.700000);
\draw[abGrey!55,rounded corners=3pt] (0.000000,-1.700000) rectangle (13.500000,-0.500000);
\foreach \x in {3.133333,6.266667,10.366667}{
 \draw[abGrey!45] (\x,.5)--(\x,1.7);
 \draw[abGrey!45] (\x,-1.7)--(\x,-.5);
}
\foreach \x/\part in {1.566667/A,4.700000/R,8.316667/M,11.933333/T}{\node at (\x,2.05) {$V_{\part}$};}
\node[anchor=east] at (-0.223810,1.100000) {$H$};
\node[anchor=east] at (-0.223810,-1.100000) {$-b+H$};
\coordinate (S) at (9.816667,1.100000);
\coordinate (V) at (7.016667,1.100000);
\coordinate (A) at (0.503571,1.100000);
\coordinate (U) at (1.790476,-1.100000);
\coordinate (W) at (6.053333,-1.100000);
\coordinate (Y) at (9.766667,-1.100000);
\coordinate (O) at (2.685714,1.100000);
\draw[figpath,draw=abBlue,decorate,decoration={snake,amplitude=.5mm,segment length=3mm,post length=2mm}]
 (S)--(V);
\node[abBlue] at (8.366667,1.480000) {$P_2$};
\draw[figedge,draw=abOrange] (V)..controls (6.266667,1.500000) and (1.286905,1.500000)..(A);
\draw[figpath,draw=abOrange,decorate,decoration={snake,amplitude=.5mm,segment length=3mm,post length=2mm}]
 (A)..controls (0.503571,0.100000) and (0.503571,-0.650000)..(U);
\node[fill=white,inner sep=1pt] at (0.615476,0.000000) {$\vdots$};
\node[text=abOrange,font=\footnotesize] at (1.589048,1.560000) {exceptional path};
\draw[figedge,draw=abGrey!70,dashed] (U)..controls (3.133333,-0.600000) and (3.133333,0.550000)..(O);
\node[abGrey,font=\footnotesize,anchor=east] at (2.931905,0.220000) {$b$ deleted};
\draw[abGrey,line width=.7pt] (2.786429,-0.100000)--(2.976667,0.070000) (2.786429,0.070000)--(2.976667,-0.100000);
\node[circle,draw=abGrey,fill=white,inner sep=0pt,minimum size=3.4pt] at (O) {};
\definecolor{wfHalo}{HTML}{E1C6FA}
\coordinate (CW1L) at (3.403333,-1.1);
\coordinate (CW1R) at (4.353333,-1.1);
\coordinate (CW1T) at (3.878333,-.64);
\coordinate (CW1C) at (3.878333,-1.1);
\coordinate (CW1B) at (3.878333,-1.56);
\coordinate (CW2L) at (5.103333,-1.1);
\coordinate (CW2R) at (6.053333,-1.1);
\coordinate (CW2T) at (5.578333,-.64);
\coordinate (CW2Ca) at (5.578333,-.946667);
\coordinate (CW2Cb) at (5.578333,-1.253333);
\coordinate (CW2B) at (5.578333,-1.56);
\draw[draw=wfHalo,line width=4.8pt,line join=round,line cap=round]
 (U)--(CW1L)--(CW1T)--(CW1R)--(CW2L)--(CW2B)--(CW2R);
\foreach \l/\m/\r/\ca/\cb in {
 CW1L/CW1C/CW1R/origWave5/origWave6,
 CW1L/CW1B/CW1R/origPairA/origPairB,
 CW2L/CW2T/CW2R/origWave2/origWave3,
 CW2L/CW2Ca/CW2R/origWave8/origWave9,
 CW2L/CW2Cb/CW2R/origPairA/origPairB}{
 \draw[origWaveGhost,draw=\ca] (\l)--(\m);
 \draw[origWaveGhost,draw=\cb] (\m)--(\r);
}
\draw[origWaveEdge,draw=origWave1] (U)--(CW1L);
\draw[origWaveEdge,draw=origWave2] (CW1L)--(CW1T);
\draw[origWaveEdge,draw=origWave3] (CW1T)--(CW1R);
\draw[origWaveEdge,draw=origWave4] (CW1R)--(CW2L);
\draw[origWaveEdge,draw=origWave5] (CW2L)--(CW2B);
\draw[origWaveEdge,draw=origWave6] (CW2B)--(CW2R);
\foreach \v in {CW1L,CW1R,CW1T,CW2L,CW2B}{\node[figvertex] at (\v) {};}
\foreach \v in {CW1C,CW1B,CW2T,CW2Ca,CW2Cb}{\node[origWaveUnused] at (\v) {};}
\node[font=\scriptsize,fill=abPurple!5,inner sep=.4pt] at (4.728333,-1.1) {$\cdots$};
\node[abPurple] at (4.700000,-.28) {$P_R$};
\node[font=\footnotesize] at (4.700000,-2.10) {$S_F\setminus\operatorname{col}(\mathcal P^\prime)$};
\coordinate (M) at (6.666667,-1.100000);
\draw[figedge,draw=abBlue] (W)--(M);
\draw[figpath,draw=abBlue,decorate,decoration={snake,amplitude=.5mm,segment length=3mm,post length=2mm}]
 (M)--(Y);
\node[abBlue] at (8.316667,-0.730000) {$P_1$};
\coordinate (T0) at (10.758333,-1.100000);
\coordinate (T1) at (11.541667,-1.100000);
\coordinate (T2) at (12.499074,1.100000);
\coordinate (T3) at (13.108333,1.100000);
\draw[figedge,draw=origPairA] (Y)--node[above=1pt,font=\scriptsize] {$f_1$}(T0);
\draw[figpath,draw=abRed,decorate,decoration={snake,amplitude=.5mm,segment length=3mm,post length=2mm}]
 (T0)--(T1);
\draw[figedge,draw=abRed] (T1)--node[right=2pt] {$b$} (T2);
\draw[figpath,draw=origPairB,decorate,decoration={snake,amplitude=.5mm,segment length=3mm,post length=2mm}]
 (T2)--node[above=1pt,font=\scriptsize] {$f_2$}(T3);
\node[abRed,font=\footnotesize] at (11.672222,-1.500000) {absorber};
\foreach \p in {S,A,M,T0,T1,T2,T3}{\node[figvertex] at (\p) {};}
\node[figvertex,label=below:{$v$}] at (V) {};
\node[figvertex,label=below:{$u$}] at (U) {};
\node[figvertex,label=below:{$w$}] at (W) {};
\node[figvertex,label=below:{$y$}] at (Y) {};
\node[font=\footnotesize] at (11.933333,-2.10) {$L\cup\operatorname{col}(\mathcal P^\prime)$};
\node[font=\scriptsize,text=abGrey] at (11.933333,-2.40) {at most one omitted};
\end{tikzpicture}
\endgroup
\caption{Dense-core assembly with $b$ used by the absorber, adapted from
\cite{BBKMM}. The highlighted path is $P_R$. The green and olive
colours are left for the absorber; colours on the other path
segments are schematic.}
\label{fig:dense-core-assembly}
\end{figure}

\begin{claim}[Assembly into a simple rainbow path]\label{cl:localized-assembly}
There is a rainbow path using all but at most one colour of $S$.
\end{claim}

\begin{claimproof}
Figure~\ref{fig:dense-core-assembly} illustrates the split case
with $\Sigma(J)\in H$.
In the split case, choose two distinct vertices in $V_M\cap H$.
Claim~\ref{cl:localized-palettes}, applied to $A_2$ with
forbidden set $D_2$, gives a rainbow path $P_2$ between them,
with all vertices in $V_M\cap H$, using only colours in
$A_2\setminus(E\cup S')$ and omitting at most $\mu qh$
colours of this set.
Let $v$ be its final vertex.
In the unsplit case, omit $P_2$ and choose $v\in V_A\cap H$.
These choices are possible by Claim~\ref{cl:localized-reserved}(iv).

If $r>0$, Claim~\ref{cl:localized-reserved}(i) gives a rainbow
path starting at $v$ with colour word
$(e_1,x_1,\ldots,e_r,x_r)$, where $e_i\in E$,
and all other vertices in $V_A$.
Its endpoint lies in $\Sigma(J)+H$.
If $\Sigma(J)\in H$, delete its last edge, of colour $b=x_r$;
the new endpoint then lies in $\Sigma(J)-b+H$,
which is disjoint from $H$.
Otherwise keep the whole path.
Write $P_A$ for the resulting path and $u$ for its endpoint.
When $r=0$, take $P_A$ to be the single-vertex path at $v$
and put $u=v$.
Then $\Gamma:=u+H$ belongs to $\mathcal C$ and is contained in $W$.
In the split case, $J\ne\varnothing$, and our choice ensures
that $\Gamma\cap H=\varnothing$.

By Claim~\ref{cl:localized-reserved}(ii), choose a waveform
starting at $u$, formed from $\mathcal P_1,\ldots,\mathcal P_{3t}$,
with all other vertices in $V_R$.
All its colours lie in $S'\subseteq H$, so it lies in $\Gamma$.
Let $w\in V_R\cap\Gamma$ be its endpoint and let $S_F\subseteq S'$
be the set of all its edge colours.
This set contains $\operatorname{col}(\mathcal P)$ and
all connector colours.

Choose $y\in V_M\cap\Gamma$, using
Claim~\ref{cl:localized-reserved}(iv).
Write $\operatorname{col}(Q)$ for the edge-colour set of a path $Q$,
and put $\operatorname{col}(P_2)=\varnothing$ when $P_2$ is absent.
Let
\[
 F=S_F\cup\bigl[A_1\cap
       (\operatorname{col}(P_2)\cup\operatorname{col}(P_A))\bigr].
\]
Since $S_F\subseteq S'$, $A_1\cap\operatorname{col}(P_2)
\subseteq A_1\cap A_2$, and
$A_1\cap\operatorname{col}(P_A)\subseteq E$, we have $F\subseteq D_1$.
Claim~\ref{cl:localized-palettes} gives a rainbow path $P_1$
from $w$ to $y$, with all internal vertices in $V_M\cap\Gamma$,
using only colours in $A_1\setminus F$ and omitting at most
$\mu qh$ colours of this set.

Put
\[
 L=S\setminus\bigl(S_F\cup\operatorname{col}(P_1)
       \cup\operatorname{col}(P_2)\cup\operatorname{col}(P_A)\bigr).
\]
Since $A_0=A_1\cup A_2$ and $D_2\subseteq A_1$,
\[
 L\cap A_0\subseteq
 \bigl[A_1\setminus(F\cup\operatorname{col}(P_1))\bigr]
 \cup\bigl[A_2\setminus(D_2\cup\operatorname{col}(P_2))\bigr].
\]
Each set on the right has size at most $\mu qh$,
and $L\setminus A_0\subseteq\{b\}$.
Thus $|L|\le2\mu qh+1\le\ell$.
Moreover, $\operatorname{col}(\mathcal P_{\rm flex})\subseteq S_F$,
so $L\subseteq(A_0\cup\{b\})\setminus
(\{0\}\cup\operatorname{col}(\mathcal P_{\rm flex}))$.

Since $y\in W$, Claim~\ref{cl:localized-reserved}(iii)
supplies a subfamily $\mathcal P'\subseteq\mathcal P_{\rm flex}$
with $|\mathcal P'|=\ell$ and a rainbow path $P_T$ starting at $y$,
with all other vertices in $V_T$, using only colours in
$L\cup\operatorname{col}(\mathcal P')$ and omitting at most
one colour of this set.
By the absorbing property, choose pairwise distinct
$p_i\in\mathcal P_i\setminus\mathcal P'$ for $1\le i\le3t$.
Retaining the corresponding two-edge path in each theta graph
and all connector edges collapses the waveform to a rainbow
path $P_R$ from $u$ to $w$.
Its colours are exactly $S_F\setminus\operatorname{col}(\mathcal P')$.
Consequently, $P_2,P_A,P_R,P_1,P_T$ have disjoint colour sets
and together use all but at most one colour of $S$,
omitting $P_2$ in the unsplit case.

It remains to check that joining these paths in this order
introduces no repeated vertex.
All vertices of $P_A$ other than its starting vertex lie in $V_A$,
all vertices of $P_R$ other than $u$ lie in $V_R$,
and all vertices of $P_T$ other than $y$ lie in $V_T$.
All vertices of $P_1$ other than $w$ lie in $V_M\cap\Gamma$.
In the split case, $P_2$ lies in $V_M\cap H$, and
$\Gamma\cap H=\varnothing$ separates it from $P_1$.
In the unsplit case, $P_2$ is absent and $P_A$ starts in $V_A$.
Thus consecutive paths meet only at their common endpoint,
and nonconsecutive paths are disjoint.
Their concatenation is the required rainbow path.
\end{claimproof}

The observation following Definition~\ref{def:cayley}
now gives a valid ordering of $S$.
\end{proof}

\section{Proof of Theorem~\ref{thm:Graham}}\label{sec:main-proof}

We apply the dichotomy of Lemma~\ref{lem:filtration}.
The reservoir case and the first two affine-core subcases
provide the uniform estimates required for local repair.
The remaining subcase is handled by
Proposition~\ref{prop:localized-dense-core}.

\begin{proof}[Proof of Theorem~\ref{thm:Graham}]
Fix $\varepsilon=10^{-5}$, $D=400000$, and
$c=2^{-50}/(D+1)$.
Write $n=|S|$ and assume throughout that $n$ is sufficiently
large. Put $r=\lceil n^\varepsilon\rceil$ and
$\tau=c/(r+1)\ge c/(3n^\varepsilon)$.

To apply Proposition~\ref{prop:repair-full}, we require
bounds $F_1(A;z)\le q$ and $F_D(A;\boldsymbol z)\le F$
that hold for every $A\subseteq S$ with
$|S\setminus A|\le20D+1$ and every
$z\in G$, $\boldsymbol z\in G^D$.
Below, $A$ always denotes such a retained set.
By Lemma~\ref{lem:filtration}, at least one of the following
two cases holds.

\smallskip\noindent\emph{Case 1.}
$S$ contains an $r$-element set $P$ with distinct subset sums
at each fixed size.

For every retained set $A$, we have $|A|\ge n/2$ and
$|P\cap A|\ge r-(20D+1)\ge n^\varepsilon/2$.
The set $P\cap A$ also has distinct subset sums at each
fixed size.
Lemmas~\ref{lem:reservoir-prefixes}
and~\ref{lem:reservoir-chains}, applied to $A$ and $P\cap A$,
give the required bounds with
\[
 q=\frac{26\log(2n)}{n^\varepsilon},\quad
 F=(D+1)
 \left(\frac{52(D+1)\log(2n)}{n^\varepsilon}\right)^D
 +D n^D e^{-n^\varepsilon/[16(D+1)]}.
\]
Since $\varepsilon D=4$, we have $q+nF=o(1)$.
Proposition~\ref{prop:repair-full} therefore gives
a valid ordering of $S$.

\smallskip\noindent\emph{Case 2.}
There are $H\le G$ and $a\in G$ such that
$T:=S\cap(a+H)$ satisfies $|T|\ge(1-c)n$ and
$T-a$ is $\tau$-balanced in $H$.

Put $d=\ord(a+H)$ in $G/H$.
We distinguish three subcases.

\smallskip\noindent\emph{Case 2(a): $d\ge n^\varepsilon$.}
We have $|S\setminus T|\le cn\le n/[4(D+1)]$.
Moreover, every retained set $A$ satisfies
$|A\cap T|\ge n/2\ge|A\setminus T|$.
Apply Proposition~\ref{aq:localtime} to
$A=(A\cap T)\sqcup(A\setminus T)$, and
Proposition~\ref{aq:chains} to
$S=T\sqcup(S\setminus T)$ with $k=D$.
The required bounds hold with
\[
 q=6n^{-\varepsilon}+512\sqrt{2/n},\quad
 F=(D+1)\left(6n^{-\varepsilon}
                  +1024\sqrt{(D+1)/n}\right)^D.
\]
Again $q+nF=o(1)$, so Proposition~\ref{prop:repair-full}
gives a valid ordering of $S$.

\smallskip\noindent\emph{Case 2(b):
$d<n^\varepsilon$ and $|H|>n^{1+\varepsilon}$.}
For each retained set $A$, put $B=A\cap T$.
Then $|B|\ge n/2$ and $|A\setminus B|\le cn\le|B|$.
For sufficiently large $n$, we have
$20D+1\le\tau|T|/2$.
For every coset $Q$ of a proper subgroup of $H$,
the balancedness of $T-a$ therefore gives
\[
 |(B-a)\setminus Q|
 \ge \tau|T|-(20D+1)
 \ge \frac{\tau}{2}|B|.
\]
Thus $B-a$ is $\tau/2$-balanced in $H$.
For sufficiently large $n$, we also have
$(\tau/2)|B|\ge2^{13}D^2\log(|B|+1)$.
Hence Proposition~\ref{prop:almost-coset-direct-chain}
applies to $A=B\sqcup(A\setminus B)$ with
$\alpha=\tau/2$ and $k=1,D$.

Using $|B|\le n$, $|H|>n^{1+\varepsilon}$,
$|B|\ge n/2$, and $\tau\ge c/(3n^\varepsilon)$,
the quantity inside the brackets in that proposition
is at most
\[
 2^{13}D^2\left(
       n^{-\varepsilon}
       +\sqrt{12/c}\,n^{-(1-\varepsilon)/2}
       +\frac{2\log(n+1)}n
       \right)
 \le M n^{-\varepsilon},
\]
where $M=2^{13}D^2(5+\sqrt{12/c})$.
Thus the required bounds hold with
$q=M n^{-\varepsilon}$ and $F=(M n^{-\varepsilon})^D$,
uniformly over all retained sets and targets.
Since $q+nF=o(1)$, Proposition~\ref{prop:repair-full}
again gives a valid ordering of $S$.

\smallskip\noindent\emph{Case 2(c):
$d<n^\varepsilon$ and $|H|\le n^{1+\varepsilon}$.}
Put $K=H+\langle a\rangle$.
Then $T\subseteq K$,
$|K|=d|H|\le n^{1+2\varepsilon}$, and
$|S\cap K|\ge(1-c)n$.
Consequently,
$\delta:=|S\cap K|/|K|\ge(1-c)n^{-2\varepsilon}$.
Also,
$|S\setminus K|/|S\cap K|\le c/(1-c)<2^{-30}$, while
\[
 |S\cap K|\delta^{28500}
 \ge (1-c)^{28501}n^{1-57000\varepsilon}
 = (1-c)^{28501}n^{0.43}.
\]
Thus hypotheses of
Proposition~\ref{prop:localized-dense-core} hold for
sufficiently large $n$, giving a valid ordering of $S$.
\end{proof}

\appendix

\section{Roos' permanent inequality}
\label{app:Roos}

We derive the random-permutation estimate used in the proof of Lemma~\ref{lem:multivariate-partition} from Roos' permanent inequality~\cite{Roos}.

\begin{lemma}\label{thm:Roos}
Let $(b_{ir})_{i,r=1}^n$ be an array in $G$,
where $n\ge2$, and let $\sigma$ be uniform over permutations of $[n]$.
For every additive character $\xi:G\to\mathbb R/\mathbb Z$,
\[
 \Bigl|\mathbb E_\sigma e^{2\pi i\xi(\sum_i b_{i\sigma(i)})}\Bigr|
 \le\exp\Bigl(-\frac1{n^3}\sum_{i,j,r,s=1}^n
       \|\xi(b_{ir}-b_{jr}-b_{is}+b_{js})\|^2\Bigr).
\]
\end{lemma}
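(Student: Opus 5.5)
We derive the lemma from Roos' inequality for permanents of complex matrices with entries of modulus one. Put $w_{ir}=e^{2\pi i\xi(b_{ir})}$, so $|w_{ir}|=1$. Then
\[
\mathbb E_\sigma e^{2\pi i\xi(\sum_i b_{i\sigma(i)})}
=\frac1{n!}\sum_\sigma\prod_{i=1}^n w_{i\sigma(i)}
=\frac{\operatorname{per}(W)}{n!},
\]
where $W=(w_{ir})$. The task is therefore a bound on $|\operatorname{per}(W)|/n!$ for a unimodular matrix. We will quote Roos' result~\cite{Roos} in the form
\[
\frac{|\operatorname{per}(W)|}{n!}\le\exp\Bigl(-\frac{c}{n^3}\sum_{i,j,r,s}|w_{ir}w_{js}-w_{is}w_{jr}|^2\Bigr),
\]
valid for $n\ge2$ and $|w_{ir}|\le1$. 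Here $c$ is an absolute constant coming from Roos' paper; we will check that the normalization there gives at least what is needed below. If the form there is stated differently, for instance through the Hellinger-type quantity $\sum_{i,j}\sum_{r,s}|w_{ir}w_{js}-w_{is}w_{jr}|^2/(n^3)$ with a specific constant, we simply track that constant.

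The second step is to express the $2\times 2$ minors through $\xi$. Since all entries are unimodular,
\[
|w_{ir}w_{js}-w_{is}w_{jr}|=|1-e^{2\pi i\xi(b_{is}+b_{jr}-b_{ir}-b_{js})}|=2|\sin(\pi\,\xi(b_{ir}-b_{jr}-b_{is}+b_{js}))|.
\]
We then use the elementary bound $|\sin(\pi t)|\ge 2\|t\|$, which comes from concavity of $\sin$ on $[0,\pi/2]$. This gives
\[
|w_{ir}w_{js}-w_{is}w_{jr}|^2\ge 16\|\xi(b_{ir}-b_{jr}-b_{is}+b_{js})\|^2.
\]
Substituting into Roos' bound yields the stated inequality provided $16c\ge1$.

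The main obstacle is matching constants, namely confirming that Roos' theorem, in whatever normalization it is stated (some versions sum over unordered pairs of rows and of columns, or divide by $n^2(n-1)$ rather than $n^3$), yields an effective constant of at least $1/16$ in front of the sum of squared minors over ordered quadruples. Passing from unordered to ordered pairs multiplies the sum by $4$, and $n^2(n-1)\le n^3$ only helps. So even a version with constant around $1/64$ per unordered pair suffices. If Roos' constant is weaker, we would either absorb the loss into the exponent constants downstream, where Lemma~\ref{lem:multivariate-partition} has slack in its numerical constants, or restate the lemma with the corresponding constant.
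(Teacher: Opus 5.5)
Your route is the paper's. You write the expectation as $\operatorname{per}(W)/n!$ for the unimodular matrix $W=(e^{2\pi i\xi(b_{ir})})$, apply Roos' permanent inequality, and convert via $|\sin(\pi t)|\ge 2\|t\|$. Your minor identity and that trigonometric step are correct.

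The gap is that the one step carrying the content of the lemma is never established. You quote Roos in a form with an unspecified constant $c$ and conclude only ``provided $16c\ge1$''. Your fallback discussion also has the conversion backwards. The sum over ordered quadruples is four times the sum over unordered row and column pairs, so a bound $\exp(-\frac{c'}{n^3}\sum_{\text{unordered}})$ equals $\exp(-\frac{c'}{4n^3}\sum_{\text{ordered}})$. You would therefore need $c'\ge 1/4$; a constant $c'=1/64$ gives only $c=1/256$. Restating the lemma with a weaker constant, or absorbing the loss downstream, does not prove the statement as given.

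What closes the gap is Roos' Theorem 1.1 in the form the paper uses. With real lifts $x_{ir}$ of $\xi(b_{ir})$, it reads
\[
 \frac{|\operatorname{per}W|}{n!}\le\Bigl(\frac{1}{n^2(n-1)^2}\sum_{i\ne j,\,r\ne s}\cos^2\bigl(\pi(x_{ir}-x_{jr}-x_{is}+x_{js})\bigr)\Bigr)^{(n-1)/4}.
\]
Because the entries are unimodular, $\cos^2(\pi t)=1-\frac14|w_{ir}w_{js}-w_{is}w_{jr}|^2$ for $t=x_{ir}-x_{jr}-x_{is}+x_{js}$. Combined with $1-x\le e^{-x}$, this is exactly your inequality with $c=1/16$ and the smaller denominator $n^2(n-1)$. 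The terms with $i=j$ or $r=s$ contribute nothing to the sum of squared minors, so the restricted sum equals the full one.

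Thus $16c\ge1$ holds with equality. The only remaining slack is $n^2(n-1)\le n^3$, so no smaller absolute constant would have sufficed. The paper skips the minors: it applies $\cos^2(\pi x)\le 1-4\|x\|^2$ directly inside the average, then uses $1-t\le e^{-t}$ and $n^2(n-1)\le n^3$.
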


\begin{proof}
Choose real lifts $x_{ir}$ of $\xi(b_{ir})$, and put
$Z=(e^{2\pi i x_{ir}})_{i,r=1}^n$.
The expectation in the statement equals $(\operatorname{per}Z)/n!$.
By \cite[Theorem~1.1]{Roos},
\begin{align*}
 \frac{|\operatorname{per}Z|}{n!}
 &\le
 \Bigl(
 \frac1{n^2(n-1)^2}
\sum_{i\ne j, \, r\ne s}
 \cos^2\bigl(\pi(x_{ir}-x_{jr}-x_{is}+x_{js})\bigr)
 \Bigr)^{(n-1)/4}\\
 &\le
 \exp\Bigl(
 -\frac1{n^2(n-1)}
 \sum_{i\ne j,\,r\ne s}
 \|x_{ir}-x_{jr}-x_{is}+x_{js}\|^2
 \Bigr).
\end{align*}
The second inequality uses $\cos^2(\pi x)\le1-4\|x\|^2$
and $1-t\le e^{-t}$.
Since the $x_{ir}$ are real lifts of $\xi(b_{ir})$,
the norms in the last sum equal those in the statement.
The terms with $i=j$ or $r=s$ vanish, and
$n^2(n-1)\le n^3$ gives the desired bound.
\end{proof}

\section{Auxiliary absorption proofs}
\label{app:absorption}

We provide the proofs of Lemma~\ref{ext:wave} and
Proposition~\ref{prop:repaired-dense-source}, using the notation
and definitions of Section~\ref{sec:external-absorption}.

\subsection{Embedding waveforms}
\label{app:waveforms}

For completeness, we give the argument of
\cite[Lemma~9.8]{BBKMM} in our notation.

\begin{proof}[Proof of Lemma~\ref{ext:wave}]
Put $F=\bigcup_i\operatorname{col}(\mathcal P_i)$ and
$B_i=\{0,g\}\cup\{a:(a,b)\in\mathcal P_i\}$.
The hypotheses give $|F|\le80m\le|E|/2^{11}$ and $|B_i|\le42$.
The listed elements of $B_i$ are distinct, since $g\ne0$,
the entries of every pair are nonzero, and the pairs are disjoint.

Fix $v\in G$ and $1\le i\le m$.
For each $e\in E\setminus F$, call the connector $v\to v+e$
together with $\Theta(v+e,\mathcal P_i)$ the
\emph{candidate indexed by $e$}.
Its theta graph $\Theta(v+e,\mathcal P_i)$ has vertex set
\[
 v+e+B_i
 =\{v+e,v+e+g\}\cup
   \{v+e+a:(a,b)\in\mathcal P_i\}.
\]
For $e,f\in E\setminus F$, the sets $v+e+B_i$ and
$v+f+B_i$ intersect only if $f-e\in B_i-B_i$.
Thus, for each $e$, at most $|B_i-B_i|\le |B_i|^2 \le 1764$
choices of $f$ give an intersection, including $f=e$.
We can greedily select at least
$|E\setminus F|/1764
 \ge (1-2^{-11})|E|/1764
 \ge |E|/2048$
candidates whose theta graphs are pairwise vertex-disjoint.
For every $(v,i)$, fix such a collection before sampling $R$.

A candidate \emph{survives} if all vertices of its theta graph
belong to $R$.
Within each fixed collection, these events are independent,
each with probability $p^{|B_i|}\ge p^{42}$.
The number $X$ of surviving candidates therefore has mean
$\mu\ge |E|p^{42}/2048\ge128m$.
Since $50m<\mu/2$, Lemma~\ref{lem:Chernoff} gives
\[
 \Prb(X<50m)
 \le e^{-\mu/8}
 \le e^{-|E|p^{42}/2^{14}}
 \le |G|^{-16}.
\]
The size hypothesis also gives $m<|G|$.
A union bound over the $|G|m$ choices of $(v,i)$ shows that,
with probability at least $1-|G|^{-14}$,
every fixed collection has at least $50m$ surviving candidates.

Fix such an outcome and any starting vertex $u\in G$.
Construct the waveform inductively.
After adding the first $k<m$ theta graphs, let $v$ be the
current endpoint, taking $v=u$ when $k=0$.
The construction so far has at most $1+42k$ vertices
and $k$ connector colours.
Consider the surviving candidates in the fixed collection
for $(v,k+1)$.
Their theta graphs are pairwise vertex-disjoint, so
each previously used vertex, including $v$, excludes
at most one candidate.
Each earlier connector colour excludes at most one more,
since different candidates have different connector colours $e$.
Thus at most $1+43k<50m$ candidates are excluded.

Choose a remaining candidate indexed by $e$.
Add the connector $v\to v+e$, whose colour is $e$,
and the theta graph $\Theta(v+e,\mathcal P_{k+1})$
to the current waveform.
This theta graph lies in $R$ and avoids all previously
used vertices, while $e\in E\setminus F$ differs from
every earlier connector colour.
Replace the current endpoint $v$ by $v+e+g$ for the next step.
Repeating this step constructs the required waveform,
with every vertex other than $u$ in $R$.
\end{proof}

\subsection{Sets of density at least \texorpdfstring{$3/4$}{3/4}}
\label{app:dense}

We prove Proposition~\ref{prop:repaired-dense-source}
by following the absorption construction of
\cite[Section~9]{BBKMM}, using
Proposition~\ref{prop:corrected-pair-absorber} to handle
distinct inverse colours. We first recall the theorem
of M\"uyesser--Pokrovskiy~\cite{MP25} that covers the
very dense case.

\begin{theorem}[Valid orderings for very dense sets; {\cite{MP25}; see \cite[Theorem~7.1]{BBKMM}}]
\label{ext:verydense}
For every finite abelian group $G$ of sufficiently large order,
every $S\subseteq G\setminus\{0\}$ satisfying
$|S|\ge|G|-|G|^{1-2^{-20}}$ has a valid ordering.
\end{theorem}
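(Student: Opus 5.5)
The statement is Theorem~\ref{ext:verydense}, the Müyesser--Pokrovskiy theorem. I will not reprove it from scratch. Instead I will deduce it from the absorption machinery of Section~\ref{sec:external-absorption}, following the structure of \cite[Section~9]{BBKMM} with $H=G$. The plan below also uses directly the very-dense setting of \cite{MP25}.

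Write $N=|G|$ and $S\subseteq G\setminus\{0\}$ with $|S|\ge N-N^{1-2^{-20}}$. By Observation~\ref{obs:dense-cayley-cut}, $\operatorname{Cay}_G(S')$ has no $1/2$-sparse cut for any $S'\subseteq S$ with $|S'|\ge 3N/4$. So Lemma~\ref{ext:mainpath} is available with constant $\zeta,\mu,q$. Its requirement $8N^{-1/9500}\le\zeta,\mu$ is harmless for large $N$.

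The construction proceeds in four steps.

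\emph{Step 1: colour and vertex sets.} Split $S$ randomly into $E$ and $S'$ (each $\Theta(N)$, used respectively for flexible pairs and connectors) and a main palette $A_1\supseteq$ the rest. Partition $G=V_R\sqcup V_M\sqcup V_T$ with probabilities $p,q,p$.

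\emph{Step 2: pairs and waveform.} Pick $g\ne0$ that is the sum of $\Omega(N)$ disjoint pairs from $S'$; this exists by averaging as in the proof of Proposition~\ref{prop:localized-dense-core}. Take $3t+\ell$ of these pairs with $t=\Theta(N)$ and $\ell=\Theta(N)$ small. Apply Corollary~\ref{ext:flex} to get an absorbing incidence family, and use Lemma~\ref{ext:wave} to embed a waveform in $V_R$ from every start vertex.

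\emph{Step 3: main path.} Use Lemma~\ref{ext:mainpath} to get a rainbow path $P_1$ from the waveform end $w$ to some $y\in V_M$. It avoids the forbidden colours $S_F$ and misses at most $\mu qN$ colours, with $\mu$ chosen so that $\mu qN+1\le\ell$.

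\emph{Step 4: absorption.} Apply Proposition~\ref{prop:corrected-pair-absorber} at $y$ with the leftover set $L$. This consumes $\ell$ flexible pairs and omits at most one colour. Then collapse the waveform using the remaining pairs. Concatenating the pieces gives a rainbow path using all but at most one colour of $S$, and the Remark after Definition~\ref{def:cayley} turns it into a valid ordering.

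All failure probabilities are $O(1/N)$, since every parameter condition holds once $N$ is large. Note that this argument only needs $|S|\ge 3N/4$, not the much stronger hypothesis.

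The main obstacle is logical rather than technical. In \cite{BBKMM}, Theorem~9.10 (here Proposition~\ref{prop:repaired-dense-source}) is proved \emph{using} the very dense theorem, apparently for the case $|S|$ close to $N$. There, Lemma~\ref{ext:mainpath}'s condition $q\ge(1+\mu)|S|/N$ conflicts with $q\le 1-2p$. That is the real issue: when $|S|/N>1-2p$, the main path cannot fit inside $V_M$.

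My first attempt at a fix is to split $S$ into two palettes $A_1,A_2$, each of density about $3/4$, exactly as in the split case of Proposition~\ref{prop:localized-dense-core}. Each palette would then be routed in its own vertex region. This works only if the regions can be separated. In Proposition~\ref{prop:localized-dense-core} the separation came from distinct cosets, which are unavailable when $H=G$.

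If this separation fails, I fall back on citing \cite{MP25} directly. Their method combines a random greedy near-ordering with a global absorber built from $2$-switch gadgets and tolerates $N^{1-c}$ missing elements. In that case the statement is taken as an external input, consistent with how the paper labels it.
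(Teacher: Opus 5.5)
The paper does not prove this statement. It imports it from M\"uyesser--Pokrovskiy~\cite{MP25}, in the form of \cite[Theorem~7.1]{BBKMM}, and uses it only to handle the range $|S|\ge h-h^{1-2^{-20}}$ in the proof of Proposition~\ref{prop:repaired-dense-source}. Your fallback of citing \cite{MP25} as an external input is therefore exactly the paper's treatment, and it is the correct part of your proposal.

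Your Steps~1--4 do not prove the statement, and the remark that ``this argument only needs $|S|\ge 3N/4$'' is false. The reason is a vertex budget, not a separation problem:
\begin{itemize}
\item A rainbow path using all but at most one colour of $S$ has at least $|S|$ vertices. So it may miss at most $N-|S|$ vertices of $G$, and at most one if $S=G\setminus\{0\}$, which the statement must cover.
\item In your scheme every vertex of the final path lies in $V_M$, in the waveform (at most $1+126t$ vertices), or on $P_T$ (at most $3\ell+1\le 3t+1$ vertices). Hence at least roughly $|V_R|+|V_T|-130t\approx 2pN-130t$ vertices are never visited.
\item Lemma~\ref{ext:wave}, with $m=3t$ and a colour set of size at most $N$, forces both $t\le p^{42}N$ and $p^{42}N\ge 2^{18}\log N$.
\end{itemize}
So the wasted vertices number at least of order $pN\ge N^{41/42}$. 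The scheme therefore fails for every $S$ with $N-|S|=o(N^{41/42})$, and shrinking $p$ cannot fix this. This is the same obstruction as the requirement $q\ge(1+\mu)|S|/N$ with $q\le 1-2p$ in Lemma~\ref{ext:mainpath}, which you noticed later yourself.

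Splitting the palette does not help, because the count above does not depend on how $V_M$ is shared among main paths. Two palettes routed through disjoint $q_i$-random parts of $V_M$ would need $q_i\ge(1+\mu)|A_i|/N$, while $q_1+q_2\le 1-2p$ and $|A_1|+|A_2|\ge|S|$.

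In Proposition~\ref{prop:localized-dense-core} the split works only because $J\ne\varnothing$. The exceptional path moves the second main path into a coset $\Gamma$ disjoint from $H$, so about $2qh$ vertices are available for at most $h$ colours of $A_0$. When all colours lie in $H=G$, no such extra room exists. The very dense case needs a construction that wastes essentially no vertices. That is what \cite{MP25} provides, and none of the tools of Section~\ref{sec:external-absorption} do.

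You should drop Steps~1--4 and present the statement simply as a cited theorem. This is also why Proposition~\ref{prop:repaired-dense-source} has to invoke it.
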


\begin{proof}[Proof of Proposition~\ref{prop:repaired-dense-source}]
Put $h=|H|$ and $\gamma=2^{-20}$, and assume
that $h$ is sufficiently large.
Theorem~\ref{ext:verydense} handles $|S|\ge h-h^{1-\gamma}$.
In the remaining case, put
$p=\tfrac12h^{-2\gamma}$, $q=1-2p$,
$\mu=2^{-80}h^{-90\gamma}$,
$t=\lfloor2^{-66}h^{1-84\gamma}\rfloor$, and
$\ell=\lfloor tp^3/512\rfloor$.
The definitions give $t\ge40$,
$tp^3\ge2^{-70}h^{1-90\gamma}$, and
$\ell\ge2^{-79}h^{1-90\gamma}$.
Consequently, $1\le\ell\le t$,
$tp^3\ge\max\{2^{16}\log h,512\ell\}$, and $\mu qh\le\ell$.

Choose a $1/8$-random subset $E\subseteq S$.
Independently of $E$, form a partition
$H=V_R\sqcup V_M\sqcup V_T$ by assigning each vertex
to the three parts with probabilities $p,q,p$,
independently for different vertices.
By Observation~\ref{obs:dense-cayley-cut},
$\operatorname{Cay}_H(S)$ has no $1/4$-sparse cut.
Apply Lemma~\ref{ext:mainpath} in $H$ with
$M=V_M$, $D=E$, $q'=1/8$, and $\zeta=1/4$.
Its lower parameter bounds hold since
$90\gamma<1/9500$ and $|S|/h\ge3/4$.
Moreover, $\mu\le h^{-\gamma}$ gives
\[
 (1+\mu)|S|/h
 \le(1+\mu)(1-h^{-\gamma})
 \le1-h^{-2\gamma}
 =q,
\]
and $1/8\le1-\mu q/4$.
Thus the lemma's conclusion holds simultaneously for
every $S_F\subseteq E$ and every pair of distinct endpoints
in $H$, except on an event of probability at most $7/h$.

By Lemma~\ref{lem:Chernoff},
$\Prb(|E|<h/16)\le e^{-h/192}$.
For each $E$ with $|E|\ge h/16$, make the following
choices using only $E$.
Among the $\binom{|E|}{2}$ unordered pairs from $E$,
at most $|E|/2$ have sum zero.
By averaging, some $g\in H\setminus\{0\}$ is the sum of
at least $|E|(|E|-2)/(2h)\ge h/3072$ such pairs.
These pairs are disjoint, since each element determines
its partner; orient them arbitrarily to obtain $g$-pairs.
As $3t+\ell\le4t\le2^{-64}h$, choose a subfamily
$\mathcal P$ of exactly $3t+\ell$ such pairs.
Corollary~\ref{ext:flex} supplies
$\mathcal P_{\rm flex}\subseteq\mathcal P$ of size $t+\ell$
and a $(\mathcal P_{\rm flex},\ell)$-absorbing incidence family
$\mathcal P_1,\ldots,\mathcal P_{3t}$ with
$\bigcup_{i=1}^{3t}\mathcal P_i=\mathcal P$.

Conditional on these choices, $V_R$ and $V_T$ are each
$p$-random subsets of $H$.
Lemma~\ref{ext:wave} applies to $E$ and
$\mathcal P_1,\ldots,\mathcal P_{3t}$, since
\[
 \frac{|E|p^{42}}{2^{18}}
 \ge2^{-64}h^{1-84\gamma}
 \ge\max\{3t,\log h\}.
\]
Its conclusion fails with probability at most $1/h$.
Proposition~\ref{prop:corrected-pair-absorber}, applied
in $H$ to $\mathcal P_{\rm flex}$ and $V_T$, also fails
with probability at most $1/h$.
Finally, $\Prb(|V_M|<2)\le(1+h)(1-q)^{h-1}$.
The vertex parts are dependent, but each has the required
marginal distribution, so a union bound shows that the
probability of any failure is at most
\[
 e^{-h/192}+\frac9h+(1+h)(1-q)^{h-1}<1.
\]
Fix an outcome for which all these conclusions hold. We now assemble the paths
(see Figure~\ref{fig:rainbow-assembly}).

\begin{figure}[htbp]
\centering
\begingroup
\definecolor{Ma}{HTML}{A32727}
\definecolor{Mb}{HTML}{DC5A2D}
\definecolor{Mc}{HTML}{EF9932}
\definecolor{Md}{HTML}{D6A900}
\definecolor{Me}{HTML}{C73969}
\definecolor{Mf}{HTML}{8E2346}
\definecolor{Cfirst}{HTML}{226D80}
\definecolor{Pa}{HTML}{2063BF}
\definecolor{Pb}{HTML}{48A4D7}
\definecolor{Csecond}{HTML}{258B76}
\definecolor{Pc}{HTML}{6951B5}
\definecolor{Pd}{HTML}{9B73C9}
\definecolor{Cthird}{HTML}{344B89}
\definecolor{Clast}{HTML}{337EAA}
\definecolor{Pe}{HTML}{1E9B9F}
\definecolor{Pf}{HTML}{3D745B}
\definecolor{Ta}{HTML}{B85C1D}
\definecolor{Tb}{HTML}{4B2E85}
\definecolor{Tc}{HTML}{74A998}
\definecolor{Td}{HTML}{D96B93}
\definecolor{Te}{HTML}{4385DA}
\definecolor{Tf}{HTML}{624A9C}
\definecolor{Tg}{HTML}{9A6500}
\definecolor{unused}{HTML}{A6ABB2}
\definecolor{muted}{HTML}{6A7078}
\definecolor{Mwash}{HTML}{FCF5F0}
\definecolor{Rwash}{HTML}{F1F6FB}
\definecolor{Twash}{HTML}{F8F6FA}
\tikzset{
  chosen/.style={line width=1.55pt,-{Stealth[length=1.6mm,width=1.1mm]},shorten <=1.2pt,shorten >=1.2pt},
  unusededge/.style={draw=unused,line width=.8pt,dash pattern=on 3pt off 2.4pt,-{Stealth[length=1.65mm,width=1.2mm]},shorten <=1.5pt,shorten >=1.5pt},
  dot/.style={circle,fill=black,inner sep=0pt,minimum size=3.5pt},
  ghostdot/.style={circle,fill=unused,inner sep=0pt,minimum size=3.2pt},
  joint/.style={circle,fill=black,inner sep=0pt,minimum size=4.7pt},
}

\definecolor{Mg}{HTML}{CD4229}
\definecolor{Mh}{HTML}{E77824}
\definecolor{Mi}{HTML}{C79C3A}
\definecolor{Mj}{HTML}{BE7A20}
\definecolor{Mk}{HTML}{D54158}
\definecolor{Ml}{HTML}{9F357B}

\resizebox{.94\linewidth}{!}{%
\begin{tikzpicture}[x=.90cm,y=1.15cm,font=\small]
\path[use as bounding box] (-0.0571,-1.3600) rectangle (15.4830,2.6800);
\fill[Mwash,rounded corners=3pt] (0.0000,-0.8200) rectangle (6.3226,0.8600);
\fill[Rwash,rounded corners=3pt] (6.3626,-0.8200) rectangle (11.0640,0.8600);
\fill[Twash,rounded corners=3pt] (11.1040,-0.8200) rectangle (15.3780,0.8600);
\node[font=\normalsize] at (3.1613,2.3500) {Main path $P_M$};
\node[font=\normalsize] at (8.7133,2.3500) {Waveform and $P_R$};
\node[font=\normalsize] at (13.2357,2.3500) {Absorber path $P_T$};
\draw[Ma,line width=2.6pt,line cap=round] (1.7613,1.9100)--(2.0613,1.9100);
\draw[Mb,line width=2.6pt,line cap=round] (2.2613,1.9100)--(2.5613,1.9100);
\draw[Mc,line width=2.6pt,line cap=round] (2.7613,1.9100)--(3.0613,1.9100);
\draw[Md,line width=2.6pt,line cap=round] (3.2613,1.9100)--(3.5613,1.9100);
\draw[Me,line width=2.6pt,line cap=round] (3.7613,1.9100)--(4.0613,1.9100);
\draw[Mf,line width=2.6pt,line cap=round] (4.2613,1.9100)--(4.5613,1.9100);
\draw[Cfirst,line width=2.6pt,line cap=round] (7.8933,1.9100)--(8.1533,1.9100);
\draw[Pa,line width=2.6pt,line cap=round] (8.3533,1.9100)--(8.6133,1.9100);
\draw[Pc,line width=2.6pt,line cap=round] (8.8133,1.9100)--(9.0733,1.9100);
\draw[Pe,line width=2.6pt,line cap=round] (9.2733,1.9100)--(9.5333,1.9100);
\node[font=\normalsize] at (3.1613,1.4300) {$(S\setminus S_F)\setminus L$};
\node[font=\normalsize] at (8.7133,1.4300) {$S_F\setminus\operatorname{col}(\mathcal P')$};
\draw[Ta,line width=2.6pt,line cap=round] (11.5656,1.9100)--(11.7956,1.9100);
\draw[Td,line width=2.6pt,line cap=round] (11.9656,1.9100)--(12.1956,1.9100);
\draw[Tg,line width=2.6pt,line cap=round] (12.3656,1.9100)--(12.5956,1.9100);
\draw[Tb,line width=2.6pt,line cap=round] (13.6449,1.9100)--(13.8749,1.9100);
\draw[Te,line width=2.6pt,line cap=round] (14.0449,1.9100)--(14.2749,1.9100);
\draw[Tf,line width=2.6pt,line cap=round] (14.4449,1.9100)--(14.6749,1.9100);
\node[font=\normalsize] at (12.0806,1.4300) {$L$};
\node[font=\normalsize] at (13.1307,1.4300) {$\cup$};
\node[font=\normalsize] at (14.1599,1.4300) {$\operatorname{col}(\mathcal P')$};
\node[font=\footnotesize,text=muted] at (13.2357,1.0000) {at most one colour omitted};
\coordinate (w) at (0.1255,0.0000);
\coordinate (m1) at (0.5820,0.0450);
\coordinate (m2) at (1.0385,-0.0350);
\coordinate (m3) at (1.4951,0.0450);
\coordinate (m4) at (1.9516,-0.0350);
\coordinate (m5) at (2.4081,0.0450);
\coordinate (ml) at (2.8646,0.0000);
\coordinate (mr) at (3.5208,0.0000);
\coordinate (m6) at (3.9773,0.0450);
\coordinate (m7) at (4.4338,-0.0350);
\coordinate (m8) at (4.8903,0.0450);
\coordinate (m9) at (5.3468,-0.0350);
\coordinate (m10) at (5.8033,0.0450);
\coordinate (v) at (6.2598,0.0000);
\draw[chosen,Ma] (w)--(m1);
\draw[chosen,Mg] (m1)--(m2);
\draw[chosen,Mb] (m2)--(m3);
\draw[chosen,Mh] (m3)--(m4);
\draw[chosen,Mc] (m4)--(m5);
\draw[chosen,Md] (m5)--(ml);
\draw[chosen,Mi] (mr)--(m6);
\draw[chosen,Mj] (m6)--(m7);
\draw[chosen,Mk] (m7)--(m8);
\draw[chosen,Me] (m8)--(m9);
\draw[chosen,Mf] (m9)--(m10);
\draw[chosen,Ml] (m10)--(v);
\node[font=\large] at (3.1927,0.0150) {$\cdots$};
\definecolor{wfHalo}{HTML}{E1C6FA}
\coordinate (L1) at (6.6395,0);
\coordinate (R1) at (8.1000,0);
\coordinate (T1) at (7.3698,.60);
\coordinate (C1) at (7.3698,0);
\coordinate (B1) at (7.3698,-.60);
\coordinate (gapl) at (8.48,0);
\coordinate (gapr) at (9.08,0);
\coordinate (L3) at (9.5000,0);
\coordinate (T3) at (10.2309,.60);
\coordinate (C3a) at (10.2309,.20);
\coordinate (C3b) at (10.2309,-.20);
\coordinate (B3) at (10.2309,-.60);
\coordinate (z) at (10.9618,0);
\draw[draw=wfHalo,line width=5.3pt,line join=round,line cap=round]
 (v)--(L1)--(T1)--(R1)--(gapl);
\draw[draw=wfHalo,line width=5.3pt,line join=round,line cap=round]
 (gapr)--(L3)--(B3)--(z);
\foreach \l/\m/\r/\ca/\cb in {
 L1/C1/R1/Pc/Pd,L1/B1/R1/Tb/Te,
 L3/T3/z/Pa/Pb,L3/C3a/z/Pc/Pd,L3/C3b/z/Tb/Te}{
 \draw[unusededge,draw=\ca] (\l)--(\m);
 \draw[unusededge,draw=\cb] (\m)--(\r);
}
\draw[chosen,Cfirst] (v)--(L1);
\draw[chosen,Pa] (L1)--(T1);
\draw[chosen,Pb] (T1)--(R1);
\draw[chosen,Csecond] (R1)--(gapl);
\draw[chosen,Clast] (gapr)--(L3);
\draw[chosen,Pe] (L3)--(B3);
\draw[chosen,Pf] (B3)--(z);
\node[font=\large] at (8.78,.015) {$\cdots$};
\coordinate (t1) at (11.5451,0.0450);
\coordinate (t2) at (12.1436,-0.0350);
\coordinate (t3) at (12.7422,0.0450);
\coordinate (t4) at (13.3408,-0.0350);
\coordinate (t5) at (13.9393,0.0450);
\coordinate (t6) at (14.5379,-0.0350);
\coordinate (end) at (15.1785,0.0000);
\draw[chosen,Ta] (z)--(t1);
\draw[chosen,Tb] (t1)--(t2);
\draw[chosen,Tc] (t2)--(t3);
\draw[chosen,Td] (t3)--(t4);
\draw[chosen,Te] (t4)--(t5);
\draw[chosen,Tf] (t5)--(t6);
\draw[chosen,Tg] (t6)--(end);
\node[joint] at (w) {};
\node[dot] at (m1) {};
\node[dot] at (m2) {};
\node[dot] at (m3) {};
\node[dot] at (m4) {};
\node[dot] at (m5) {};
\node[dot] at (ml) {};
\node[dot] at (mr) {};
\node[dot] at (m6) {};
\node[dot] at (m7) {};
\node[dot] at (m8) {};
\node[dot] at (m9) {};
\node[dot] at (m10) {};
\node[joint] at (v) {};
\foreach \v in {L1,T1,R1,gapl,gapr,L3,B3}{\node[dot] at (\v) {};}
\foreach \v in {C1,B1,T3,C3a,C3b}{\node[ghostdot] at (\v) {};}
\node[joint] at (z) {};
\node[dot] at (t1) {};
\node[dot] at (t2) {};
\node[dot] at (t3) {};
\node[dot] at (t4) {};
\node[dot] at (t5) {};
\node[dot] at (t6) {};
\node[joint] at (end) {};
\node[below=4pt,font=\normalsize] at (w) {$w$};
\node[below=4pt,font=\normalsize] at (v) {$v$};
\node[below=4pt,font=\normalsize] at (z) {$z$};
\node[above=2.5pt,font=\footnotesize] at (T1) {$p_1$};
\node[below=1.5pt,font=\footnotesize] at (B3) {$p_{3t}$};
\node[font=\large] at (3.1613,-1.1300) {$V_M$};
\node[font=\large] at (8.7133,-1.1300) {$V_R$};
\node[font=\large] at (13.2357,-1.1300) {$V_T$};
\end{tikzpicture}%
}
\endgroup
\caption{The waveform is fixed before $P_M$ is chosen.
Once the unused set $L$ is known, the absorber selects
$\ell$ flexible pairs $\mathcal P'$.
The absorbing property then allows the waveform to be
collapsed to $P_R$ avoiding these pairs, without changing
its endpoints $v,z$.
Any colour of $S$ omitted by
$P_M\cup P_R \cup P_T$ is placed first in the final ordering.}
\label{fig:rainbow-assembly}
\end{figure}

Choose distinct $w,v\in V_M$.
By Lemma~\ref{ext:wave}, choose a waveform starting at $v$,
formed from $\mathcal P_1,\ldots,\mathcal P_{3t}$,
with all other vertices in $V_R$.
Let $z\in V_R$ be its endpoint.
Let $S_F\subseteq E$ be the set of all edge colours
of the waveform.
These consist of $\operatorname{col}(\mathcal P)$ and
the $3t$ connector colours, so $|S_F|=9t+2\ell$.

Lemma~\ref{ext:mainpath} gives a rainbow path $P_M$
from $w$ to $v$ using only colours in $S\setminus S_F$,
with all its vertices in $V_M$.
Let $L$ be the set of colours in $S\setminus S_F$
not used by $P_M$.
Then $|L|\le\mu qh\le\ell$, and
$L\cap\operatorname{col}(\mathcal P)=\varnothing$.
The conclusion of Proposition~\ref{prop:corrected-pair-absorber}
therefore supplies a subfamily
$\mathcal P'\subseteq\mathcal P_{\rm flex}$ with
$|\mathcal P'|=\ell$ and a rainbow path $P_T$ starting at $z$,
with all other vertices in $V_T$, using only colours in
$L\cup\operatorname{col}(\mathcal P')$ and omitting
at most one colour of this set.

By the absorbing property, choose pairwise distinct
pairs $p_i\in\mathcal P_i\setminus\mathcal P'$ for $1\le i\le3t$.
Retaining the corresponding two-edge path in each theta graph
and all connector edges collapses the waveform to a
rainbow path $P_R$ from $v$ to $z$.
The selected pairs are precisely those in
$\mathcal P\setminus\mathcal P'$, so $P_R$ uses exactly
the colours in $S_F\setminus\operatorname{col}(\mathcal P')$.

The colour sets of $P_M,P_R,P_T$ are therefore disjoint
and together contain all but at most one element of $S$.
All vertices of $P_M$ lie in $V_M$, all vertices of $P_R$
other than $v$ lie in $V_R$, and all vertices of $P_T$
other than $z$ lie in $V_T$.
Thus joining these paths in the order $P_M,P_R,P_T$
gives a rainbow path using all but at most one colour of $S$.
The observation following Definition~\ref{def:cayley}
then gives a valid ordering of $S$.
\end{proof}

\end{document}